\newtheorem{theorem}{Theorem}[section]
\newtheorem{definition}{Definition}[section]
\newtheorem{lemma}{Lemma}[section]
\newtheorem{remark}{Remark}[section]
\newenvironment{proof}[1][Proof]{\noindent\textbf{#1.} }{\ \hfill\rule{0.3em}{0.5em}}
\newcommand*{\R}{\mathbb{R}}
\def\beq#1{\begin{equation}\label{#1}}
	\def\eeq{\end{equation}}
\def\bep{\begin{proof}}
	\def\ep{\end{proof}}
\def\reff#1{(\ref{#1})}
\def\M{\mathcal M}
\def\A{{\mathcal A}}
\def\B{\mathcal B}
\def\ignore#1{}
\def\X{\mathcal X}
\def\Y{\mathcal Y}
\def\G{\mathcal G}
\def\F{\mathcal F}
\def\U{\mathcal U}
\def\V{\mathcal V}
\def\P{\mathcal P}
\def\Q{\mathcal Q}
\def\rank{\operatorname {rank}}
\def\bR{\mathbb R}
\newcommand{\re}{\mathbb{R}}
\begin{document}

\title{Tensor factorization based method for low rank matrix completion and its application on tensor completion}
%
%
%

\author{\IEEEauthorblockN{Quan~Yu,
		Xinzhen~Zhang$ ^{*} $\thanks{* Corresponding author.}}
\thanks{Quan Yu and Xinzhen Zhang are with School of Mathematics, Tianjin University, Tianjin 300354, P.R. China. (e-mail:quanyu@tju.edu.cn; xzzhang@tju.edu.cn). This work was supported by NSFC under Grant 11871369 and the Tianjin Research Innovation Project for Postgraduate Students under Grant 2020YJSS140. }
}
%
%

\markboth{}
{Shell \MakeLowercase{\textit{et al.}}: Bare Demo of IEEEtran.cls for IEEE Journals}
%



\maketitle

\begin{abstract}Low rank matrix and tensor completion problems are to recover the
	incomplete two and higher order data by using their
	low rank structures. The  essential problem in the matrix and tensor completion problems  is how to improve the efficiency.
To this end, we first establish the relationship between matrix rank and  tensor tubal rank, and then reformulate matrix completion problem as a tensor completion problem. For the reformulated tensor completion problem, we adopt a two-stage strategy based on tensor factorization algorithm. In this way,   a matrix completion problem of big size can be  solved via  some matrix computations of smaller sizes.
	For a third order tensor completion problem, to fully exploit the low rank structures, we introduce the double tubal rank which combines the tubal rank and the rank of the mode-3 unfolding matrix. For the mode-3 unfolding matrix rank,  we follow the idea of matrix completion.  Based on this, we establish a novel model and modify the tensor factorization based algorithm for third order tensor completion. Extensive numerical experiments demonstrate that the	proposed methods outperform state-of-the-art methods in terms of both accuracy and  running time.
\end{abstract}

\begin{IEEEkeywords}
Matrix completion, tensor completion, tensor factorization, tubal rank.
\end{IEEEkeywords}

%
\IEEEpeerreviewmaketitle

\section{Introduction}
\IEEEPARstart{M}{atrix} and tensor completion have received much attention in recent years, which have many applications, such as in hyperspectral data recovery \cite{GRY11}, image/video inpainting \cite{LMWY13,SZ17,YZ21,YZH20,YZM20,ZHJ19}, image classification \cite{CdlTCB15,LLTX15} and high dynamic range (HDR) imaging \cite{LLM14,OLTK15,TT12}. In general, such matrix and tensor data have low rank structures. Hence the problems are modeled as the rank minimization problems. Unfortunately, the rank minimization problem is NP-hard in general due to the combinational nature of the function $ \rank(\cdot) $ even for matrix rank.

Nuclear norm is known to be the tightest convex relaxation of matrix rank function \cite{RFP10}. Hence the matrix completion problem is relaxed as a nuclear norm minimization with efficient numerical methods \cite{CCS10,LST12,MGC11,MHT10,TY10,MZXB21}. But these nuclear norm minimization methods require computing matrix singular value decomposition (SVD), which become increasingly expensive with the increasing sizes of the underlying matrices. To cut down the computational cost, low rank matrix factorization methods have been proposed in \cite{HWL20,KMO10,RR13,ZS18,ZLS12}. However,  matrix decomposition methods also need expensive computation for large scale matrix data.

As a higher order generalization of matrix completion,  tensor completion has attracted much more attention recently \cite{BZNC16,BPTD17,JNZH20,Zha19,HZW21,JHZ17}. Compared to matrix rank, there are various definitions for tensor rank, including CANDECOMP/PARAFAC (CP)
rank \cite{Hit27}, Tucker rank \cite{Tuc66}, TT rank \cite{Ose11}, triple rank \cite{QCBZ21} and tubal rank \cite{KBHH13}.  Since it is generally NP-hard to compute the CP rank \cite{HL13}, it is hard to apply CP rank to the tensor completion problem. Although the TT rank can be computed by TT singular value decomposition, it always has a fixed pattern, which might not be the optimum for specific data tensor \cite{ZZX16}. The Tucker rank is defined on the rank of unfolding matrices, which are of big sizes. On the other hand, unfolding a tensor as a matrix would destroy the original multi-way structure of the data, leading to vital information loss and degrading performance efficiency \cite{MSL13,MHWG14}. Recently, tubal rank
becomes more and more popular since the low tubal rank tensor completion can be solved via updating matrices of smaller sizes at each iteration \cite{ZLLZ18}.
However, tubal rank is defined on the third mode, which ignores the low rank  structures on the other two modes \cite{YZH20}. 
To exploit the low rank structures, 
 \cite{ZHZ20} and \cite{ZHZ20a} proposed 3-tubal rank and tensor fibered rank, respectively,  which considered the three modes at the same time. Though this type of rank reveals more low rank structures of the tensor, the low rank structures they considered overlapped (see Lemma \ref{lem:wstnn}), so redundant running time is generated.

Based on these analyses, in this paper, we first propose a novel model for low rank matrix completion  problem. 
For a large scale matrix, we reshape it  as a third order tensor. Then we establish the relationship between matrix rank and tubal rank of the reshaped tensor. Based on this relationship, we reformulate the matrix completion problem as a third order tensor completion problem. Then we  propose a two-stage  tensor factorization based algorithm
 to the reformulated tensor completion problem. 
By this way,  a matrix completion problem of big size can be dealt with by computing matrix factorization of smaller sizes, which drastically reduces the consumed time. 

For the tensor completion problem, we consider the tubal rank  and the mode-3 unfolding matrix rank together for fully exploiting the low rank structures of the tensor. 
For the mode-3 unfolding matrix rank, we adopt the strategy of matrix completion problems. Thus, we introduce a new tensor rank, named double tubal rank. See the definition of tensor double tubal rank in \eqref{ttl}. 
Based on these, we modify the proposed tensor factorization based algorithm  for the tensor completion based on double tubal rank. 

In summary, our main contributions include:
\begin{itemize}
	\item[(1)] We reformulate the matrix completion problem as a third order tensor completion problem. Then we propose a  tensor factorization based algorithm. 
In this way, a big matrix completion problem can be solved by computing some smaller matrices, which greatly improves the efficiency of matrix completion problems.

	\item[(2)]  For  a third order tensor, we introduce the tensor double tubal rank. Compared with tubal rank, 3-tubal rank \cite{ZHZ20} and tensor fibered rank \cite{ZHZ20a}, double tubal rank can fully exploit the  low rank structures  without redundancy. 
	Based on the introduced double tubal rank, we  modify the proposed   tensor factorization based algorithm. 
	
\item[(3)] In the proposed algorithms, we adopt  the two-stage  strategy, in which a good initial point is generated in the first stage and the  convergence is accelerated in the second stage.
	
	\item[(4)] The proposed algorithms converge to  KKT points.   Extensive numerical experiments demonstrate  the outperforms of our proposed algorithms over the other compared algorithms.
\end{itemize}
The outline of this paper is given as follows. We recall the basic notations on tensor  in Section 2. 
In Section 3, we establish the relation between matrix rank and tubal rank of the reshaped tensor, and then reformulate the matrix completion problem as a tensor completion problem. For the reformulated tensor completion problem, a two-stage tensor factorization based algorithm is proposed. Section 4 introduces double tubal rank and then presents a new model for low rank tensor completion. For the presented model, we modify the  two-stage  
tensor factorization based algorithm.  Extensive simulation results are reported to demonstrate the validity of our proposed algorithms in Section 5.

\section{NOTATIONS AND PRELIMINARIES}
This section recalls some basic knowledge on tensors. We first give the basic notations and then present the tubal rank, 3-tubal rank (tensor fibered rank), and Tucker rank. We state them here in detail for the readers' convenience.

\subsection{Notations}
For a positive integer $n$,
$[n]:=\{1,2,\ldots, n\}$. Scalars, vectors and matrices are denoted as lowercase letters ($a,b,c,\ldots$), boldface lowercase letters ($\bm{a} ,\bm{b},\bm{c},\ldots$) and uppercase letters ($A,B,C,\ldots $), respectively.
Third order tensors are denoted as ($\mathcal{A},\mathcal{B},\mathcal{C},\ldots$). For a third order tensor $\mathcal{A} \in \R^{n_1\times n_2\times n_3}$, we use the Matlab notations $  \mathcal{A}(:, :, k)  $ to denote its $ k $-th frontal slice, denoted by $ A^{(k)} $ for all $k\in { [n_3]}$. The inner product of two tensors  $ \mathcal{A},\,\mathcal{B} \in {\R^{{n_1} \times {n_2} \times {n_3}}}$ is the sum of products of their entries, i.e.
$$\left\langle {\mathcal{A},\mathcal{B}} \right\rangle  = \sum\limits_{i = 1}^{{n_1}} {\sum\limits_{j = 1}^{{n_2}} {\sum\limits_{k = 1}^{{n_3}} {{\mathcal{A}_{ijk}}{\mathcal{B}_{ijk}}} } }. $$
The Frobenius norm is ${\left\| \mathcal{A} \right\|_F} = \sqrt {\left\langle {\mathcal{A},\mathcal{A}} \right\rangle } $.
For a matrix $A$, $ A^* $ and $ A^\dagger $ represent the conjugate transpose and the pseudo-inverse of $ A $, respectively.

\subsection{$T$-product, tubal rank and 3-tubal rank (tensor fibered rank)}
Discrete Fourier Transformation
(DFT) plays a key role in tensor-tensor product (t-product). For $\A\in \mathbb{R}^{n_1 \times n_2 \times n_3}$, let ${{\bar \A}} \in {{\mathbb C}^{{n_1} \times {n_2} \times {n_3}}}$ be the result of
Discrete Fourier transformation (DFT) of ${{ \A}} \in {{\mathbb R}^{{n_1} \times {n_2} \times {n_3}}}$
along the 3rd dimension. Specifically, let  $F_{n_3}=[f_1,\dots, f_{n_3}]\in \mathbb C^{n_3\times n_3}$, where
$$f_i=\left[ \omega^{0\times (i-1)}; \omega^{1\times (i-1)};\dots; \omega^{(n_3-1)\times (i-1)}\right] \in \mathbb C^{n_3}$$
with $\omega=e^{-\frac{2\pi \mathfrak{b}}{n_3}}$ and $\mathfrak{b}=\sqrt{-1}$. Then $ \bar \A(i,j,:)=F_{n_3}\A(i,j,:) $,
which can be computed by Matlab command ``$\bar \A=fft(\A,[\; ],3)$''. Furthermore, $\A$ can be computed by $\bar \A$ with the inverse DFT $ \A=ifft({\bar \A},[\; ],3) $.

\begin{lemma}\label{lem:v}\cite{RR04}
	Given any real vector $\bm{v} \in \mathbb{R}^{n_3}$, the associated $\bar{\bm{v}}=F_{n_3} \bm{v} \in \mathbb{C}^{n_3}$ satisfies
	$$
	\bar{v}_{1} \in \mathbb{R} \text { and } \operatorname{conj} \left(\bar{v}_{i}\right)=\bar{v}_{n_3-i+2},\; i=2, \ldots,\left\lfloor\frac{n_3+1}{2}\right\rfloor.
	$$
\end{lemma}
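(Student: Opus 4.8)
The plan is to establish both assertions by a direct computation from the definition of the DFT matrix $F_{n_3}$. Since the $(j,i)$ entry of $F_{n_3}$ equals $\omega^{(j-1)(i-1)}$ and $F_{n_3}$ is symmetric, the $i$-th coordinate of $\bar{\bm v}=F_{n_3}\bm v$ is $\bar v_i=\sum_{j=1}^{n_3}\omega^{(i-1)(j-1)}v_j$, where $\omega=e^{-2\pi\mathfrak b/n_3}$. Taking $i=1$ makes every factor $\omega^{0}=1$, so $\bar v_1=\sum_{j=1}^{n_3}v_j$, which lies in $\mathbb{R}$ because the $v_j$ are real; this is the first claim.

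For the second claim I would use two elementary facts: $|\omega|=1$, so $\operatorname{conj}(\omega)=\omega^{-1}$, and $\omega^{n_3}=e^{-2\pi\mathfrak b}=1$. Conjugating $\bar v_i$ and using that each $v_j$ is real gives $\operatorname{conj}(\bar v_i)=\sum_{j=1}^{n_3}\omega^{-(i-1)(j-1)}v_j$. On the other hand, $\bar v_{n_3-i+2}=\sum_{j=1}^{n_3}\omega^{(n_3-i+1)(j-1)}v_j$, and since $(n_3-i+1)(j-1)=n_3(j-1)-(i-1)(j-1)$ while $\omega^{n_3(j-1)}=1$, we have $\omega^{(n_3-i+1)(j-1)}=\omega^{-(i-1)(j-1)}$ for every $j$. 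Hence the two sums agree term by term, and $\operatorname{conj}(\bar v_i)=\bar v_{n_3-i+2}$.

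The range $i=2,\dots,\lfloor(n_3+1)/2\rfloor$ is not needed for the identity itself — the equality $\operatorname{conj}(\bar v_i)=\bar v_{n_3-i+2}$ is valid for all $i\in\{2,\dots,n_3\}$ — it simply records each conjugate pair once while keeping the shifted index $n_3-i+2$ inside $\{2,\dots,n_3\}$. There is no real obstacle in this proof; the only point that requires care is the one-based indexing together with the ``$+2$'' shift, namely verifying that $n_3-i+2$ stays in range and that the wraparound $\omega^{n_3}=1$ is invoked correctly. I would spell out this index bookkeeping explicitly rather than leave it implicit.
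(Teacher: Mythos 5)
Your computation is correct: the paper itself gives no proof of this lemma (it is quoted from \cite{RR04}), and your direct expansion $\bar v_i=\sum_{j=1}^{n_3}\omega^{(i-1)(j-1)}v_j$ together with $\operatorname{conj}(\omega)=\omega^{-1}$ and $\omega^{n_3}=1$ is the standard argument for the conjugate symmetry of the DFT of a real vector. Your remark that the identity holds for all $i\in\{2,\dots,n_3\}$ and that the stated range merely lists each conjugate pair once is also accurate and consistent with how the paper uses the lemma in \eqref{conj}.
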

By using Lemma \ref{lem:v}, the frontal slices of $ \bar\A $ have the following properties:
\begin{equation}\label{conj}
	\left\{\begin{array}{l}
		\bar{{A}}^{(1)} \in \mathbb{R}^{n_{1} \times n_{2}}, \\
		\operatorname{conj} \left(\bar{{A}}^{(i)}\right)=\bar{{A}}^{\left(n_{3}-i+2\right)},\; i=2, \ldots,\left\lfloor\frac{n_{3}+1}{2}\right\rfloor.
	\end{array}\right.	
\end{equation}
For $\A\in \mathbb{R}^{n_1\times n_2\times n_3}$, we define matrix ${{\bar A}} \in {{\mathbb C}^{{n_1}{n_3} \times {n_2}{n_3}}}$ as
\begin{equation}\label{bdiag}
	{{\bar A}} = bdia{g}(\bar {{\mathcal{A}}} ) \hfill \\
	= \left[ {\begin{array}{*{20}{c}}
			{\bar A^{(1)}}&{}&{}&{} \\
			{}&{\bar A^{(2)}}&{}&{} \\
			{}&{}& \ddots &{} \\
			{}&{}&{}&{\bar A^{({n_3})}}
	\end{array}} \right]. \end{equation}
Here, $ bdiag(\cdot) $ is an operator which maps the tensor $ {{\bar {\mathcal A}}} $ to the block diagonal matrix $ \bar A$. The block circulant matrix $bcirc({\A}) \in {{\mathbb R}^{{n_1}{n_3} \times {n_2}{n_3}}}$ of $\A$ is defined  as
$$bcirc({\A}) = \left[ {\begin{array}{*{20}{c}}
		{A^{(1)}}&{A^{(n_3)}}& \cdots &{A^{(2)}}\\
		{A^{(2)}}&{A^{(1)}}& \cdots &{A^{(3)}}\\
		\vdots & \vdots & \ddots & \vdots \\
		{A^{(n_3)}}&{A^{({n_3} - 1)}}& \cdots &{A^{(1)}}
\end{array}} \right].$$

Based on these notations, the $T$-product is presented as follows.
\begin{definition}\label{def:T-pro}\textbf{($T$-product)} \cite{KM11}
	For $\A\in \mathbb{R}^{n_1\times r\times n_3}$ and $\B\in \mathbb R^{r\times n_2\times n_3}$, define
	$$\A\ast\B:=fold\left(bcirc(\A)\ \cdot unfold(\B)\right) \in \mathbb{R}^{n_1\times n_2\times n_3}.$$
	Here
	$$ unfold(\B) = \left[B^{(1)};B^{(2)}; \ldots ;B^{(n_3)}\right],$$
	and its inverse operator ``fold" is defined by $$fold(unfold(\B)) = \B.$$
\end{definition}
Tensor multi-rank and tubal rank are now introduced.
\begin{definition}\label{definition2.6}{\bfseries (Tensor multi-rank and tubal rank)} \cite{KBHH13}
	For tensor $\A \in {{\mathbb R}^{{n_1} \times {n_2} \times {n_3}}}$, let $r_k=\rank\left(\bar A^{(k)}\right)$ for all $k\in {[n_3]}$.
	Then multi-rank of $\A$ is defined as $\rank_{m}(\A)=(r_1,\ldots,r_{n_3})$. The tensor tubal
	rank is defined as $ \rank_t(\A)=\max\left\lbrace r_k|k\in[n_3]\right\rbrace  $.
\end{definition}

Then, we introduce 3-tubal rank (tensor fibered rank).
\begin{definition}{\bfseries (3-tubal rank/tensor fibered rank)} \cite{ZHZ20,ZHZ20a}
	For tensor $\A \in {{\mathbb R}^{{n_1} \times {n_2} \times {n_3}}}$, its 3-tubal rank (tensor fibered rank) as follows:
	$$
	3\text{-}\rank_t\left(\A\right)=\left(\rank_t\left(\A\right), \rank_t\left(\A_{(13)}\right), \rank_t\left(\A_{(23)}\right) \right) ,
	$$
	where $\mathcal{A}\left(i,j,k\right)=\mathcal{A}_{(13)}\left(i, k, j\right)=\mathcal{A}_{(23)}\left(j,k,i\right)$.
\end{definition}

Finally, we offer a lemma that will be utilized to simplify models and do theoretical analysis.
\begin{lemma}\label{lem:equ} \cite{KM11}
	Suppose that $\A,\, \B$ are tensors such that $\F:=\A\ast \B$ is well defined as in Definition \ref{def:T-pro}. Let $ \bar A,\bar B, \bar F $ be the block diagonal matrices defined as in \eqref{bdiag}. Then
	\begin{enumerate}
		\item [(1).]$\left\| \A \right\|_F^2 = \frac{1}{n_3}\left\| {{{\bar A}}} \right\|_F^2$;
		\item [(2).]${\mathcal F} = \A{ \ast }\B$ and $\bar F= {\bar A}{\bar B}$ are equivalent.
	\end{enumerate}		
\end{lemma}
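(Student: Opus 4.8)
The plan is to reduce both parts to the single structural fact that the block circulant matrix $bcirc(\A)$ is block-diagonalized by the (unnormalized) block discrete Fourier transform, together with the elementary identity $F_{n_3}^{*}F_{n_3}=n_3 I_{n_3}$ coming from Lemma \ref{lem:v}. Everything after that is bookkeeping with Kronecker products and the stacking conventions of $unfold$/$fold$.

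For part (1), I would observe that the mode-3 DFT acts tube-wise, $\bar\A(i,j,:)=F_{n_3}\A(i,j,:)$ for every $i,j$, so that $\|\bar\A(i,j,:)\|_2^2=\A(i,j,:)^{*}F_{n_3}^{*}F_{n_3}\A(i,j,:)=n_3\|\A(i,j,:)\|_2^2$. Summing over $i\in[n_1]$ and $j\in[n_2]$ gives $\|\bar\A\|_F^2=n_3\|\A\|_F^2$. Since $\bar A=bdiag(\bar\A)$ is obtained from the tensor $\bar\A$ merely by relocating its frontal slices onto a block diagonal and padding the remaining entries with zeros, $\|\bar A\|_F=\|\bar\A\|_F$, and hence $\|\A\|_F^2=\frac{1}{n_3}\|\bar A\|_F^2$.

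For part (2), the key step is the block-circulant diagonalization identity
$$(F_{n_3}\otimes I_{n_1})\cdot bcirc(\A)\cdot (F_{n_3}^{-1}\otimes I_{n_2})=bdiag(\bar\A)=\bar A,$$
which follows from a direct computation: the $(p,q)$ block of $bcirc(\A)$ depends only on $(p-q)\bmod n_3$, and the scalar circulant diagonalization formula carries over block-wise. Granting this, I would rewrite $\F=\A\ast\B$ via Definition \ref{def:T-pro} as $bcirc(\A)\cdot unfold(\B)=unfold(\F)$, left-multiply by $F_{n_3}\otimes I_{n_1}$, and insert $(F_{n_3}^{-1}\otimes I)(F_{n_3}\otimes I)=I$ between $bcirc(\A)$ and $unfold(\B)$. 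Using that $(F_{n_3}\otimes I)$ applied to an unfolded tensor produces the unfolding of its mode-3 DFT, the relation collapses to $\bar A\cdot unfold(\bar\B)=unfold(\bar\F)$. Because $\bar A$ is block diagonal, reading this equation block by block shows it is equivalent to $\bar A^{(k)}\bar B^{(k)}=\bar F^{(k)}$ for all $k\in[n_3]$, i.e., to $\bar A\bar B=\bar F$.

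The only genuinely non-routine ingredient is the block-circulant diagonalization identity; the rest is routine. Accordingly the main obstacle is purely notational: one must keep the orientation of $F_{n_3}$, the $\bmod n_3$ shift in $bcirc$, and the stacking order in $unfold$ mutually consistent so that the Kronecker-product manipulations line up exactly. Once those conventions are fixed, both parts follow in a couple of lines.
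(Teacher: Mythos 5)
Your argument is correct and is essentially the standard proof of this result: the paper itself gives no proof of Lemma~\ref{lem:equ} (it is imported from \cite{KM11}), and the route you take --- tube-wise use of $F_{n_3}^{*}F_{n_3}=n_3 I_{n_3}$ for part (1) and the block-circulant diagonalization $(F_{n_3}\otimes I)\,bcirc(\A)\,(F_{n_3}^{-1}\otimes I)=\bar A$ combined with $(F_{n_3}\otimes I)\,unfold(\B)=unfold(\bar\B)$ for part (2) --- is exactly the one in that reference. The only small slip is attributional: the identity $F_{n_3}^{*}F_{n_3}=n_3 I_{n_3}$ is a standard property of the unnormalized DFT matrix rather than a consequence of Lemma~\ref{lem:v} (which concerns conjugate symmetry of $F_{n_3}\bm{v}$ for real $\bm{v}$), and the reversibility of each step (hence the stated equivalence in part (2)) deserves one explicit word about the invertibility of $F_{n_3}$.
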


\begin{lemma}\label{lem:f}\cite{ZLLZ18}
	Suppose that $\A\in\R^{n_1\times r\times n_3}$ and $\B\in\R^{r\times n_2\times n_3}$. Then	$ \rank_t\left(\A\ast\B\right)\le\min\left\lbrace\rank_t\left(\A\right),\rank_t\left(\B\right)\right\rbrace$.
\end{lemma}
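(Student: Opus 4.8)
The plan is to push the whole statement into the Fourier domain and reduce it to ordinary matrix rank inequalities, using Lemma \ref{lem:equ}. First I would set $\F := \A\ast\B$ and invoke part (2) of Lemma \ref{lem:equ}, which gives the matrix identity $\bar F = \bar A\,\bar B$. Since $\bar A$, $\bar B$, $\bar F$ are block diagonal with diagonal blocks $\bar A^{(k)}$, $\bar B^{(k)}$, $\bar F^{(k)}$ (as in \eqref{bdiag}), this identity decouples across blocks, yielding $\bar F^{(k)} = \bar A^{(k)}\bar B^{(k)}$ for every $k\in[n_3]$.

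Next, for each fixed $k$ I would apply the elementary rank bound for a matrix product, $\rank\!\left(\bar A^{(k)}\bar B^{(k)}\right)\le\min\{\rank(\bar A^{(k)}),\rank(\bar B^{(k)})\}$, which holds because the column space of the product lies in that of $\bar A^{(k)}$ while its row space lies in that of $\bar B^{(k)}$. By Definition \ref{definition2.6}, $\rank_t(\F)=\max_{k}\rank(\bar F^{(k)})$, and similarly for $\A$ and $\B$, so taking the maximum over $k$ gives $\rank_t(\F)\le\max_k\min\{\rank(\bar A^{(k)}),\rank(\bar B^{(k)})\}$.

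Finally I would close with the trivial observation that $\max_k\min\{a_k,b_k\}\le\min\{\max_k a_k,\max_k b_k\}$ for any nonnegative sequences $(a_k),(b_k)$: for each $k$ one has $\min\{a_k,b_k\}\le a_k\le\max_j a_j$ and $\min\{a_k,b_k\}\le b_k\le\max_j b_j$, hence $\min\{a_k,b_k\}\le\min\{\max_j a_j,\max_j b_j\}$, and taking the maximum over $k$ gives the claim. Applying this with $a_k=\rank(\bar A^{(k)})$ and $b_k=\rank(\bar B^{(k)})$ yields $\rank_t(\F)\le\min\{\rank_t(\A),\rank_t(\B)\}$.

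There is no real obstacle in this argument; the only point requiring a little care is that the tubal rank is a maximum over the Fourier slices rather than, say, a sum, so that the concluding max/min manipulation is legitimate — but this is immediate from the definition. I would present the whole proof in three or four lines.
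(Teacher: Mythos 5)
Your proof is correct. The paper itself gives no proof of this lemma --- it is quoted with a citation to \cite{ZLLZ18} --- but your argument (pass to the Fourier domain via Lemma \ref{lem:equ}(2), decouple the block-diagonal identity $\bar F^{(k)}=\bar A^{(k)}\bar B^{(k)}$, apply the matrix product rank bound slice-wise, and finish with $\max_k\min\{a_k,b_k\}\le\min\{\max_k a_k,\max_k b_k\}$) is exactly the standard route used in the cited source, and every step is justified.
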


\subsection{Tucker rank}
In this subsection, we are ready to present some notations on Tucker rank decomposition. More details can be found in Kolda and Bader's review on tensor decompositions \cite{KB09}.

The mode-$s$ unfolding $A_{(s)}$ of tensor $\mathcal{A}\in\R^{n_1\times n_2\times n_3}$ is a matrix in $\mathbb R^{n_s\times N_s}$ with its $(i, j)$-th element being $\A_{i_1\dots i_{s-1}ii_{s+1}\dots i_3}$, where
$j=1+\sum_{k\neq s} (i_k-1)\bar n_k$, $\bar n_k=\prod_{l<s} n_l
$ and $N_s=\prod_{k\neq s} n_k$. The unfolding matrix can be obtained by ``tens2mat($ \A,s $)'' in Matlab. The opposite operation ``$ fol{d_s} $" is defined as $ fol{d_s}({A_{(s)}}): = \A $.

Based on the definition of mode-$s$ unfolding matrix, the Tucker rank of tensor is defined as follows.
\begin{definition}\label{TCrankdef}
	For a tensor $\mathcal{A}\in\mathbb R^{n_1\times n_2\times n_3}$, let $A_{(i)}\in \mathbb R^{n_i\times N_i}$ be the mode-i unfolding matrix. The Tucker rank of $\A$ is
	\[\rank_{tc}(\mathcal{A})=\left(\rank (A_{(1)}),\rank(A_{(2)}),\rank(A_{(3)})\right).\]
\end{definition}
Next, we recall the definition of $k$-mode product.
\begin{definition}\label{modprod}
	For a tensor $\mathcal{A}\in\mathbb R^{n_1\times n_2\times n_3}$ and a matrix
	$B\in\mathbb R^{J_k\times n_k}$, the mode-$ k $ product of $\mathcal{A}$ with $B$ is a tensor of $n_1\times\ldots\times n_{k-1}\times J_k\times n_{k+1}\times\ldots \times n_3$ with its entries
	$$(\mathcal{A}\times_k B)_{i_1i_2i_3}=\sum\limits_{j_k=1}^{n_i} \A_{i_1i_2\ldots i_{k-1}j_ki_{k+1}\ldots i_3} B_{i_kj_k}.$$
\end{definition}
Easy to find that, for suitable matrices $B^1$ and $B^2$, it holds for  $$\mathcal{T}\times_i B^1\times_i  B^2=\mathcal{T}\times_i\left(B^2B^1\right).$$
Based on these notations, we are ready to present an equivalent definition of Tucker decomposition of tensor as follows.
\begin{definition}\label{TCdecom}
	Suppose that
	\begin{equation}\label{Tuckde}
		\mathcal{A}=\mathcal{G}\times_1 U^1\times_2U^2\times_3 U^3,	
	\end{equation}
	where $\mathcal{G}\in\mathbb R^{r_1\times r_2\times r_3}$, orthogonal matrix $U^i\in \mathbb R^{n_i\times r_i}$ and $r_i=\rank \left( A_{(i)}\right) $ for all $i\in[3]$.  Such $\mathcal{G}$ is called the core tensor and (\ref{Tuckde}) is called a Tucker rank decomposition of $\mathcal{A}$.
\end{definition}

\section{Matrix completion}\label{smc}
Given a partially observed matrix $ M\in \bR^{n_1\times h} $,  low rank matrix completion problem can be formulated as a constrained rank minimization problem, that is,
\begin{equation}\label{mc}
	\min_{X\in \bR^{n_1\times h}}~\operatorname{rank}(X), \quad \mbox{\rm s.t.} \quad  P_{\tilde\Omega}(X-M)=0,	
\end{equation}
where $ \tilde\Omega $ is the index subset of observed entries of matrix, $  P_{\tilde\Omega}(\cdot) $ is a projection operator
that keeps the entries of matrix in $ \tilde\Omega $ and makes other entries zero.
When $ n_1 $ and $ h $ are very large, the  required cost to recover matrix $ X $ will be very expensive.  To lower the cost, we reshape the matrix as a third order tensor as follows.
 For a given integer $n_2$, we add a zero matrix $0\in \bR^{n_1\times l}$ in $X$ with the smallest $l$ such that $X:=[X,0]\in \bR^{n_1\times (h+l)} $  and $n_3:=(h+l)/n_2$ is an integer.
Therefore, we reshape the matrix $ X\in \bR^{n_1\times h} $ as a tensor $ \X\in\bR^{n_1\times n_2\times n_3} $ such that
\begin{equation}\label{m-t}
	\begin{aligned}
		X^{(k)}=X\left(:,(k-1)n_2+1:kn_2\right),\,k\in [n_3].			
	\end{aligned}
\end{equation}
See Figure \ref{fig:mm} for clearness.
\begin{figure}
	\centering
	\includegraphics[width=0.8\linewidth]{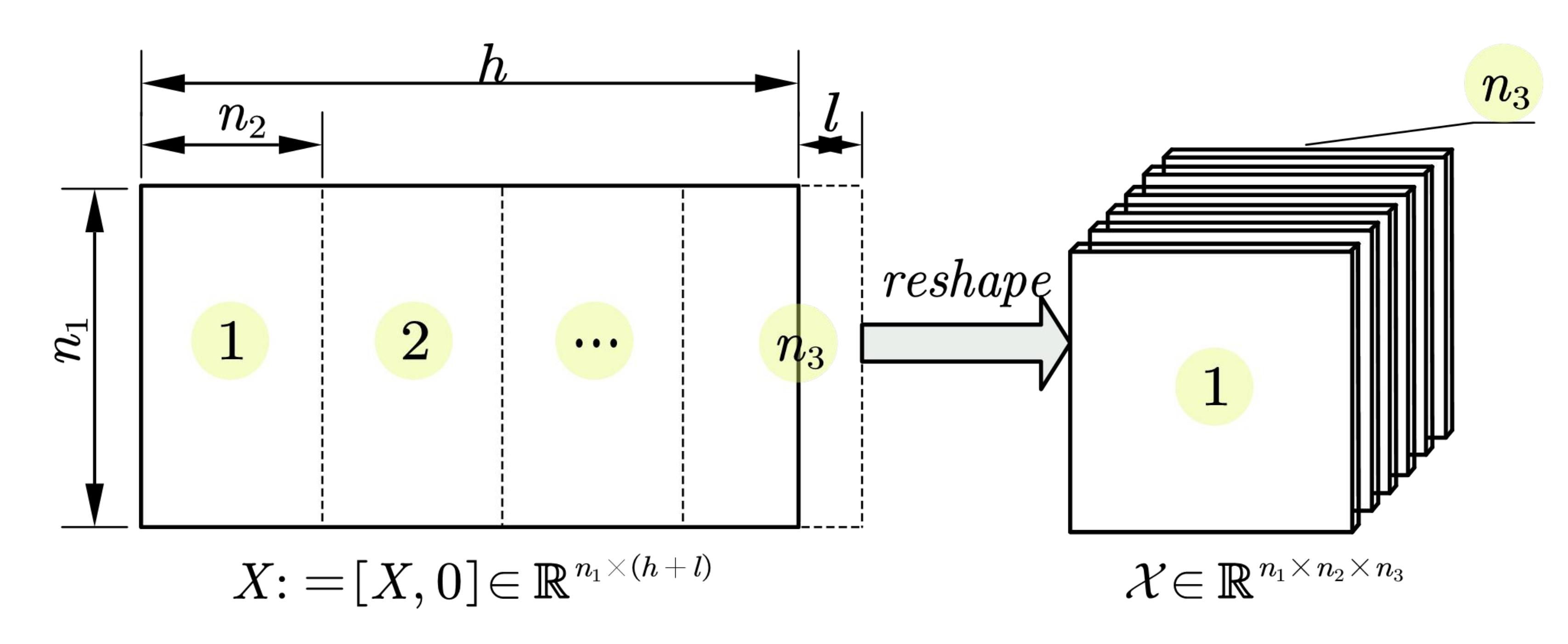}
	\caption{Reshaping the matrix $ X $ into the tensor $ \X $. }
	\label{fig:mm}
\end{figure}

Now we are ready to  establish the relationship between $ \rank\left(X\right)  $ and $ \rank_t\left(\X\right)$. For this aim, we need the following results.
\begin{lemma}\label{rank-equ}
	Suppose that $ \A \in {{\mathbb R}^{{n_1} \times {n_2} \times {n_3}}} $ and $\bar \A=fft(\A,[\; ],3)$, then $ \rank\left({{\bar A}_{(1)}}\right) = \rank\left({A_{(1)}}\right) $.
\end{lemma}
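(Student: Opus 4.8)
The plan is to exhibit $\bar A_{(1)}$ as the image of $A_{(1)}$ under an invertible linear map acting on its columns, so that the two unfolding matrices necessarily have the same rank.

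First I would unwind the definitions of the mode-$1$ unfolding and of $\bar{\mathcal A}$ simultaneously. Writing $A_{(1)}=\left[A^{(1)},A^{(2)},\ldots,A^{(n_3)}\right]$ as the block row of frontal slices (this is exactly what the mode-$1$ unfolding is, up to the column ordering), and likewise $\bar A_{(1)}=\left[\bar A^{(1)},\ldots,\bar A^{(n_3)}\right]$, the defining relation $\bar{\mathcal A}(i,j,:)=F_{n_3}\mathcal A(i,j,:)$ reads entrywise as $\bar A^{(k)}=\sum_{l=1}^{n_3}\left(F_{n_3}\right)_{kl}A^{(l)}$ for every $k\in[n_3]$. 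Consequently $\bar A_{(1)}=A_{(1)}\bigl(F_{n_3}^{\top}\otimes I_{n_2}\bigr)$; the precise Kronecker factor depends only on the column-ordering convention for the unfolding, but in every convention it is $F_{n_3}$ (or its transpose) tensored with an identity. Since $F_{n_3}$ is the (unscaled) DFT matrix — equivalently the Vandermonde matrix at the distinct nodes $\omega^{0},\omega^{1},\ldots,\omega^{n_3-1}$ — it is invertible, hence so is $F_{n_3}^{\top}\otimes I_{n_2}$. Right multiplication by an invertible matrix does not change the rank, which yields $\rank\left(\bar A_{(1)}\right)=\rank\left(A_{(1)}\right)$.

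Equivalently, and avoiding any bookkeeping with the unfolding convention, I would argue directly at the level of column spaces: the columns of $A_{(1)}$ are the mode-$1$ fibers $\mathcal A(:,j,k)$, and the identity $\bar{\mathcal A}(:,j,k)=\sum_{l}\left(F_{n_3}\right)_{kl}\mathcal A(:,j,l)$ shows every column of $\bar A_{(1)}$ lies in the complex span of the columns of $A_{(1)}$, while invertibility of $F_{n_3}$ gives the reverse containment via $\mathcal A(:,j,l)=\sum_{k}\left(F_{n_3}^{-1}\right)_{lk}\bar{\mathcal A}(:,j,k)$. Thus $A_{(1)}$ and $\bar A_{(1)}$ share the same column space over $\mathbb C$, hence the same rank.

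The one point needing care — and it is the closest thing to an obstacle — is that $\bar A_{(1)}$ is complex while $A_{(1)}$ is real, so a priori the symbol $\rank$ refers to ranks over different fields. This is harmless: the rank of a real matrix equals its rank regarded as a complex matrix (a maximal set of real-linearly-independent columns of $A_{(1)}$ stays complex-linearly-independent, e.g.\ by a determinantal-minor argument). Once this remark is recorded, the column-space identity (or the invertible-multiplier identity) gives the claimed equality and completes the proof.
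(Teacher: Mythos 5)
Your proof is correct, but it takes a genuinely different route from the paper's. The paper writes $\mathcal{A}=\mathcal{G}\times_1 U^1\times_2 U^2\times_3 U^3$ as a Tucker rank decomposition, notes that $\bar{\mathcal{A}}=\mathcal{A}\times_3 F_{n_3}=\mathcal{G}\times_1 U^1\times_2 U^2\times_3\left(F_{n_3}U^3\right)$ only alters the mode-$3$ factor, and concludes $\rank\left(\bar A_{(1)}\right)\le\rank\left(U^1\right)=\rank\left(A_{(1)}\right)$, with the reverse inequality obtained from $\mathcal{A}=\bar{\mathcal{A}}\times_3 F_{n_3}^{-1}$. You instead work directly with the unfolding identity $\bar A_{(1)}=A_{(1)}\bigl(F_{n_3}^{\top}\otimes I_{n_2}\bigr)$ (equivalently, the column-span argument on mode-$1$ fibers) and invoke invariance of rank under right multiplication by an invertible matrix. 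Both arguments rest on the same underlying fact that an invertible mode-$3$ multiplication cannot change the mode-$1$ rank; yours is the more elementary and self-contained version, since it does not presuppose the existence of a Tucker rank decomposition, while the paper's version is shorter given the machinery already set up in its Section 2.3. A further point in your favor is that you explicitly record why the rank of the real matrix $A_{(1)}$ and the rank of the complex matrix $\bar A_{(1)}$ are comparable at all (rank over $\mathbb{R}$ equals rank over $\mathbb{C}$); the paper's proof silently passes over this, even though the same two-sided-inequality structure implicitly needs it.
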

\begin{proof}
	By $\bar \A=fft(\A,[\; ],3)$, we have $ \bar \A = \mathcal{A}{ \times _3}{F_{n_3}}  $. Let $ \A=\G\times_{1} U^1 \times_{2} U^2 \times_{3} U^3 $ be a Tucker rank decomposition. Then
	\[ \bar \A = \mathcal{A}{ \times _3}{F_{n_3}}=\G\times_{1} U^1 \times_{2} U^2 \times_{3}\left(F_{n_3}U^3\right), \]
	which leads to $ \rank\left({{\bar A}_{(1)}}\right) \le\rank\left(U^1\right) = \rank\left({A_{(1)}}\right) $. Similarly, with $ \A = \bar \A{ \times _3}{F_{n_3}^{-1}}  $, there holds \[ \rank\left({A_{(1)}}\right) \le \rank\left({{\bar A}_{(1)}}\right).\] In conclusion, the lemma is established now.
\end{proof}

\begin{lemma}\label{rank-inequ}
	Suppose that matrix $ X\in \bR^{n_1\times h} $ and tensor $ \X\in\R^{n_1\times n_2\times n_3}$ obtained by reshaping  matrix $X$ with
	(\ref{m-t}).
	Then
	\begin{equation}
		\begin{aligned}
			&\rank_t(\X)\leq\rank(X)\le n_3 \rank_t(\X), \\
			&\rank\left( X \right)\leq \left\|\rank_m(\X)\right\|_1 \leq  n_3\rank\left( X \right).
		\end{aligned}
	\end{equation}
\end{lemma}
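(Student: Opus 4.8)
The plan is to reduce all four inequalities to one identity about column spaces, exploiting that the DFT along the third mode is an invertible linear map acting on the slice index. Throughout write $\mathrm{Col}(B)$ for the column space of a matrix $B$. By the reshaping rule (\ref{m-t}), $X$ is the horizontal concatenation $X=[X^{(1)},X^{(2)},\ldots,X^{(n_3)}]$ of the frontal slices of $\X$, so $\mathrm{Col}(X)=\sum_{k=1}^{n_3}\mathrm{Col}(X^{(k)})$ and therefore
\[
\rank(X)=\dim\Big(\sum\nolimits_{k=1}^{n_3}\mathrm{Col}(X^{(k)})\Big).
\]

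Next I would bring in the frontal slices of $\bar\X=fft(\X,[\; ],3)$. Since $\bar\X(p,q,:)=F_{n_3}\X(p,q,:)$, each slice $\bar X^{(i)}$ is the fixed linear combination $\bar X^{(i)}=\sum_{k=1}^{n_3}(F_{n_3})_{ik}X^{(k)}$, and the inverse DFT gives $X^{(k)}=\sum_{i=1}^{n_3}(F_{n_3}^{-1})_{ki}\bar X^{(i)}$. The first identity shows $\mathrm{Col}(\bar X^{(i)})\subseteq\sum_k\mathrm{Col}(X^{(k)})=\mathrm{Col}(X)$ for every $i$, and the second shows $\mathrm{Col}(X^{(k)})\subseteq\sum_i\mathrm{Col}(\bar X^{(i)})$ for every $k$; combining the two inclusions,
\[
\mathrm{Col}(X)=\sum_{k=1}^{n_3}\mathrm{Col}(X^{(k)})=\sum_{i=1}^{n_3}\mathrm{Col}\big(\bar X^{(i)}\big).
\]

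Finally I would read off the ranks. By Definition \ref{definition2.6}, $r_i=\rank(\bar X^{(i)})=\dim\mathrm{Col}(\bar X^{(i)})$, so $\rank_t(\X)=\max_i r_i$ and $\|\rank_m(\X)\|_1=\sum_i r_i$. Applying the elementary bounds $\max_i\dim V_i\le\dim(\sum_i V_i)\le\sum_i\dim V_i$ to the subspaces $V_i=\mathrm{Col}(\bar X^{(i)})$ and using the displayed identity yields
\[
\rank_t(\X)=\max_i r_i\ \le\ \rank(X)\ \le\ \sum_i r_i=\|\rank_m(\X)\|_1,
\]
which already contains the first inequality of each of the two lines. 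Since moreover $r_i\le\rank(X)$ for every $i$, we get $\|\rank_m(\X)\|_1=\sum_i r_i\le n_3\rank(X)$, the last inequality of the second line; and $\sum_i r_i\le n_3\max_i r_i=n_3\rank_t(\X)$ combined with $\rank(X)\le\sum_i r_i$ gives $\rank(X)\le n_3\rank_t(\X)$, the remaining inequality of the first line.

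There is essentially no obstacle here; the only point requiring care is the bookkeeping of the DFT, namely confirming that $\bar X^{(i)}$ is a linear combination of the $X^{(k)}$ with an invertible coefficient matrix $F_{n_3}$, so that the two sums of column spaces genuinely coincide rather than merely nest. (An alternative route is to observe $\rank(bcirc(\X))=\rank(\bar X)=\sum_i r_i$ through the block--diagonalization behind Lemma \ref{lem:equ}, together with the fact that a column--permuted copy of $X$ sits as a block row of $bcirc(\X)$; but the column--space argument above is shorter and self--contained.)
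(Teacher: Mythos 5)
Your proof is correct and follows essentially the same route as the paper: both reduce $\rank(X)$ to the rank of the horizontal concatenation $\left[\bar X^{(1)},\ldots,\bar X^{(n_3)}\right]$ and then apply the elementary bounds $\max_k r_k \le \rank\left(\left[\bar X^{(1)},\ldots,\bar X^{(n_3)}\right]\right) \le \sum_k r_k \le n_3\max_k r_k$ together with $r_k\le\rank(X)$. The only difference is that where the paper justifies $\rank(X)=\rank\left(\bar X_{(1)}\right)$ via Lemma \ref{rank-equ} (a Tucker-decomposition argument), you obtain the same fact directly from the column-space identity $\sum_k\operatorname{Col}\left(X^{(k)}\right)=\sum_i\operatorname{Col}\left(\bar X^{(i)}\right)$ (taken over $\mathbb{C}$, which is harmless since rank is field-independent), a slightly more self-contained version of the same observation.
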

\begin{proof}
	Let $\bar \X=fft(\X,[\; ],3)$, then
	\begin{equation}
		\begin{aligned}\label{rank-inequ-1}
			\rank\left( X \right)&=\rank\left( {{X_{(1)}}} \right)=\rank\left( {{{\bar X}_{(1)}}} \right)\\&=\rank\left(\left[{{{\bar X}^{(1)}},{{\bar X}^{(2)}}, \ldots ,{{\bar X}^{(n_3)}}}\right]\right),		
		\end{aligned}
	\end{equation}
	where the first equality follows from the way of  the reshaped tensor $ \X $, the second equality is due to Lemma \ref{rank-equ} and the third equality comes from $ {{{\bar X}_{(1)}}}=\left[{{{\bar X}^{(1)}},{{\bar X}^{(2)}}, \ldots ,{{\bar X}^{(n_3)}}}\right] $.
	
	Observe that
	\begin{equation}\label{rank-inequ-2}
		\begin{aligned}
			&\rank\left(\left[{{{\bar X}^{(1)}},{{\bar X}^{(2)}}, \ldots ,{{\bar X}^{(n_3)}}}\right]\right)\\\le& \sum_{k=1}^{n_3} \rank\left({{\bar X}^{(k)}}\right)\le n_3 \rank_t(\X)			
		\end{aligned}
	\end{equation}
	and
	\begin{equation}\label{rank-inequ-3}
		\begin{aligned}
			&\rank\left(\left[{{{\bar X}^{(1)}},{{\bar X}^{(2)}}, \ldots ,{{\bar X}^{(n_3)}}}\right] \right)\\\ge&\max\left\lbrace\rank\left({{\bar X}^{(k)}}\right)|k\in[n_3]\right\rbrace=\rank_t(\X).				
		\end{aligned}
	\end{equation}
	By \eqref{rank-inequ-1}, \eqref{rank-inequ-2} and \eqref{rank-inequ-3}, it follows \[ \rank_t(\X)\le\rank(X)\le n_3\rank_t(\X).\]
	
	On the other hand,  \eqref{rank-inequ-1} and \eqref{rank-inequ-3} mean that
	\begin{equation}\label{rank-inequ-4}
		n_3\rank\left( X \right)\ge n_3\rank_t(\X)\ge\sum_{k=1}^{n_3} \rank\left({{\bar X}^{(k)}}\right).	
	\end{equation}
	Together with \eqref{rank-inequ-1} and \eqref{rank-inequ-2}, it holds
	\begin{equation*}
		\begin{aligned}
			&n_3\rank\left( X \right)\ge\sum_{k=1}^{n_3} \rank\left({{\bar X}^{(k)}}\right)\\=&\left\|\rank_m(\X)\right\|_1\ge\rank\left( X \right).			
		\end{aligned}
	\end{equation*}
\end{proof}

Based on these analyses, we consider the following tensor completion problem for solving the matrix completion problem
\eqref{mc}:
\begin{equation}\label{tc}
	\min_{\X\in\bR^{n_1\times n_2 \times n_3}}~\operatorname{rank}_t(\X), \quad \mbox{\rm s.t.} \quad  P_{\Omega}(\X-\M)=0,		
\end{equation}
where $ \M\in\R^{n_1\times n_2\times n_3 } $ is a tensor by reshaping  matrix $ M$  in the same way of  reshaped tensor $\X$.

According to Lemma \ref{lem:f}, we consider the following tensor factorization model to solve \eqref{tc}
\begin{equation}\label{Q:m}
	\min_{\X,\P,\Q}~\frac{1}{2}\left\| \P \ast\Q - \X
	\right\|_F^2, \quad \mbox{\rm s.t.} \quad  P_{\Omega}(\X-\M)=0.		
\end{equation}
We use the alternating minimization algorithm to optimize \eqref{Q:m}. Update $ \X$, for fixed tensors $\P $ and $ \Q $ by
\begin{equation}\label{M-X}
	\begin{aligned}
		\mathcal{X} &=\underset{P_{\Omega}(\X-\M)=0}{\operatorname{argmin}} \frac{1}{2}\|\mathcal{P} * \mathcal{Q}-\mathcal{X}\|_{F}^{2} =P_{\Omega^c}(\mathcal{P} * \mathcal{Q})+P_{\Omega}(\mathcal{M}).
	\end{aligned}
\end{equation}

Now we present how to update $\P$ and $\Q$, which is similar to Algorithm TCTF proposed in Section 3 of \cite{ZLLZ18}. For the ease of the reader, we present the details here. We rewrite (\ref{Q:m}) as a corresponding matrix version. Assume that $ \operatorname{rank}_{m}(\mathcal{X})=\bm{r} $ and $ \operatorname{rank}_{t}(\mathcal{X})=\hat{r} $, where $ \bm{r}_{k}=\operatorname{rank}\left(\bar{X}^{(k)}\right),\, k\in[n_3]$ and $\hat{r}=\max\left\lbrace \bm{r}_{1}, \ldots, \bm{r}_{n_{3}}\right\rbrace$. For each $k$,  $\bar X^{(k)}$ can be factorized as a product of two matrices $\hat P^{(k)}$ and $\hat Q^{(k)}$ of smaller sizes, where $\hat P^{(k)}\in {\mathbb{C}}^{n_1\times \bm r_k}$ and $\hat Q^{(k)}\in {\mathbb{C}}^{\bm r_k\times n_{2}}$ are the $k$-th block diagonal matrices of $\hat P\in {\mathbb{C}}^{n_{1}n_3\times \left(\sum_{k=1}^{n_3}\bm r_k\right)}$ and $\hat Q\in {\mathbb{C}}^{\left(\sum_{k=1}^{n_3}\bm r_k\right)\times n_2n_3}$.
Let $\bar P^{(k)}=[\hat P^{(k)}, 0]\in \mathbb{C}^{n_1\times \hat r}$, $\bar Q^{(k)}=[\hat Q^{(k)};0]\in \mathbb{C}^{\hat r\times n_2}$ and $\bar P,\,\bar Q$ be the block diagonal matrices with the $k$-th block diagonal matrices $\bar P^{(k)}$ and $\bar Q^{(k)}$, respectively. Then $ \hat P\hat Q=\bar P\bar Q$. Together with Lemma \ref{lem:equ}, it follows
\begin{equation*}
	\begin{aligned}
		\left\|\P*\Q-\X\right\|_F^2&=\frac{1}{n_3}\left\|\bar P\bar Q-\bar X\right\|_F^2= \frac{1}{n_3}\left\|\hat P\hat Q-\bar X\right\|_F^2 \\&=\frac{1}{n_3}\sum\limits_{k=1}^{n_3}\left\|\hat P^{(k)}\hat Q^{(k)}-\bar X^{(k)}\right\|_F^2.		
	\end{aligned}
\end{equation*}
Therefore, (\ref{Q:m}) can be rewritten as
\begin{equation}
	\min\limits_{\hat\P,\hat\Q}\; \frac{1}{2n_3}\sum\limits_{k = 1}^{n_3}\left\|\hat P^{(k)}\hat Q^{(k)} - \bar X^{(k)}\right\|_F^2, \quad \mbox{\rm s.t.} \quad  P_{\Omega}(\X-\M)=0.
\end{equation}
Combining  with \eqref{conj}, we can update $ \hat P $ and $ \hat Q $ as follows:
{\small
\begin{equation}\label{P}
	\begin{aligned}
		\hat P^{(k)} = \left\{ \begin{gathered}
			{{\bar X}^{(k)}}{\left( {{{\hat Q}^{(k)}}} \right)^*}{\left( {{{\hat Q}^{(k)}}{{\left( {{{\hat Q}^{(k)}}} \right)}^*}} \right)^\dag },\,k = 1, \ldots, \left\lceil \frac{n_3+1}{2} \right\rceil, \hfill \\
			conj\left( {{{\hat P}^{({n_3} - k + 2)}}} \right),\,k = \left\lceil \frac{n_3+1}{2} \right\rceil+1, \ldots ,{n_3}, \hfill \\
		\end{gathered}  \right.		
	\end{aligned}
\end{equation}
\begin{equation}\label{Q}
	\begin{aligned}
		\hat Q^{(k)} = \left\{ \begin{gathered}
			{\left( {{\left( {\hat P^{(k)}} \right)}^*}\hat P^{(k)} \right)^\dagger}{\left( {\hat P^{(k)}} \right)^*}\bar X^{(k)},\,k = 1, \ldots, \left\lceil \frac{n_3+1}{2} \right\rceil, \hfill \\
			conj\left( {{{\hat Q}^{({n_3} - k + 2)}}} \right),\,k = \left\lceil \frac{n_3+1}{2} \right\rceil+1, \ldots ,{n_3}. \hfill \\
		\end{gathered}  \right.		
	\end{aligned}
\end{equation}}

One can perform \eqref{P}, \eqref{Q} and \eqref{M-X} to update $ \P $, $ \Q $ and $ \X $ in different manners. Directly applying the APG method proposed in \cite{XYWZ12} leads to the order of $ \P $, $ \Q $, $ \X $. However, since $ \X $ interacts with $ \P $ and $ \Q $, updating it more frequently is expected to speed up the convergence of the algorithm. Hence, a more efficient way would be to update the variables in the order of $ \P $, $ \X $, $ \Q $, $ \X $. The convergence behavior with two different updating orders on a synthetic tensor and the USC-SIPI image database\footnote{http://sipi.usc.edu/database/.} was shown in Figure \ref{fig:three}. From the figure, we see that the updating order $ \P $, $ \Q $, $ \X $ final effect comparably well as that with the order $ \P $, $ \X $, $ \Q $, $ \X $. However, the former convergence speeds are much worse than the latter. We further notice that although the update sequence $ \P $, $ \X $, $ \Q $, $ \X $ converges faster, it takes more iteration time for each step, and the reason for the faster convergence is due to the fact that the first few steps can produce a good value. For this reason, we adopt the two-stage strategy:
updating order $ \P $, $ \X $, $ \Q $, $ \X $ in the first few steps, and $ \P $, $ \Q $, $ \X $ in the subsequent steps. We denote this algorithm  by TCTF-M. Similarly, we can see  the convergence behavior of TCTF-M with the best performance.
\begin{figure*}[htbp]
	\centering
	\begin{subfigure}[b]{1\linewidth}
		\begin{subfigure}[b]{0.5\linewidth}
			\centering
			\includegraphics[width=\linewidth]{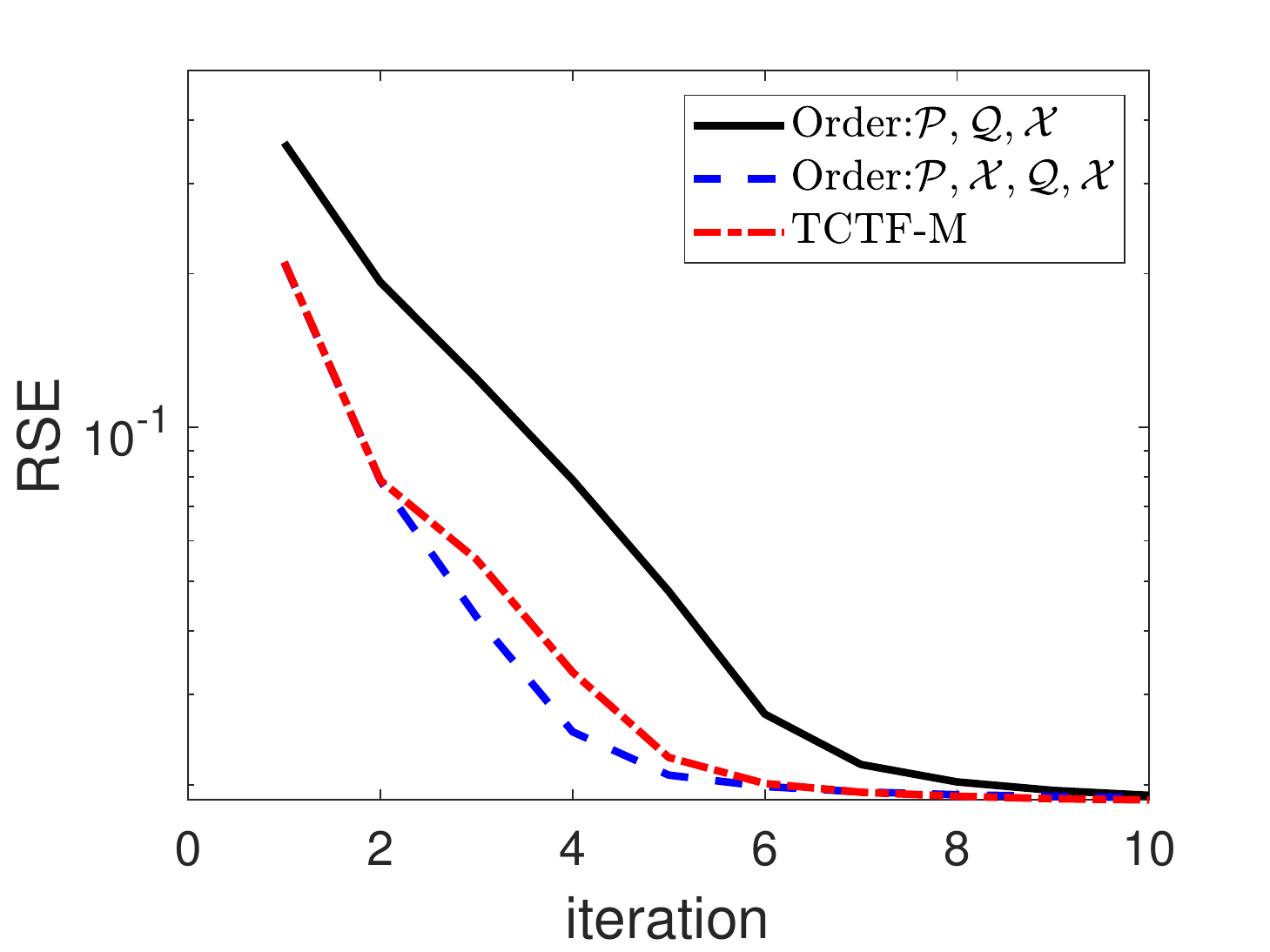}
			\includegraphics[width=\linewidth]{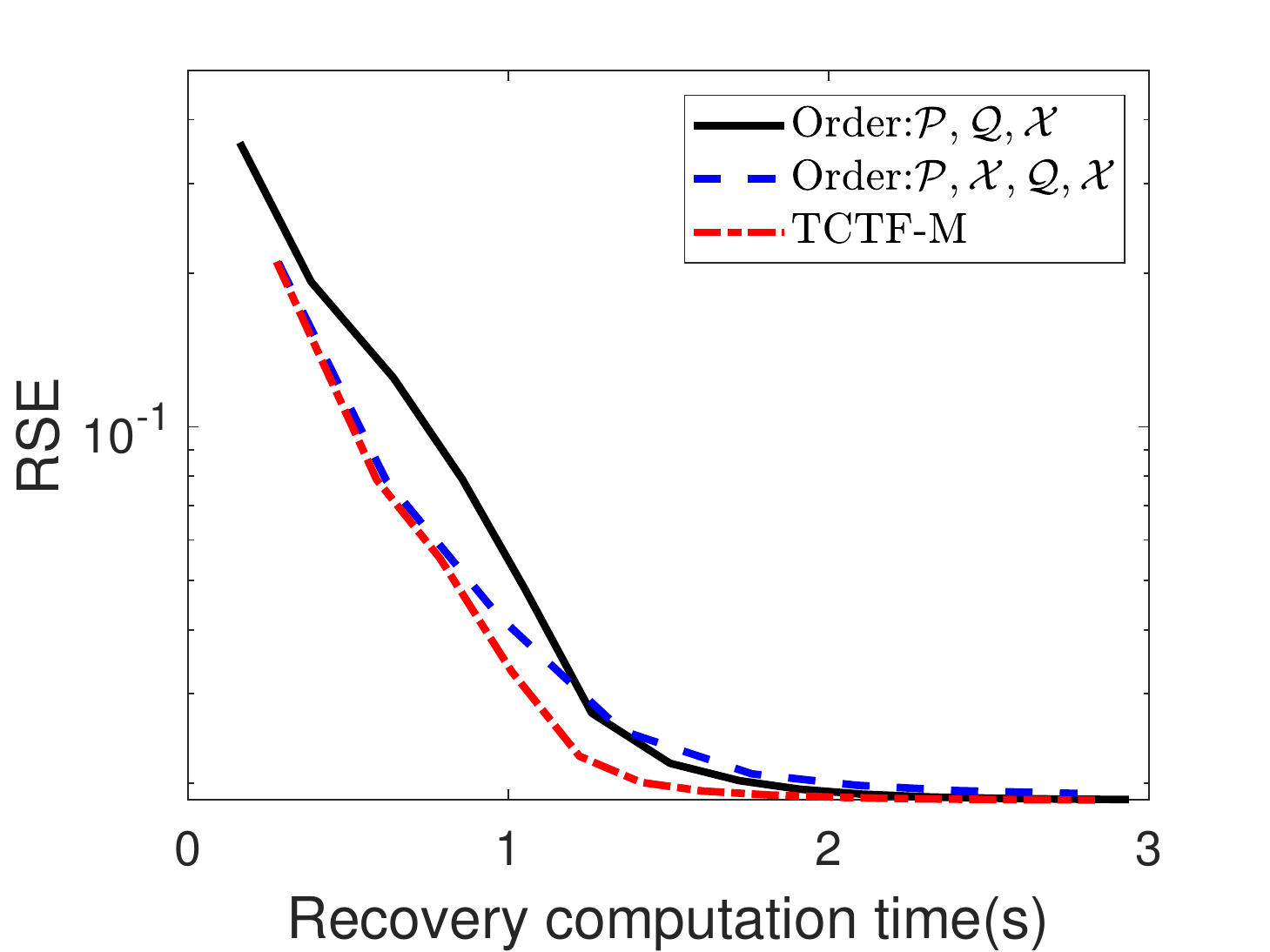}
			\caption{A Gaussian random tensor.} 
		\end{subfigure}   	
		\begin{subfigure}[b]{0.5\linewidth}
			\centering
			\includegraphics[width=\linewidth]{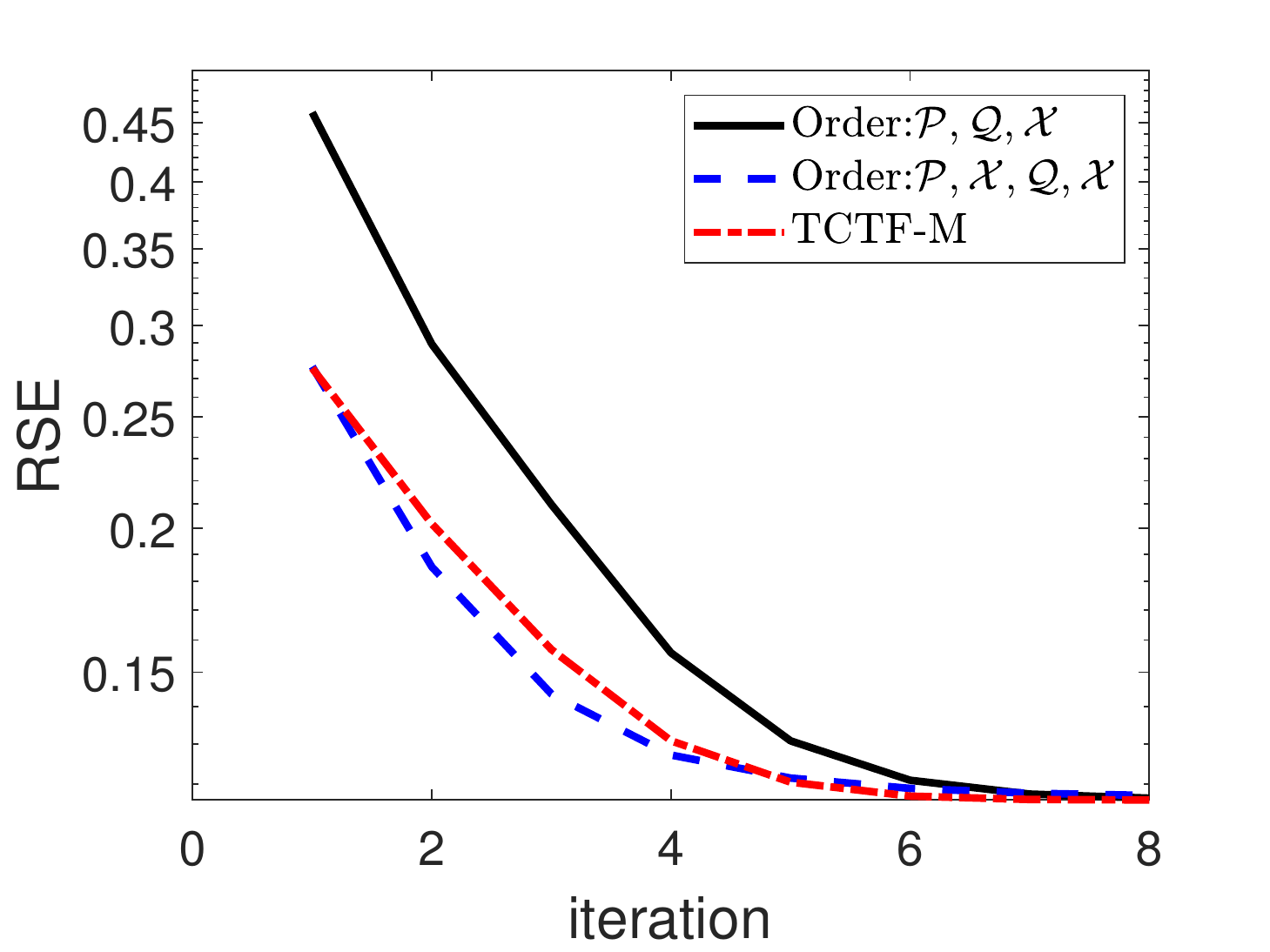}	
			\includegraphics[width=\linewidth]{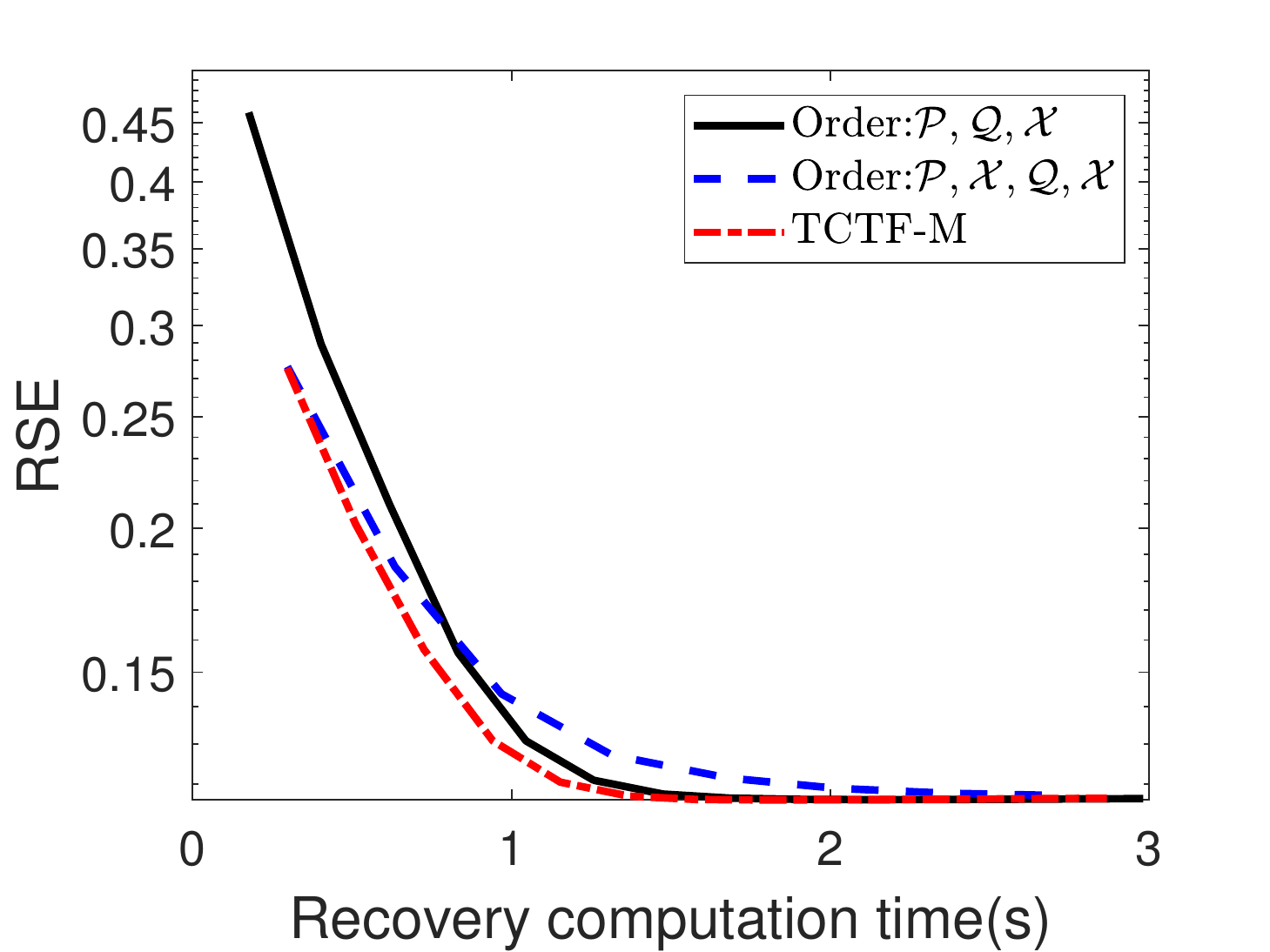}	
			\caption{The USC-SIPI image database.} 
		\end{subfigure}
	\end{subfigure}
	\vfill
	\caption{Results with three different orders.}
	\label{fig:three}
\end{figure*}
For convenience of notation, we outline the pseudocode of TCTF-M as follows.
\begin{table}[htbp]
	\centering
	\begin{tabular}{l}
		\toprule
		\toprule
		{\bfseries Algorithm 3.1} Matrix Completion Algorithm (TCTF-M)       \\
		\midrule
		{\bfseries Input:} The matrix (tensor) data $M \in {{\mathbb R}^{{n_1} \times h}}\,({\mathcal M} \in {{\mathbb R}^{{n_1} \times {n_2} \times {n_3}}})$, \\\qquad\quad\:\! the observed set
		$\tilde\Omega\, (\Omega) $ and $ t_0 $.                             \\
		{\bfseries Initialization:} $\X^0,\,\hat P^0,\,\hat Q^0$ and the multi-rank  $ \bm r_\X^0\in\R^{n_3}$.                                       \\
		{\bfseries While not converge do}                \\		
		\qquad  $ \bm{1.} $ Fix $\hat Q^t$ and $\mathcal X^t $ to update $\hat P^{t+1}$ by \eqref{P}.     \\
		\qquad  $ \bm{2.} $ If $ t\le t_0 $ then \\
		\qquad \quad\qquad Fix $\hat P^{t+1}$ and $ \hat Q^{t} $ to compute $\mathcal X^{t} $ by \eqref{M-X}. \\
		\qquad  $ \bm{3.} $ Fix $\hat P^{t+1}$ and $\X^{t} $ to update $\hat Q^{t+1}$ by \eqref{Q}.     \\
		\qquad  $ \bm{4.} $ Adopt the rank decreasing scheme to \\\qquad\quad\:\! adjust $ \bm r^t_\X $,
		adjust the sizes of $\hat P^{t+1},\,\hat Q^{t+1}$. \\
		\qquad  $\bm {5.}$ Fix $\hat P^{t+1}$ and $ \hat Q^{t+1} $ to compute $\mathcal X^{t+1} $ by \eqref{M-X}.\\
		\qquad  $ \bm{6.} $ Check the stop criterion: ${\left\| {\X^{t + 1} - {\X^t}} \right\|_F}/{\left\| {{\X^t}} \right\|_F} < \varepsilon $.  \\
		\qquad  $ \bm{7.} $ $t \leftarrow t + 1$.            \\
		{\bfseries end while}                             \\
		{\bfseries Output:}  $\mathcal X^{t + 1}  $.      \\		
		\bottomrule
		\bottomrule
	\end{tabular}
\end{table}
\begin{remark}
	In general, we do not know the true multi-tubal rank of optimal tensor $\X$ in advance. Thus, it is necessary to  estimate the multi-rank of tensor $\X$. In this paper, we adopt the same rank  estimation and rank decreasing strategy proposed in \cite{WYZ12,XHYS15,ZLLZ18}.
	
	Compared to TCTF, only half of matrices $\hat P^{(k)}$ and $\hat Q^{(k)}$ are calculated in \eqref{P} and \eqref{Q}. The reduction decreases the computational cost of $\hat P^{t+1}$ and $\hat Q^{t+1}$  when $ n_3 $ is large.
	When $ t\ge t_0 $, in each iteration, the complexity of TCTF-M is $\mathcal{O}\left(r\left(n_1+n_2\right) n_3 \log n_3+r n_1n_2\left\lceil\frac{n_3+1}{2} \right\rceil \right)$, where $ r=\operatorname{rank}_t(\X) $.
\end{remark}

Finally, we present the convergence results of Algorithm 3.1, whose proof is from \cite{ZLLZ18}.

\begin{theorem}
	Assume that $ g\left(\hat P,\hat Q,\X\right) =\frac{1}{2n_3}\left\|\hat P\hat Q-\bar X\right\|_F^2 =\frac{1}{2n_3}\sum\limits_{k=1}^{n_3}\left\|\hat P^{(k)}\hat Q^{(k)}-\bar X^{(k)}\right\|_F^2 $ is the objective function and the sequence $\left\{\P^t,\Q^t,\X^t\right\}$ generated by Algorithm 3.1 is bounded, Then
	it satisfies the following properties:
	\begin{itemize}
		\item[(1)] $g^t:=g\left(\hat P^t,\hat Q^t,\X^t\right)$ is monotonically decreasing. Actually, it
		satisfies the following inequality:	
		$$ g^t-g^{t+1} \ge \frac{1}{2n_3}\left\|\hat{P}^{t+1} \hat{Q}^{t+1}-\hat{P}^{t} \hat{Q}^{t}\right\|_F^2\ge 0. $$
		\item[(2)] Any accumulation point $ \left(\P_\star,\Q_\star,\X_\star\right) $ of the sequence $\left\{\P^t,\Q^t,\X^t\right\}$ is a KKT point of problem \eqref{Q:m}.
	\end{itemize}	
\end{theorem}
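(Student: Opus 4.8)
The plan is to follow the convergence analysis of the TCTF algorithm in \cite{ZLLZ18} and to check that the two features peculiar to Algorithm 3.1 do not interfere with it. By \eqref{conj}, the slices produced by \eqref{P} and \eqref{Q} for $k>\lceil\tfrac{n_3+1}{2}\rceil$ are obtained by conjugating those with $k\le\lceil\tfrac{n_3+1}{2}\rceil$, and the latter are exactly the block-wise least-squares matrices the full update would return, so the half-computation is equivalent to the full one. Moreover, the two-stage order differs from TCTF only in that, for the finitely many steps $t\le t_0$, one extra update of $\X$ (Step 2) is inserted, which is again an exact partial minimization of $g$ and can only reinforce monotonicity. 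Finally, since each component of $\bm r^t_\X$ forms a non-increasing sequence of non-negative integers (Step 4), $\bm r^t_\X$ is eventually constant, so Step 4 acts trivially for all large $t$ and every asymptotic statement below may be made past that index.

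For item (1), the key observation is that within one iteration (with $t>t_0$) each update is an exact minimization of $g$ over one block of variables, the other two being frozen: Steps 1 and 3 solve, block by block, $\min_{\hat P^{(k)}}\|\hat P^{(k)}\hat Q^{t,(k)}-\bar X^{t,(k)}\|_F^2$ and $\min_{\hat Q^{(k)}}\|\hat P^{t+1,(k)}\hat Q^{(k)}-\bar X^{t,(k)}\|_F^2$, while Step 5 replaces $\X$ by $P_{\Omega^c}(\P\ast\Q)+P_{\Omega}(\M)$, which by \eqref{M-X} together with Lemma \ref{lem:equ} is the minimizer of $g(\P,\Q,\cdot)$ over $\{P_{\Omega}(\X-\M)=0\}$; hence $g^t$ is non-increasing. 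To make the decrease quantitative I would use that $\P\mapsto\P\ast\Q^t$ and $\Q\mapsto\P^{t+1}\ast\Q$ are linear, so $\P^{t+1}\ast\Q^t$ is the orthogonal projection of $\X^t$ onto a subspace containing $\P^t\ast\Q^t$, and $\P^{t+1}\ast\Q^{t+1}$ is the orthogonal projection of $\X^t$ onto a subspace containing $\P^{t+1}\ast\Q^t$, while $\X^{t+1}$ is the orthogonal projection of $\P^{t+1}\ast\Q^{t+1}$ onto the affine feasible set, which contains $\X^t$. Applying the Pythagorean identity at these three steps (and Lemma \ref{lem:equ}(1) to pass to the $\hat P\hat Q$, $\bar X$ notation) yields
\begin{align*}
2n_3(g^t-g^{t+1})={}&\|\hat P^t\hat Q^t-\hat P^{t+1}\hat Q^t\|_F^2\\
&+\|\hat P^{t+1}\hat Q^t-\hat P^{t+1}\hat Q^{t+1}\|_F^2\\
&+\|\bar X^{t+1}-\bar X^t\|_F^2,
\end{align*}
and discarding the last term and combining the first two through an elementary inequality (as in \cite{ZLLZ18}) gives the bound in (1). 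Being non-increasing and bounded below by $0$, $\{g^t\}$ converges.

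For item (2), since $g^t-g^{t+1}\to 0$ the identity above forces each of its three terms to $0$; by Lemma \ref{lem:equ}(1) this gives $\|\P^{t+1}\ast\Q^t-\P^t\ast\Q^t\|_F\to 0$ and $\|\X^{t+1}-\X^t\|_F\to 0$. Let $(\P_\star,\Q_\star,\X_\star)$ be an accumulation point, reached along $\{t_j\}$; by the boundedness hypothesis we may pass to a further subsequence so that $\P^{t_j+1}\to\P'_\star$ for some $\P'_\star$, and then $\P'_\star\ast\Q_\star=\P_\star\ast\Q_\star$ while $\X^{t_j-1}\to\X_\star$. The closed-form updates \eqref{P}, \eqref{Q} and \eqref{M-X} satisfy, for all $t>t_0$, the first-order identities
\begin{align*}
&(\P^{t+1}\ast\Q^t-\X^t)\ast(\Q^t)^*=0,\\
&(\P^{t+1})^*\ast(\P^{t+1}\ast\Q^{t+1}-\X^t)=0,\\
&\X^{t+1}=P_{\Omega^c}(\P^{t+1}\ast\Q^{t+1})+P_{\Omega}(\M).
\end{align*}
Taking the first to the limit along $t=t_j$ and using $\P'_\star\ast\Q_\star=\P_\star\ast\Q_\star$ gives $(\P_\star\ast\Q_\star-\X_\star)\ast\Q_\star^*=0$; taking the second to the limit along $t=t_j-1$ gives $\P_\star^*\ast(\P_\star\ast\Q_\star-\X_\star)=0$; taking the third along $t=t_j-1$ gives $P_{\Omega^c}(\P_\star\ast\Q_\star-\X_\star)=0$ and $P_{\Omega}(\X_\star-\M)=0$. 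These are exactly the KKT conditions of \eqref{Q:m}.

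The step I expect to require the most care is making item (1) airtight in the presence of the rank-decreasing rule: one must check that once the block sizes stabilize Step 4 is inert, so that the chained Pythagorean identity — and hence the telescoping that produces the vanishing increments used in (2) — holds for all large $t$. In item (2) the other delicate point is to evaluate each first-order identity at the right iteration index (the first at $t_j$, the others at $t_j-1$), so that the non-uniqueness of the factorization $\P\ast\Q$ — which forbids concluding $\P'_\star=\P_\star$, or that $\P^t$ and $\Q^t$ converge individually — never needs to be confronted. Both points are handled exactly as in \cite{ZLLZ18}.
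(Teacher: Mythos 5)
Your proposal is correct and follows essentially the same route as the paper: the paper itself defers the proof of this theorem to \cite{ZLLZ18} and spells out the identical argument only for the analogous Theorem on DTRTC in the appendix (optimality of each block update, the decrease identity for the $\hat P,\hat Q$ updates, telescoping to get vanishing increments, and passing the normal equations to the limit along a subsequence). Your explicit treatment of the two features the paper glosses over --- the eventual inertness of the rank-decreasing step and the conjugate-symmetric half-update --- is a welcome addition.

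One caveat on item (1): if the ``elementary inequality'' you invoke to combine the two Pythagorean terms is $\|a\|_F^2+\|b\|_F^2\ge\tfrac{1}{2}\|a+b\|_F^2$, you only obtain $g^t-g^{t+1}\ge\tfrac{1}{4n_3}\|\hat P^{t+1}\hat Q^{t+1}-\hat P^t\hat Q^t\|_F^2$, which is weaker than the stated bound (though still sufficient for everything in item (2)). To get the constant $\tfrac{1}{2n_3}$ claimed in the theorem you need the cross term to vanish exactly: writing $a=(\hat P^t-\hat P^{t+1})\hat Q^t$ and $b=\hat P^{t+1}(\hat Q^t-\hat Q^{t+1})$, the two normal equations $(\hat P^{t+1}\hat Q^t-\bar X^t)(\hat Q^t)^*=0$ and $(\hat P^{t+1})^*(\hat P^{t+1}\hat Q^{t+1}-\bar X^t)=0$ give $(\hat P^{t+1})^*\hat P^{t+1}(\hat Q^t-\hat Q^{t+1})(\hat Q^t)^*=0$, hence $\hat P^{t+1}(\hat Q^t-\hat Q^{t+1})(\hat Q^t)^*=0$ (a matrix lying in the column space of $\hat P^{t+1}$ and orthogonal to it), so $\langle a,b\rangle=0$ and the two terms sum exactly to $\|\hat P^{t+1}\hat Q^{t+1}-\hat P^t\hat Q^t\|_F^2$. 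This is precisely the content of Lemma 3 of \cite{ZLLZ18} to which you (and the paper) defer, so the gap is one of phrasing rather than substance.
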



\section{Tensor completion}

In this section, we first establish the relationship between tubal rank and Tucker rank of the third order tensor. According to such relationship, we improve the tubal rank to double tubal rank and then establish the low rank tensor completion problem with the introduced double tubal rank.

\subsection{Tensor completion model based on double tubal rank}
From Lemma \ref{rank-inequ}, the following results is direct.
\begin{lemma}\label{lem:tl-ineq}
	For a tensor $ \X \in {{\mathbb R}^{{n_1} \times {n_2} \times {n_3}}} $, it holds
	\begin{equation}\label{eq:tl-ineq}
		\rank_t(\X)\le\rank\left(X_{(i)}\right)\le n_3\rank_t(\X),\quad i\in[2].	
	\end{equation}
\end{lemma}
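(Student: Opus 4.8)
The plan is to reduce \eqref{eq:tl-ineq} to the matrix case already handled in Lemma \ref{rank-inequ}. The key observation is that the mode-$i$ unfolding $X_{(i)}$ of the tensor $\X\in\R^{n_1\times n_2\times n_3}$ is, for $i\in[2]$, precisely a matrix of size $n_i\times (n_j n_3)$ obtained by concatenating the frontal slices of $\X$ (up to a permutation of columns). More importantly, $\X$ can itself be viewed as the reshaping of such an $n_i\times (n_j n_3)$ matrix via \eqref{m-t}: for $i=1$ the frontal slices $X^{(k)}$ are exactly the column-blocks of $X_{(1)}$ of width $n_2$, so $X_{(1)}$ plays the role of the matrix ``$X$'' and $\X$ plays the role of the reshaped tensor ``$\X$'' in Lemma \ref{rank-inequ}. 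Thus the first inequality chain $\rank_t(\X)\le\rank(X_{(1)})\le n_3\rank_t(\X)$ is an immediate instance of the first line of Lemma \ref{rank-inequ}.

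For $i=2$, I would handle it by a symmetry/transposition argument. Consider the tensor $\Y\in\R^{n_2\times n_1\times n_3}$ obtained by transposing each frontal slice of $\X$, i.e. $Y^{(k)} = (X^{(k)})\T$. Then $\bar Y^{(k)} = \overline{(X^{(k)})\T}$, and since the DFT along the third mode commutes with transposing frontal slices, $\bar Y^{(k)} = (\bar X^{(k)})\T$ (being careful that for real input the conjugation pattern \eqref{conj} is preserved, which it is, since transposition commutes with conjugation). Hence $\rank(\bar Y^{(k)}) = \rank(\bar X^{(k)})$ for every $k$, so $\rank_t(\Y)=\rank_t(\X)$. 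Moreover $Y_{(1)}$ is, up to a column permutation, the transpose of $X_{(2)}$, so $\rank(Y_{(1)}) = \rank(X_{(2)})$. Applying the $i=1$ case (equivalently Lemma \ref{rank-inequ}) to $\Y$ gives $\rank_t(\Y)\le\rank(Y_{(1)})\le n_3\rank_t(\Y)$, which translates back to $\rank_t(\X)\le\rank(X_{(2)})\le n_3\rank_t(\X)$.

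Alternatively — and this is probably the cleanest writeup — one can prove both $i=1,2$ uniformly by noting $\rank(X_{(i)}) = \rank\big(\big[\bar X^{(1)}_{(i)},\dots,\bar X^{(n_3)}_{(i)}\big]\big)$-type identities: more directly, $X_{(i)}$ (for $i\in[2]$) equals a column concatenation of the slices $X^{(1)},\dots,X^{(n_3)}$ or their transposes, and by the same Fourier argument as in Lemma \ref{rank-equ} this has the same rank as the corresponding concatenation of $\bar X^{(k)}$ (or their transposes). Then the sub-additivity bound $\rank([\,\bar X^{(1)},\dots,\bar X^{(n_3)}\,])\le\sum_k\rank(\bar X^{(k)})\le n_3\rank_t(\X)$ and the lower bound $\rank([\,\bar X^{(1)},\dots,\bar X^{(n_3)}\,])\ge\max_k\rank(\bar X^{(k)})=\rank_t(\X)$ finish the argument, exactly mirroring \eqref{rank-inequ-2} and \eqref{rank-inequ-3}.

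The only mild obstacle is bookkeeping: one must check that the mode-$i$ unfolding for $i=1,2$ really does arrange the frontal slices as a column concatenation (up to permutation), using the indexing convention in the definition of $A_{(s)}$, and that the $i=3$ case is genuinely excluded (indeed $X_{(3)}$ has a completely different structure and the inequality fails there in general). Since the statement explicitly restricts to $i\in[2]$, and the excerpt already says this ``follows directly'' from Lemma \ref{rank-inequ}, I expect the proof to be two or three lines once the unfolding/reshaping correspondence is made explicit.
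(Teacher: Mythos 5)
Your proposal is correct and follows essentially the same route as the paper, which simply declares the lemma ``direct'' from Lemma \ref{rank-inequ} without spelling out the reshaping correspondence or the $i=2$ transposition step that you make explicit. Your added bookkeeping (identifying $X_{(1)}$, resp.\ $X_{(2)}$ up to transposing slices, with the matrix being reshaped in \eqref{m-t}, and checking that slice-wise transposition preserves the tubal rank) is exactly the detail the paper leaves implicit.
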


Compared to Tucker rank, tubal rank does not involve the low rank structure
information of the mode-3 unfolding matrix from Lemma
\ref{lem:tl-ineq}. Hence, we define an improved tensor  rank as follows:
\begin{equation}\label{ttr}
	\rank_{ttr}\left(\X\right)=\left(\rank_t(\X),\rank(X_{(3)})\right).	
\end{equation}
Based on the Lemma \ref{rank-inequ}, we change \eqref{ttr} into double tubal rank:
\begin{equation}\label{ttl}
	\rank_{dt}\left(\X\right)=\left(\rank_t(\X),\rank_t(\tilde\X)\right),	
\end{equation}
where $ \tilde\X\in\R^{n_3\times p\times q }\,(pq=n_1n_2) $ is a tensor by reshaping the unfolding matrix $ X_{(3)} $  satisfying \eqref{m-t} and hence $ \tilde X_{(1)}=X_{(3)} $.

Next, we discuss the relationship between Tucker rank and double tubal rank.
\begin{lemma}\label{lem:tc-dt}
	Suppose that $ \X \in {{\mathbb R}^{{n_1} \times {n_2} \times {n_3}}} $ and $ \rank_{dt}\left(\X\right) $ is defined as in \eqref{ttl}. Then
	\begin{equation*}
		\begin{aligned}
			&\rank_t(\X)\le\rank\left(X_{(i)}\right)\le n_3\rank_t(\X),\quad i\in[2],\\
			&\rank_t(\tilde\X)\le\rank\left(X_{(3)}\right)\le n_3\rank_t(\tilde\X).
		\end{aligned}
	\end{equation*}
\end{lemma}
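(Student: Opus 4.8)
The plan is to obtain both chains of inequalities as instances of the single matrix--tensor rank estimate already established in Lemma~\ref{rank-inequ}, without any new computation.

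For the first line, $\rank_t(\X)\le\rank(X_{(i)})\le n_3\rank_t(\X)$ for $i\in[2]$, there is nothing to prove beyond what is already recorded: this is exactly the statement of Lemma~\ref{lem:tl-ineq}, which is itself the direct consequence of Lemma~\ref{rank-inequ} applied to the relevant mode-$i$ unfolding. So I would simply quote Lemma~\ref{lem:tl-ineq}.

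For the second line I would invoke Lemma~\ref{rank-inequ} a second time, now with the roles of the ``matrix'' and its ``reshaped tensor'' played by $X_{(3)}$ and $\tilde\X$. By the construction accompanying \eqref{ttl}, the tensor $\tilde\X\in\R^{n_3\times p\times q}$ (with $pq=n_1n_2$) is obtained from the mode-3 unfolding matrix $X_{(3)}\in\R^{n_3\times n_1n_2}$ precisely through the reshaping rule \eqref{m-t}, and one has $\tilde X_{(1)}=X_{(3)}$. Hence the hypotheses of Lemma~\ref{rank-inequ} hold with $X\leftarrow X_{(3)}$ and $\X\leftarrow\tilde\X$, and its conclusion yields $\rank_t(\tilde\X)\le\rank(X_{(3)})\le n_3\rank_t(\tilde\X)$. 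Combining the two displays gives the lemma.

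The only point meriting a line of care is that the reshaping defining $\tilde\X$ is genuinely the operation \eqref{m-t} to which Lemma~\ref{rank-inequ} applies, i.e.\ that the zero-padding used to make the third dimension integral is the same harmless device as in \eqref{m-t} (appending zero columns cannot raise or lower the relevant ranks); since this padding is already part of the construction in \eqref{ttl}, no genuine obstacle arises, and the proof is essentially a double application of Lemma~\ref{rank-inequ}.
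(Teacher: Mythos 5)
Your proposal is correct and is essentially the paper's own argument: the paper disposes of this lemma in one line as immediate from Lemma~\ref{rank-inequ} and Lemma~\ref{lem:tl-ineq}, which is exactly your double application. One small point of care (a discrepancy already present in the paper's own statement): applying Lemma~\ref{rank-inequ} to the pair $\left(X_{(3)},\tilde\X\right)$ with $\tilde\X\in\R^{n_3\times p\times q}$ delivers the multiplier $q$ (the third dimension of the reshaped tensor), not $n_3$, so the upper bound you quote as the "conclusion" of that lemma should read $q\,\rank_t(\tilde\X)$ — as it does in the later Lemma~\ref{lem:wstnn} — and the stated $n_3\rank_t(\tilde\X)$ bound instead requires the separate trivial observation $\rank\left(X_{(3)}\right)\le n_3\le n_3\rank_t(\tilde\X)$ for $\X\neq 0$.
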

\begin{proof}
	The result is immediate from Lemma \ref{rank-inequ} and Lemma \ref{lem:tl-ineq}.
\end{proof}

According to this lemma, the proposed double tubal rank can learn the global correlations within multi-dimensional data as well as the Tucker rank. In the next lemma, we prove a connection between double tubal rank and 3-tubal rank (tensor fibered rank).

\begin{lemma}\label{lem:wstnn}
	For a tensor $ \X \in {{\mathbb R}^{{n_1} \times {n_2} \times {n_3}}} $, we have
		\begin{equation*}
		\begin{aligned}
			&\rank_t(\tilde\X)/n_2\le\rank_t(\X_{(13)})\le q\rank_t(\tilde\X),\\
			&\rank_t(\tilde\X)/n_1\le\rank_t(\X_{(23)})\le q\rank_t(\tilde\X).
		\end{aligned}
	\end{equation*} 
In particular, when $ \tilde\X \in {{\mathbb R}^{{n_3} \times {n_1} \times {n_2}}} $, $ \rank_t(\tilde\X)=\rank_t(\X_{(13)}) $.
\end{lemma}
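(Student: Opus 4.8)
Plan: reduce everything to the single matrix $X_{(3)}\in\R^{n_3\times n_1n_2}$ and then apply the reshaping bound of Lemma \ref{rank-inequ} together with the tubal-rank/unfolding-rank bound of Lemma \ref{lem:tl-ineq}. The first step is to identify which unfoldings of $\X_{(13)}$ and $\X_{(23)}$ are (essentially) $X_{(3)}$. A direct inspection of the unfolding conventions fixed in Section 2 shows that the mode-$2$ unfolding of $\X_{(13)}\in\R^{n_1\times n_3\times n_2}$ is exactly $X_{(3)}$, since the permutation ``$(13)$'' merely interchanges the second and third modes of $\X$ and leaves the ordering of the two surviving indices $(i,j)$ unchanged; similarly, the mode-$2$ unfolding of $\X_{(23)}\in\R^{n_2\times n_3\times n_1}$ is obtained from $X_{(3)}$ by permuting its columns, hence has the same rank as $X_{(3)}$. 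Finally, $\tilde\X\in\R^{n_3\times p\times q}$ is, by definition, a reshaping of $X_{(3)}$ in the sense of \eqref{m-t} with $\tilde X_{(1)}=X_{(3)}$.

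Next I assemble three chains of inequalities, all centered on $\rank(X_{(3)})$. Applying Lemma \ref{rank-inequ} to the reshaping $\tilde\X$ of $X_{(3)}$ (here the new third dimension is $q$) gives $\rank_t(\tilde\X)\le\rank(X_{(3)})\le q\,\rank_t(\tilde\X)$. Applying Lemma \ref{lem:tl-ineq} to $\X_{(13)}$ with $i=2$ (the ``third dimension'' of $\X_{(13)}$ being $n_2$), and using $\left(\X_{(13)}\right)_{(2)}=X_{(3)}$, gives $\rank_t(\X_{(13)})\le\rank(X_{(3)})\le n_2\,\rank_t(\X_{(13)})$. Applying Lemma \ref{lem:tl-ineq} to $\X_{(23)}$ with $i=2$ (third dimension $n_1$), and using that $\left(\X_{(23)}\right)_{(2)}$ is a column permutation of $X_{(3)}$, gives $\rank_t(\X_{(23)})\le\rank(X_{(3)})\le n_1\,\rank_t(\X_{(23)})$. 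Chaining the first bound against the second, namely $\rank_t(\X_{(13)})\le\rank(X_{(3)})\le q\,\rank_t(\tilde\X)$ and $\rank_t(\tilde\X)\le\rank(X_{(3)})\le n_2\,\rank_t(\X_{(13)})$, yields exactly $\rank_t(\tilde\X)/n_2\le\rank_t(\X_{(13)})\le q\,\rank_t(\tilde\X)$; chaining the first bound against the third yields $\rank_t(\tilde\X)/n_1\le\rank_t(\X_{(23)})\le q\,\rank_t(\tilde\X)$ symmetrically.

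For the ``in particular'' claim, when $p=n_1$ and $q=n_2$ one checks directly from the column-block structure of $X_{(3)}$ that $\tilde X^{(j)}=\bigl(\X_{(13)}^{(j)}\bigr)^{\top}$ for every $j\in[n_2]$, i.e. $\tilde\X$ is the slicewise transpose of $\X_{(13)}$. Since the DFT along the third mode acts fiberwise, transposing every frontal slice commutes with it, so the $k$-th frontal slice of $\overline{\tilde\X}$ is the transpose of the $k$-th frontal slice of $\overline{\X_{(13)}}$; as matrix rank is transpose-invariant, $\rank_t(\tilde\X)=\rank_t(\X_{(13)})$.

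The only delicate point — and the main obstacle — is the index bookkeeping in the first step: one must verify carefully that $\left(\X_{(13)}\right)_{(2)}$ equals $X_{(3)}$ exactly, that $\left(\X_{(23)}\right)_{(2)}$ differs from $X_{(3)}$ only by a column permutation, and that the correct mode plays the role of ``$n_3$'' in each invocation of Lemmas \ref{rank-inequ} and \ref{lem:tl-ineq} for $\tilde\X$, $\X_{(13)}$ and $\X_{(23)}$. Beyond this, no new idea is required.
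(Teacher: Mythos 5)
Your proof is correct and follows essentially the same route as the paper's: apply Lemma \ref{lem:tl-ineq} to $\X_{(13)}$, $\X_{(23)}$ and $\tilde\X$, and chain the resulting inequalities through $\rank\left(X_{(3)}\right)=\rank\left(\tilde X_{(1)}\right)$. Your explicit slicewise-transpose argument for the ``in particular'' equality is a welcome addition, since the paper asserts that claim without proof.
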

\begin{proof}
By the definition of $ \X_{(13)} $ and Lemma \ref{lem:tl-ineq}, we have
\begin{equation*}
	\begin{aligned}
	&\rank_t(\X_{(13)})\le \rank(X_{(3)})\le n_2\rank_t(\X_{(13)}), \\
	&\rank_t(\tilde\X)\le \rank(\tilde X_{(1)})\le q\rank_t(\tilde \X).
\end{aligned}
\end{equation*}	
Combining the above inequality and $ \tilde X_{(1)}=X_{(3)} $, one has
\begin{equation*}
	\rank_t(\tilde\X)/n_2\le\rank_t(\X_{(13)})\le q\rank_t(\tilde\X).
\end{equation*}
Similar to the analysis above, we obtain
\begin{equation*}
	\rank_t(\tilde\X)/n_1\le\rank_t(\X_{(23)})\le q\rank_t(\tilde\X).
\end{equation*}

\end{proof}

Double tubal rank is a vector and its corresponding low rank tensor completion model is a vector optimization problem. To keep things simple, we adopt the weighted rank $ \rank_t(\X)+\gamma\rank_t(\tilde\X)$ with a positive parameter $\gamma$  as a measure of tensor rank, and the low rank tensor completion problem can be modeled as 
\begin{equation}\label{mt-vector-com}
	\begin{aligned}
		\min \limits_{{\mathcal X}}&\; \rank_t(\X)+\gamma\rank_t(\tilde\X)\\
		\mbox{\rm s.t.}& \;{P_\Omega }\left( {{\mathcal X} - {\mathcal M}} \right) = 0.		
	\end{aligned}
\end{equation}
Clearly, \eqref{mt-vector-com} reduces to the classical low tubal rank tensor completion model when $\gamma=0$.

According to Lemma \ref{lem:f}, we consider the following tensor factorization model
\begin{equation}\label{lm}
	\begin{aligned}
		\min&\;\frac{1}{2}\left\| \P \ast\Q - \X
		\right\|_F^2+\frac{\gamma}{2}\left\| \U \ast\V - \tilde\X
		\right\|_F^2 \\
		\mbox{\rm s.t.}& \; {P_\Omega }(\mathcal{X} - \mathcal{M}) = 0.		
	\end{aligned}
\end{equation}

Now, we are ready to update $\X,\,\P,\,\Q,\,\U,\,\V$. First of all, we update $\X$ by
{\small
\begin{equation}\label{X}
	\begin{aligned}
		\X=&\mathop {\operatorname{argmin}} \limits_{{P_\Omega }(\X - \mathcal M) = 0}\frac{1}{2}\left\|\P \ast\Q -\X\right\|_F^2+\frac{\gamma}{2}\left\| \U \ast\V - \tilde\X
		\right\|_F^2	\\
		=&\mathop {\operatorname{argmin}} \limits_{{P_\Omega }(\X - \mathcal M) = 0}\frac{1}{2}\left\|\P \ast\Q -\X\right\|_F^2+\frac{\gamma}{2}\left\| fold_3\left[\left(\U \ast\V\right)_{(1)}\right] - \X
		\right\|_F^2	\\
		=&\frac{1}{1+\gamma}P_{\Omega^c}\left(\P \ast\Q+\gamma fold_3\left[\left(\U \ast\V\right)_{(1)}\right]\right)
		+{P_\Omega }(\mathcal M).
	\end{aligned}
\end{equation}}

Furthermore, $\P$ and $\Q$ can be updated by solving the following problem
\begin{equation}\label{PQ}
	\mathop {\operatorname{argmin}}\limits_{\P,\,\Q}\;\frac{1}{2}\left\| \P \ast\Q - \X
	\right\|_F^2.
\end{equation}
Clearly, $\P$ and $\Q$ can be updated by \eqref{P} and \eqref{Q} respectively.

Similarly, we can update $ \hat U $ and $ \hat V $ as follows:
{\small
\begin{equation}\label{U}
	\begin{aligned}
		\hat U^{(k)}= \left\{ \begin{gathered}
			\bar {\tilde X}^{(k)}{\left( {\hat V^{(k)}} \right)^*}{\left( \hat V^{(k)}{{\left( {\hat V^{(k)}} \right)}^*} \right)^\dagger},\,k = 1, \ldots, \left\lceil \frac{q+1}{2} \right\rceil, \hfill \\
			conj\left( {{{\hat U}^{({q} - k + 2)}}} \right),\,k = \left\lceil \frac{q+1}{2} \right\rceil+1, \ldots ,{q}, \hfill \\
		\end{gathered}  \right.		
	\end{aligned}
\end{equation}
\begin{equation}\label{V}
	\begin{aligned}
		\hat V^{(k)}= \left\{ \begin{gathered}
			{\left( {{\left( {\hat U^{(k)}} \right)}^*}\hat U^{(k)} \right)^\dagger}{\left( {\hat U^{(k)}} \right)^*}\bar {\tilde X}^{(k)},\,k = 1, \ldots, \left\lceil \frac{q+1}{2} \right\rceil, \hfill \\
			conj\left( {{{\hat V}^{({q} - k + 2)}}} \right),\,k = \left\lceil \frac{q+1}{2} \right\rceil+1, \ldots ,{q}. \hfill \\
		\end{gathered}  \right.		
	\end{aligned}
\end{equation}}

Based on above discussions, a tensor factorization based method can be outlined as Algorithm 4.1, denoted by  DTRTC.
\begin{table}[htbp]
	\centering
	\begin{tabular}{l}
		\toprule
		\toprule
		{\bfseries Algorithm 4.1} Double Tubal Rank Tensor Completion (DTRTC)       \\
		\midrule
		{\bfseries Input:} The tensor data ${\mathcal M} \in {{\mathbb R}^{{n_1} \times {n_2} \times {n_3}}}$, the observed set
		$\Omega $, $ t_0 $\\\qquad\quad\:\! and parameters $\gamma$.                                  \\
		{\bfseries Initialization:} $\X^0,\,\hat P^0,\,\hat Q^0,\,\hat U^0,\,\hat V^0$. The initialized rank  $ \bm r^0_\X\in\R^{n_3}$\\\qquad\qquad\qquad\, and $ \bm r^0_{\tilde\X}\in\R^{q} $.                                          \\
		{\bfseries While not converge do}                \\		
		\qquad  $ \bm{1.} $ Fix $\hat Q^t$ and $\mathcal X^t $ to update $\hat P^{t+1}$ by \eqref{P}.     \\
		\qquad  $ \bm{2.} $ If $ t\le t_0 $ then \\
		\qquad \quad\qquad Fix $\hat P^{t+1}$ and $ \hat Q^{t} $ to compute $\mathcal X^{t} $ by \eqref{M-X}. \\
		\qquad  $ \bm{3.} $ Fix $\hat P^{t+1}$ and $\mathcal X^{t} $ to update $\hat Q^{t+1}$ by \eqref{Q}.     \\
		\qquad  $ \bm{4.} $ If $ t\le t_0 $ then \\
		\qquad \quad\qquad Fix $\hat P^{t+1}$ and $ \hat Q^{t+1} $ to compute $\mathcal X^{t} $ by \eqref{M-X}. \\
		\qquad  $ \bm{5.} $ Fix $\hat V^t$ and $\mathcal X^t $ to update $\hat U^{t+1}$ by \eqref{U}.     \\
		\qquad  $ \bm{6.} $ If $ t\le t_0 $ then \\
		\qquad \quad\qquad Fix $\hat U^{t+1}$ and $ \hat V^{t} $ to compute $\mathcal X^{t} $ by \eqref{M-X}. \\
		\qquad  $ \bm{7.} $ Fix $\hat U^{t+1}$ and $\mathcal X^t $ to update $\hat V^{t+1}$ by \eqref{V}.     \\
		\qquad  $ \bm{8.} $ Adopt the rank decreasing scheme to adjust $ \bm r^t_\X $ and $ \bm r^t_{\tilde\X} $,
		\\\qquad\quad\:\! adjust the sizes of $\hat P^{t+1},\,\hat Q^{t+1},$ $\hat U^{t+1} $ and $ \hat V^{t+1} $. \\
		\qquad
		$\bm {9.}$ Fix $\hat P^{t+1},\,\hat Q^{t+1},\,\hat U^{t+1},\,\hat V^{t+1}$ to compute $\mathcal X^{t+1} $ by \eqref{X}.\\
		\qquad  $ \bm{10.} $ Check the stop criterion: ${\left\| {\X^{t + 1} - {\X^t}} \right\|_F}/{\left\| {{\X^t}} \right\|_F} < \varepsilon $.  \\
		\qquad  $ \bm{11.} $ $t \leftarrow t + 1$.            \\
		{\bfseries end while}                             \\
		{\bfseries Output:}  $\mathcal X^{t + 1}  $.      \\		
		\bottomrule
		\bottomrule
	\end{tabular}
\end{table}
\begin{remark}
	Similar to TCTF-M, it does not know the true multi-tubal rank of optimal tensor $\X$ and $\tilde \X$ in advance. Hence,  we adopt the same rank  estimation and rank decreasing strategy proposed in \cite{WYZ12,XHYS15,ZLLZ18}.
	
	In our paper, we set the update rule of $ \gamma^{t+1} $ as follows
	$$ \gamma^{t+1}=\frac{\left\| P_{\Omega}(\X^t-\M)\right\|_F}{\left\|P_{\Omega}(\tilde\X^t-\M)\right\|_F}. $$

	Complexity analysis: At each iteration, the cost of updating $ \P $ and $ \Q $ by \eqref{P} and \eqref{Q} is $\mathcal{O}\left(\hat{r}_\X\left(n_{1}+n_{2}\right) n_{3} \log n_{3}+\hat{r}_\X n_{1} n_{2} \left\lceil\frac{n_3+1}{2} \right\rceil\right)$, respectively. The cost of updating $ \U $ and $ \V $ by \eqref{U} and \eqref{V} is $\mathcal{O}\left(\hat{r}_{\tilde\X}\left(n_{3}+p\right) q \log q+\hat{r}_{\tilde\X} n_{3}p\left\lceil\frac{q+1}{2}\right\rceil\right)$, where $ \hat{r}_\X $ and $ \hat{r}_{\tilde\X} $ is the estimated tubal rank of $ \X $ and $ \tilde\X $, respectively. For updating $ \X $ by \eqref{X}, the computational cost for conducting the (inverse) DFT and matrix product is $\mathcal{O}\left(\hat{r}_\X\left(n_{1}+n_{2}\right) n_{3} \log n_{3}+\hat{r}_\X n_{1} n_{2} \left\lceil\frac{n_3+1}{2} \right\rceil+\hat{r}_{\tilde\X}\left(n_{3}+p\right) \right.\\q \log q
	\left.+\hat{r}_{\tilde\X} n_{3}p\left\lceil\frac{q+1}{2} \right\rceil\right)$. In step 8, we use QR decomposition to estimate the target rank whose cost is $\mathcal{O}\left(\hat{r}_\X\left(n_{1}+n_{2}\right) n_{3} \log n_{3}+\hat{r}_\X n_{1} n_{2} \left\lceil\frac{n_3+1}{2} \right\rceil\right)$ and $\mathcal{O}\left(\hat{r}_{\tilde\X}\left(n_{3}+p\right) q \log q+\hat{r}_{\tilde\X} n_{3}p\left\lceil\frac{q+1}{2} \right\rceil\right)$.
	In summary, the total cost at each iteration is $ \mathcal{O}\left(\hat{r}_\X\left(n_{1}+n_{2}\right) n_{3} \log n_{3}+\hat{r}_{\tilde\X}\left(n_{3}+p\right) q \log q+\hat{r}_\X n_{1} n_{2}\right.\\
	\left. \left\lceil\frac{n_3+1}{2} \right\rceil +\hat{r}_{\tilde\X} n_{3}p\left\lceil\frac{q+1}{2} \right\rceil\right)$.
\end{remark}

\subsection{Convergence analysis}
In this subsection, we present the convergence of DTRTC. The following notation will be used in our analysis. In problem \eqref{lm}, $\Omega$ is an index set which locates the observed data. We use $\Omega^c$ to denote the complement of the set $\Omega$ with respect to the set $\{(i,j,k): i\in {[n_1]},j\in{[n_2]},k\in {[n_3]}\}$. To simply the notation, we denote $z^t=\left(\P^t,\Q^t,\U^t,\V^t,\X^t\right)$, $ f\left(\P,\Q,\U,\V,\X\right) :=\frac{1}{2}\left\| \P \ast\Q - \X
\right\|_F^2+\frac{\gamma}{2}\left\| \U \ast\V - \tilde\X
\right\|_F^2 $ and $ f^t:=f\left(\P^t,\Q^t,\U^t,\V^t,\X^t\right) $ in this subsection.

\begin{theorem}
	Assume that the sequence $\left\{\P^t,\Q^t,\U^t,\V^t,\X^t\right\}$ generated by Algorithm 4.1 is bounded, Then
	it satisfies the following properties:
	\begin{itemize}
		\item[(1)] $f^t$ is monotonically decreasing. Actually, it
		satisfies the following inequality:
		\begin{equation*}
			\begin{aligned}
			f^t-f^{t+1} \ge& \frac{1}{2n_3}\left\|\hat{P}^{t+1} \hat{Q}^{t+1}-\hat{P}^{t} \hat{Q}^{t}\right\|_F^2\\&+\frac{\gamma}{2q}\left\|\hat{U}^{t+1} \hat{V}^{t+1}-\hat{U}^{t} \hat{V}^{t}\right\|_F^2\ge 0.	
			\end{aligned}
		\end{equation*}	
		\item[(2)] Any accumulation point $ \left(\P_\star,\Q_\star,\U_\star,\V_\star,\X_\star\right) $ of the sequence $\left\{\P^t,\Q^t,\U^t,\V^t,\X^t\right\}$ is a KKT point of problem \eqref{lm}.
	\end{itemize}	
\end{theorem}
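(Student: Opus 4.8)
The strategy is to mimic the convergence analysis of TCTF in \cite{ZLLZ18}, treating the two factorization blocks $(\P,\Q)$ and $(\U,\V)$ on an equal footing. First I would rewrite the objective $f$ in its equivalent block-diagonal Fourier form: by Lemma \ref{lem:equ}(1) and the reduction already carried out in Section \ref{smc},
\[
f\left(\P,\Q,\U,\V,\X\right)=\frac{1}{2n_3}\sum_{k=1}^{n_3}\left\|\hat P^{(k)}\hat Q^{(k)}-\bar X^{(k)}\right\|_F^2+\frac{\gamma}{2q}\sum_{k=1}^{q}\left\|\hat U^{(k)}\hat V^{(k)}-\bar{\tilde X}^{(k)}\right\|_F^2 .
\]
Since the updates \eqref{P}, \eqref{Q}, \eqref{U}, \eqref{V} are exactly the (least-squares) global minimizers of the corresponding sub-blocks, and the update \eqref{X} (and \eqref{M-X}) is the exact minimizer in $\X$ of $f$ over the feasible set $\{P_\Omega(\X-\M)=0\}$, each of the steps $1$–$9$ of Algorithm 4.1 does not increase $f$. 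The extra ingredient for the quantitative descent in part (1) is that the $\P$- and $\U$-subproblems are least-squares problems: by the standard identity for the residual of a projection, updating $\hat P^{t+1}$ from $\hat P^{t}$ (with $\hat Q^{t},\X^{t}$ fixed) gives
\[
g^{t}-g\left(\hat P^{t+1},\hat Q^{t},\X^{t}\right)\ \ge\ \frac{1}{2n_3}\left\|\hat P^{t+1}\hat Q^{t}-\hat P^{t}\hat Q^{t}\right\|_F^2,
\]
and analogously for the $\hat Q$, $\hat U$, $\hat V$ steps; telescoping these inequalities over one outer iteration, and discarding the (non-negative) decreases produced by the intermediate $\X$-updates in steps $2,4,6$, yields the stated bound
\[
f^{t}-f^{t+1}\ \ge\ \frac{1}{2n_3}\left\|\hat P^{t+1}\hat Q^{t+1}-\hat P^{t}\hat Q^{t}\right\|_F^2+\frac{\gamma}{2q}\left\|\hat U^{t+1}\hat V^{t+1}-\hat U^{t}\hat V^{t}\right\|_F^2\ \ge\ 0 .
\]
Here I must be slightly careful about the rank-decreasing step $8$: following \cite{ZLLZ18}, truncating columns/rows of $\hat P,\hat Q$ (resp.\ $\hat U,\hat V$) only removes components that contribute negligibly, so it does not increase $f$ either, and the monotonicity is preserved.

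For part (2), boundedness of $\{z^t\}$ plus the monotone decrease of the bounded-below sequence $\{f^t\}$ forces $f^t-f^{t+1}\to 0$, hence $\|\hat P^{t+1}\hat Q^{t+1}-\hat P^{t}\hat Q^{t}\|_F\to 0$ and $\|\hat U^{t+1}\hat V^{t+1}-\hat U^{t}\hat V^{t}\|_F\to 0$; combined with the update formula \eqref{X} this also gives $\|\X^{t+1}-\X^{t}\|_F\to 0$. Now take any accumulation point $z_\star=(\P_\star,\Q_\star,\U_\star,\V_\star,\X_\star)$ along a subsequence $z^{t_j}\to z_\star$. Passing to the limit in the optimality conditions of the sub-problems \eqref{P}–\eqref{V} and \eqref{X} — each of which is a smooth (quadratic) problem so the KKT conditions are just the vanishing of the relevant partial gradients, together with the feasibility constraint $P_\Omega(\X-\M)=0$ — and using that the asymptotic regularity just established makes the ``fixed'' variables in each step converge to the same limit as the ``updated'' ones, one checks that $z_\star$ satisfies the full KKT system of \eqref{lm}: namely $P_{\Omega^c}$ of the appropriate gradient combination vanishes in $\X$ (which is exactly the content of \eqref{X} in the limit), and the normal equations $(\hat P_\star^{(k)})^*(\hat P_\star^{(k)}\hat Q_\star^{(k)}-\bar X_\star^{(k)})=0$, $(\hat P_\star^{(k)}\hat Q_\star^{(k)}-\bar X_\star^{(k)})(\hat Q_\star^{(k)})^*=0$ hold for every $k$, and similarly for $\hat U_\star,\hat V_\star$ with $\bar{\tilde X}_\star$.

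The main obstacle I anticipate is the bookkeeping around steps $2$, $4$, $6$ and $8$: one must verify that interleaving the exact $\X$-minimizations \eqref{M-X} between the factor updates, and then performing the rank adjustment, never spoils either the monotone descent or the exactness of the sub-problem solves, so that the limiting gradients one extracts really do correspond to the sub-problems of the \emph{single} functional \eqref{lm} rather than to some perturbed iteration. This is routine but needs the observation (as in \cite{ZLLZ18}) that \eqref{M-X} is itself the exact $\X$-minimizer of $\frac12\|\P*\Q-\X\|_F^2$ on the feasible set while \eqref{X} is the exact $\X$-minimizer of the full $f$, so all $\X$-steps are descent steps for their respective parts and in the limit collapse to the single stationarity condition in $\X$. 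Everything else — existence of accumulation points from boundedness, telescoping, continuity of the gradients — is standard.
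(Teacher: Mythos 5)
Your overall route --- exact block minimization, a sufficient--decrease inequality, summability, and passage to the limit in the normal equations together with the $\X$-stationarity condition --- is the same as the paper's, which in turn follows \cite{ZLLZ18} closely. There are, however, two concrete points where your argument as written does not deliver the statement.

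First, the constant in part (1). Your per-step projection identities give
\begin{align*}
&\bigl\|\hat P^{t}\hat Q^{t}-\bar X^{t}\bigr\|_F^2-\bigl\|\hat P^{t+1}\hat Q^{t+1}-\bar X^{t}\bigr\|_F^2\\
&\;=\bigl\|\hat P^{t+1}\hat Q^{t}-\hat P^{t}\hat Q^{t}\bigr\|_F^2+\bigl\|\hat P^{t+1}\hat Q^{t+1}-\hat P^{t+1}\hat Q^{t}\bigr\|_F^2,
\end{align*}
but the two increments $(\hat P^{t+1}-\hat P^{t})\hat Q^{t}$ and $\hat P^{t+1}(\hat Q^{t+1}-\hat Q^{t})$ are not orthogonal in general, so ``telescoping'' only yields $\tfrac12\|\hat P^{t+1}\hat Q^{t+1}-\hat P^{t}\hat Q^{t}\|_F^2$ by the parallelogram inequality, i.e.\ half of the claimed constant. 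The paper instead invokes the exact identity of Lemma~3 in \cite{ZLLZ18} (its equation \eqref{xt1}), namely $\|\hat P^{t}\hat Q^{t}-\bar X^{t}\|_F^2-\|\hat P^{t+1}\hat Q^{t+1}-\bar X^{t}\|_F^2=\|\hat P^{t+1}\hat Q^{t+1}-\hat P^{t}\hat Q^{t}\|_F^2$, which exploits the specific structure of the two consecutive least-squares updates and not merely that each one is a projection. The weaker constant still suffices for part (2), but not for the inequality as stated.

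Second, your assertion that the intermediate $\X$-updates in steps 2, 4, 6 produce ``non-negative decreases'' of $f$ is not justified: those steps use \eqref{M-X}, which minimizes only $\frac12\|\P\ast\Q-\X\|_F^2$ over the feasible set and ignores the coupled term $\frac{\gamma}{2}\|\U\ast\V-\tilde\X\|_F^2$, which also depends on $\X$ through the reshaping $\tilde\X$; such a step can increase $f$. The paper's proof implicitly restricts to the regime $t>t_0$, where steps 2, 4, 6 are switched off and the only $\X$-update is \eqref{X}, the exact minimizer of the full objective; you should do the same (monotonicity for $t>t_0$ is all the accumulation-point argument needs). With these two repairs your proof coincides with the paper's treatment of problem \eqref{lm}.
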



\section{Numerical Experiments}

In this section, we conduct some experiments on real-world dataset to compare the performance of TCTF-M and DTRTC to show their validity. We employ the peak signal-to-noise rate (PSNR) \cite{WBSS04}, the structural similarity (SSIM) \cite{WBSS04}, the feature similarity (FSIM) \cite{ZZMZ11} and the recovery computation time to measure the quality of the recovered results. We compare TCTF-M for the matrix completion problem with four existing methods, including SRMF \cite{RZWQ12}, MC-NMF \cite{XYWZ12}, FPCA \cite{MGC11} and SPG \cite{YZ21}. We compare DTRTC for the tensor completion problem with WSTNN \cite{ZHZ20}, TCTF \cite{ZLLZ18}, TNN \cite{ZEA14}, NCPC \cite{XY13} and NTD \cite{Xu15}. 
All methods are implemented on the platform of Windows 10 and Matlab (R2020b) with an Intel(R) Core(TM) i7-7700 CPU at 3.60GHz and 24 GB RAM.

\subsection{Grayscale Image Inpainting}
In this subsection, we use the USC-SIPI image database\footnote{http://sipi.usc.edu/database/.} to evaluate our proposed method TCTF-M for grayscale image inpainting. In our test, six images are randomly selected from this database, including texture images ``Plastic" and ``Bark", high altitude aerial images ``Pentagon" and ``Wash", other images ``Male" and ``Airport". Among them, only the pixels of ``Wash" is $ 2250\times 2250 $, and the others are $ 1024\times 1024 $. The data of images are normalized in the range $ \left[0,1\right] $.

For each taken image, we randomly sample by the sampling ratio $p=70\%$. The initial tubal rank is set to $ \left(50,20,\ldots,20\right) $ in TCTF-M, the initial matrix rank is set to $ 100 $ in SRMF and MC-NMF. In TCTF-M, ``Wash" data sets form a tensor of size $ 2250\times 150 \times 15 $ and the others set form a tensor of size $ 1024\times 64 \times 16 $.

In Table \ref{tab:grayimage}, we present the results of all five methods for different images, and the best results are highlighted in bold. It is easy to see that  TCTF-M outperforms the other four methods. TCTF-M is the fastest method,
about $ 3 $ times faster than the second fastest method MC-NMF. MC-NMF is only slightly longer than TCTF-M in running time, but it has no exact recovery
performance guarantee. Both SRMF and FPCA are far inferior to TCTF-M in terms of running time and inpainting results. 
Although SPG has similar PSNR, SSIM, and FSIM values as TCTF-M, its running time is almost $ 20.9 $ times that of TCTF-M. Especially for the more challenging image ``Wash" inpainting, TCTF-M is about $ 58.6 $ times faster than SPG. Since SPG has to compute SVD at each iteration, it runs slower. In summary, TCTF-M not only achieves the best inpainting results but also runs very fast.

To further demonstrate the performance, images recovered by different algorithms are shown in Figure \ref{fig:image_inpainting}. Enlarged views of the recovered images evidently show the recovery differences. It can be seen that MC-NMF fails to recover the ``Male" image. Furthermore, the recovered images of SRMF and MC-NMF still have some visible reconstruction errors, such as roads in ``Pentagon" image, river edge in ``Wash" image and lines in ``Airport image". TCTF-M and SPG recover these details with  better performance.

To further demonstrate the advantage of the proposed algorithms in terms of computational cost, we make a comparison of computation complexity for fives methods in Figure \ref{fig:grayscale_time}, which shows the PSNR, SSIM, and FSIM values over running time. We can see that the PSNR, SSIM, and FSIM values of methods based on TCTF-M optimization rapidly increase to the highest values with less running time than other methods.

\begin{table}[htbp]
	\centering
	\caption{GRAYSCALE IMAGE INPAINTING PERFORMANCE COMPARISON: PSNR, SSIM, FSIM AND RUNNING TIME}\label{tab:grayimage}
	\begin{tabular}{cccccc}
		\toprule
		Image & Methods & PSNR  & SSIM  & FSIM  & Time \\
		\midrule
		\multirow{5}{*}{Plastic} & TCTF-M & \textbf{30.762 } & \textbf{0.872 } & \textbf{0.995 } & \textbf{0.796 } \\
		& SRMF  & 27.148  & 0.708  & 0.973  & 18.867  \\
		& MC-NMF & 26.512  & 0.673  & 0.964  & 3.070  \\
		& FPCA  & 20.855  & 0.397  & 0.833  & 41.008  \\
		& SPG   & 29.709  & 0.841  & 0.984  & 15.725  \\
		\midrule
		\multirow{5}{*}{Bark} & TCTF-M & \textbf{29.590 } & \textbf{0.890 } & \textbf{0.996 } & \textbf{0.765 } \\
		& SRMF  & 25.651  & 0.727  & 0.975  & 18.524  \\
		& MC-NMF & 24.413  & 0.663  & 0.960  & 3.497  \\
		& FPCA  & 19.219  & 0.400  & 0.847  & 40.227  \\
		& SPG   & 29.306  & 0.881  & 0.990  & 17.890  \\
		\midrule
		\multirow{5}{*}{Pentagon} & TCTF-M & \textbf{29.018 } & \textbf{0.792 } & \textbf{0.991 } & \textbf{0.875 } \\
		& SRMF  & 26.704  & 0.628  & 0.972  & 19.030  \\
		& MC-NMF & 26.518  & 0.619  & 0.968  & 2.561  \\
		& FPCA  & 22.600  & 0.412  & 0.835  & 39.701  \\
		& SPG   & 28.540  & 0.779  & 0.973  & 13.255  \\
		\midrule
		\multirow{5}{*}{Male} & TCTF-M & \textbf{30.961 } & 0.847  & \textbf{0.993 } & \textbf{0.838 } \\
		& SRMF  & 27.994  & 0.695  & 0.966  & 18.595  \\
		& MC-NMF & 12.479  & 0.437  & 0.841  & 2.170  \\
		& FPCA  & 21.664  & 0.412  & 0.811  & 40.036  \\
		& SPG   & 30.842  & \textbf{0.853 } & 0.984  & 18.296  \\
		\midrule
		\multirow{5}{*}{Airport} & TCTF-M & 28.692  & 0.799  & \textbf{0.987 } & \textbf{0.903 } \\
		& SRMF  & 26.236  & 0.648  & 0.961  & 18.621  \\
		& MC-NMF & 25.430  & 0.630  & 0.953  & 3.181  \\
		& FPCA  & 21.638  & 0.422  & 0.831  & 39.100  \\
		& SPG   & \textbf{29.111 } & \textbf{0.824 } & 0.981  & 17.229  \\
		\midrule
		\multirow{5}{*}{Wash} & TCTF-M & \textbf{24.207 } & \textbf{0.816 } & \textbf{0.996 } & \textbf{3.157 } \\
		& SRMF  & 19.383  & 0.364  & 0.965  & 108.759  \\
		& MC-NMF & 19.013  & 0.312  & 0.946  & 12.736  \\
		& FPCA  & 17.210  & 0.200  & 0.825  & 268.344  \\
		& SPG   & 24.046  & 0.783  & 0.990  & 184.925  \\
		\bottomrule
	\end{tabular}%
	\label{tab:addlabel}%
\end{table}%

\begin{figure*}[htbp]
	\centering
	\begin{subfigure}[b]{1\linewidth}
		\begin{subfigure}[b]{0.138\linewidth}
			\centering
			\includegraphics[width=\linewidth]{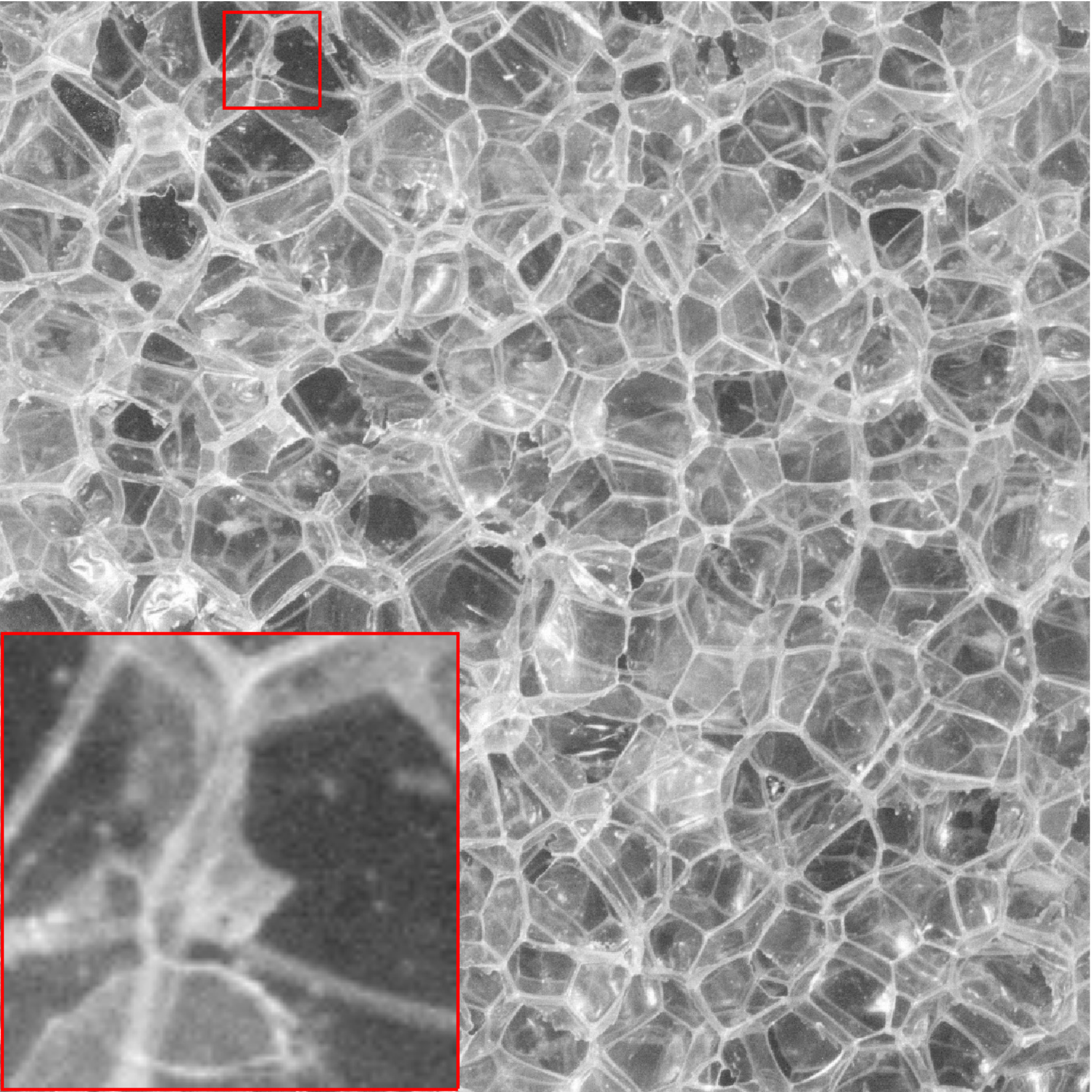}\vspace{0pt}
			\includegraphics[width=\linewidth]{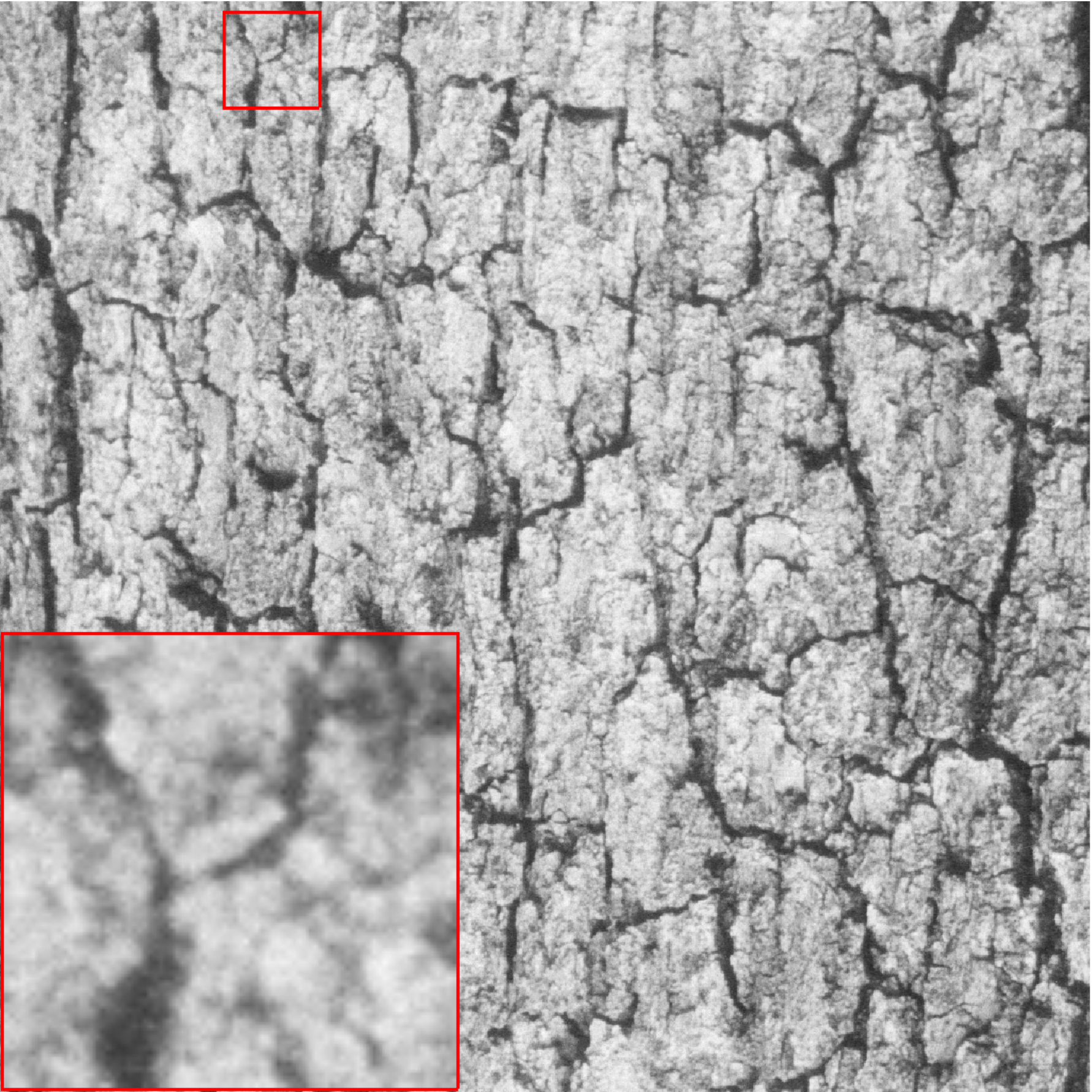}\vspace{0pt}
			\includegraphics[width=\linewidth]{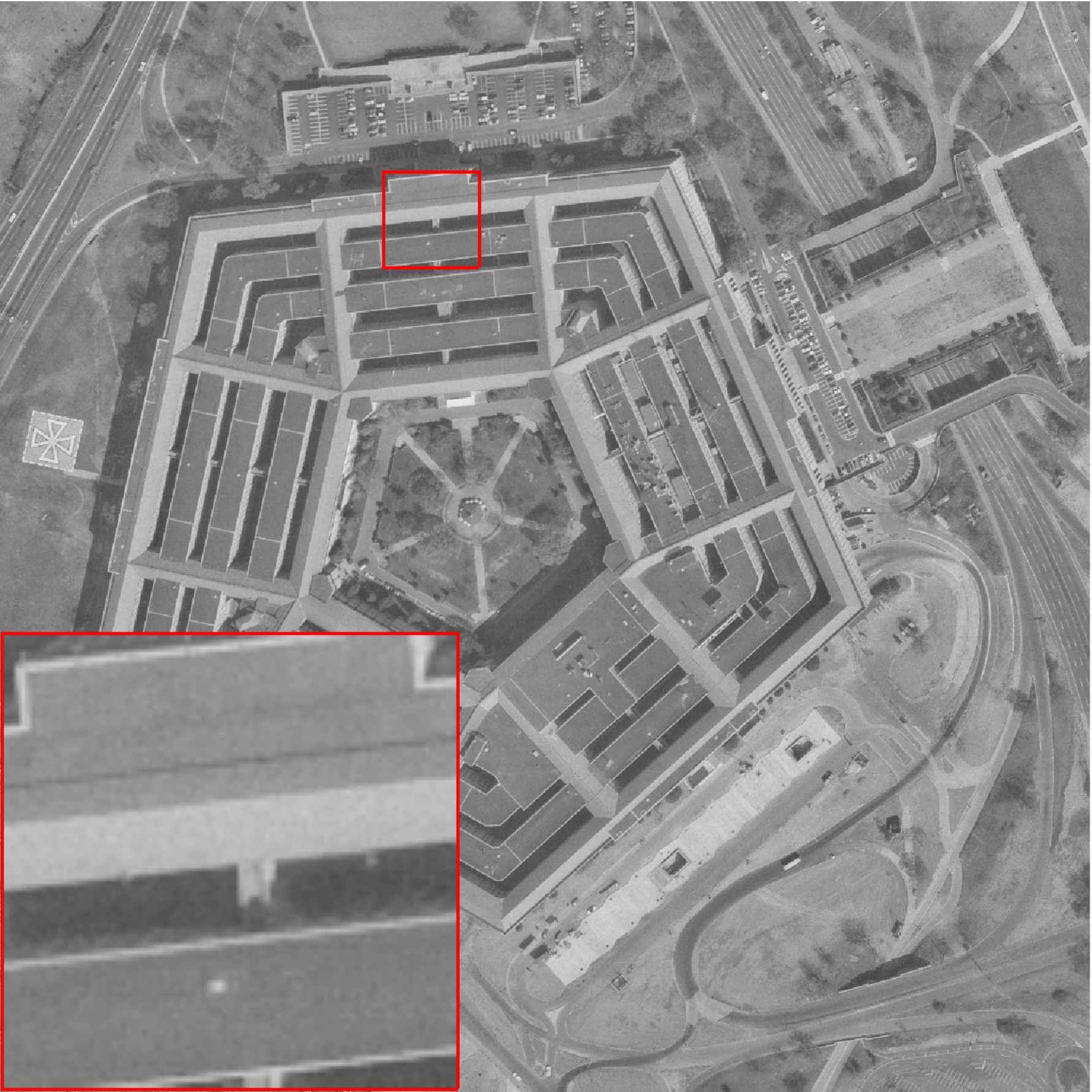}\vspace{0pt}
			\includegraphics[width=\linewidth]{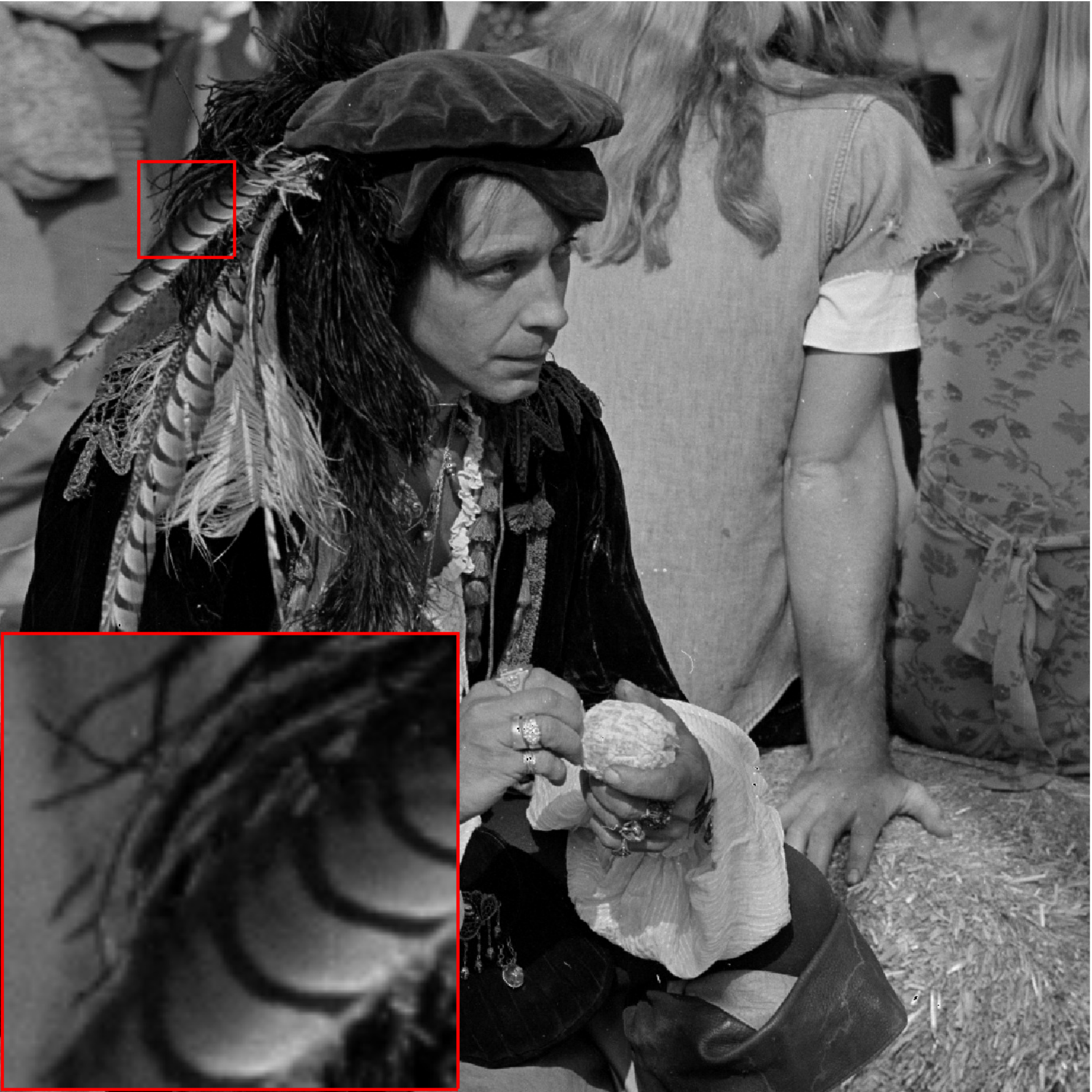}\vspace{0pt}
			\includegraphics[width=\linewidth]{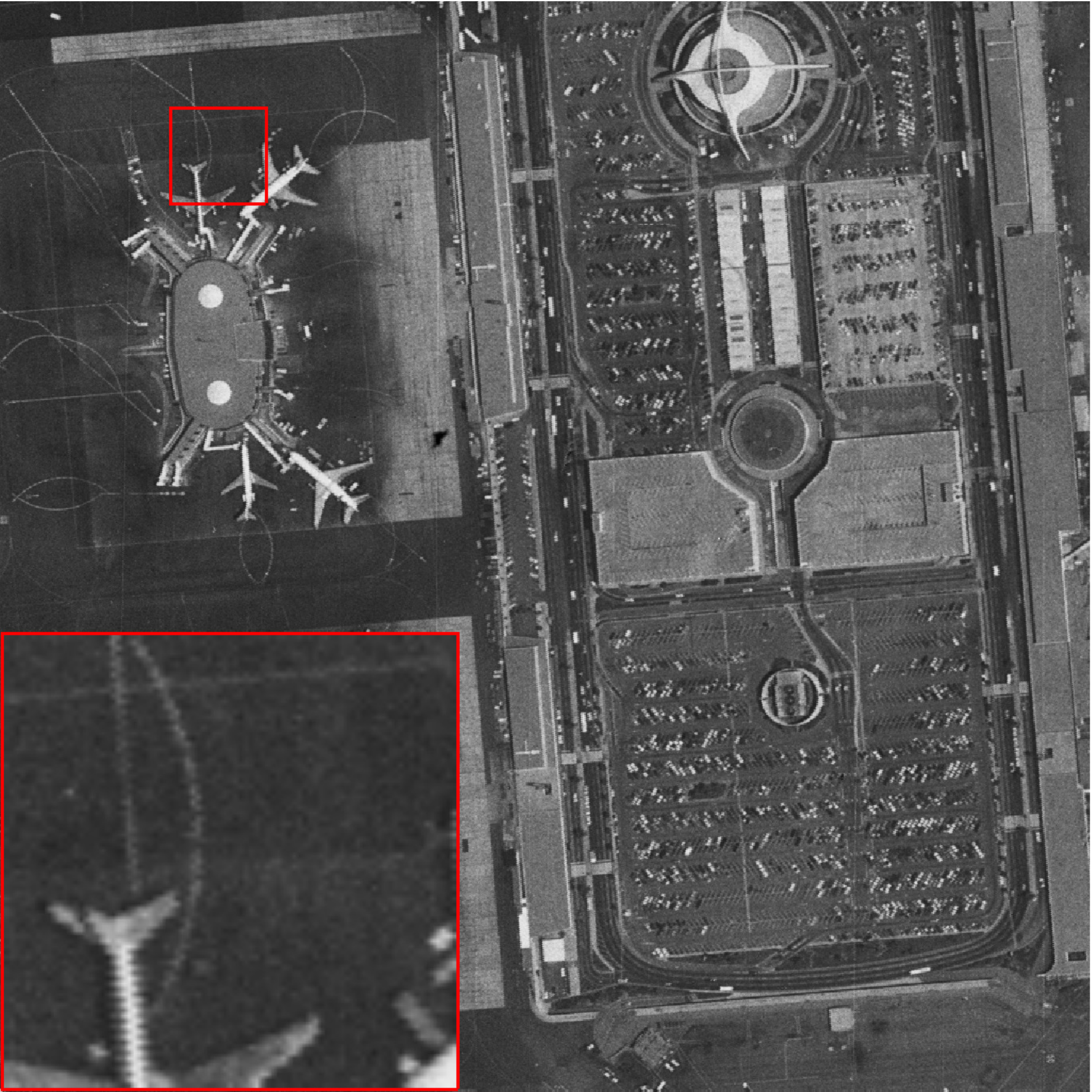}\vspace{0pt}
			\includegraphics[width=\linewidth]{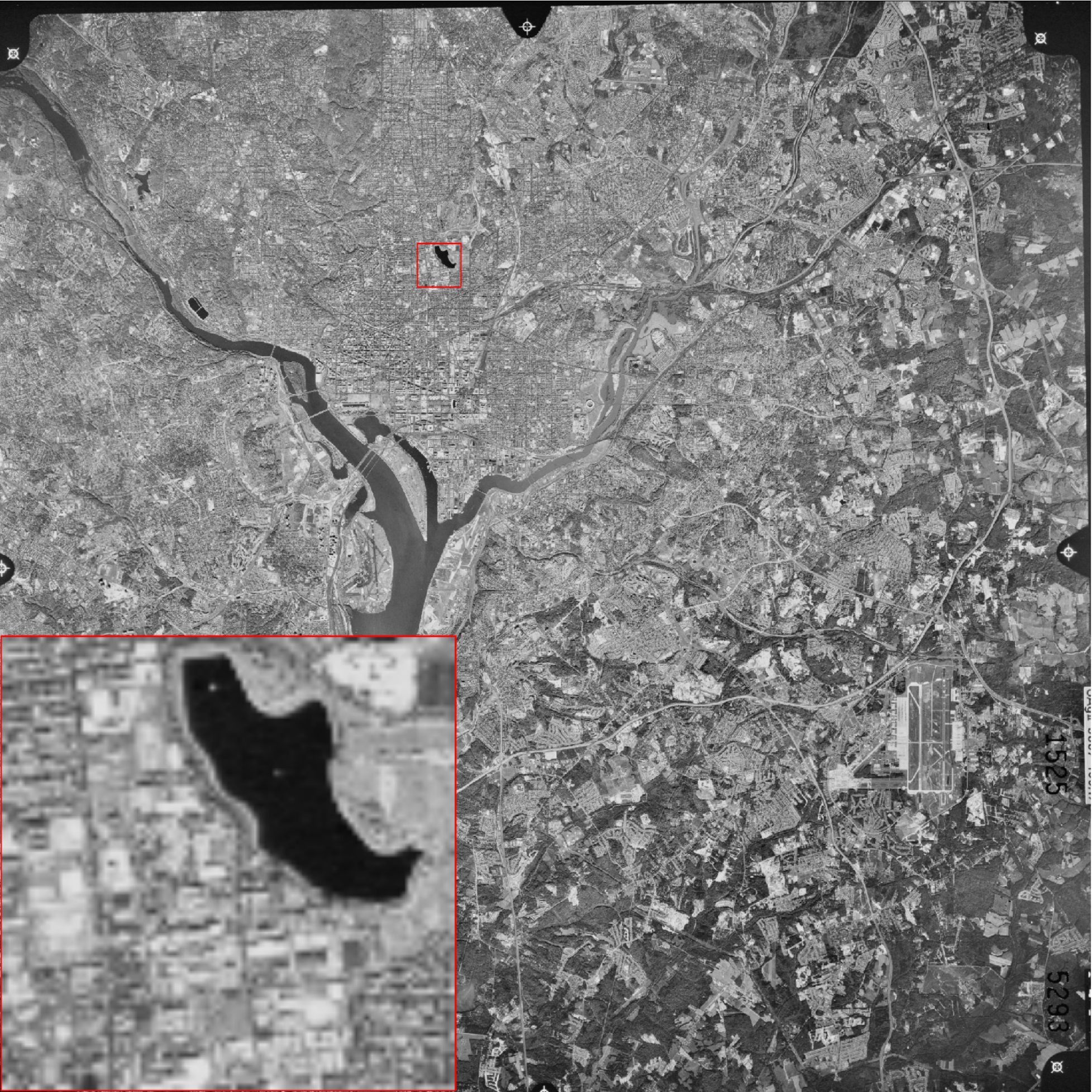}
			\caption{Original}
		\end{subfigure}   	
		\begin{subfigure}[b]{0.138\linewidth}
			\centering
			\includegraphics[width=\linewidth]{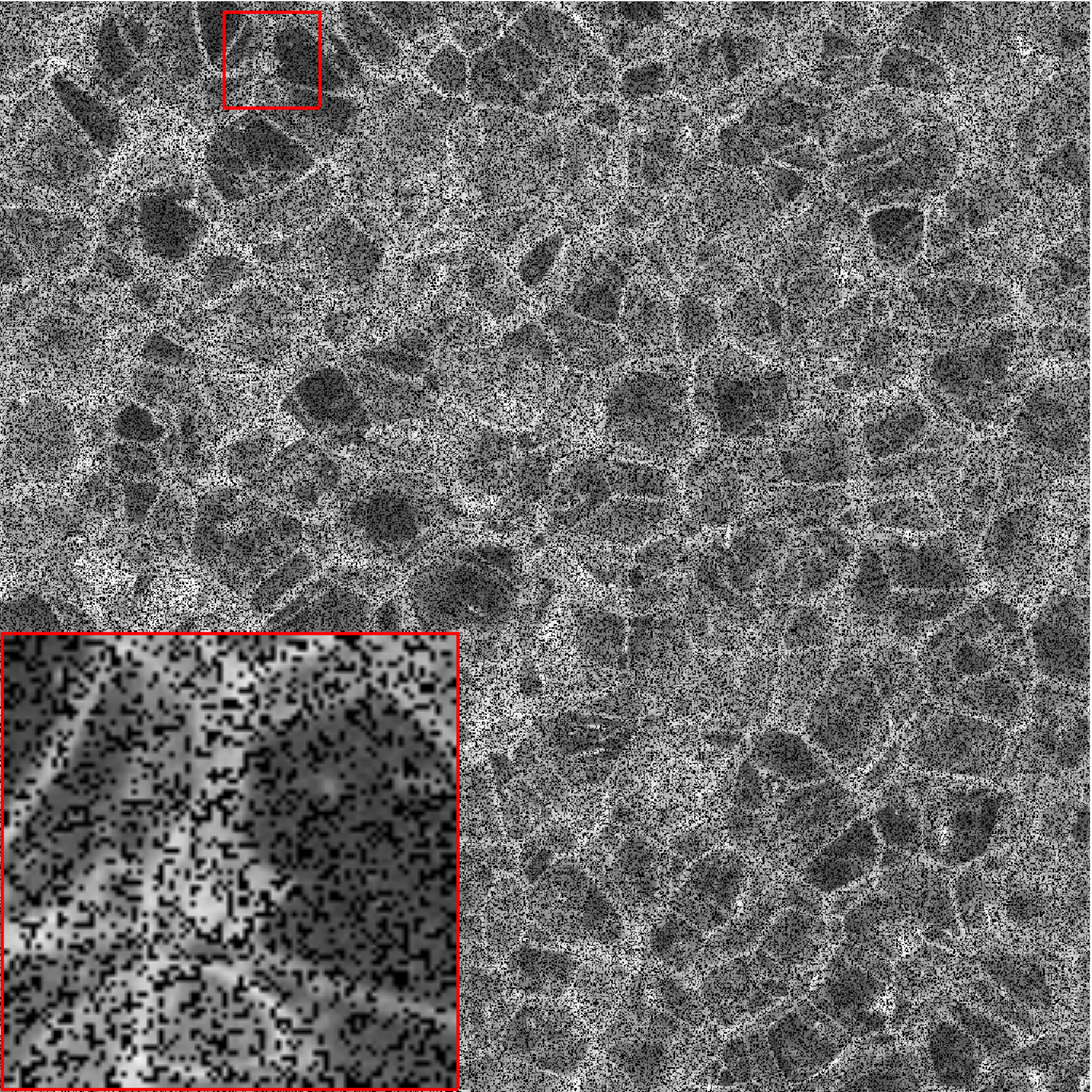}\vspace{0pt}
			\includegraphics[width=\linewidth]{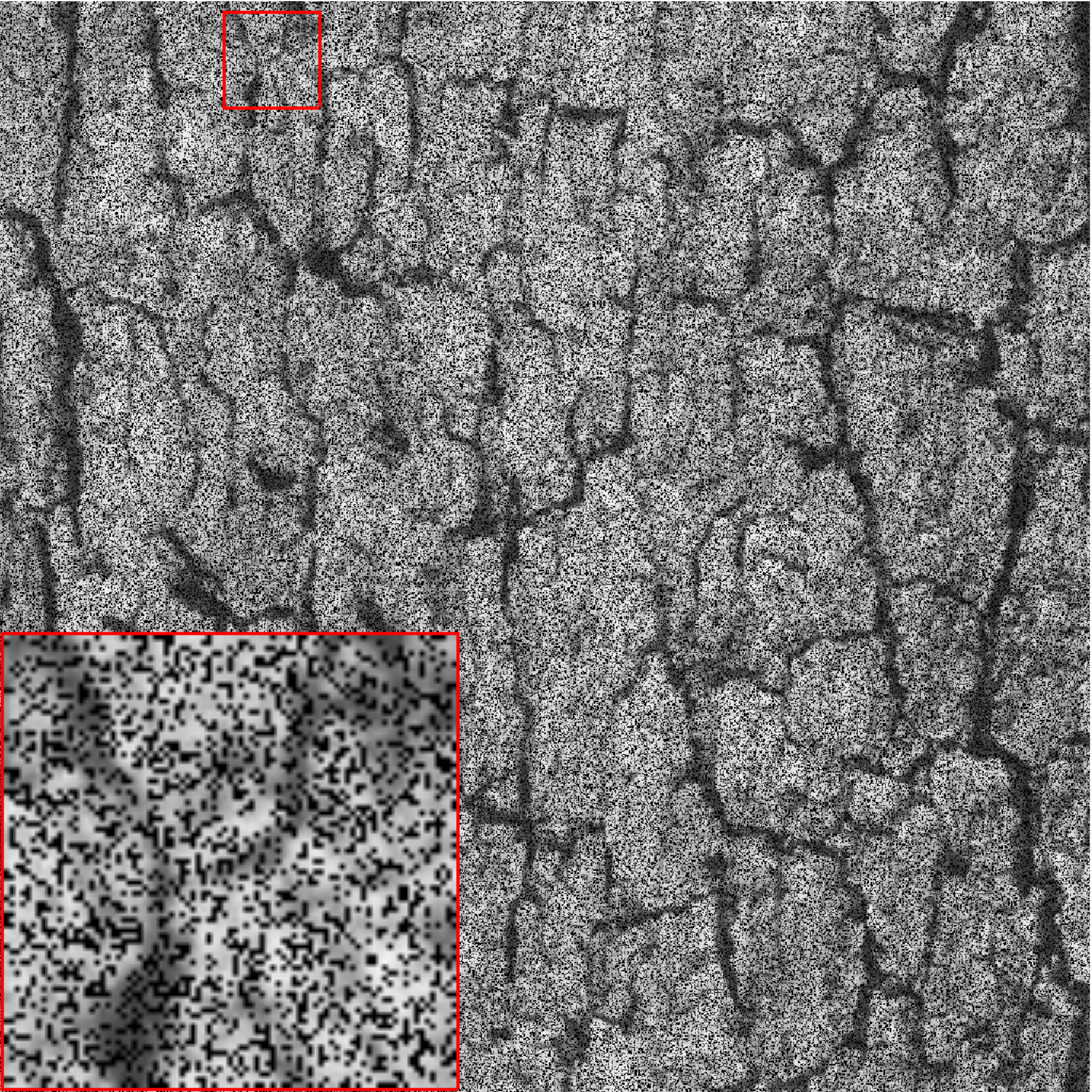}\vspace{0pt}
			\includegraphics[width=\linewidth]{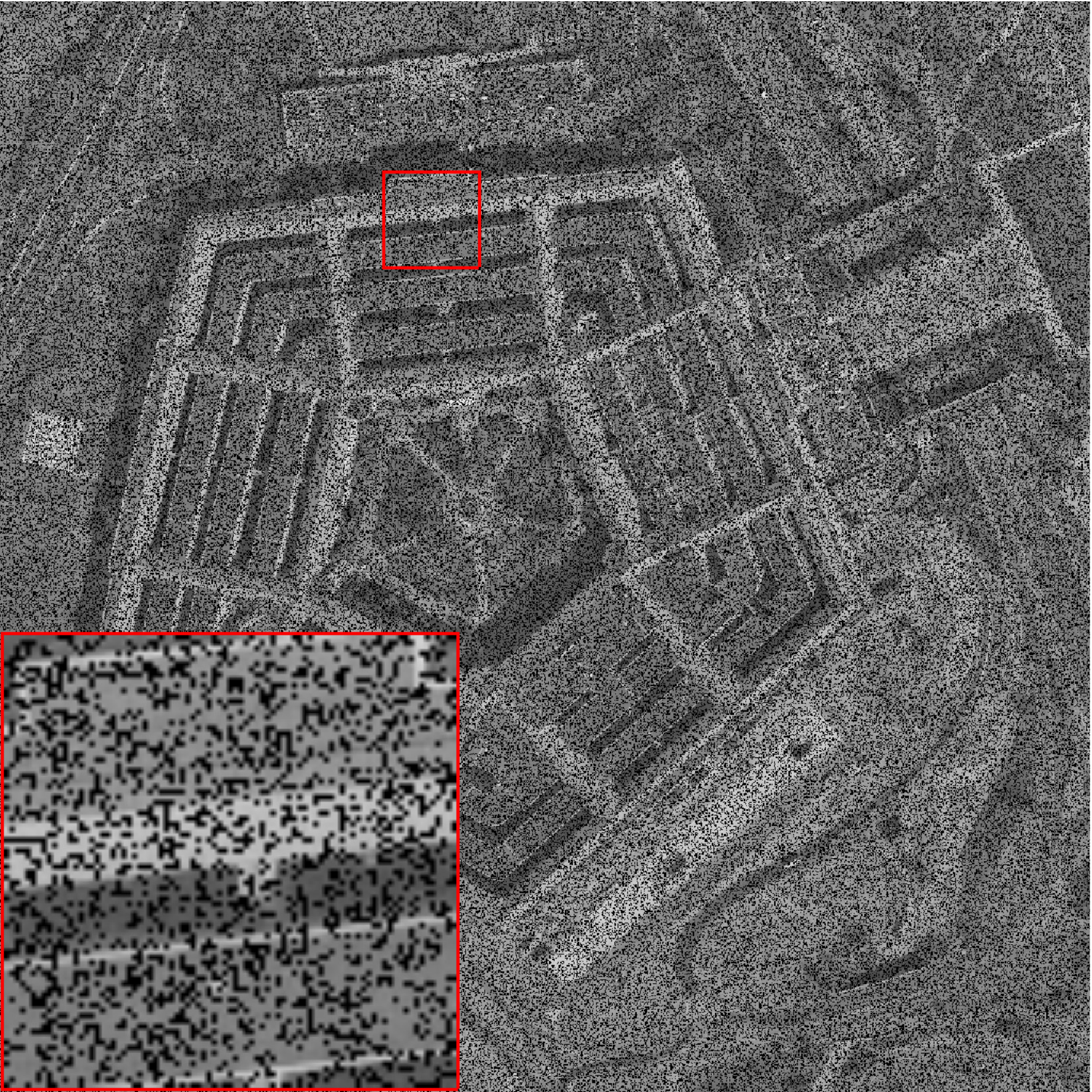}\vspace{0pt}
			\includegraphics[width=\linewidth]{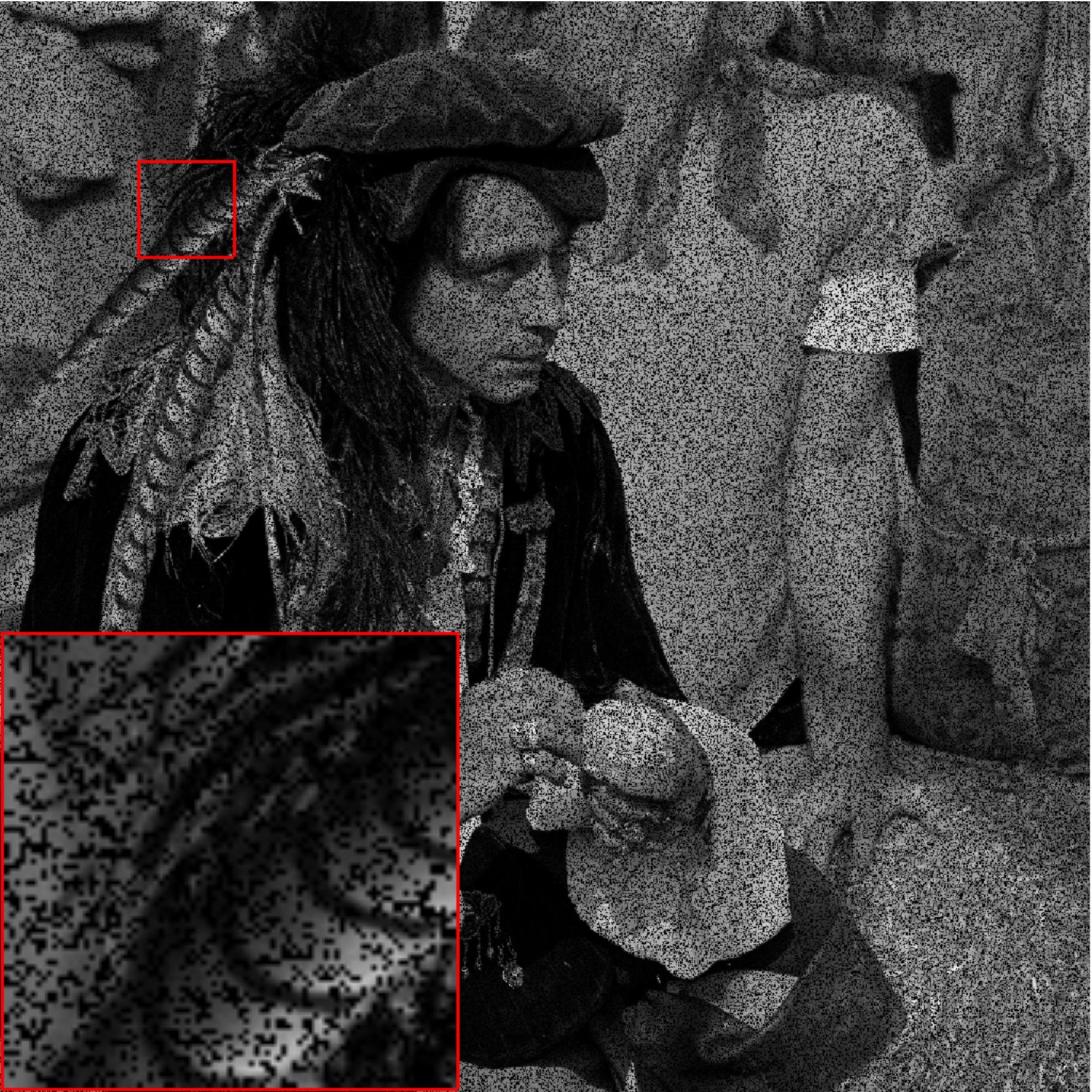}\vspace{0pt}
			\includegraphics[width=\linewidth]{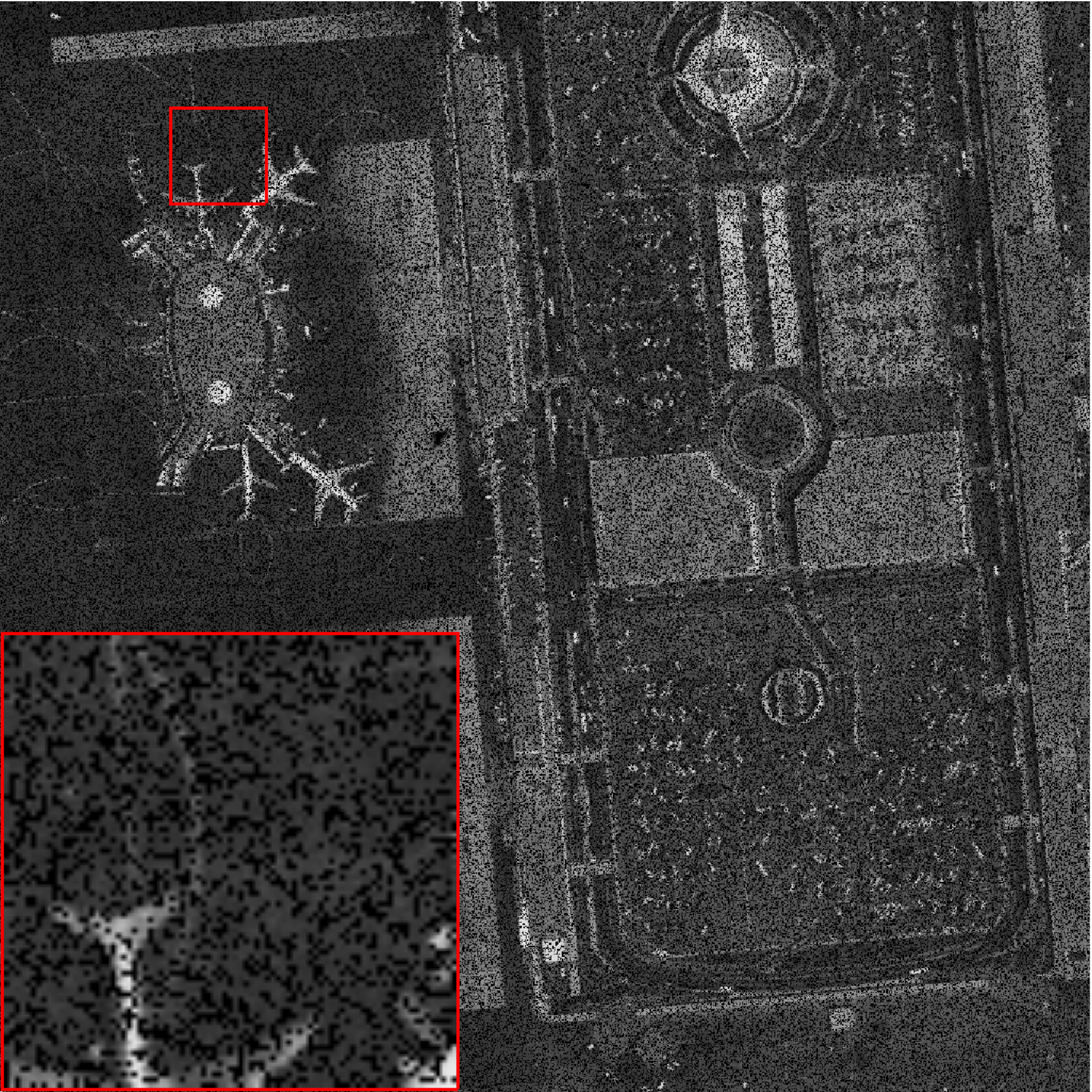}\vspace{0pt}
			\includegraphics[width=\linewidth]{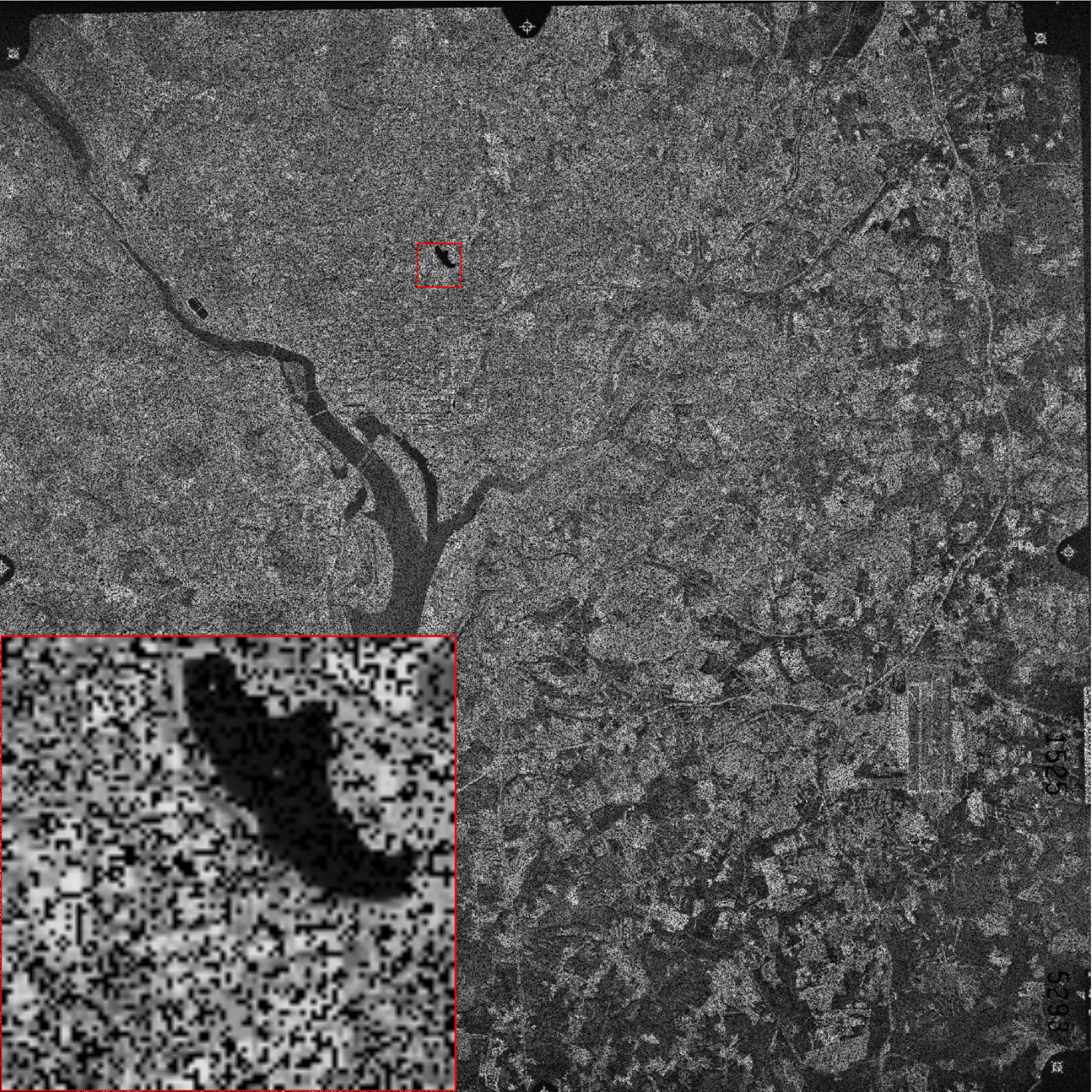}
			\caption{Observed}
		\end{subfigure}
		\begin{subfigure}[b]{0.138\linewidth}
			\centering
			\includegraphics[width=\linewidth]{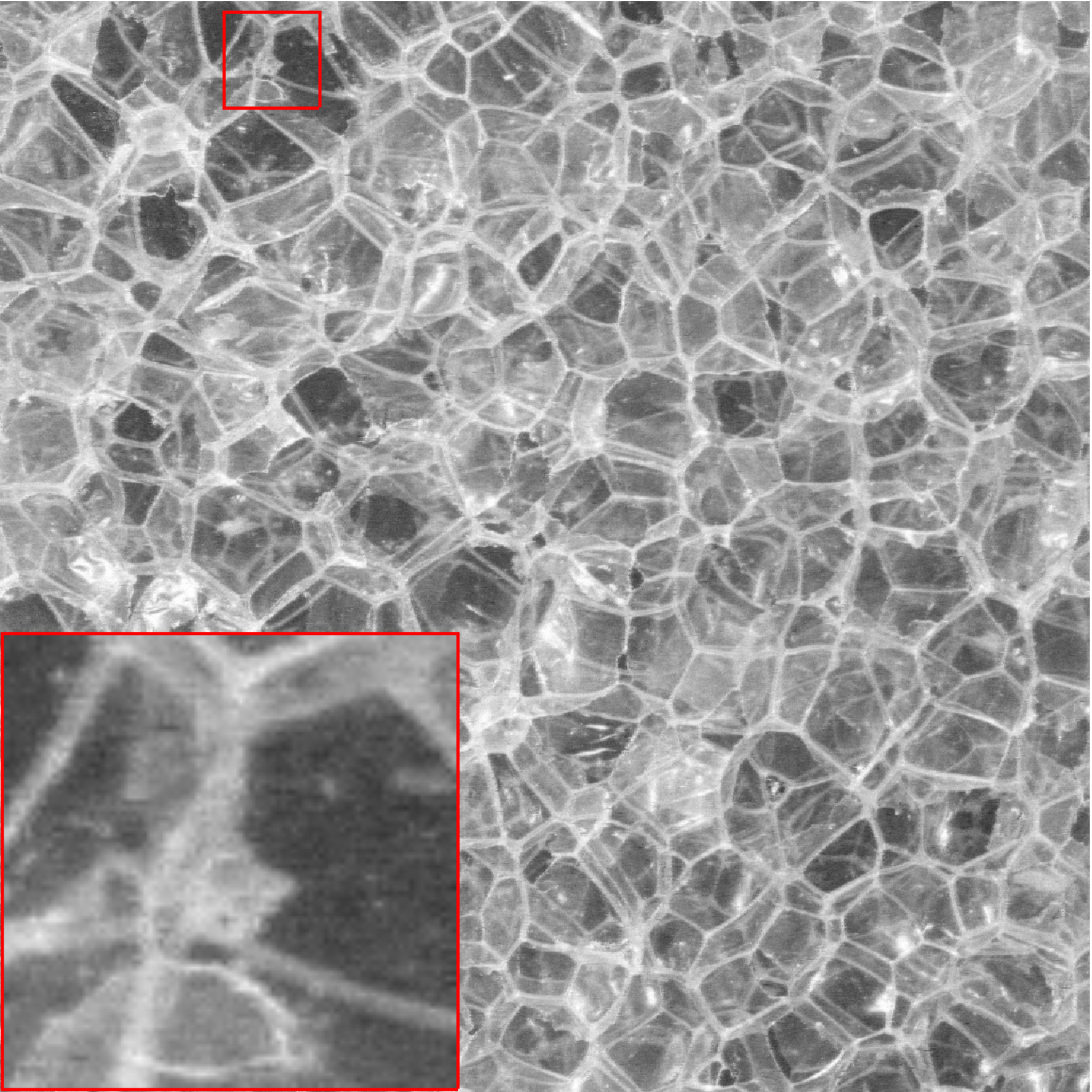}\vspace{0pt}
			\includegraphics[width=\linewidth]{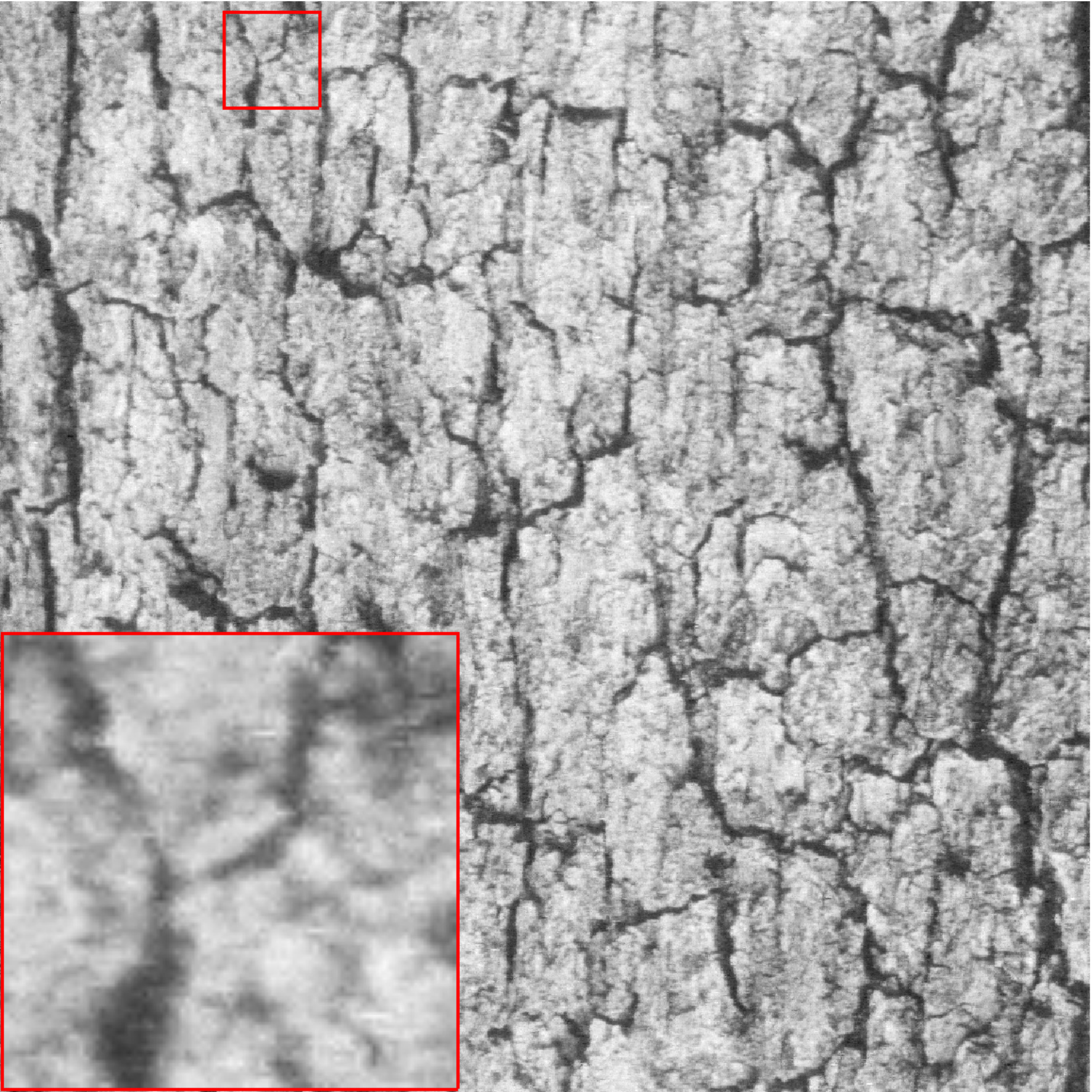}\vspace{0pt}
			\includegraphics[width=\linewidth]{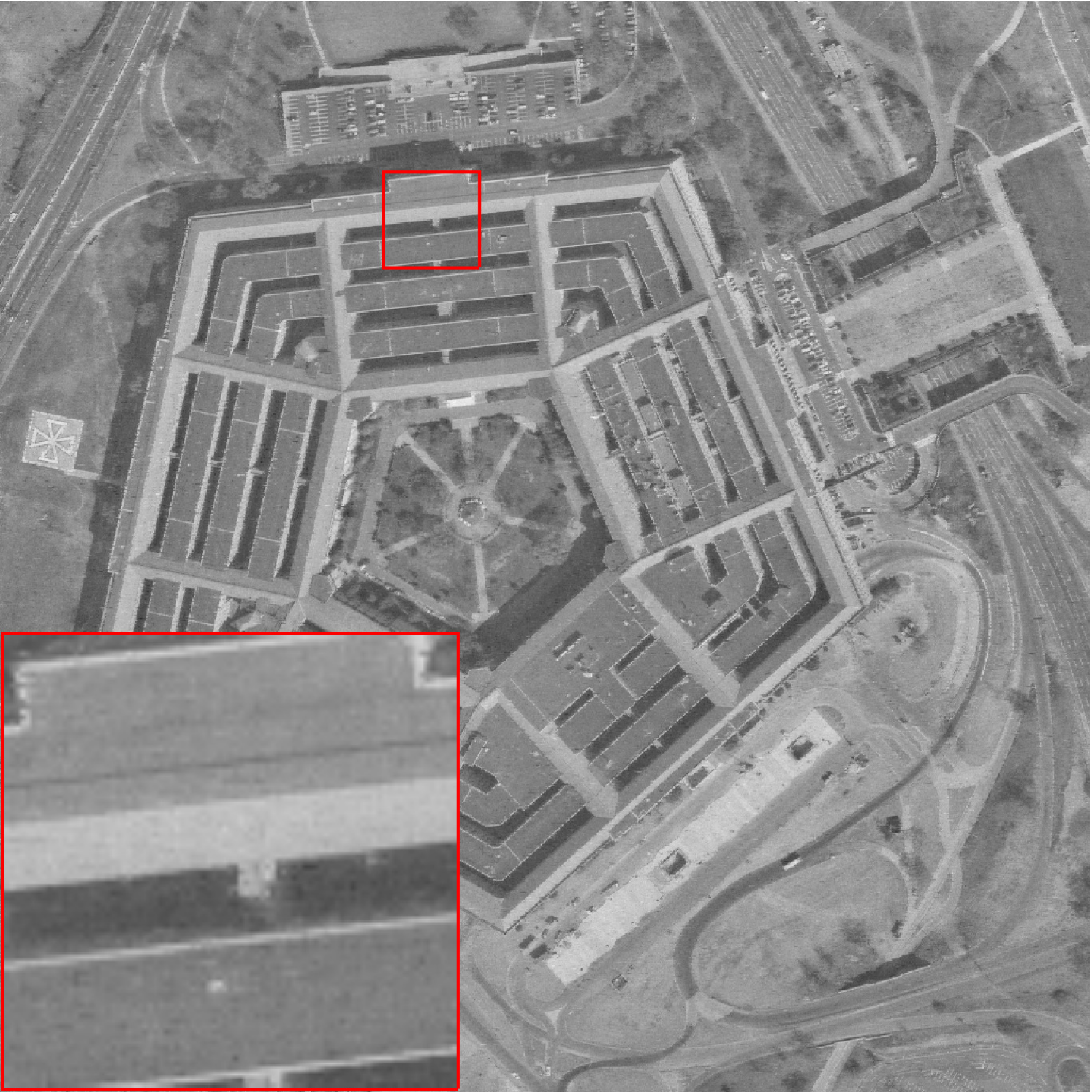}\vspace{0pt}
			\includegraphics[width=\linewidth]{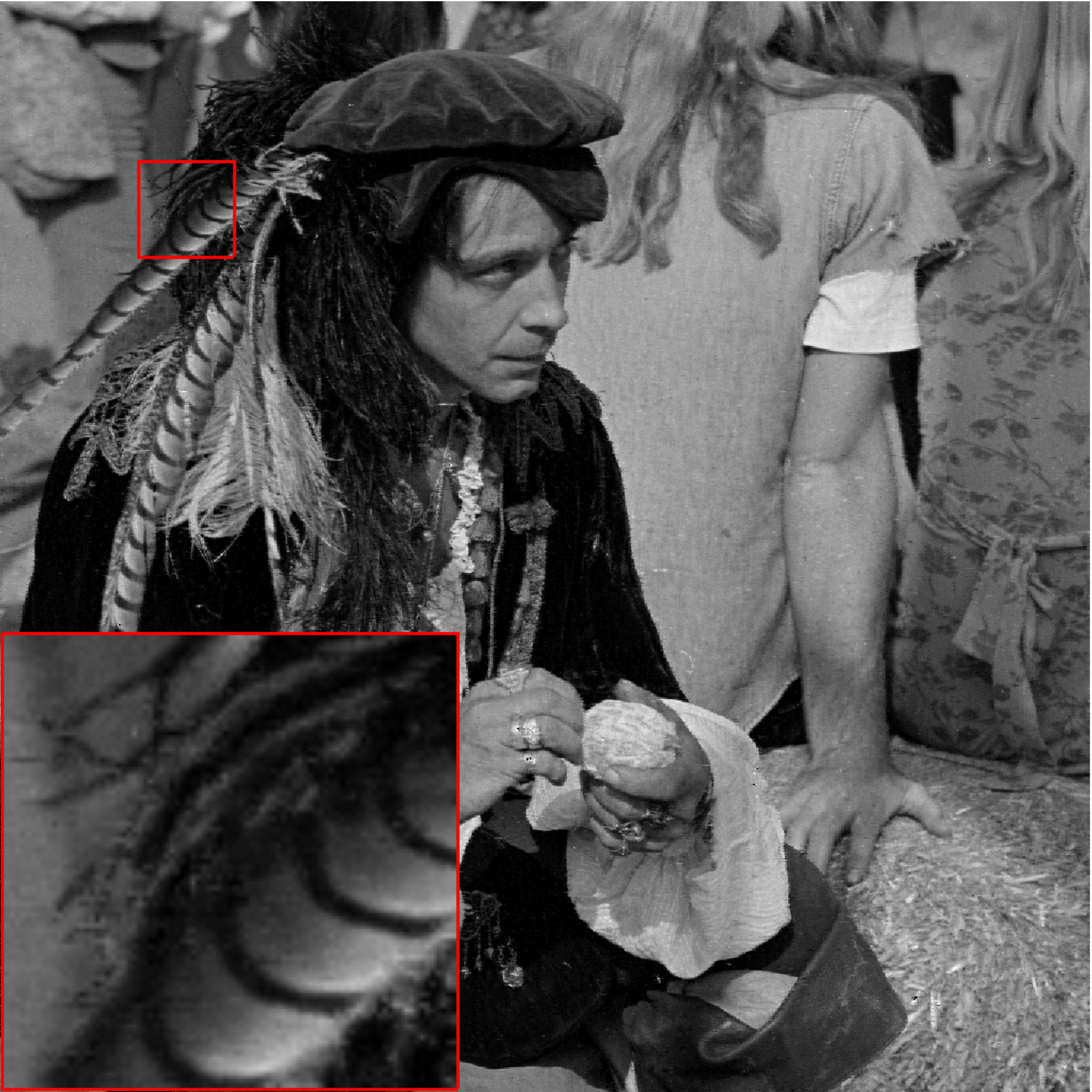}\vspace{0pt}
			\includegraphics[width=\linewidth]{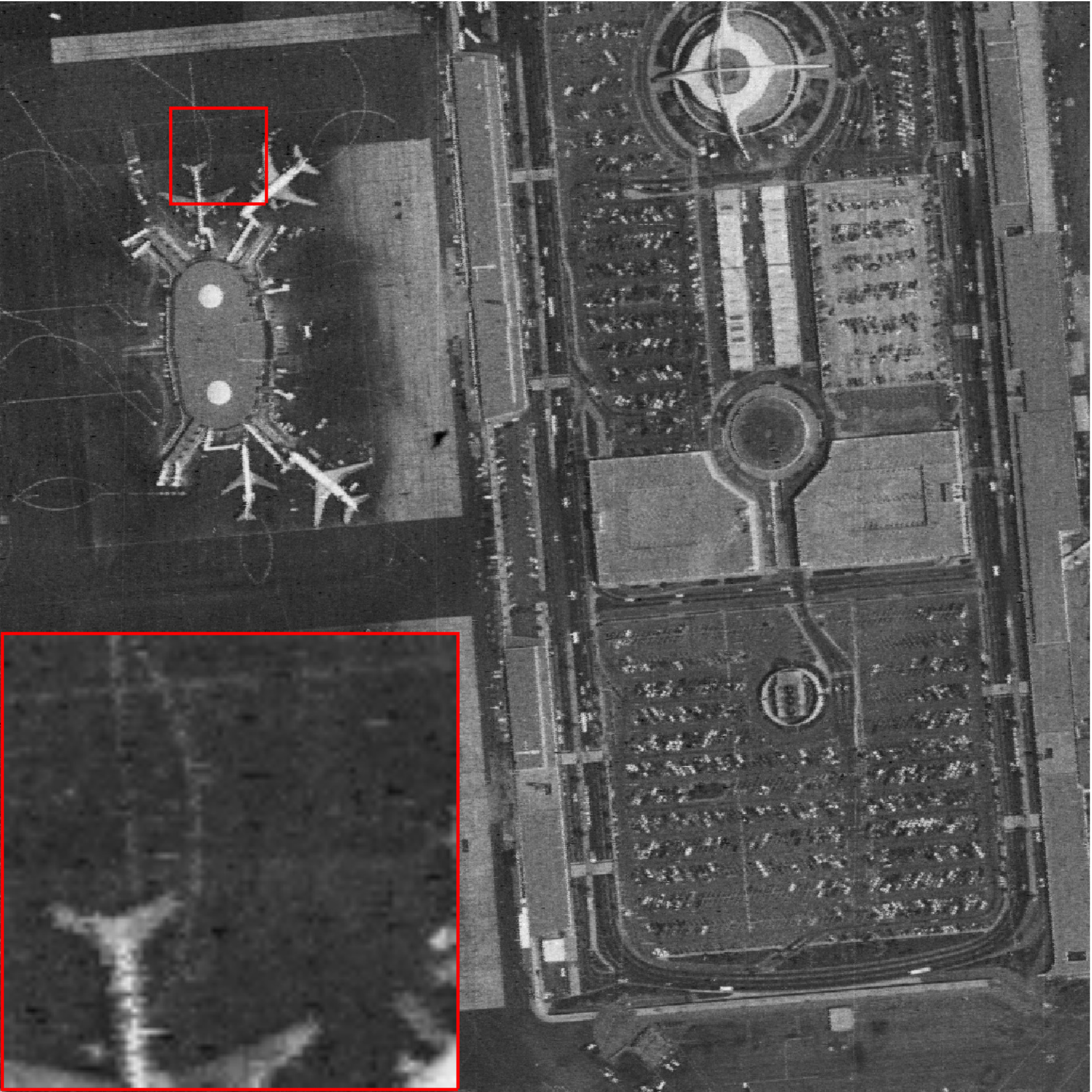}\vspace{0pt}
			\includegraphics[width=\linewidth]{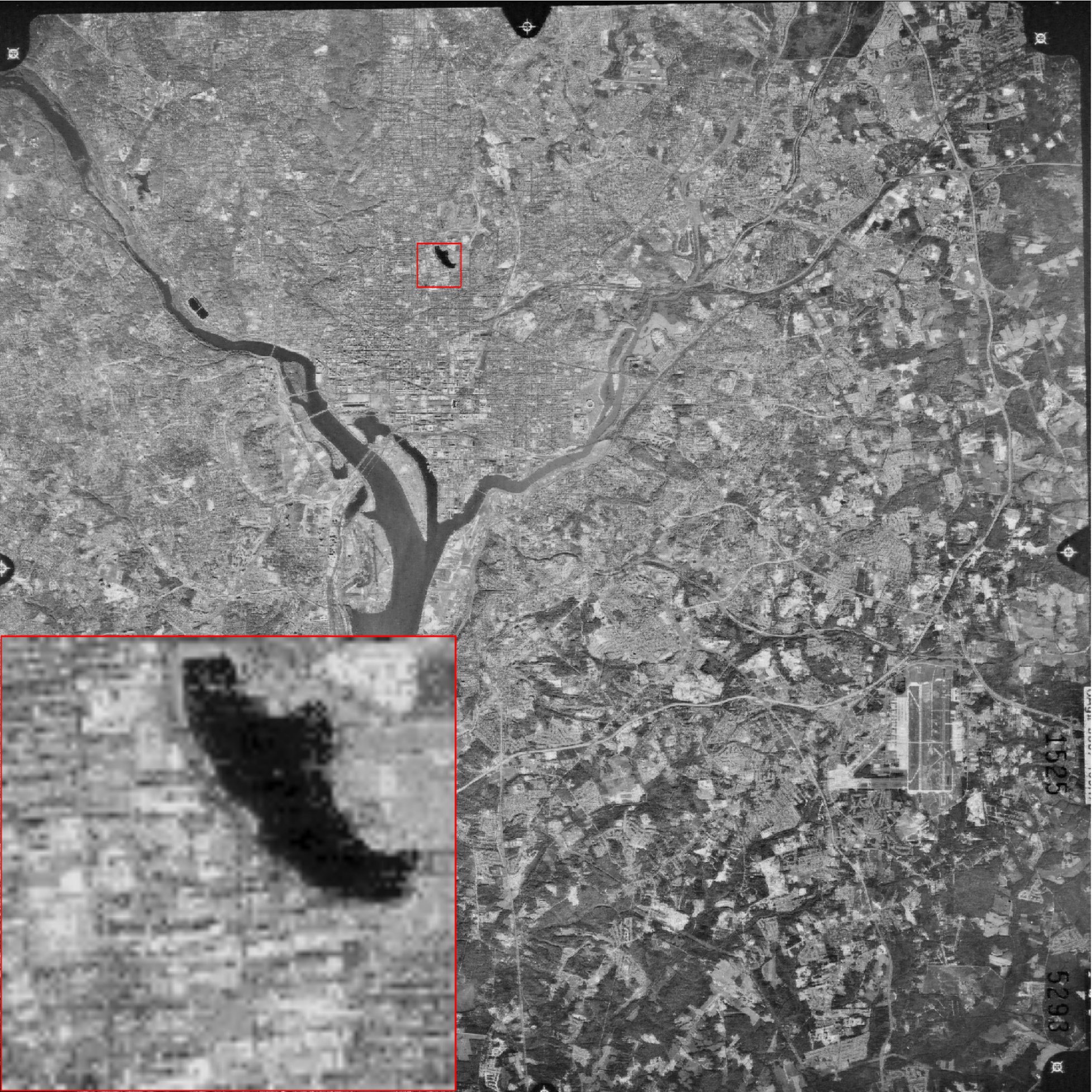}
			\caption{TCTF-M}
		\end{subfigure}
		\begin{subfigure}[b]{0.138\linewidth}
			\centering
			\includegraphics[width=\linewidth]{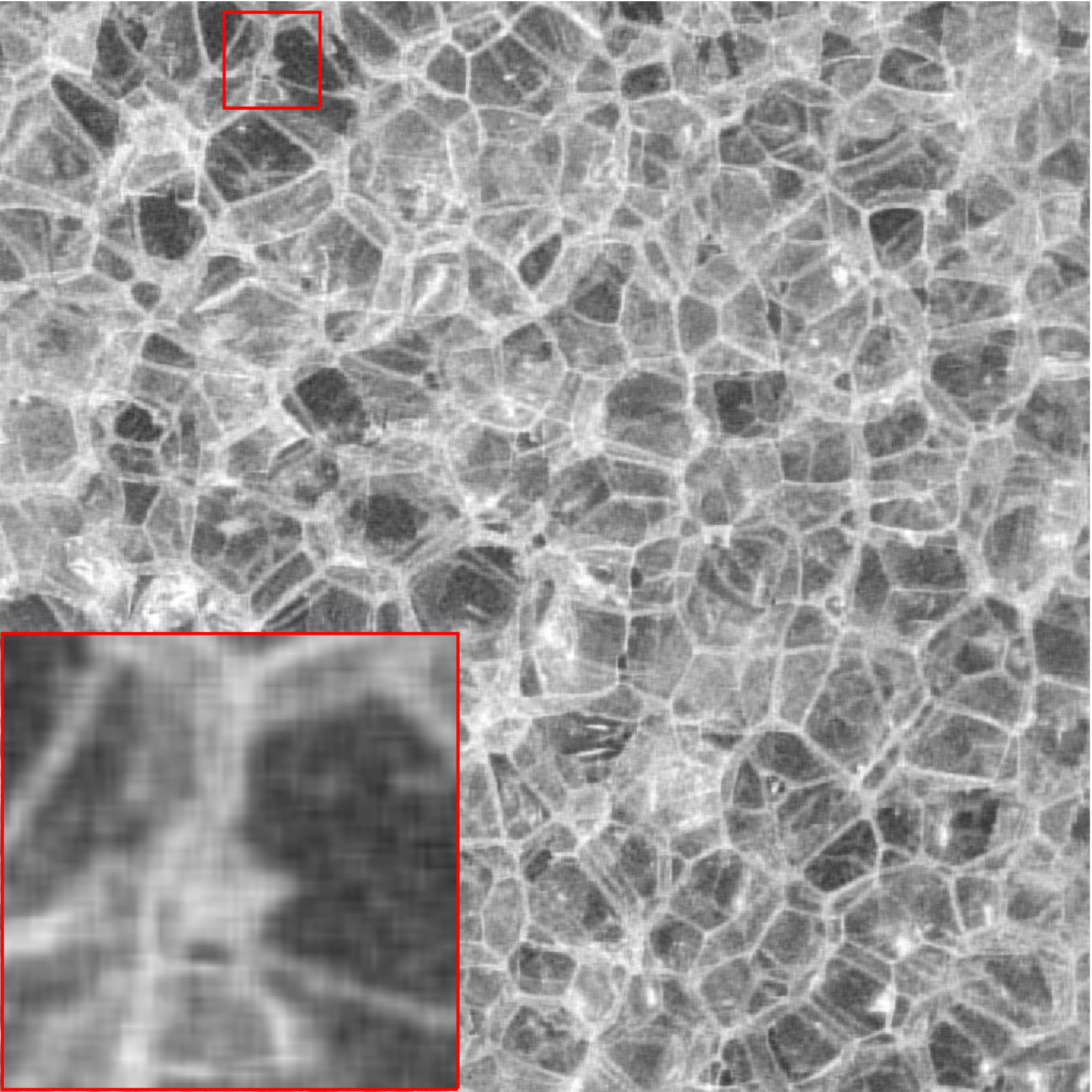}\vspace{0pt}
			\includegraphics[width=\linewidth]{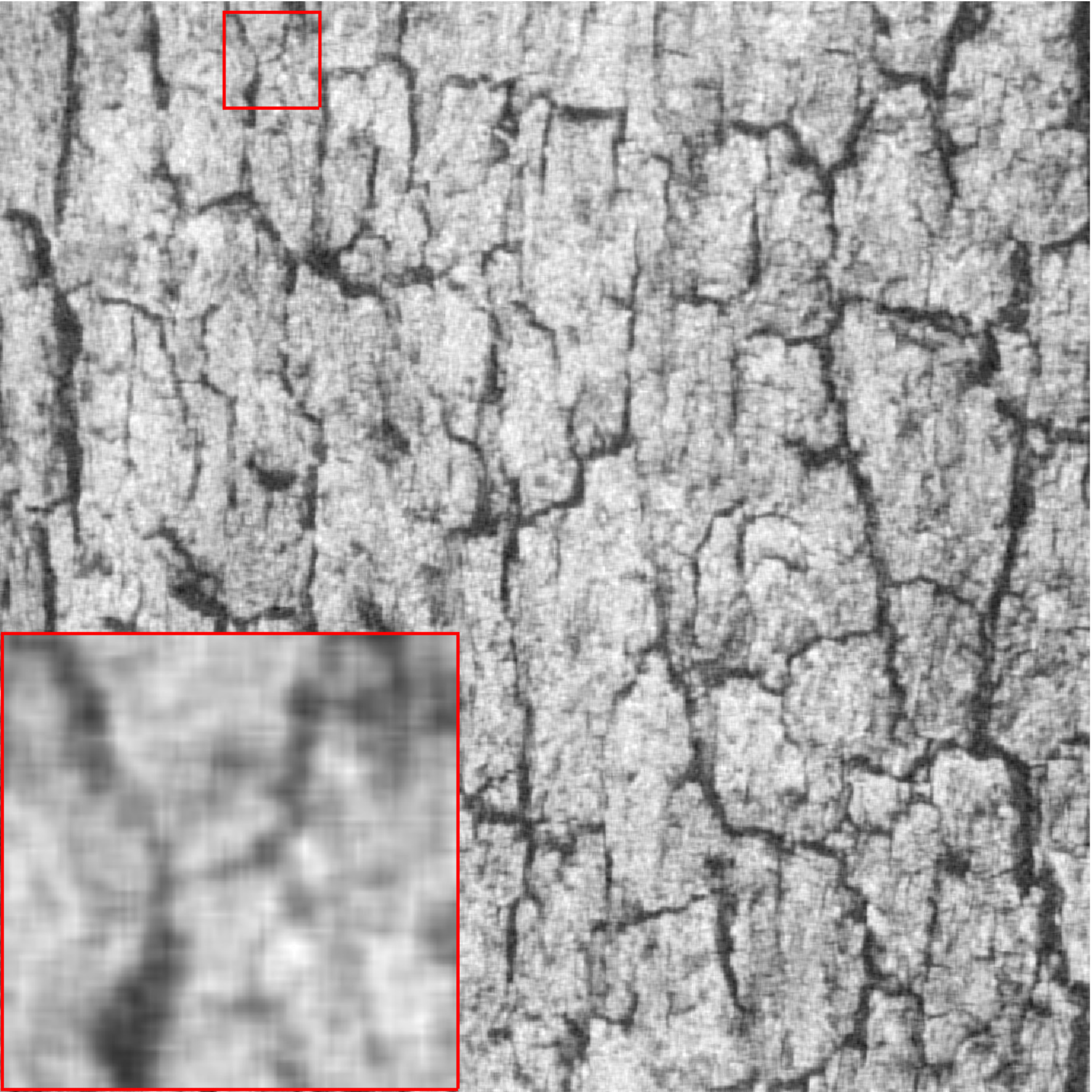}\vspace{0pt}
			\includegraphics[width=\linewidth]{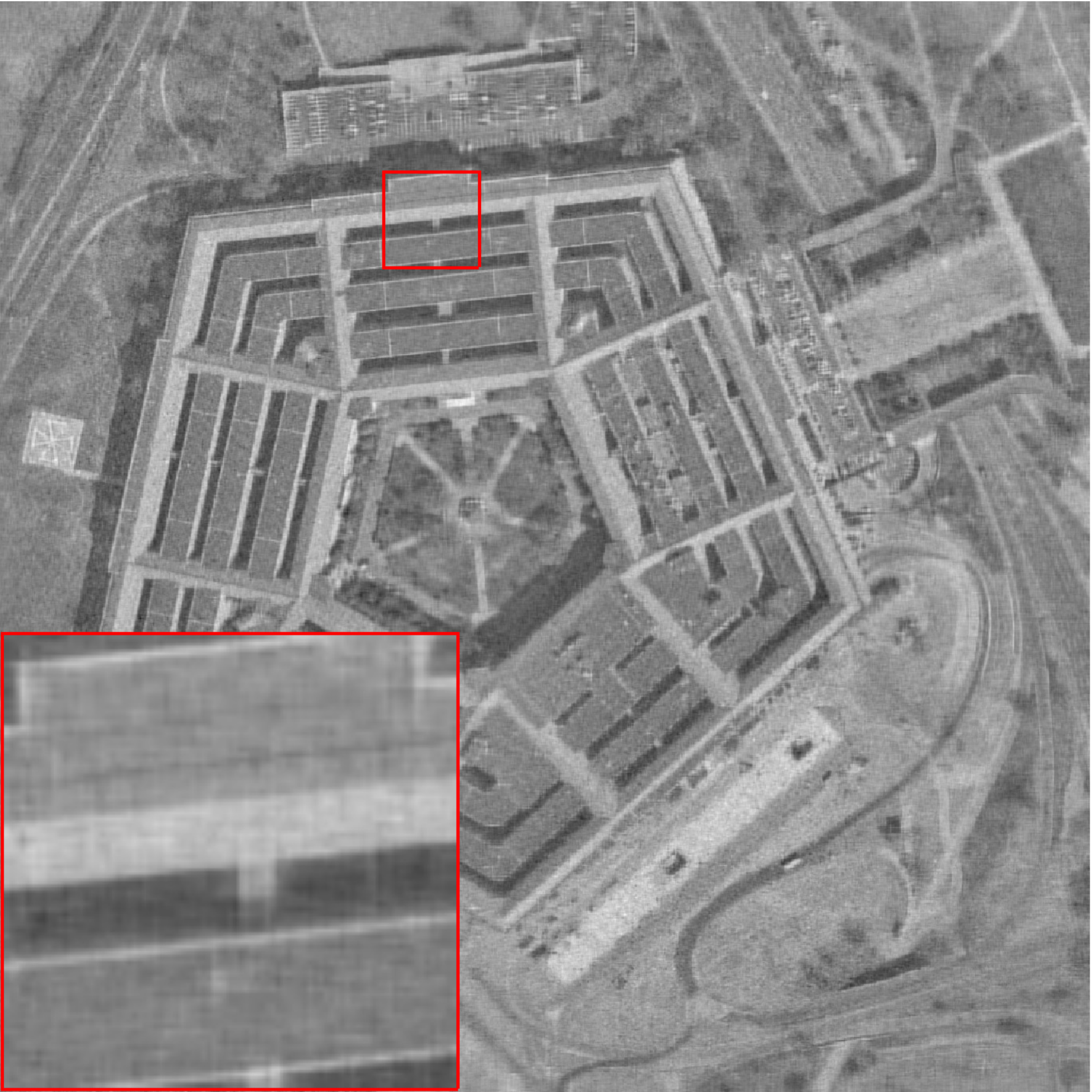}\vspace{0pt}
			\includegraphics[width=\linewidth]{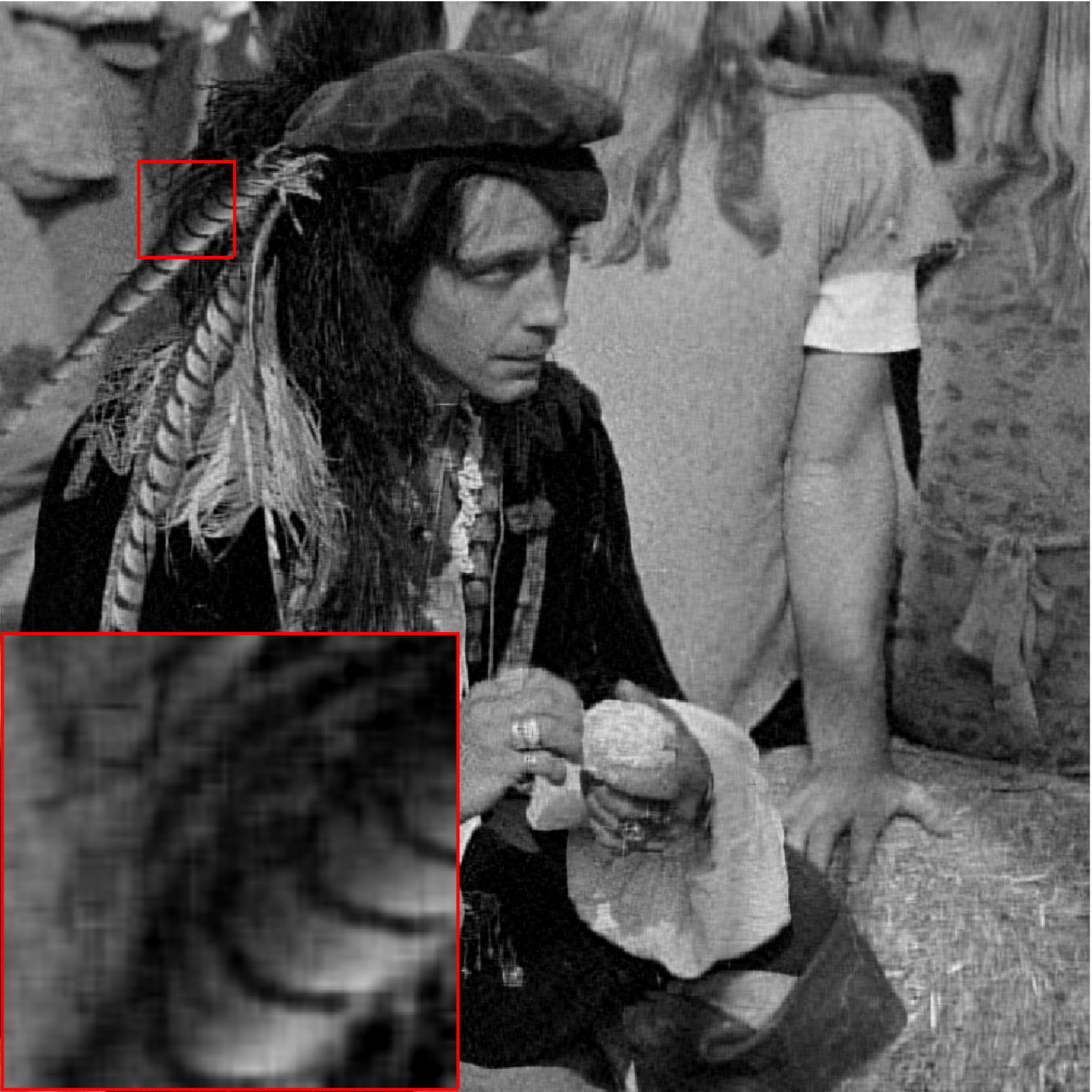}\vspace{0pt}
			\includegraphics[width=\linewidth]{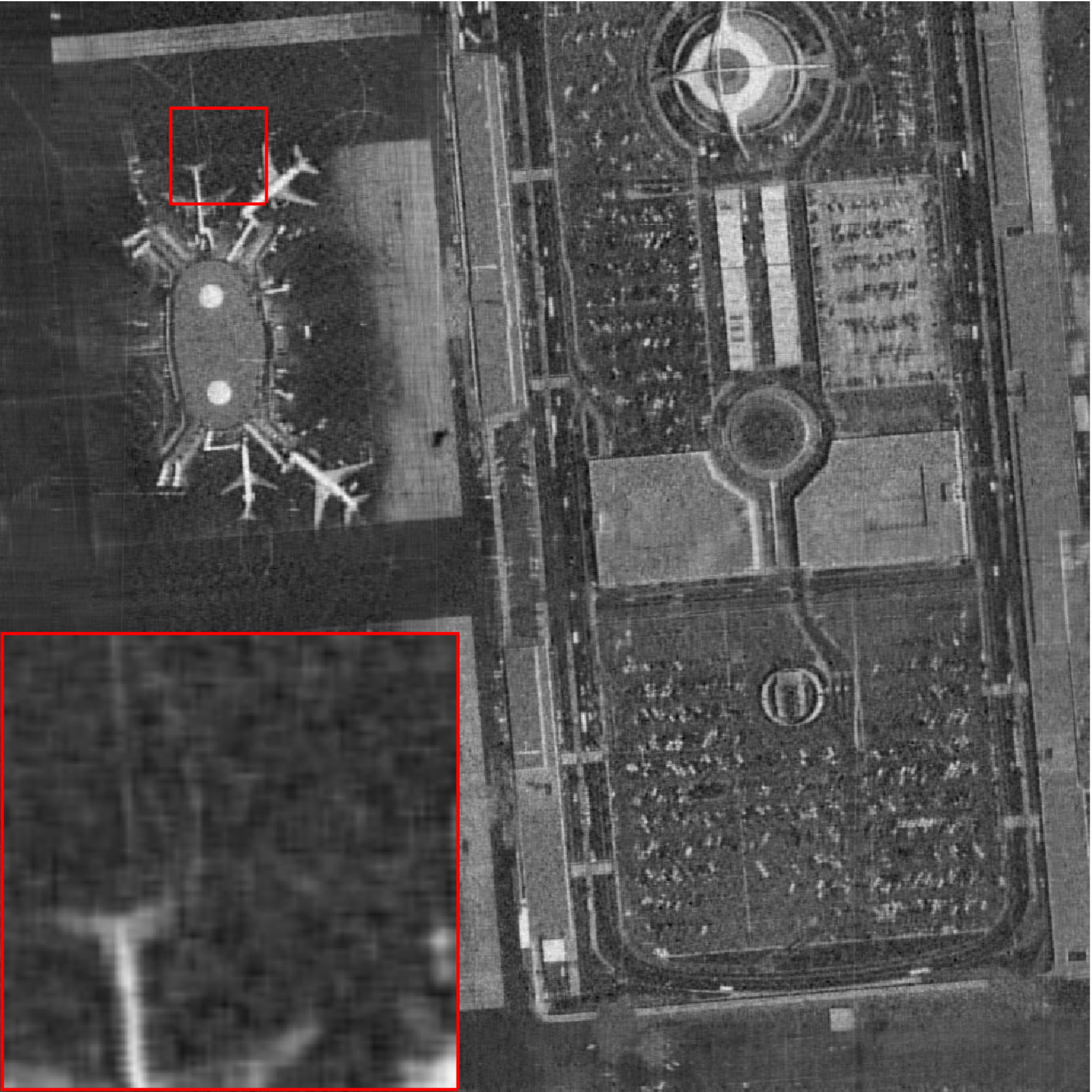}\vspace{0pt}
			\includegraphics[width=\linewidth]{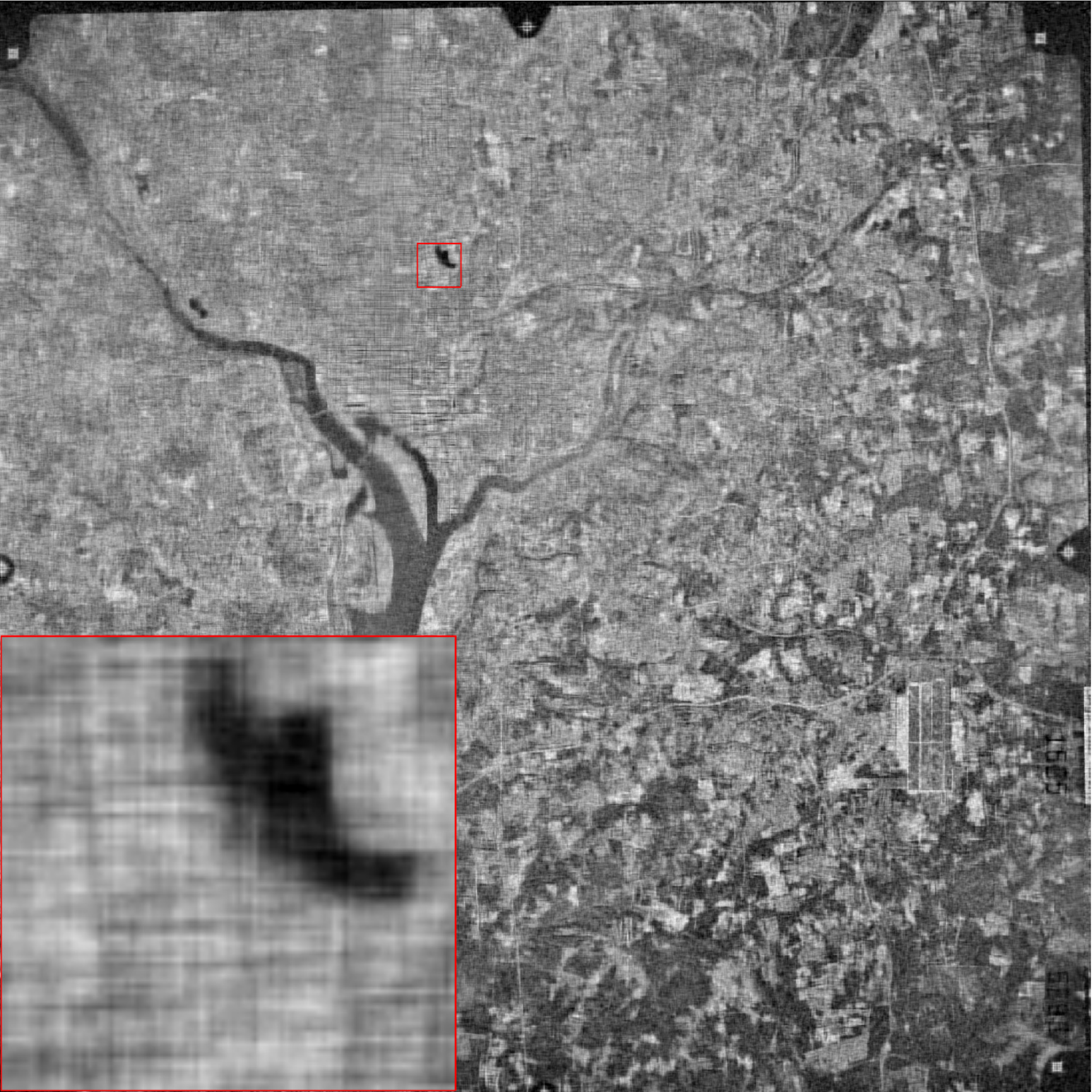}
			\caption{SRMF}
		\end{subfigure}	
		\begin{subfigure}[b]{0.138\linewidth}
			\centering
			\includegraphics[width=\linewidth]{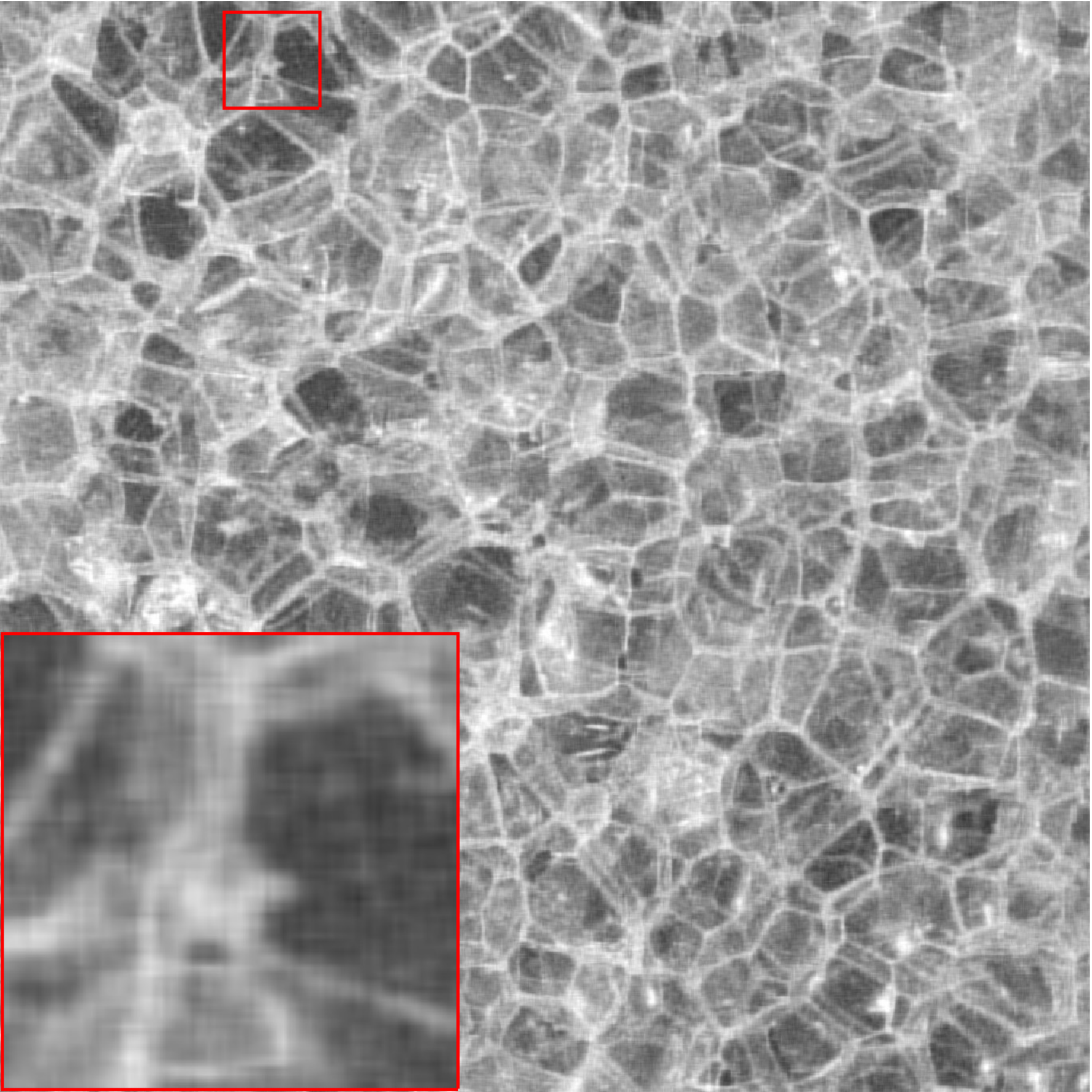}\vspace{0pt}
			\includegraphics[width=\linewidth]{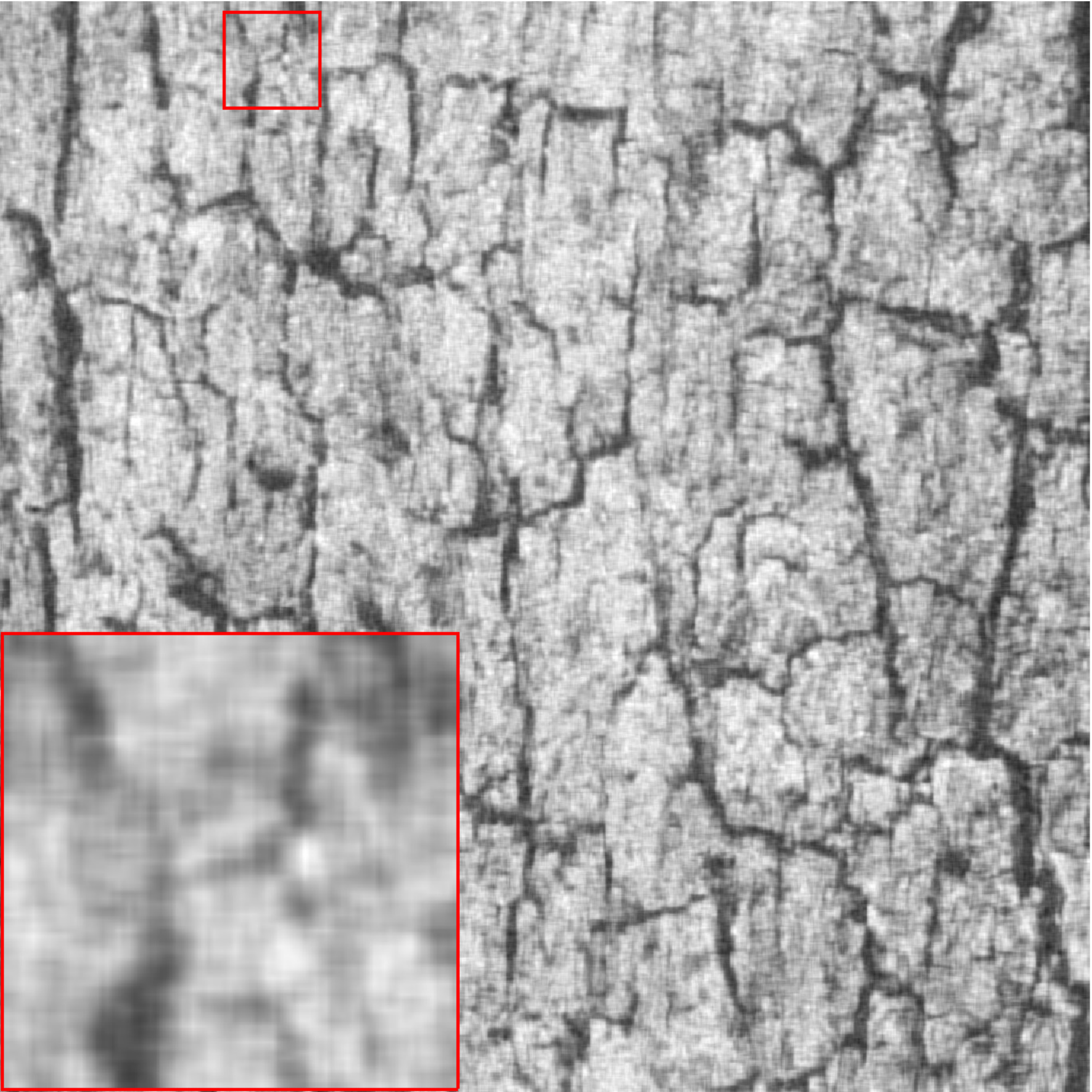}\vspace{0pt}		
			\includegraphics[width=\linewidth]{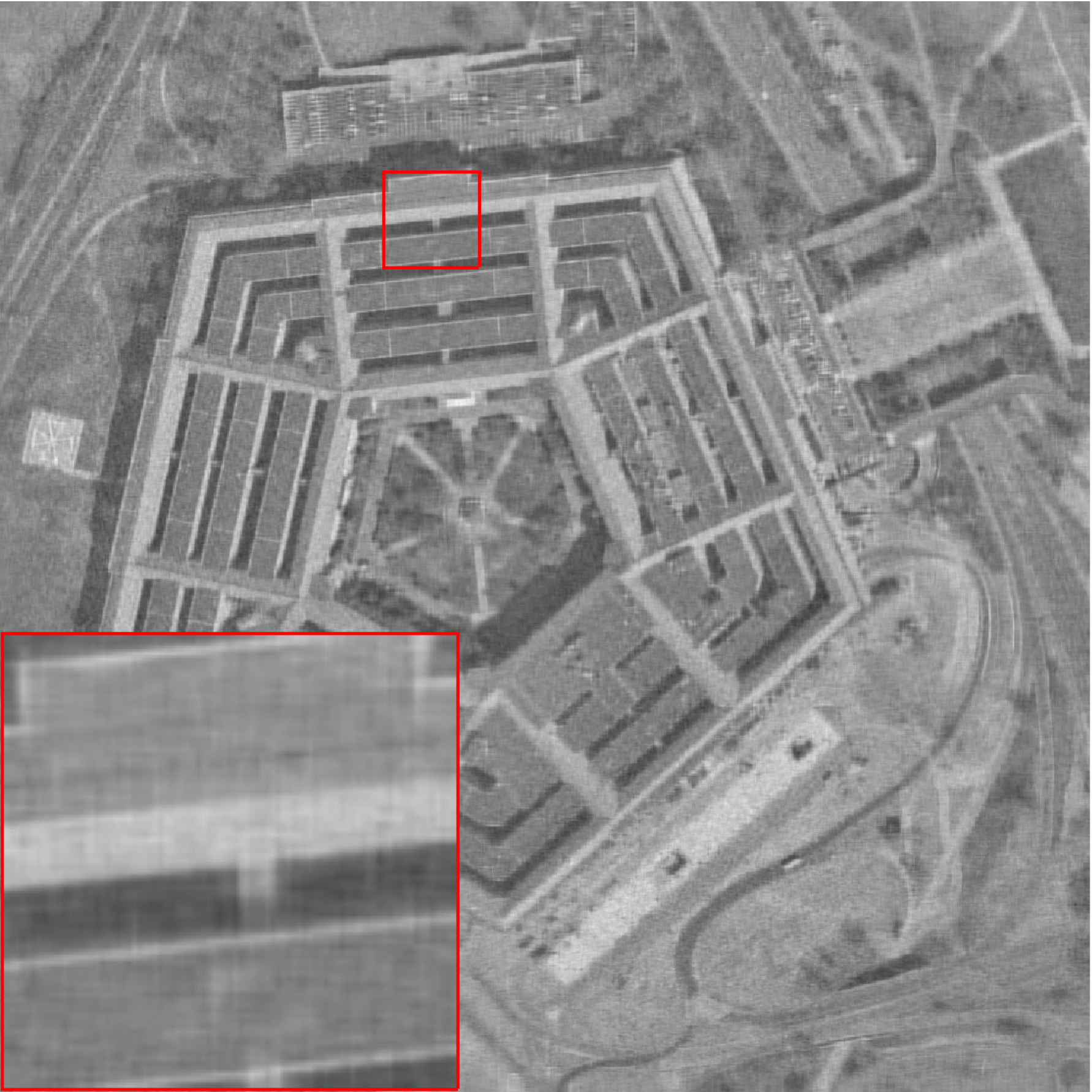}\vspace{0pt}
			\includegraphics[width=\linewidth]{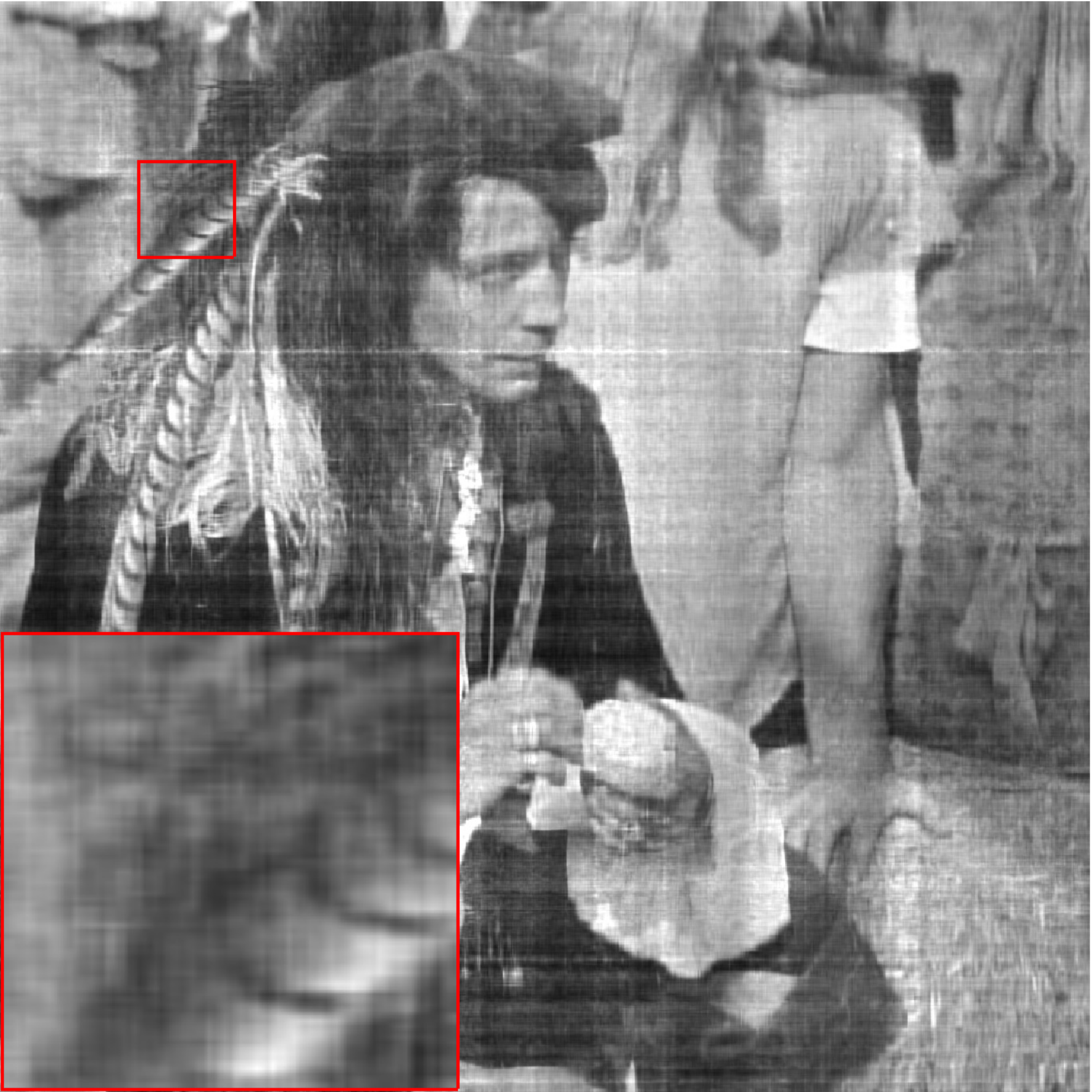}\vspace{0pt}
			\includegraphics[width=\linewidth]{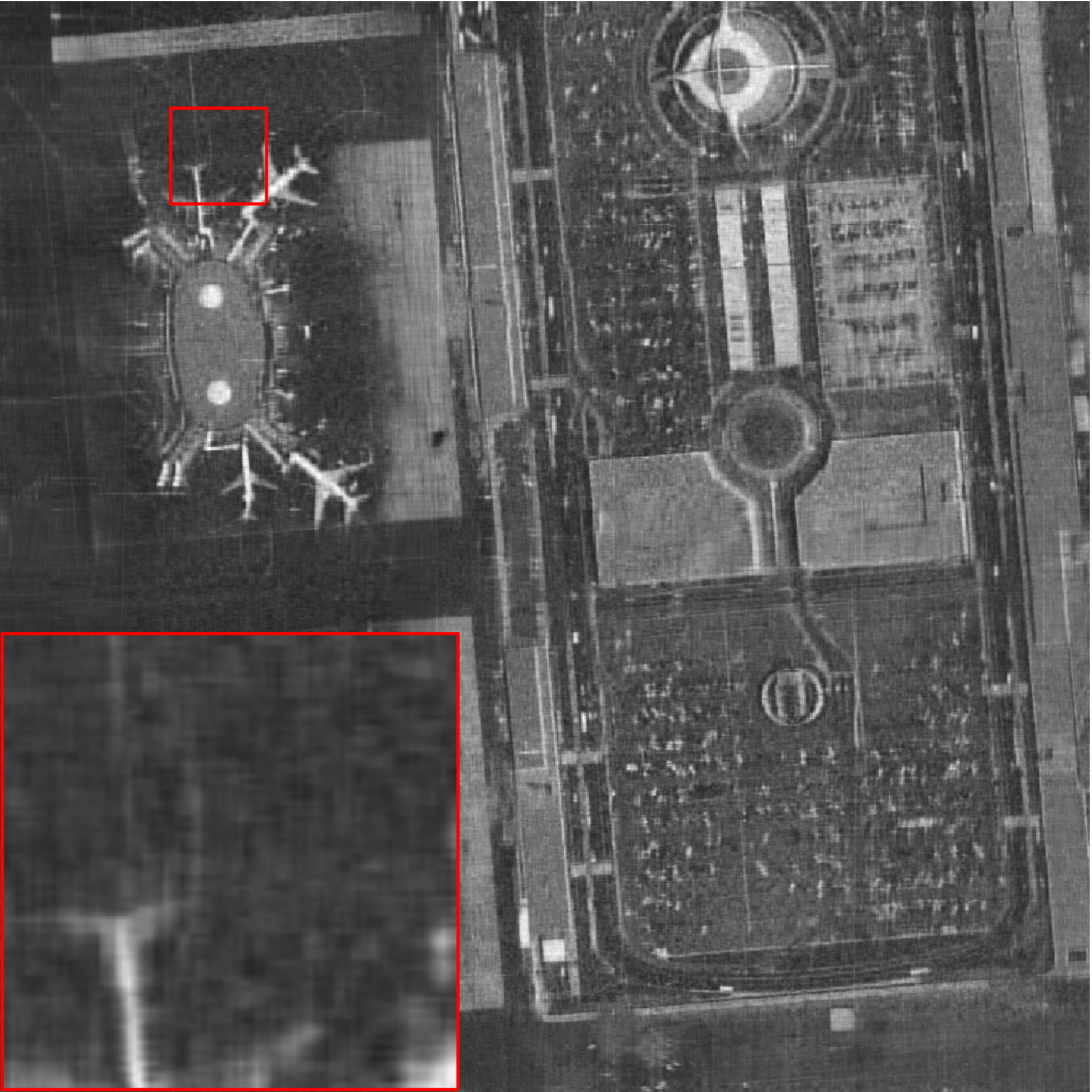}\vspace{0pt}
			\includegraphics[width=\linewidth]{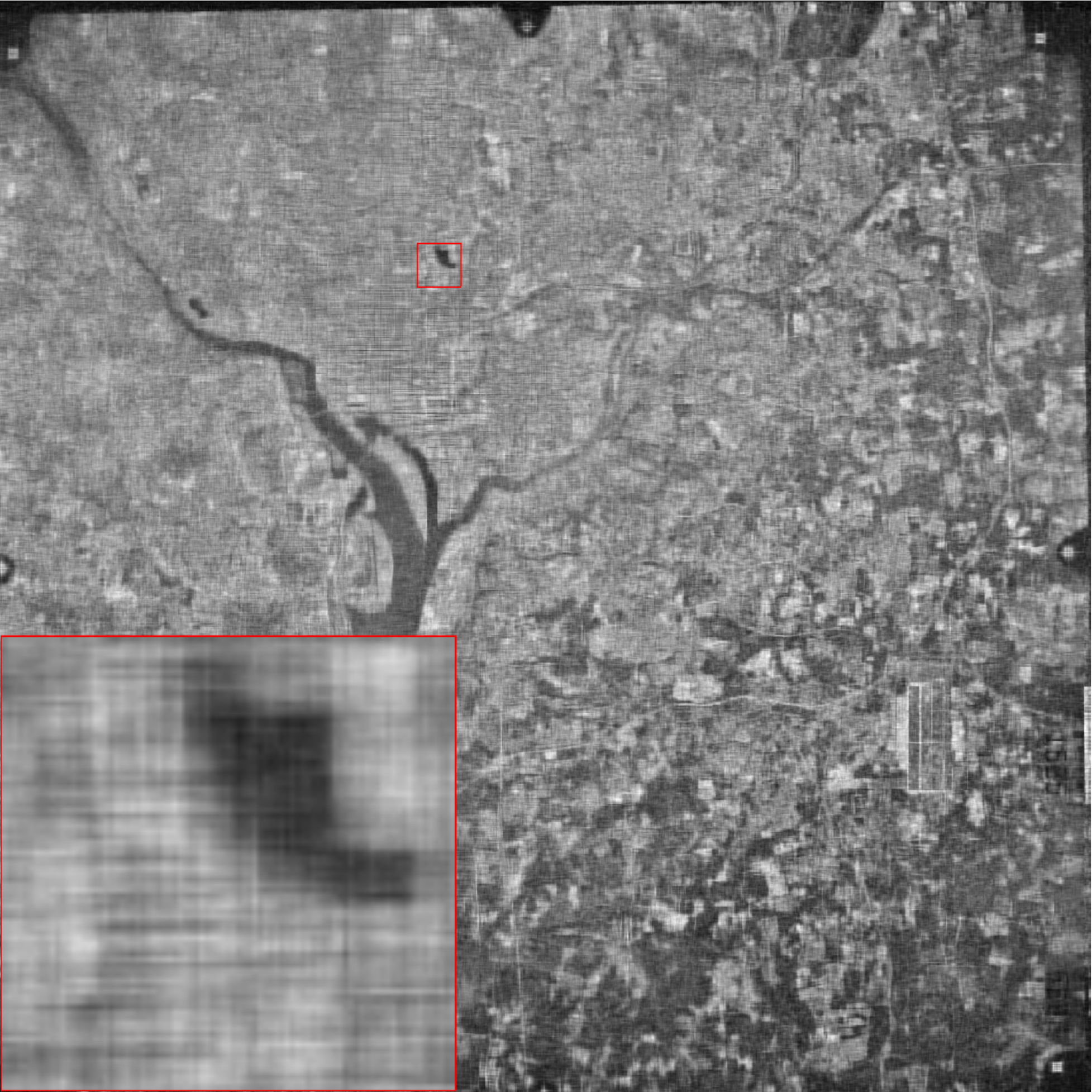}
			\caption{MC-NMF}
		\end{subfigure}
		\begin{subfigure}[b]{0.138\linewidth}
		   \centering
		   \includegraphics[width=\linewidth]{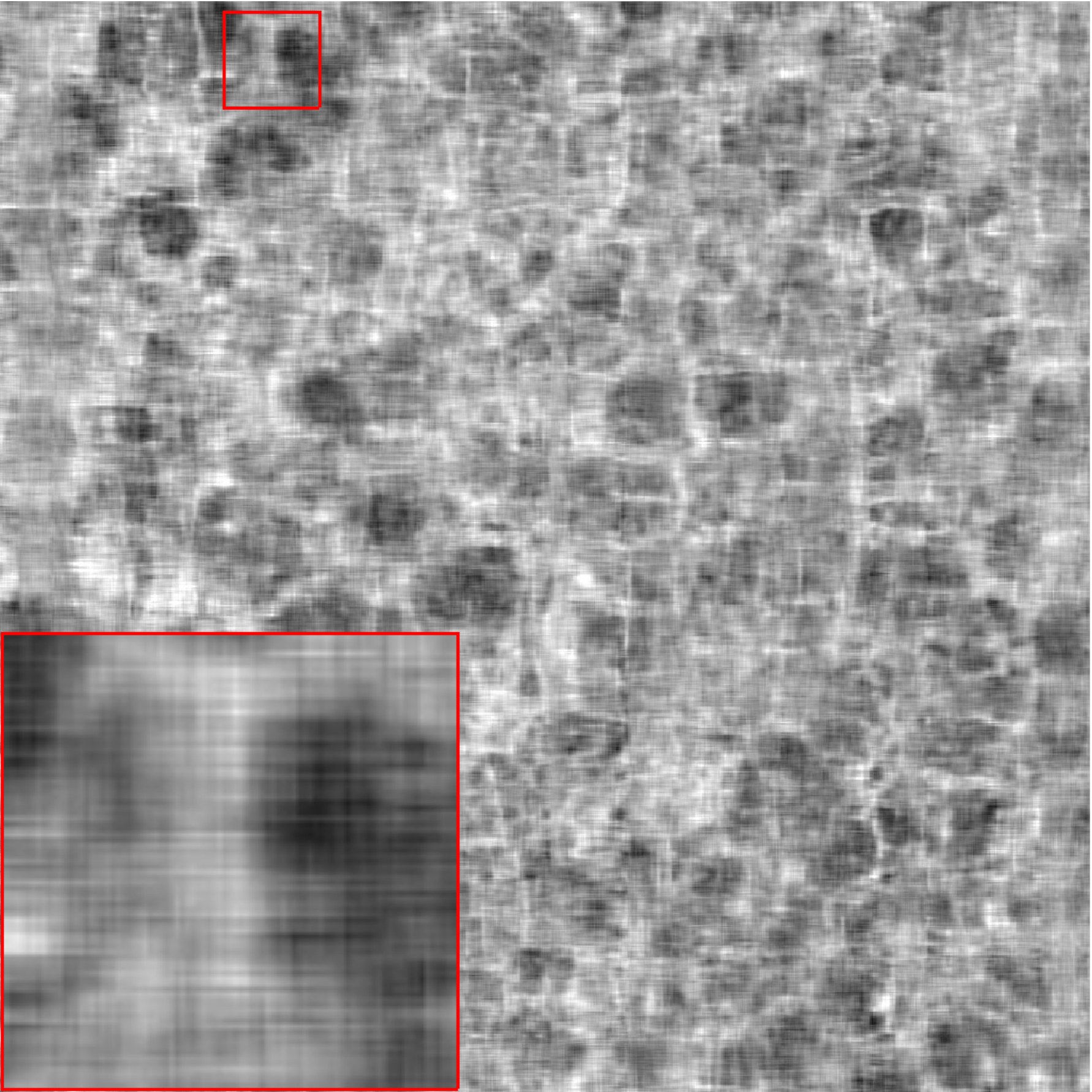}\vspace{0pt}
		   \includegraphics[width=\linewidth]{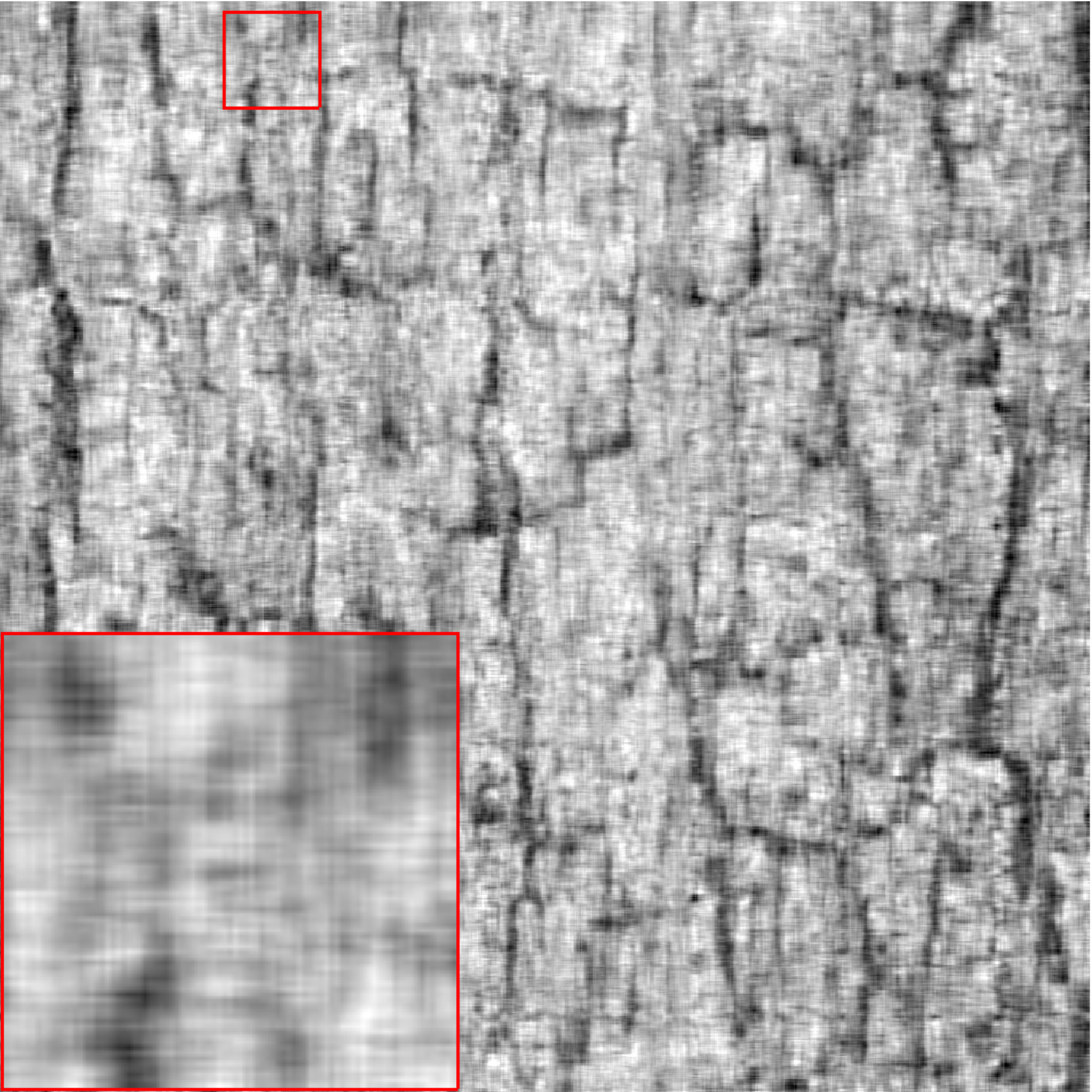}\vspace{0pt}		
		   \includegraphics[width=\linewidth]{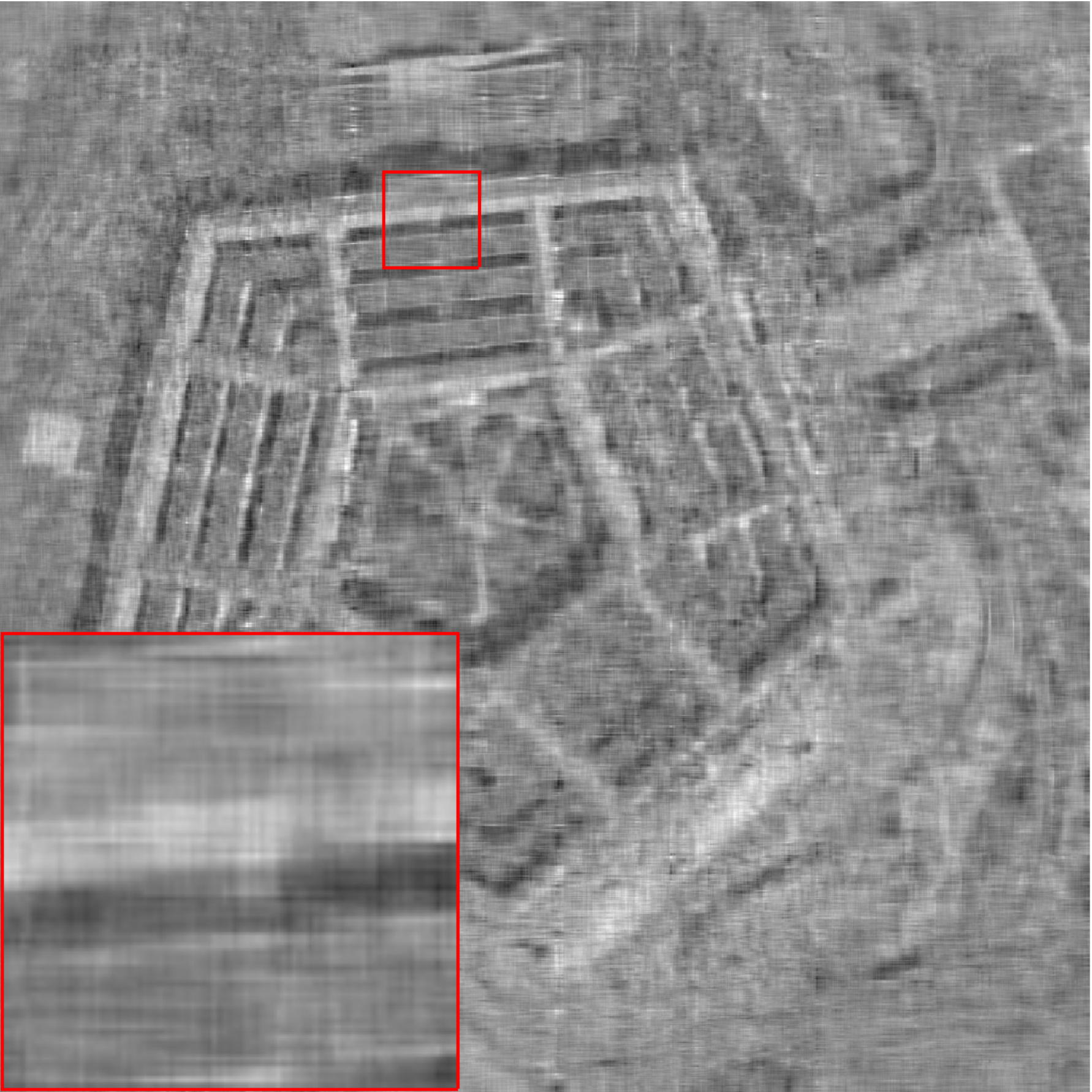}\vspace{0pt}
		   \includegraphics[width=\linewidth]{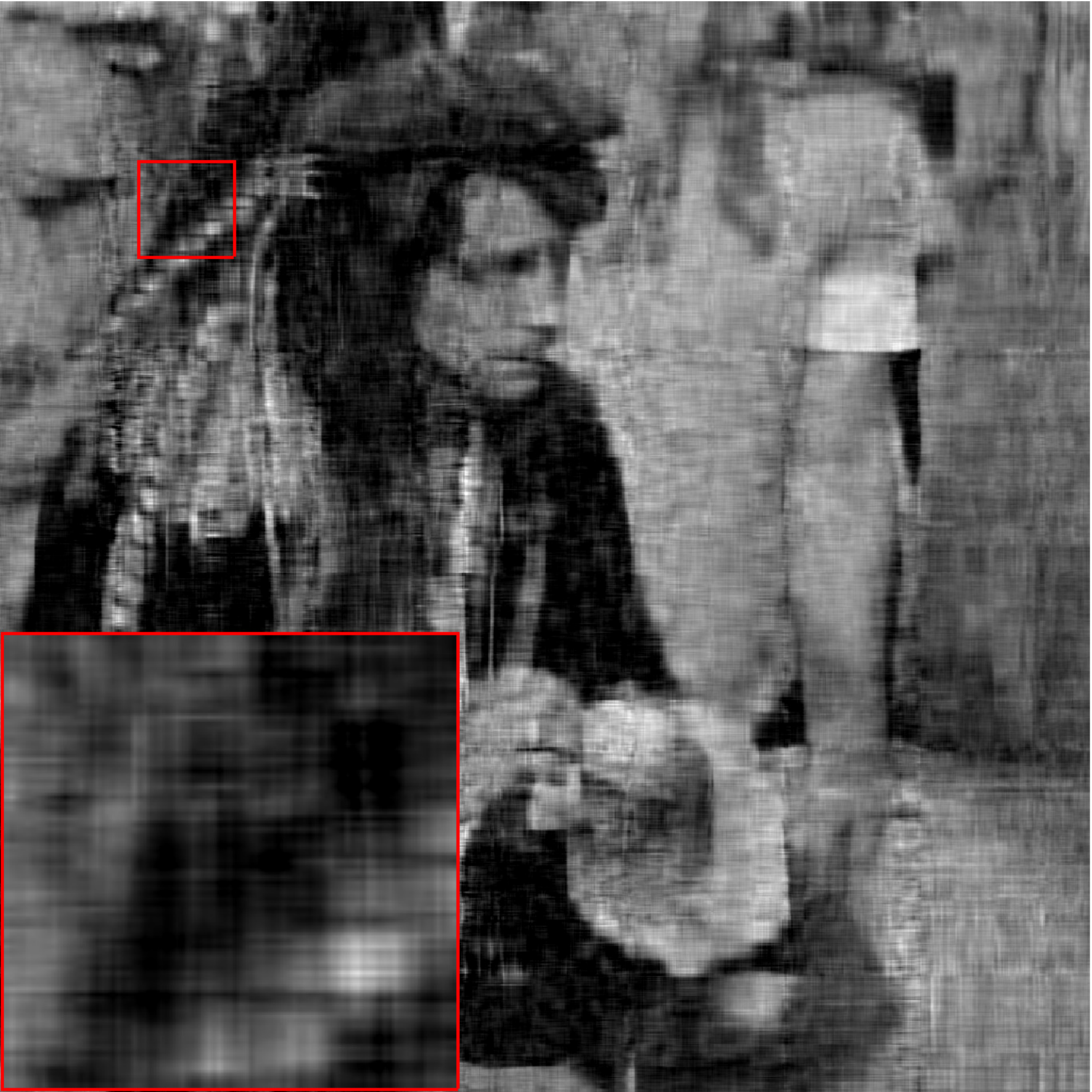}\vspace{0pt}
		   \includegraphics[width=\linewidth]{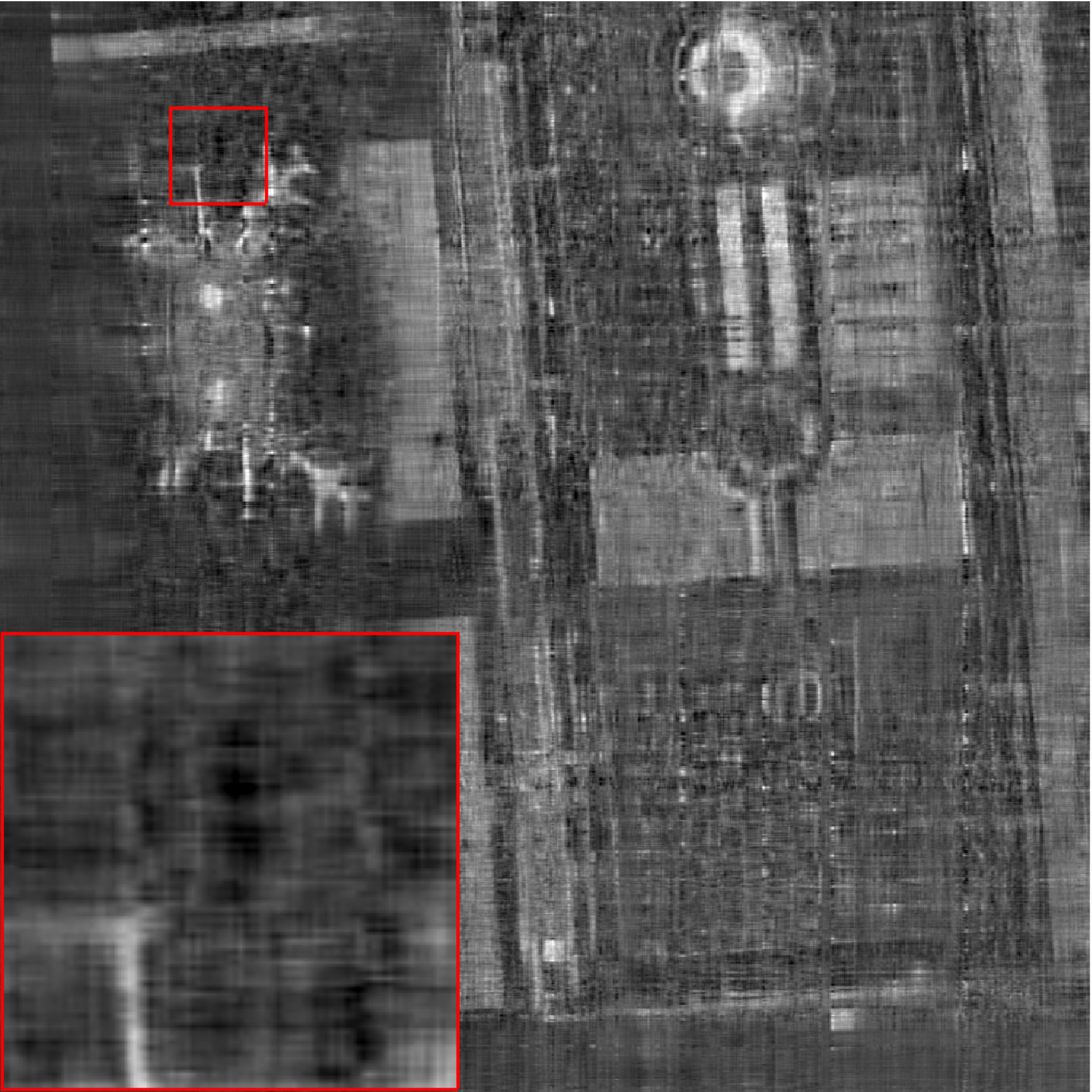}\vspace{0pt}
		   \includegraphics[width=\linewidth]{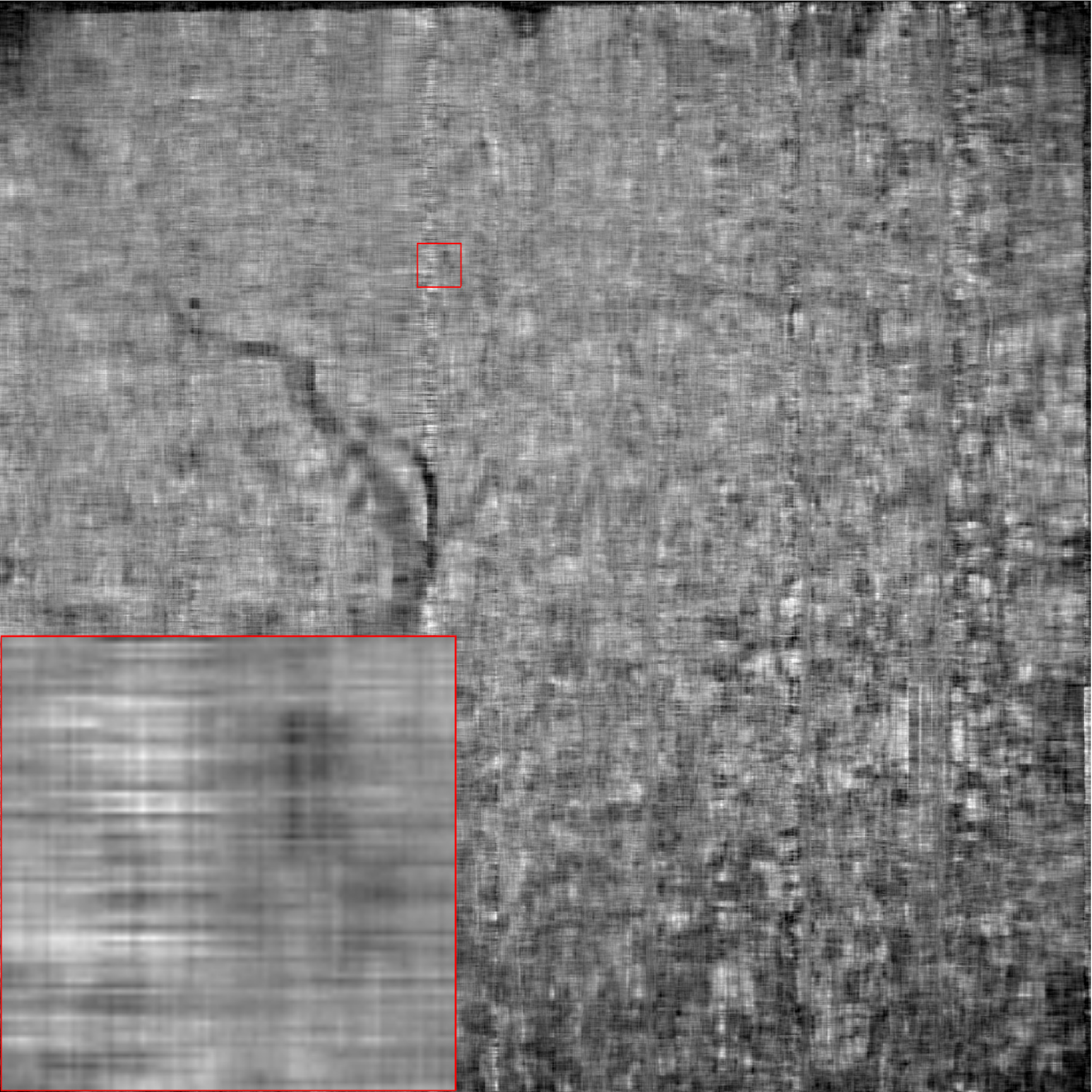}
		   \caption{FPCA}
	   \end{subfigure}
   		\begin{subfigure}[b]{0.138\linewidth}
   	       \centering
   	       \includegraphics[width=\linewidth]{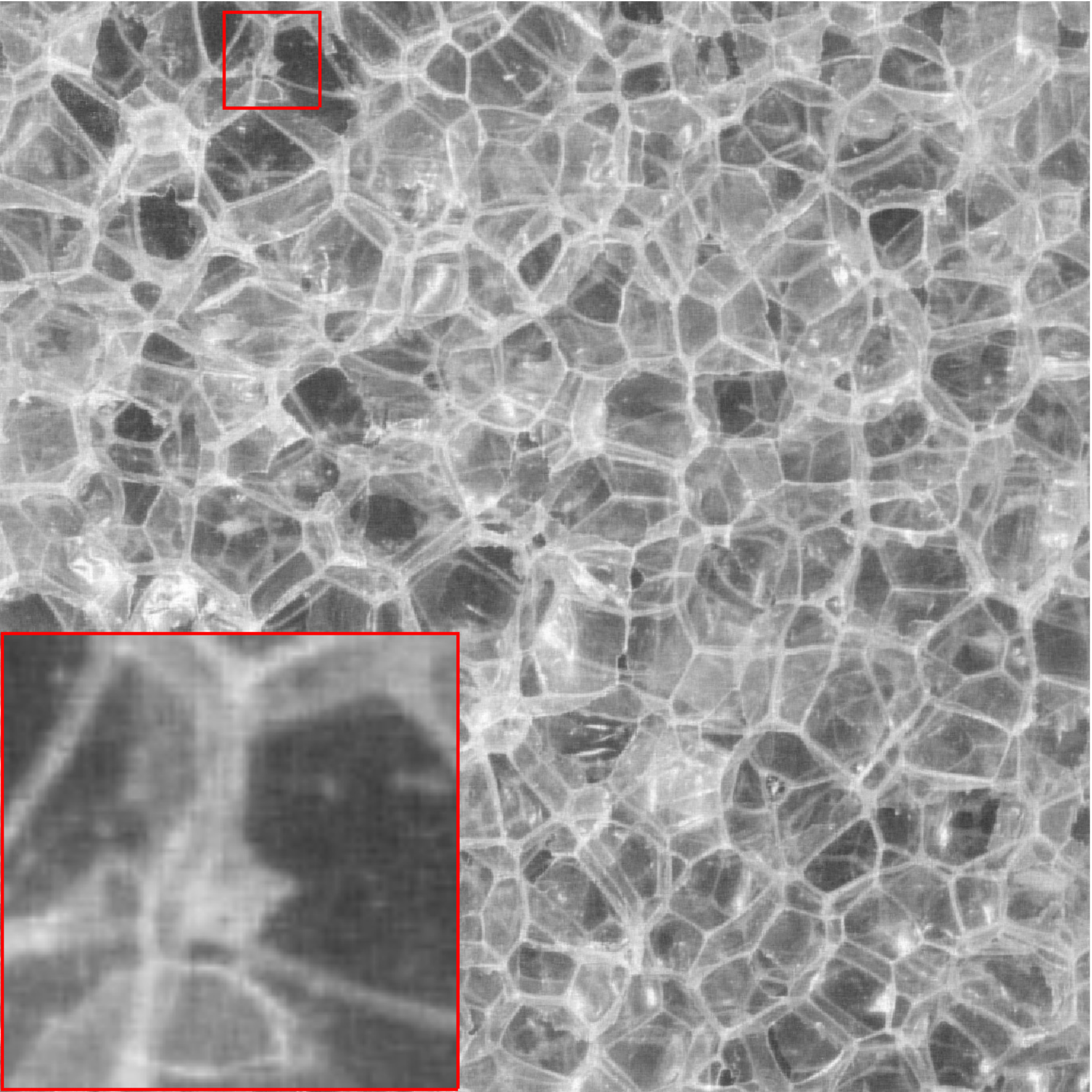}\vspace{0pt}
   	       \includegraphics[width=\linewidth]{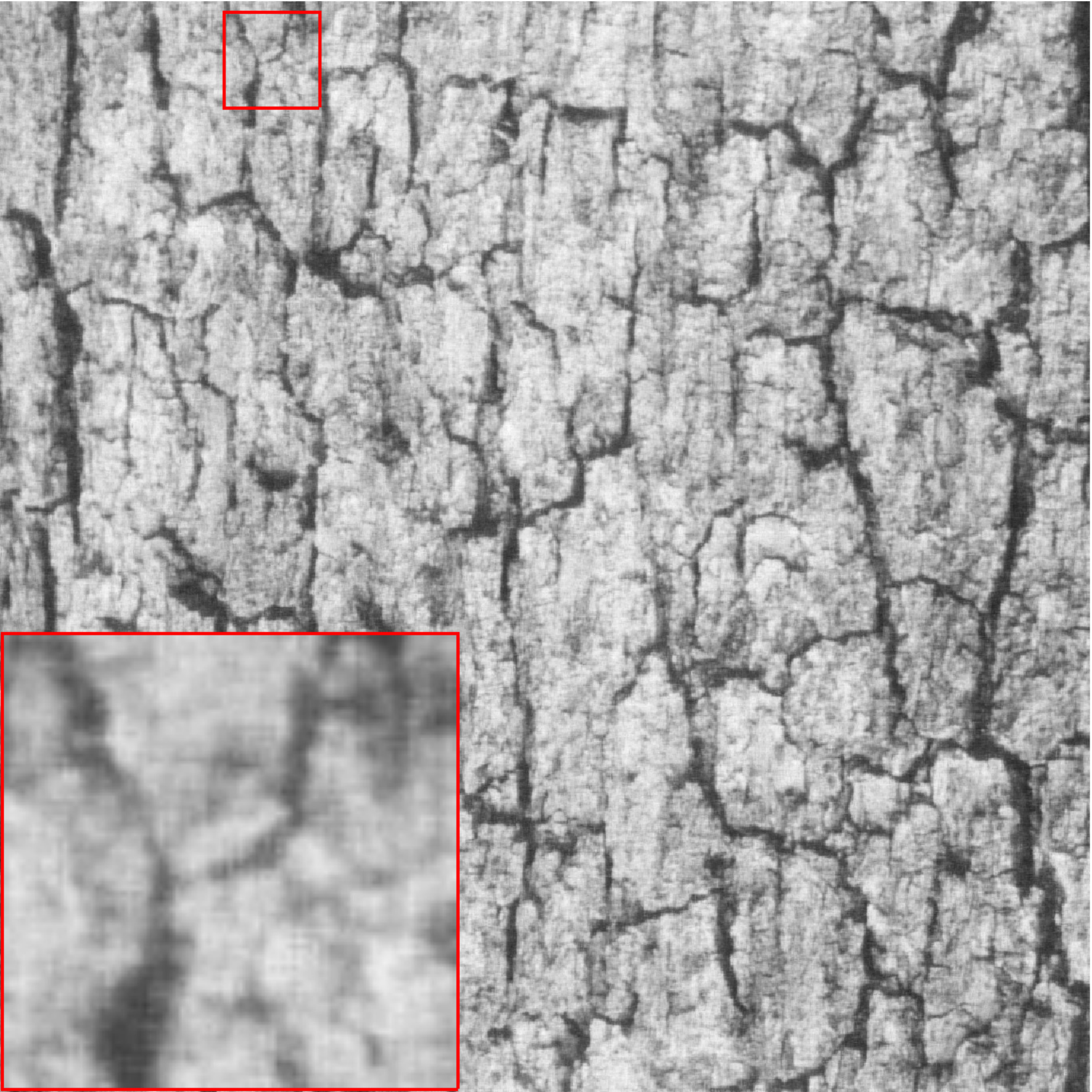}\vspace{0pt}			
   	       \includegraphics[width=\linewidth]{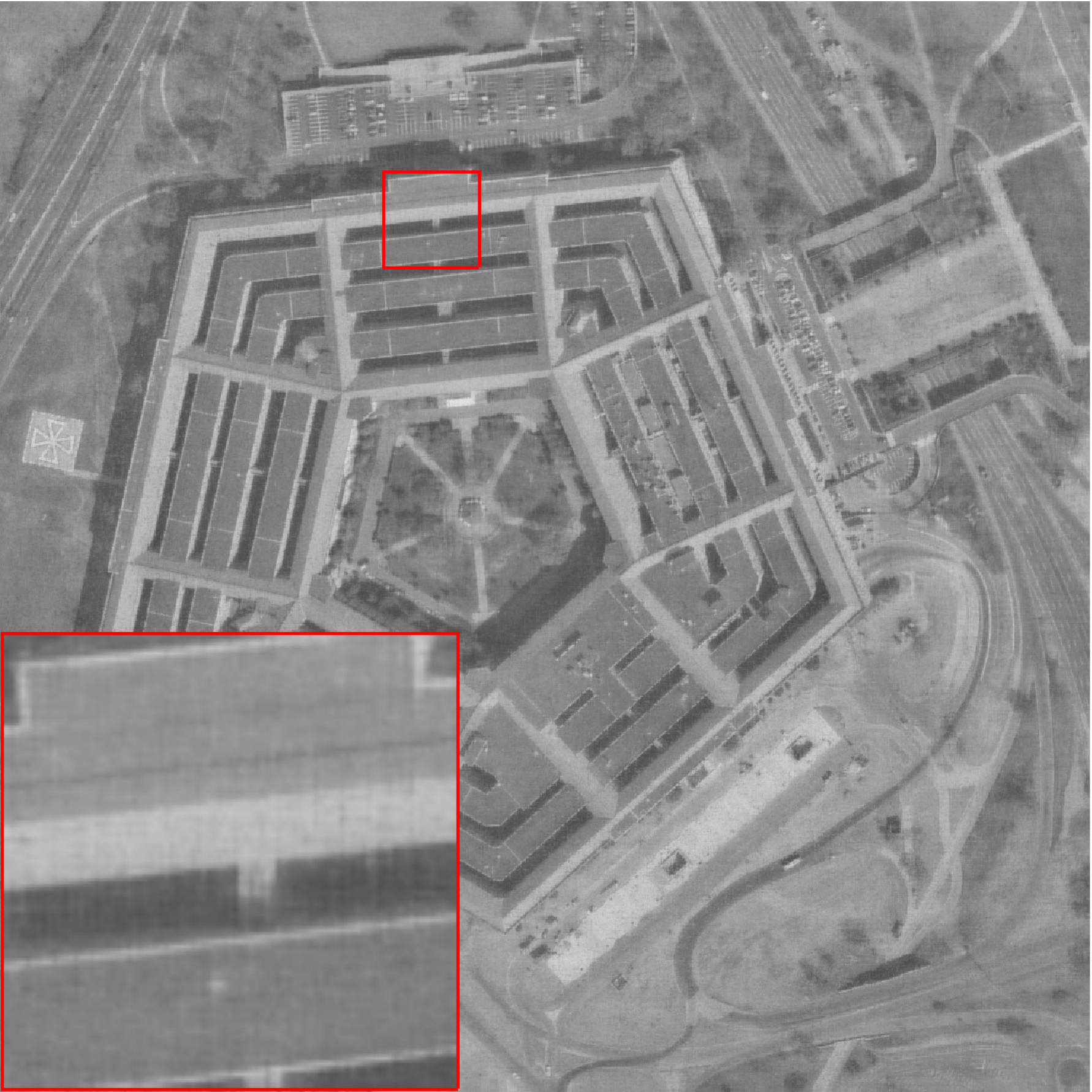}\vspace{0pt}
   	       \includegraphics[width=\linewidth]{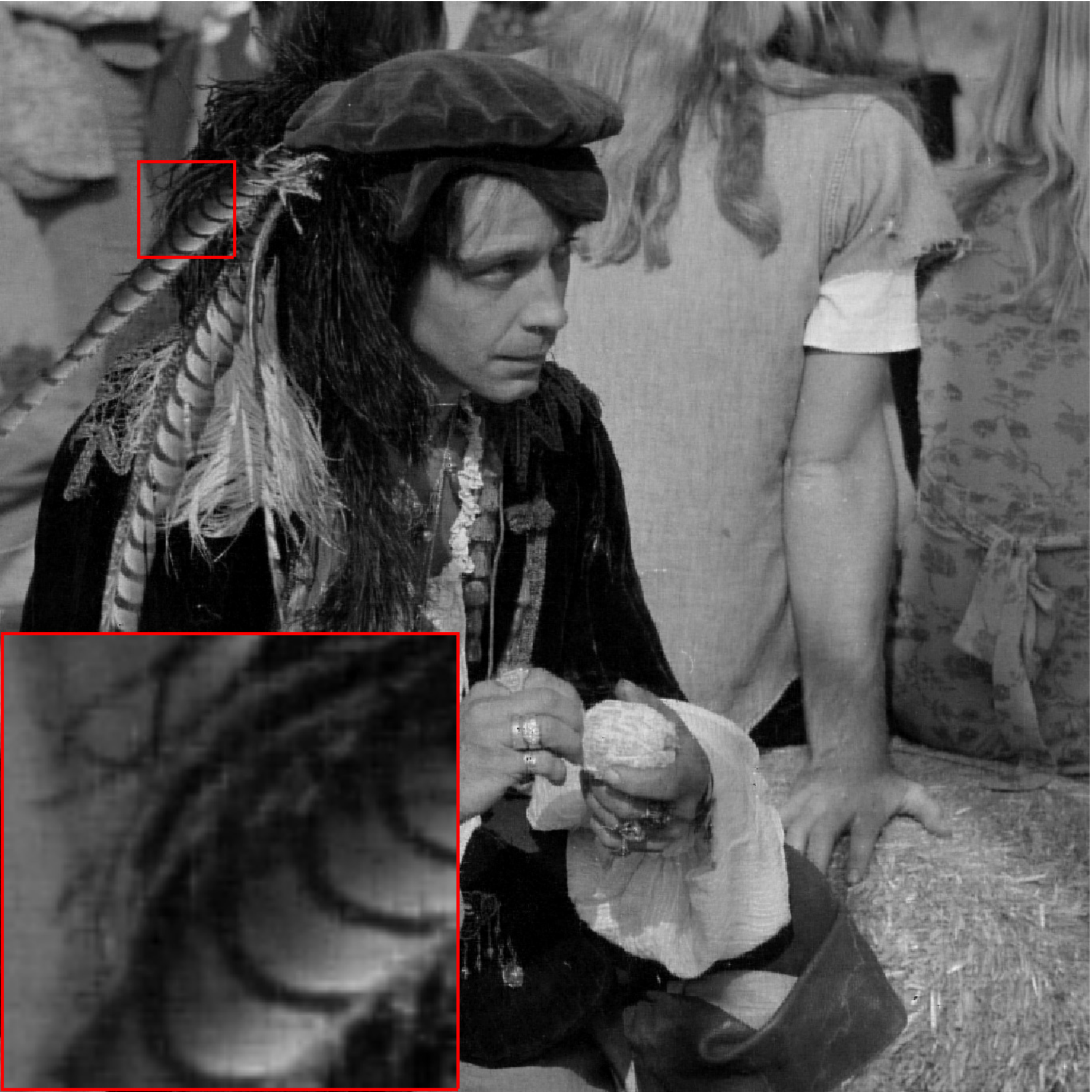}\vspace{0pt}
   	       \includegraphics[width=\linewidth]{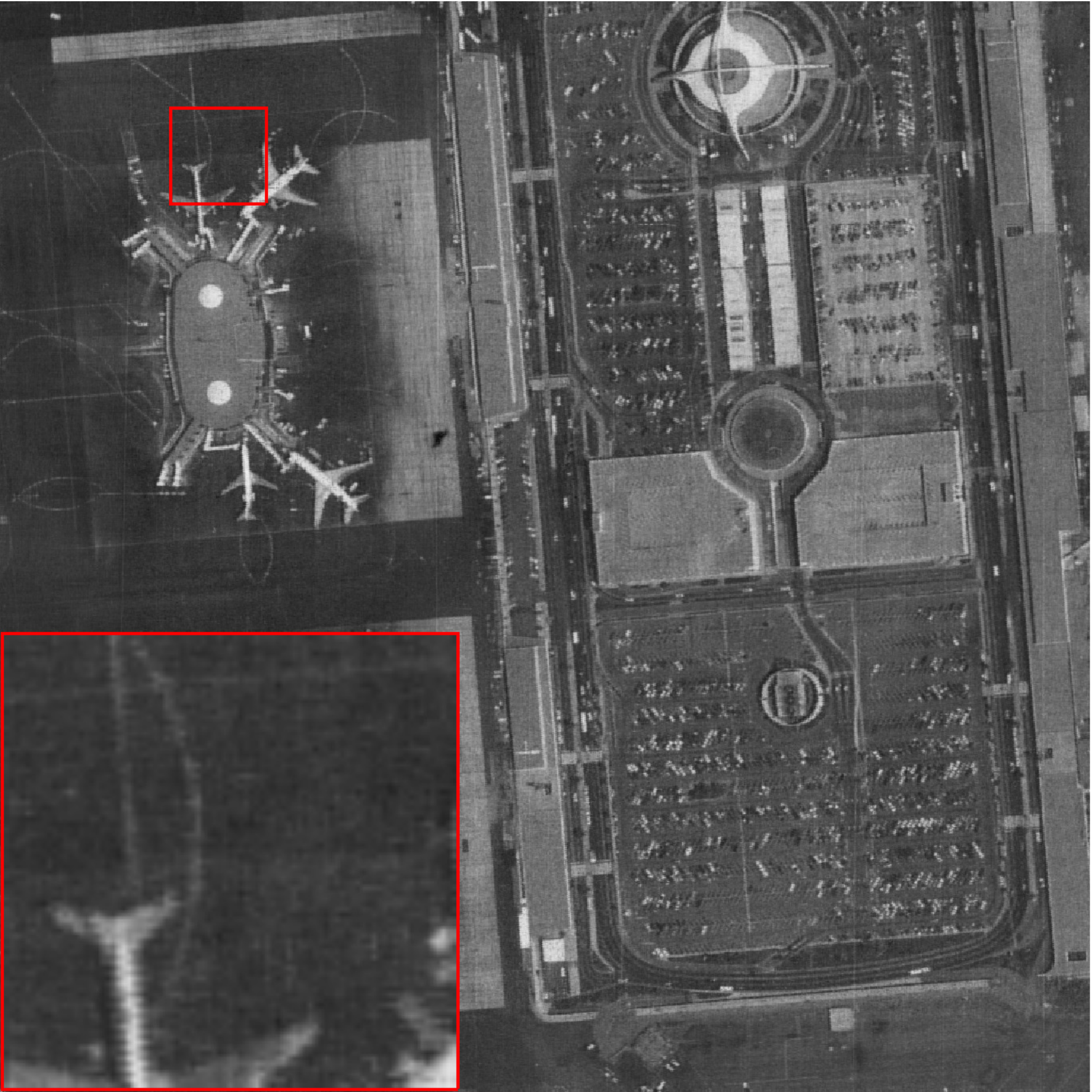}\vspace{0pt}
   	       \includegraphics[width=\linewidth]{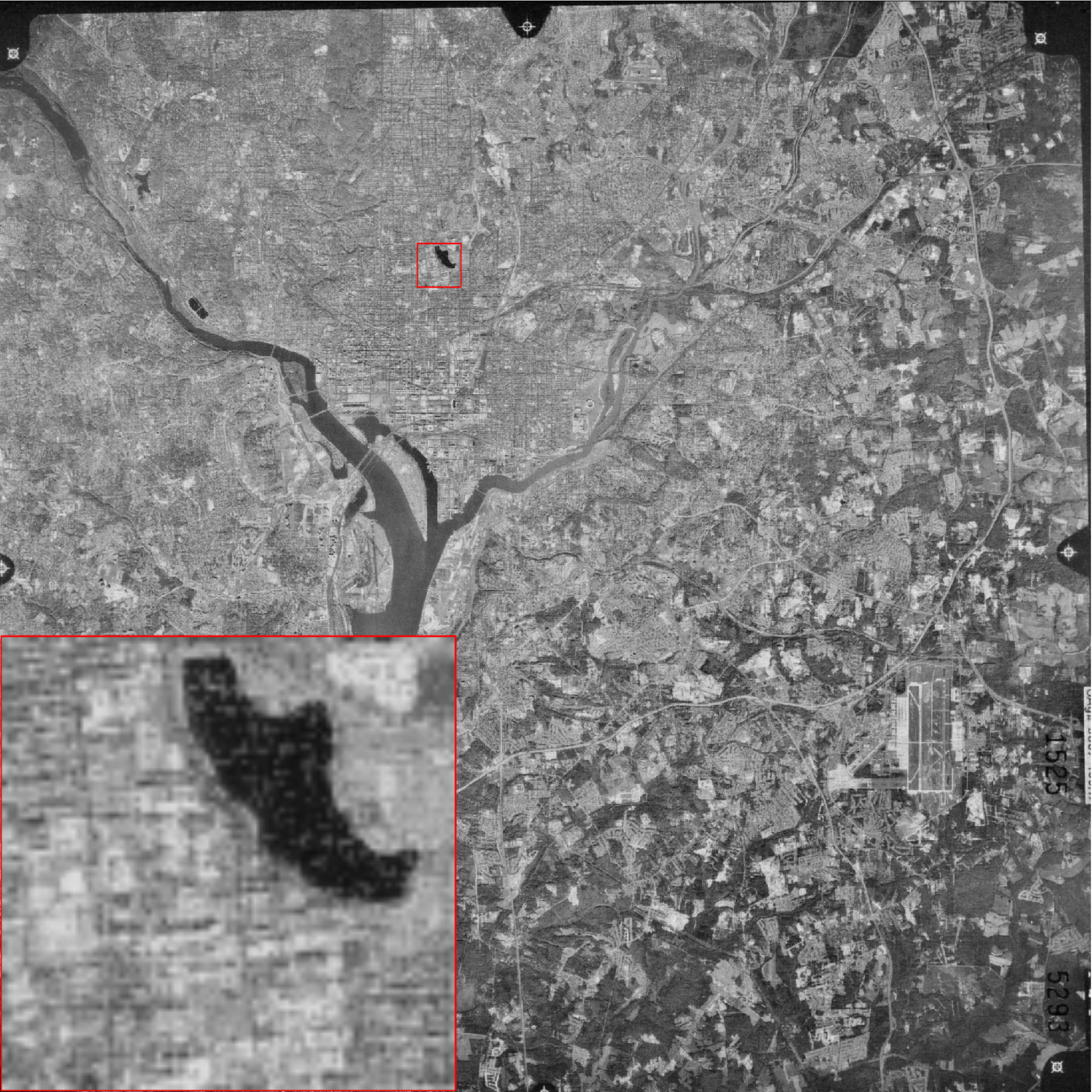}
   	\caption{SPG}
      \end{subfigure}				
	\end{subfigure}
	\vfill
	\caption{Examples of grayscale image inpainting. From top to bottom are respectively corresponding to ``Plastic", ``Bark", ``Pentagon", ``Male", ``Airport" and ``Wash".}
	\label{fig:image_inpainting}
\end{figure*}

\begin{figure*}[htbp]
	\centering
	\begin{subfigure}[b]{0.93\linewidth}
		\begin{subfigure}[b]{0.31\linewidth}
			\centering
			\includegraphics[width=\linewidth]{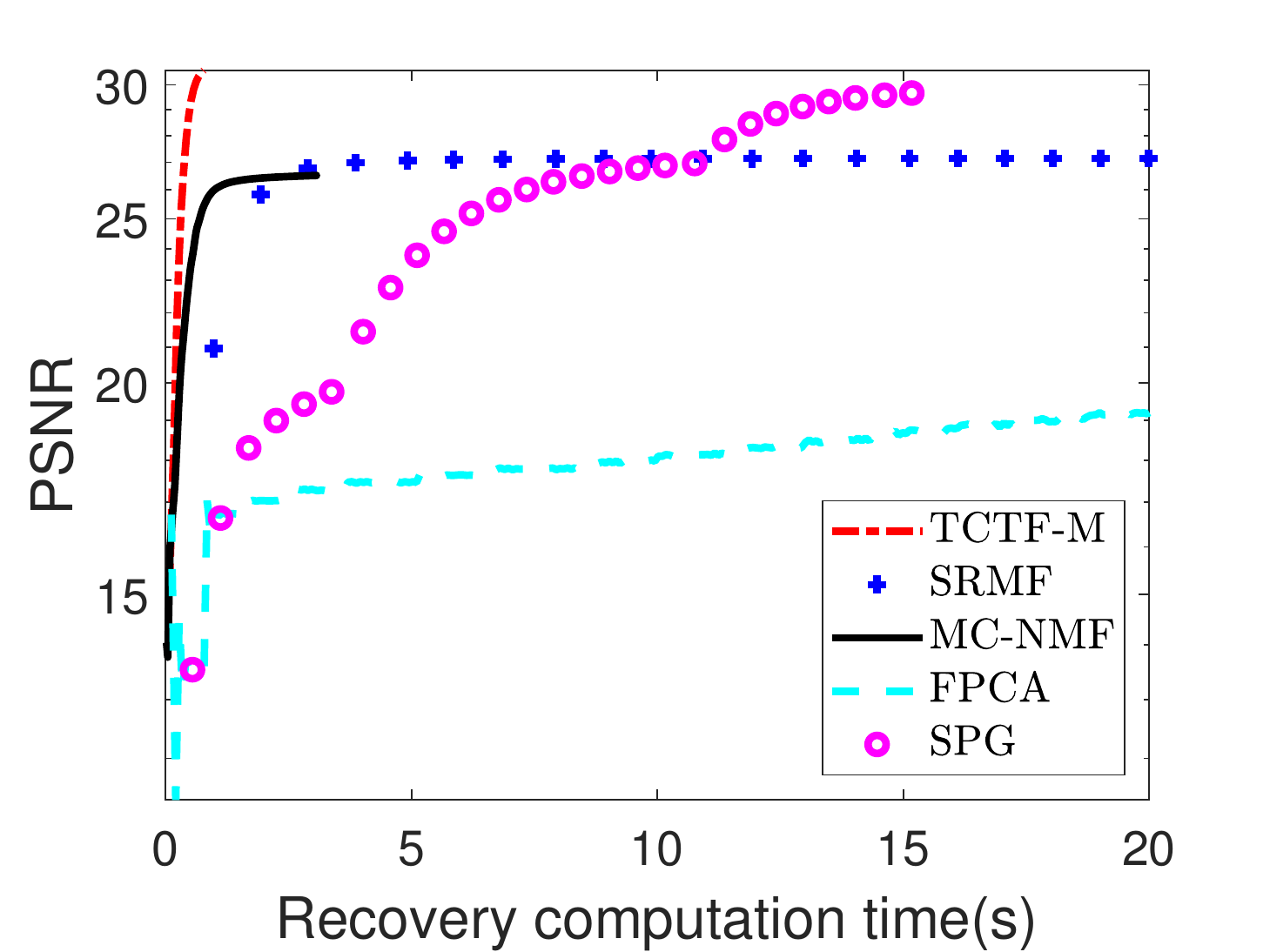}\vspace{0pt}
			\includegraphics[width=\linewidth]{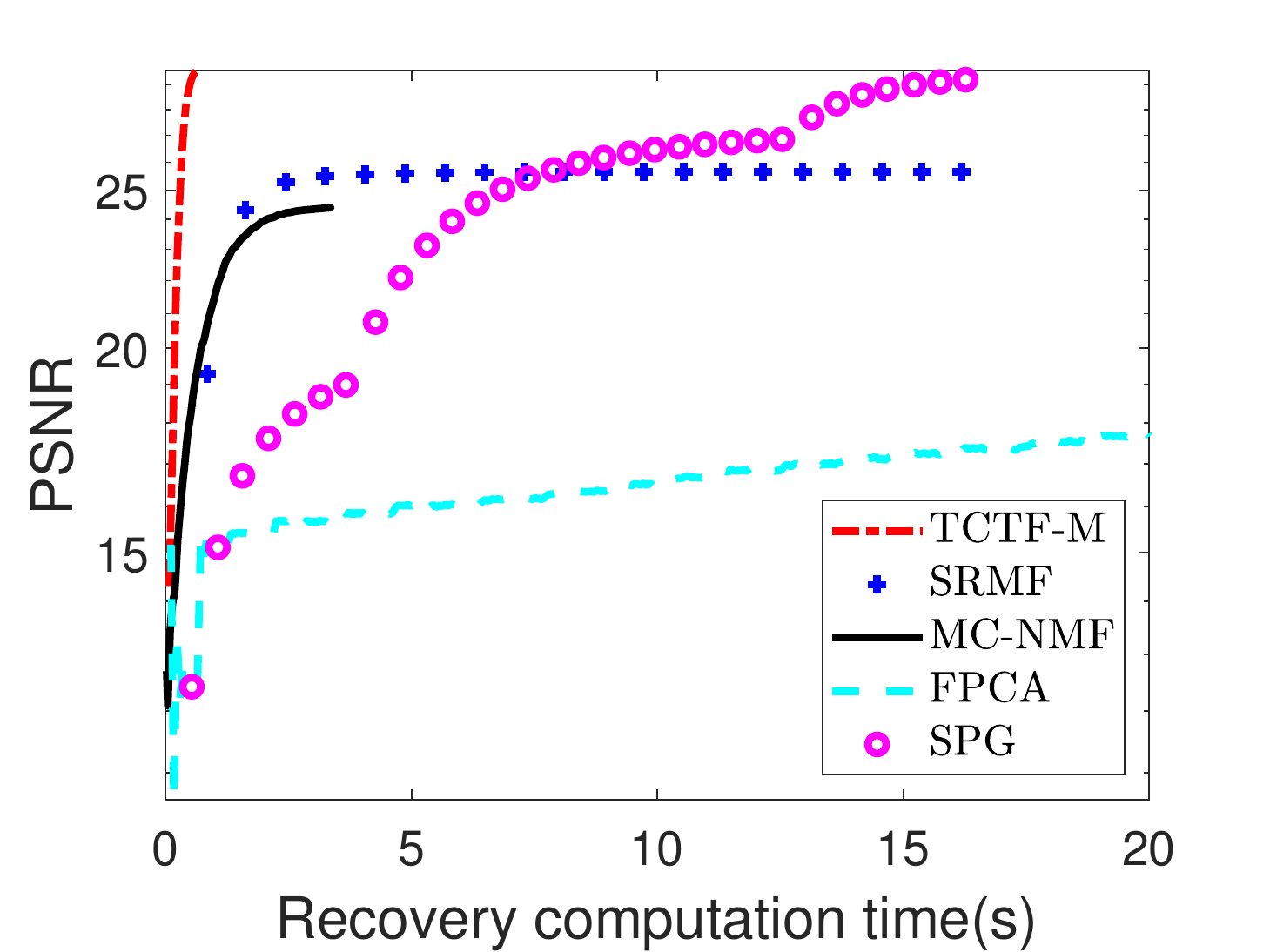}\vspace{0pt}
			\includegraphics[width=\linewidth]{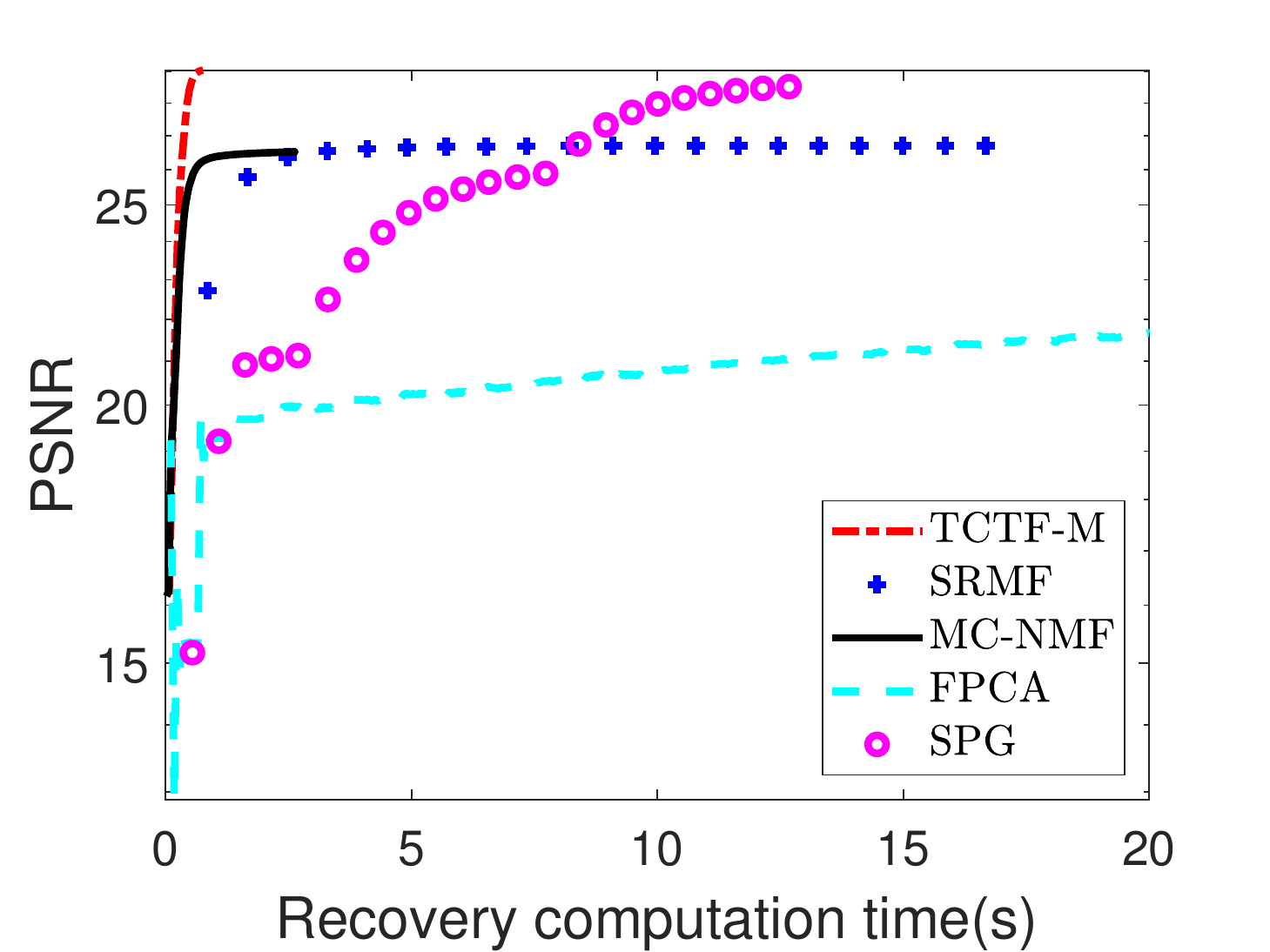}\vspace{0pt}
			\includegraphics[width=\linewidth]{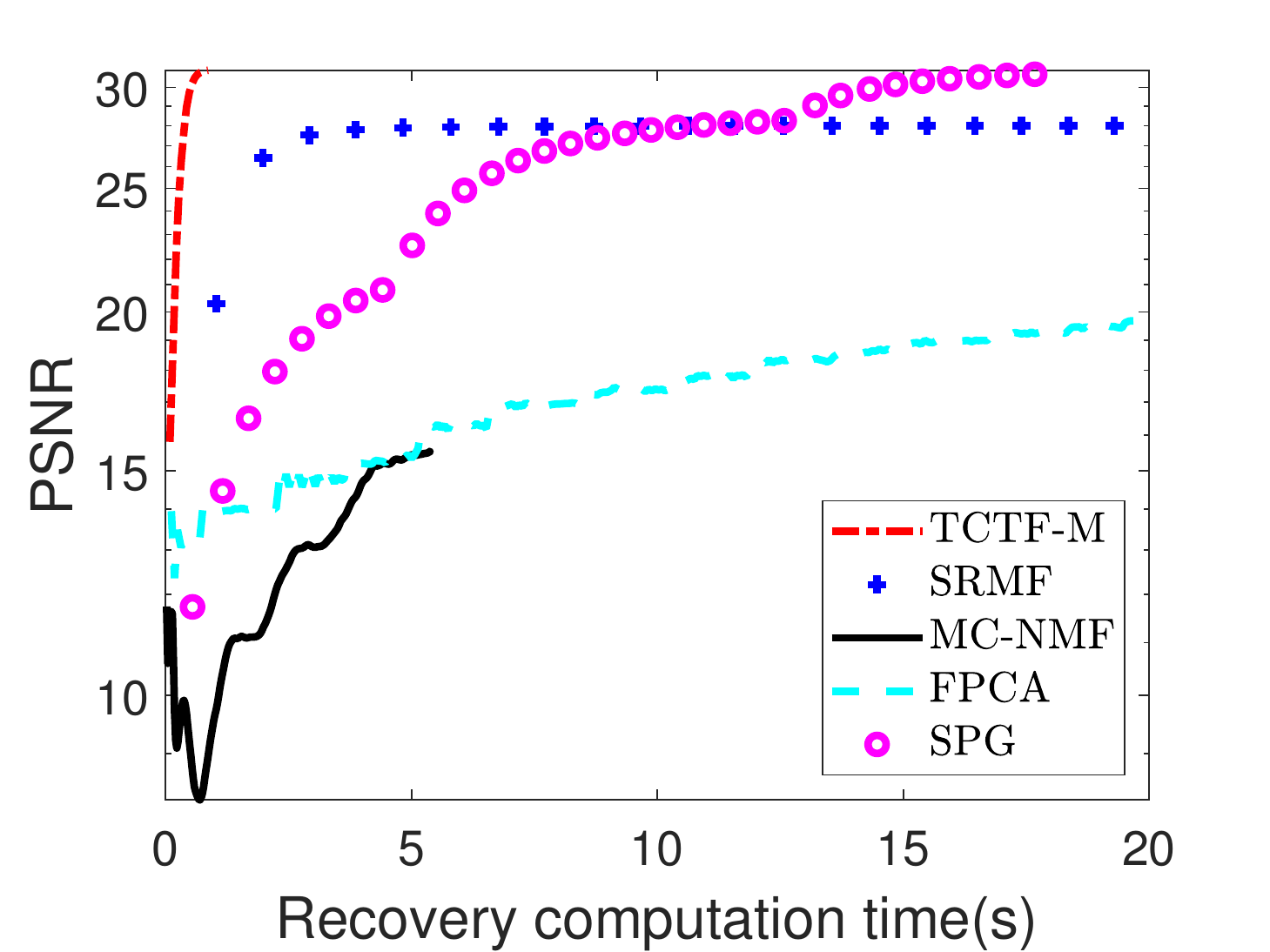}\vspace{0pt}
			\includegraphics[width=\linewidth]{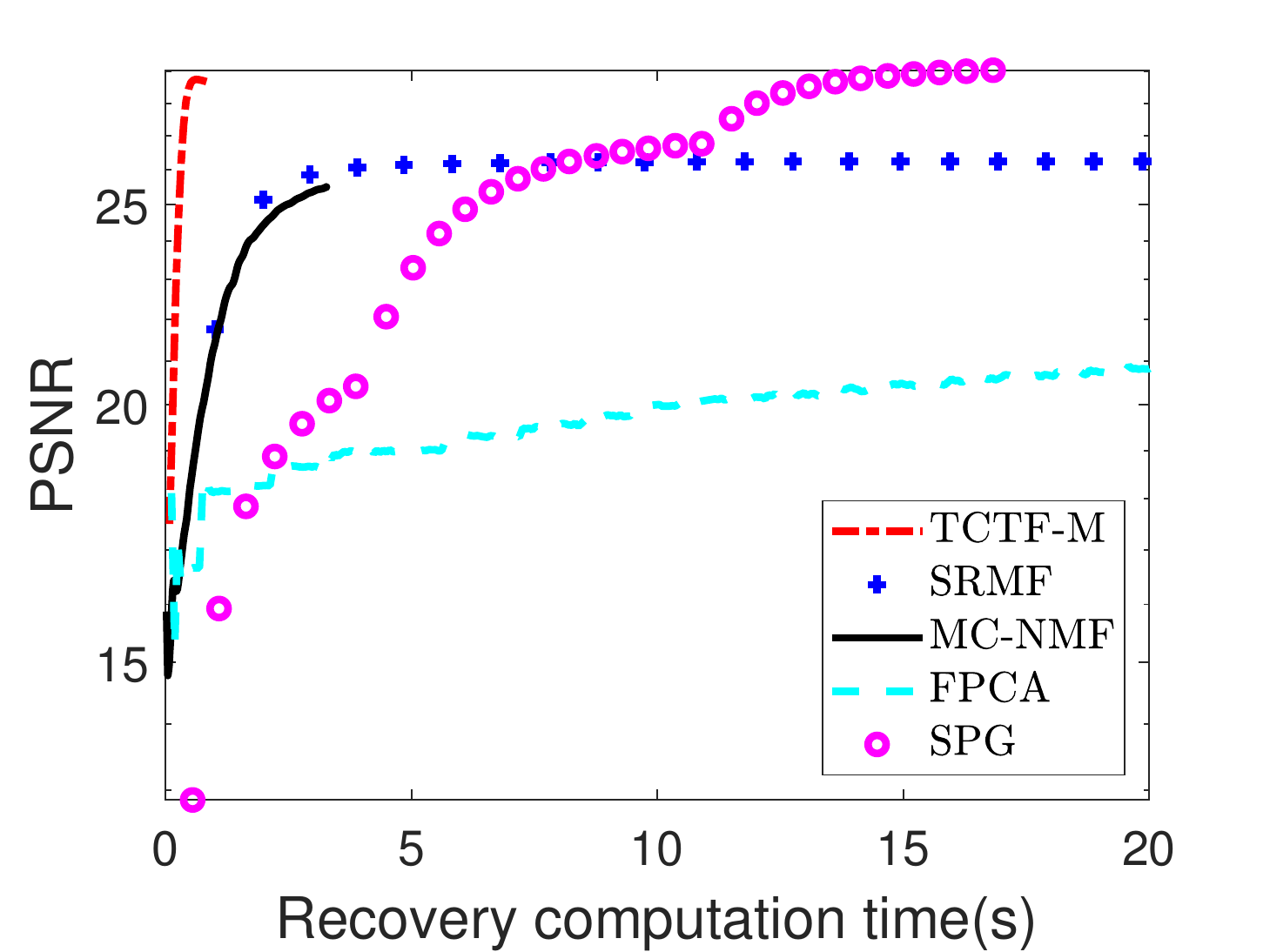}\vspace{0pt}
			\includegraphics[width=\linewidth]{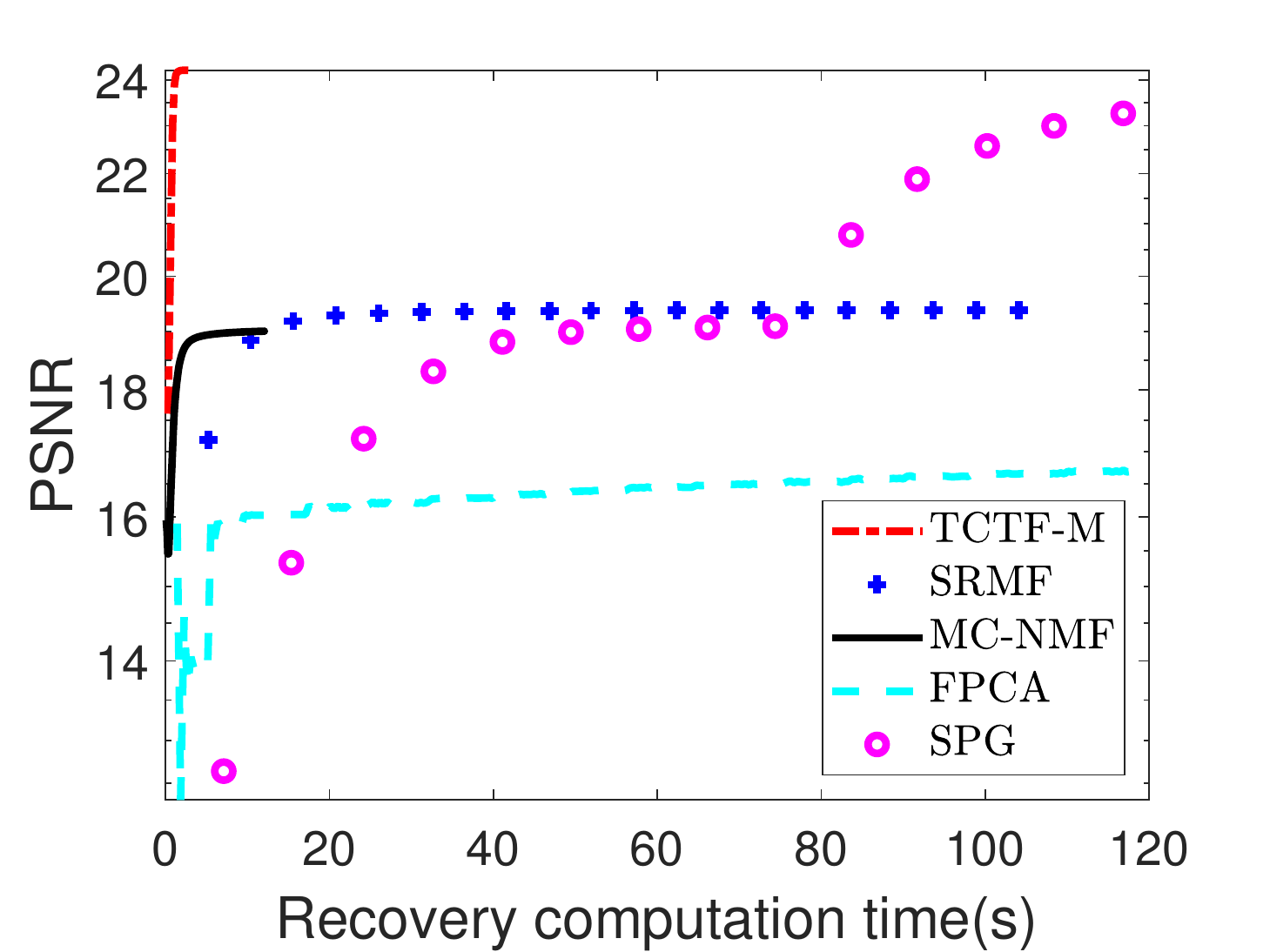}
		\end{subfigure}   	
		\begin{subfigure}[b]{0.31\linewidth}
			\centering
			\includegraphics[width=\linewidth]{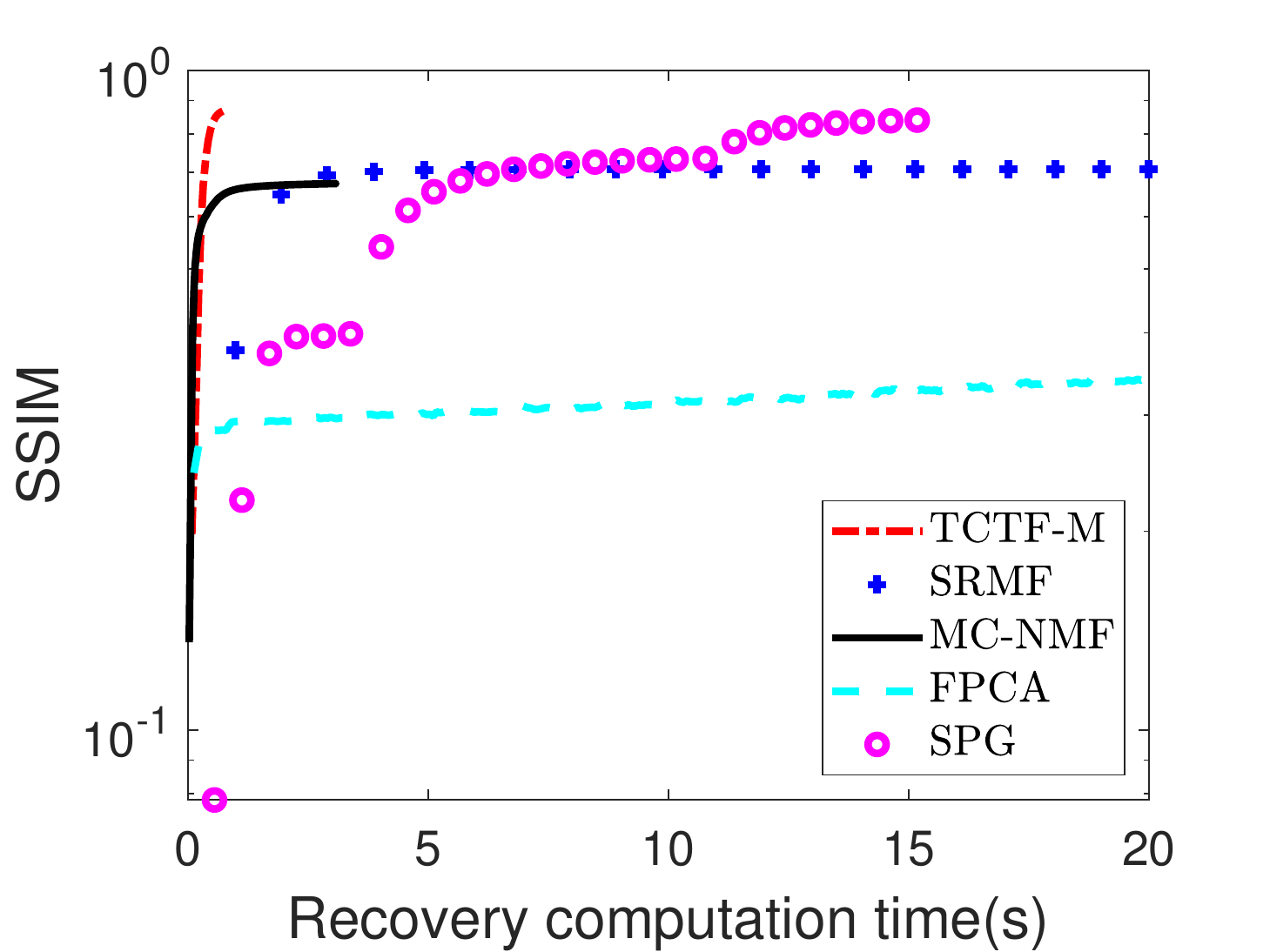}\vspace{0pt}
			\includegraphics[width=\linewidth]{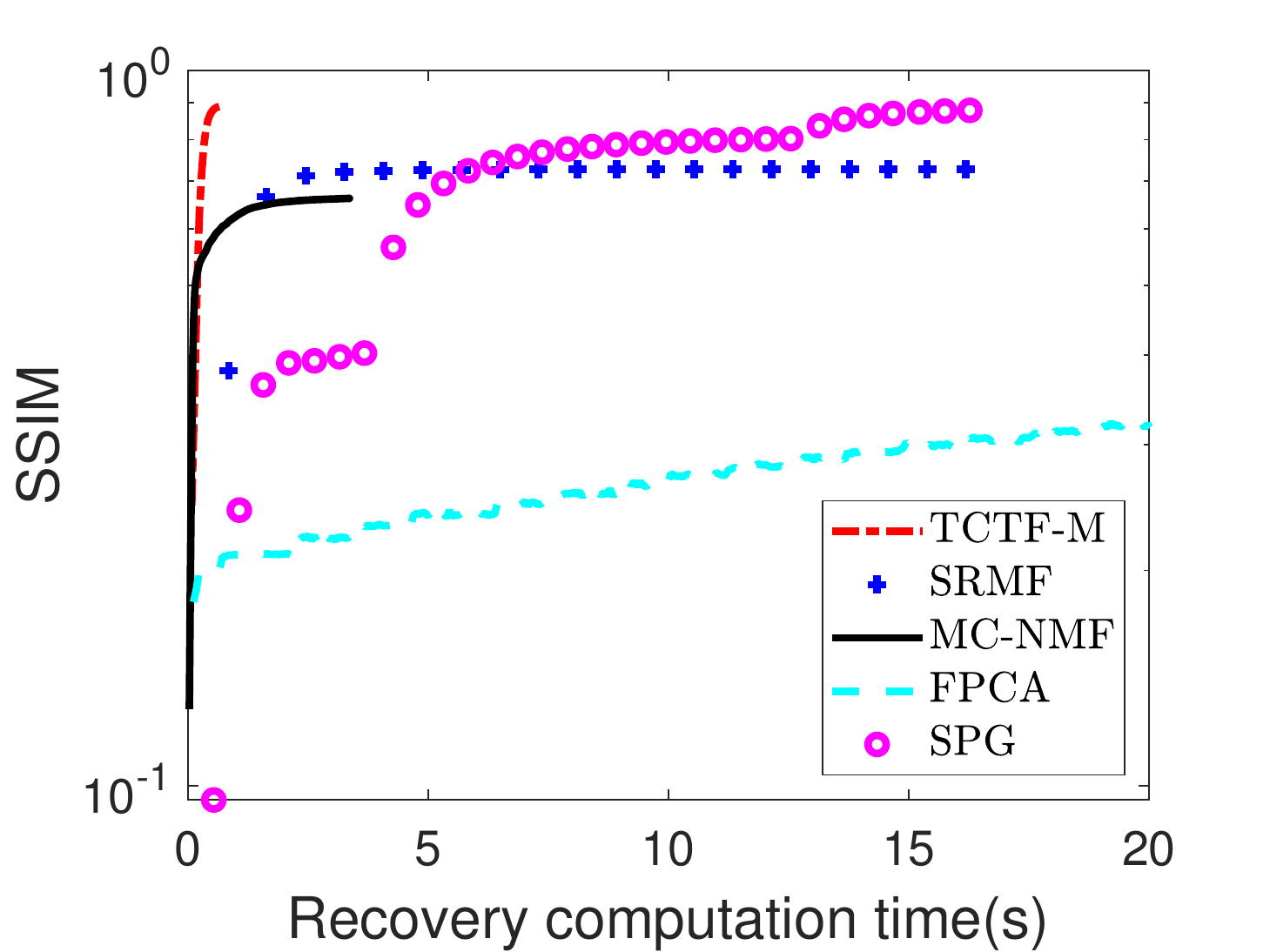}\vspace{0pt}
			\includegraphics[width=\linewidth]{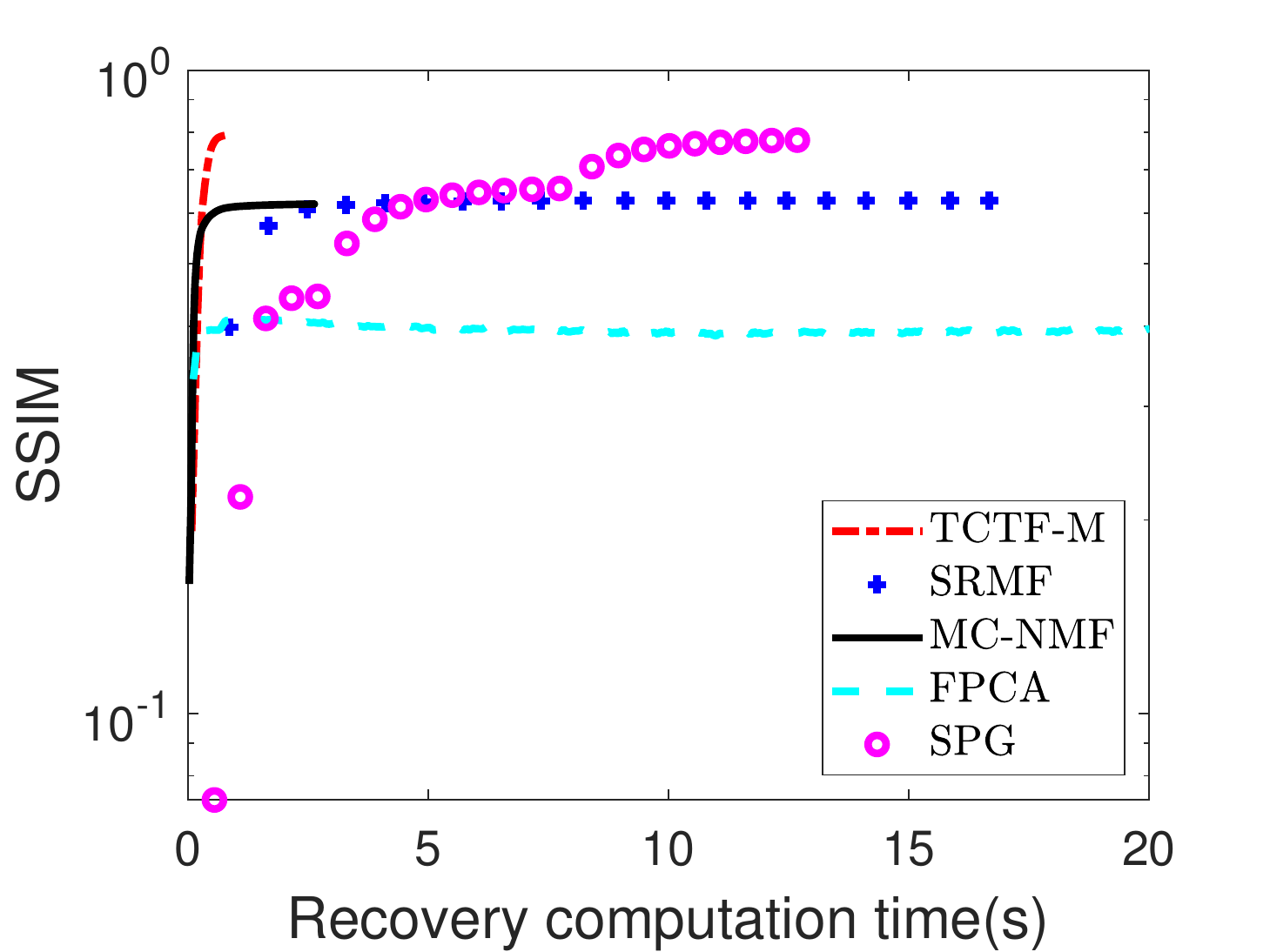}\vspace{0pt}
			\includegraphics[width=\linewidth]{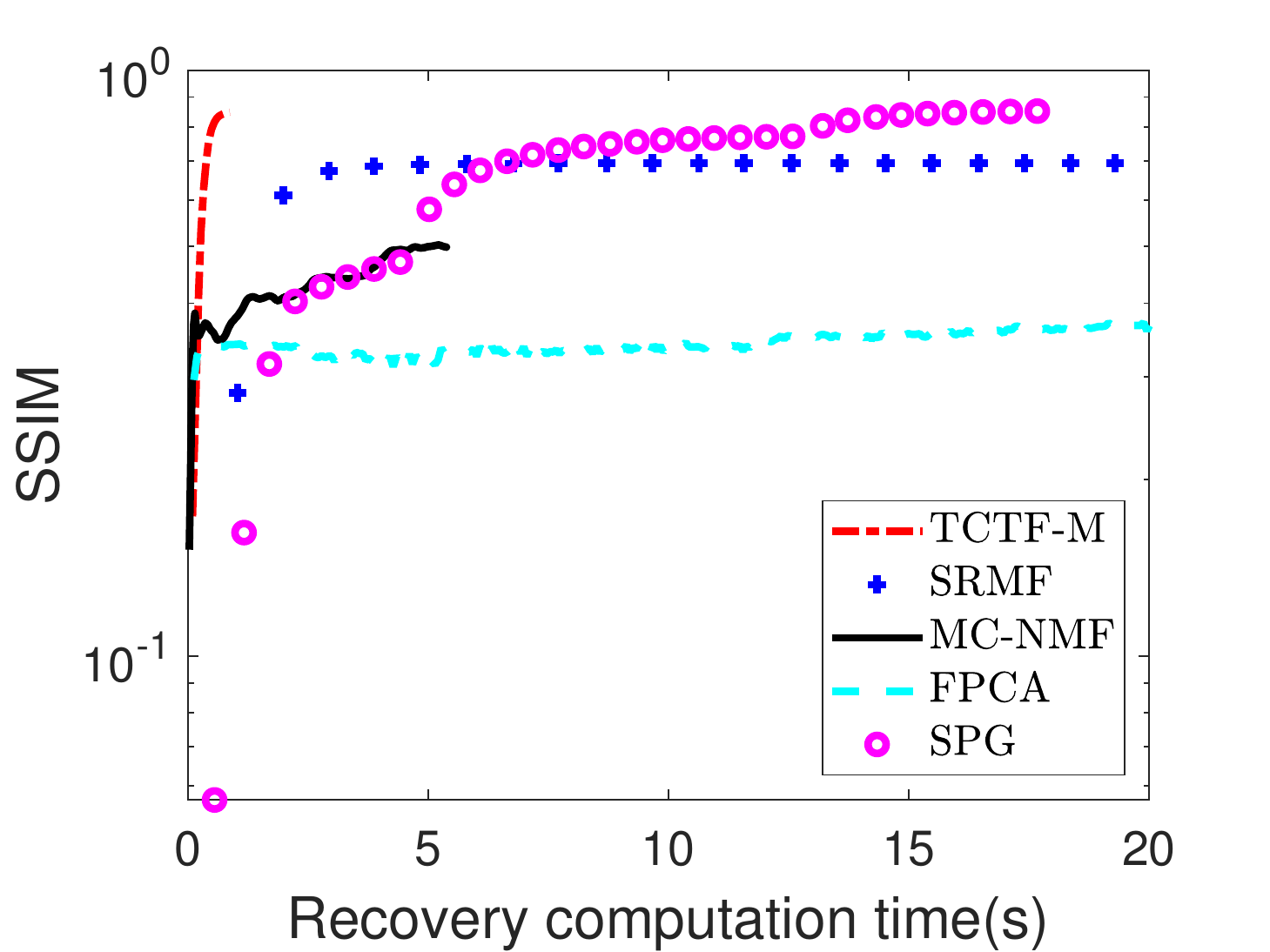}\vspace{0pt}
			\includegraphics[width=\linewidth]{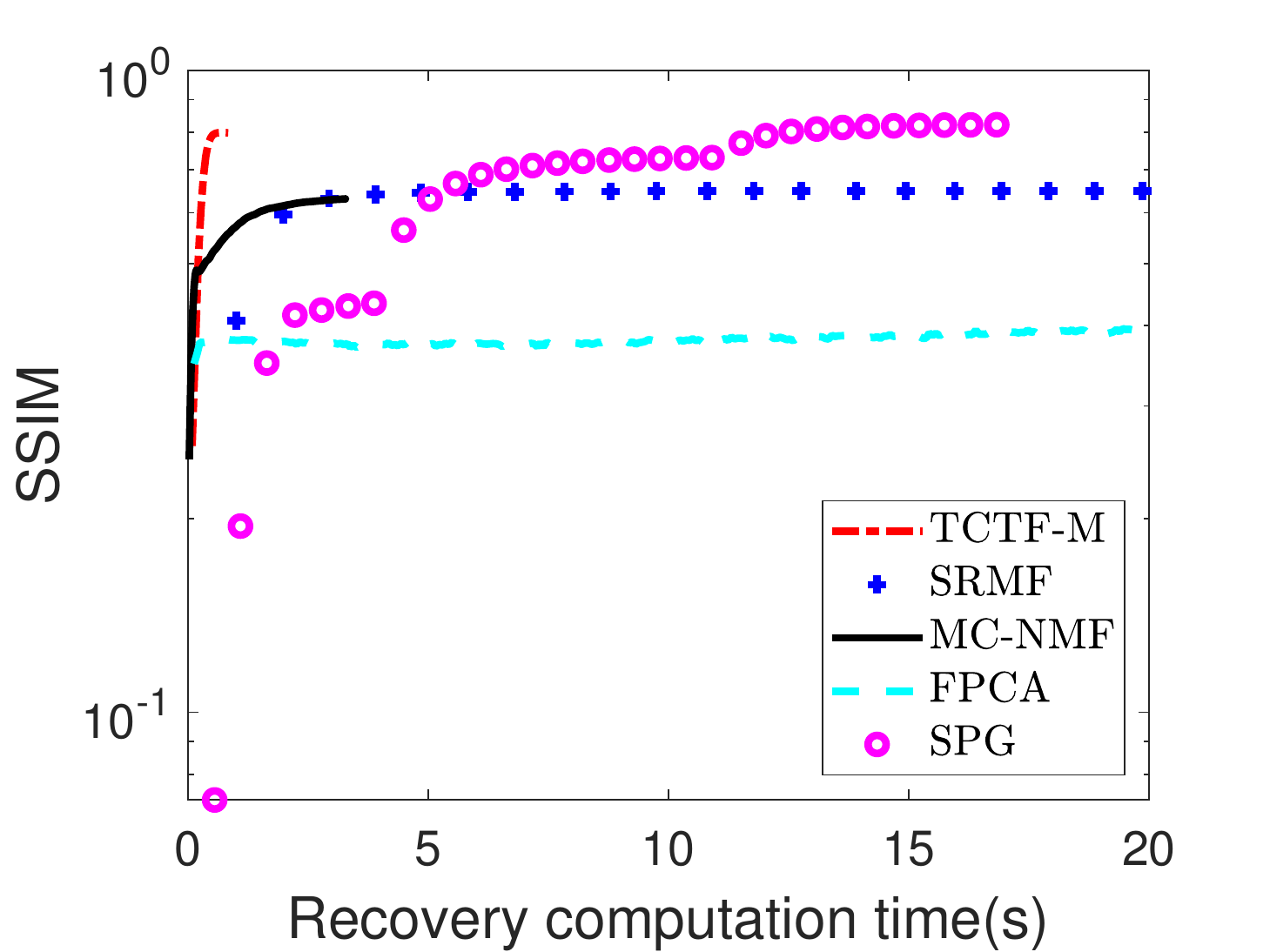}\vspace{0pt}
			\includegraphics[width=\linewidth]{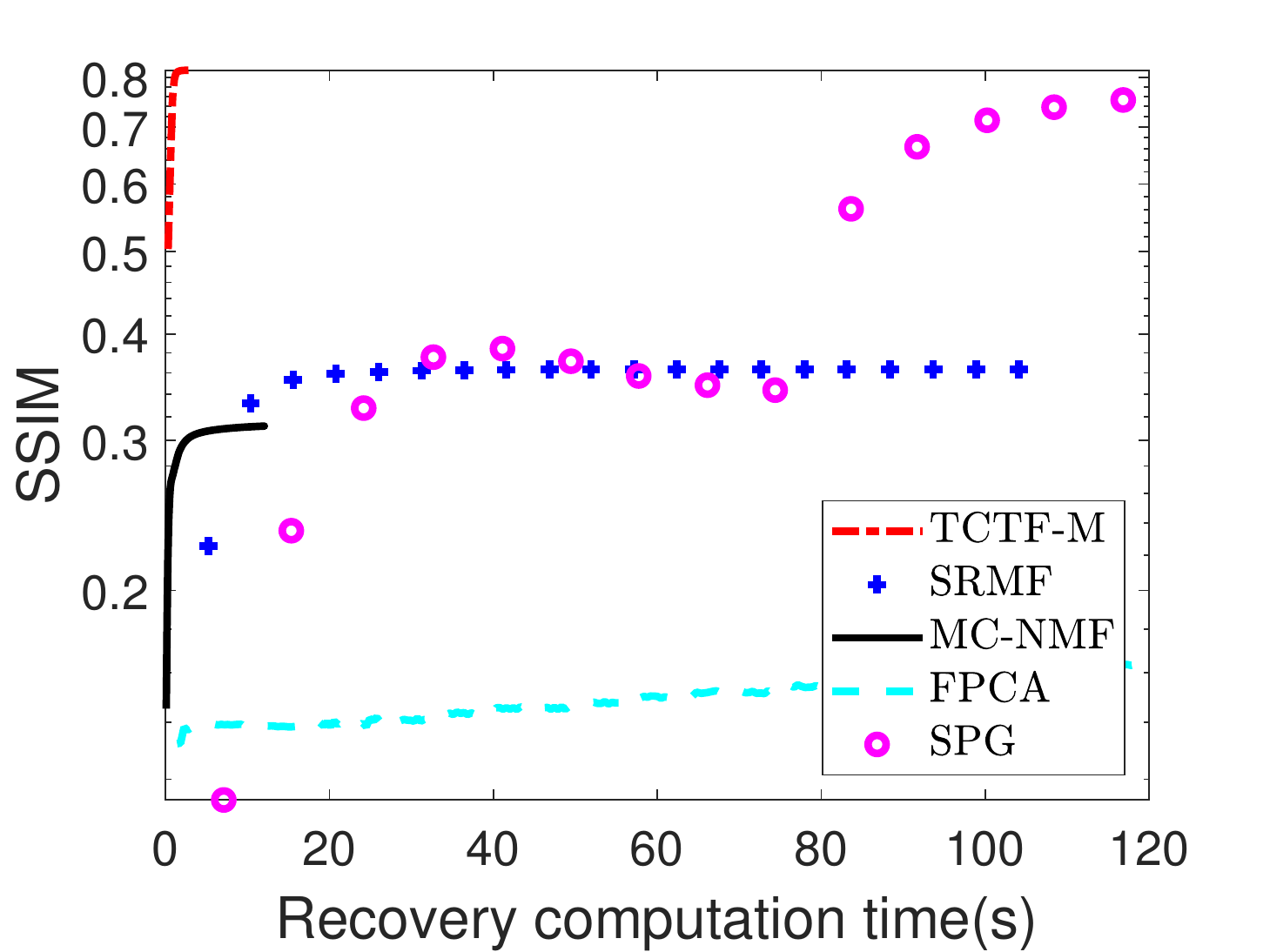}
		\end{subfigure}
		\begin{subfigure}[b]{0.31\linewidth}
			\centering
			\includegraphics[width=\linewidth]{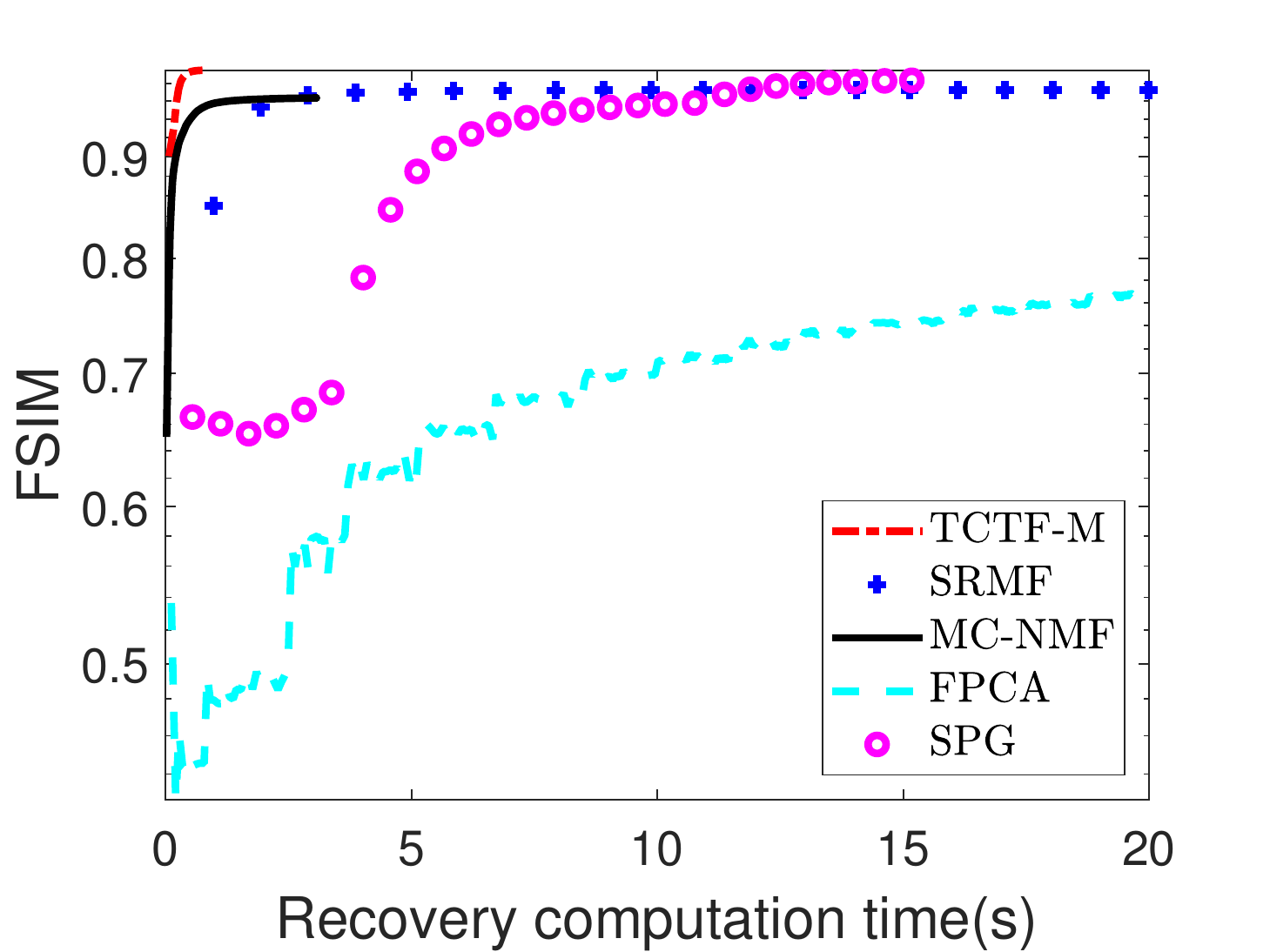}\vspace{0pt}
			\includegraphics[width=\linewidth]{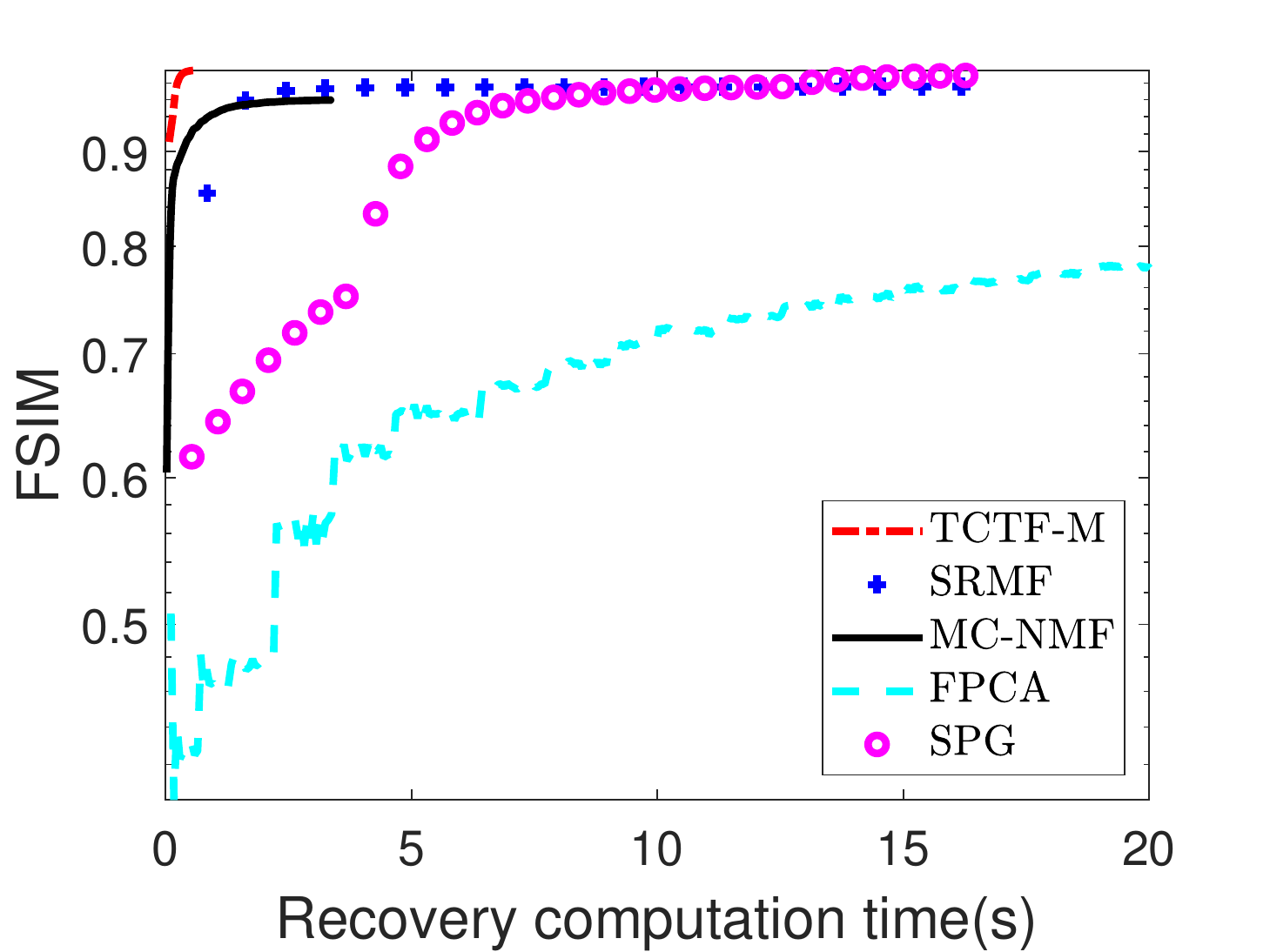}\vspace{0pt}
			\includegraphics[width=\linewidth]{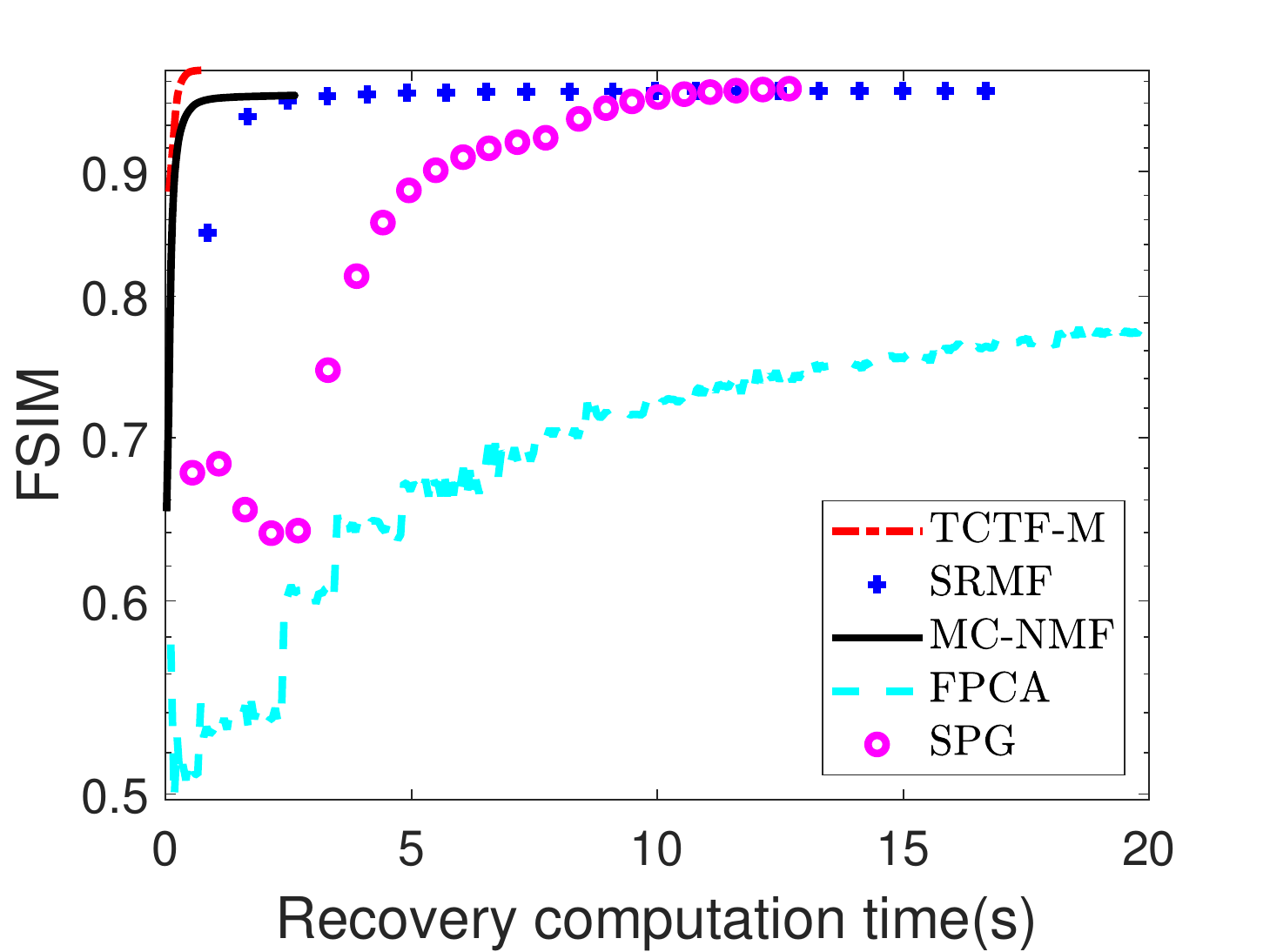}\vspace{0pt}
			\includegraphics[width=\linewidth]{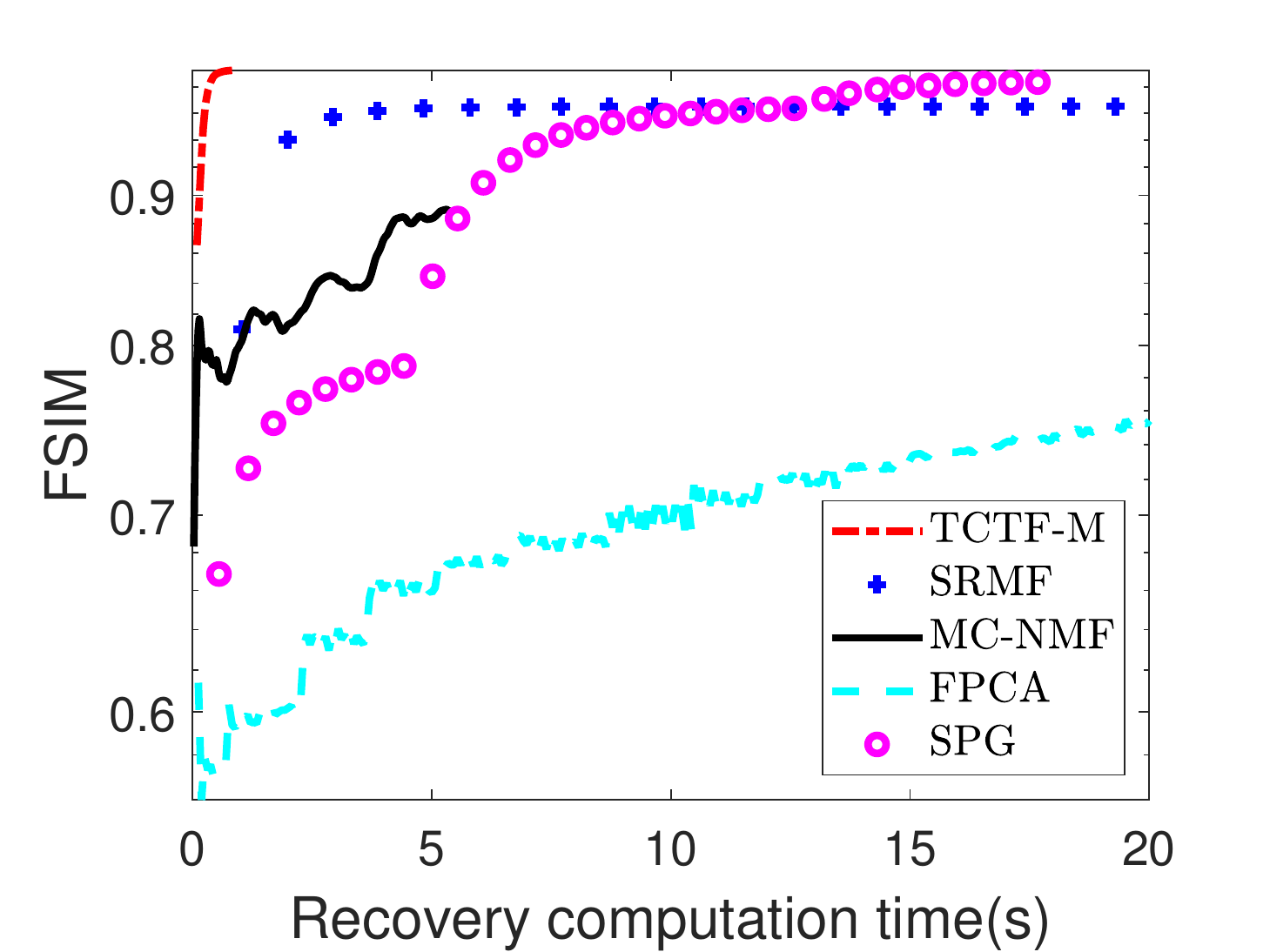}\vspace{0pt}
			\includegraphics[width=\linewidth]{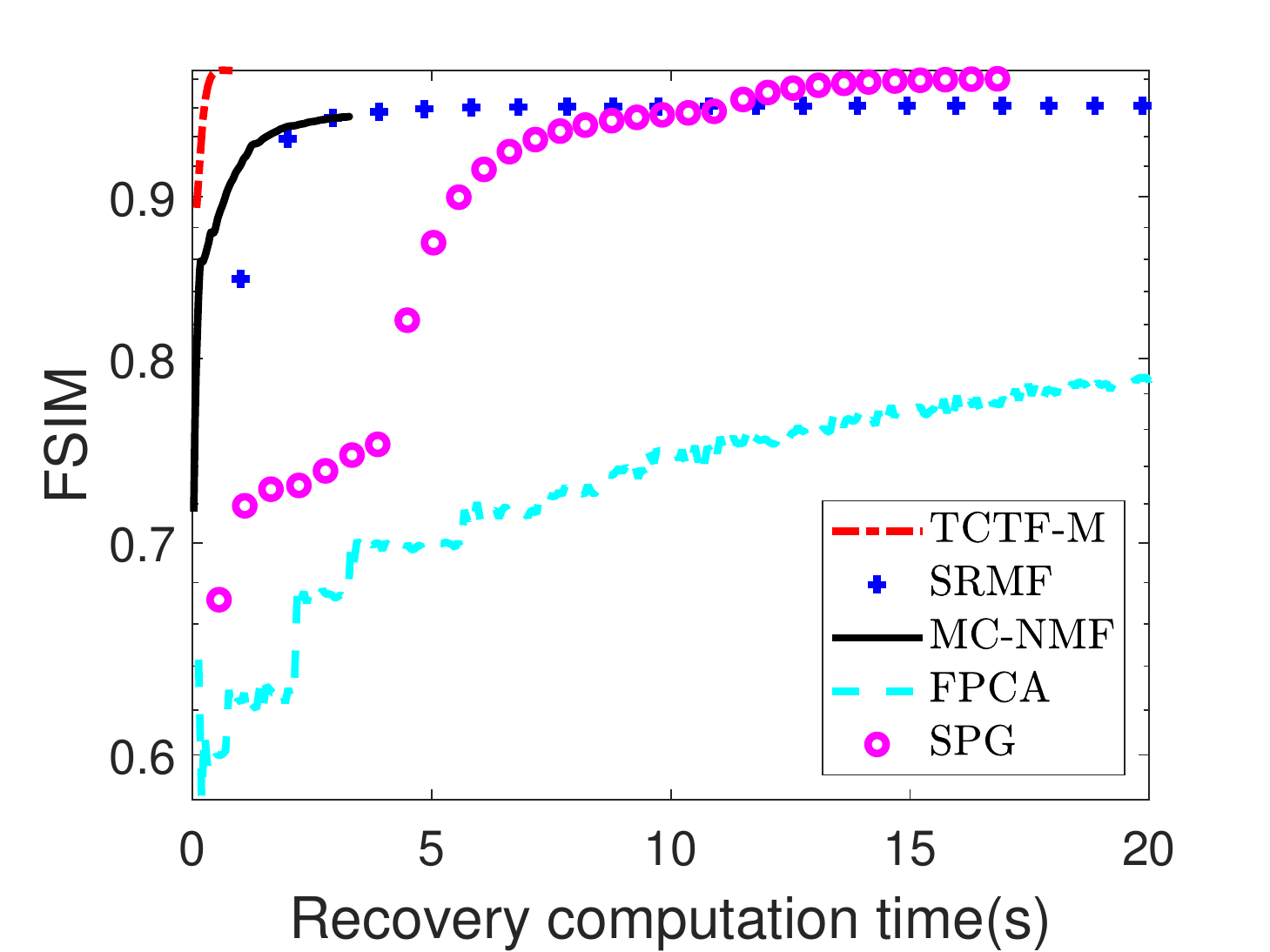}\vspace{0pt}
			\includegraphics[width=\linewidth]{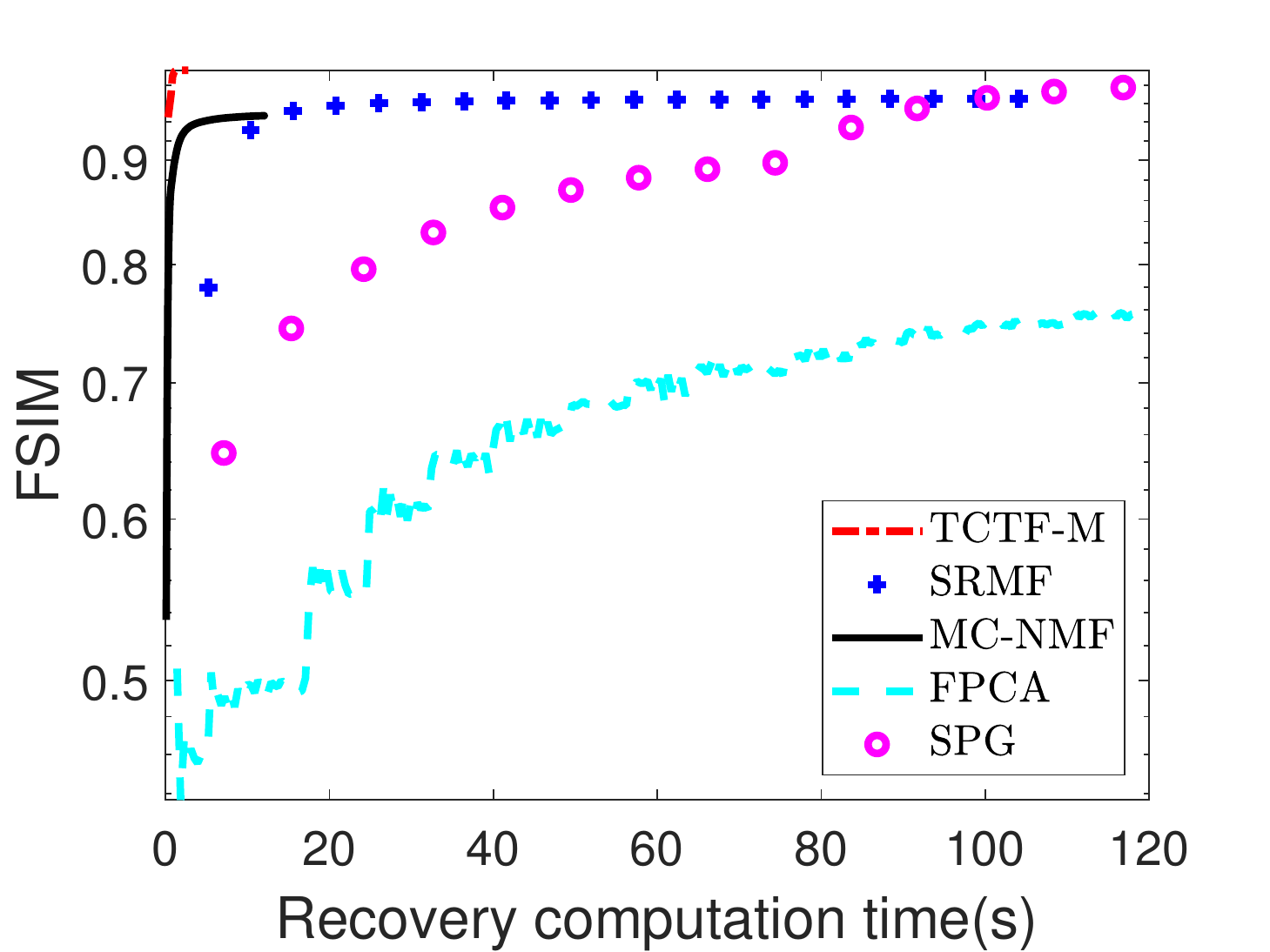}
		\end{subfigure}			
	\end{subfigure}
	\vfill
	\caption{Grayscale image inpainting:The PSNR, SSIM and FSIM values with respect to the recovery computation time. From top to bottom are respectively corresponding to ``Plastic", ``Bark", ``Pentagon", ``Male", ``Airport" and ``Wash". In order to better display the effect, we only selected the first 20 (120) seconds for comparison.}
	\label{fig:grayscale_time}
\end{figure*}

\subsection{High Altitude Aerial Image Inpainting}
This subsection applies DTRTC to high altitude aerial image inpainting. We also use the USC-SIPI image database to evaluate our proposed method for high altitude aerial image inpainting. In our test, four high altitude aerial images are randomly selected from this database. The first three images both are $ 1024\times 1024\times 3 $ pixels and that of the last one is $ 2250\times 2250\times 3 $ pixels. The data of images are normalized in the range $ \left[0,1\right] $.

For each chosen image, we randomly sample by the sampling ratio $p=40\%, 45\%, 50\%$.  We set the initial double tubal rank $\bm{r}_\X^0=\left(200,30,30\right),\,\bm{r}_{\tilde\X}^0=\left(3,\ldots,3\right)\,  $ in DTRTC, the initial tubal rank $ \left(200,30,30\right) $ in TCTF,
the initial CP rank $ 100 $ in NCPC and the initial Tucker rank $ \left(100,100,3\right) $ in NTD. In DTRTC, ``Wash" data sets form a tensor of size $ 3\times 101250 \times 50 $ and the others set form a tensor of size $ 3\times 16384 \times 64 $. In experiments, the maximum iterative number is set to be $100$ and precision $ \varepsilon $ is set to be 1e-4.

\begin{figure*}[htbp]
	\centering
	\begin{subfigure}[b]{1\linewidth}
	\begin{subfigure}[b]{0.118\linewidth}
		\centering
		\includegraphics[width=\linewidth]{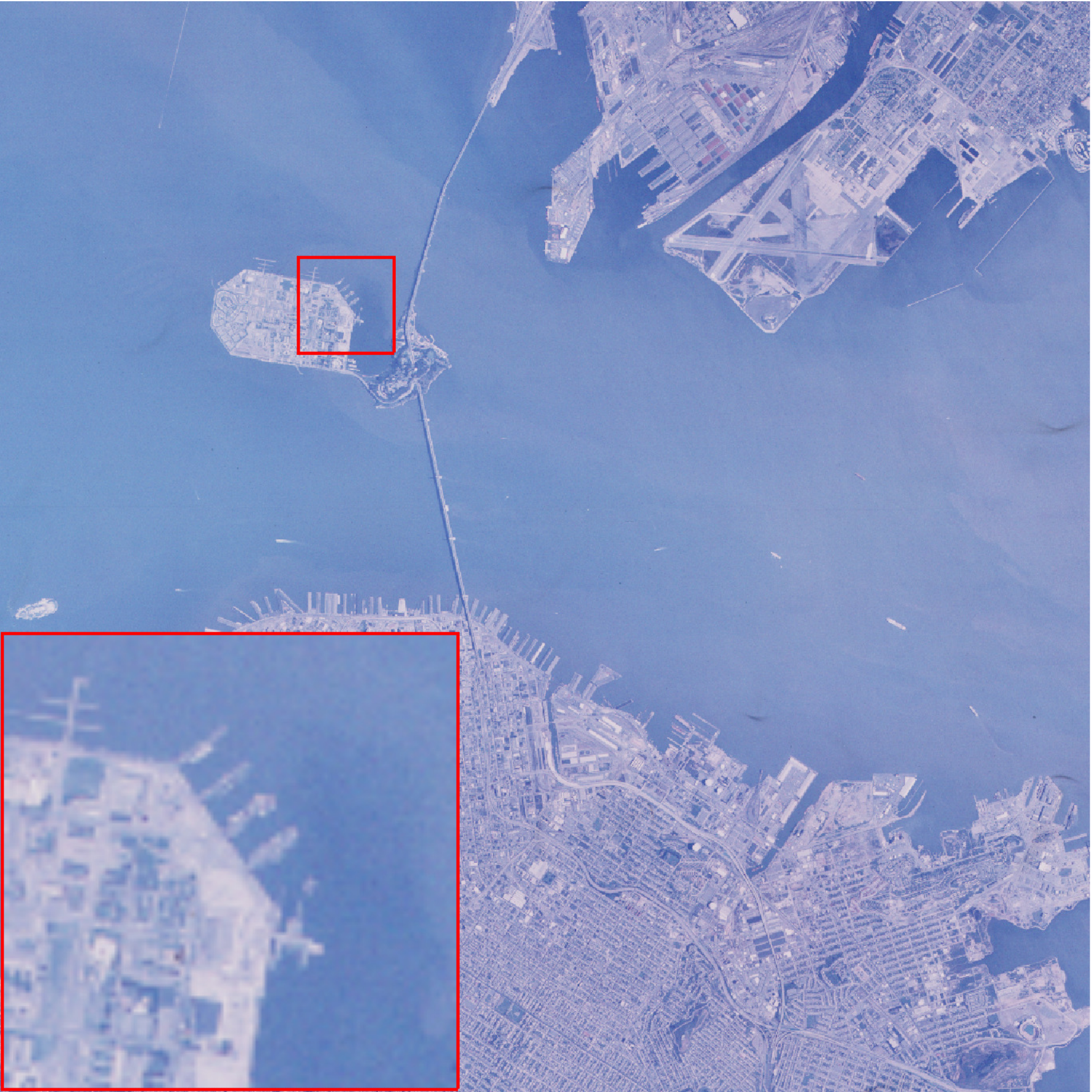}\vspace{0pt}
		\includegraphics[width=\linewidth]{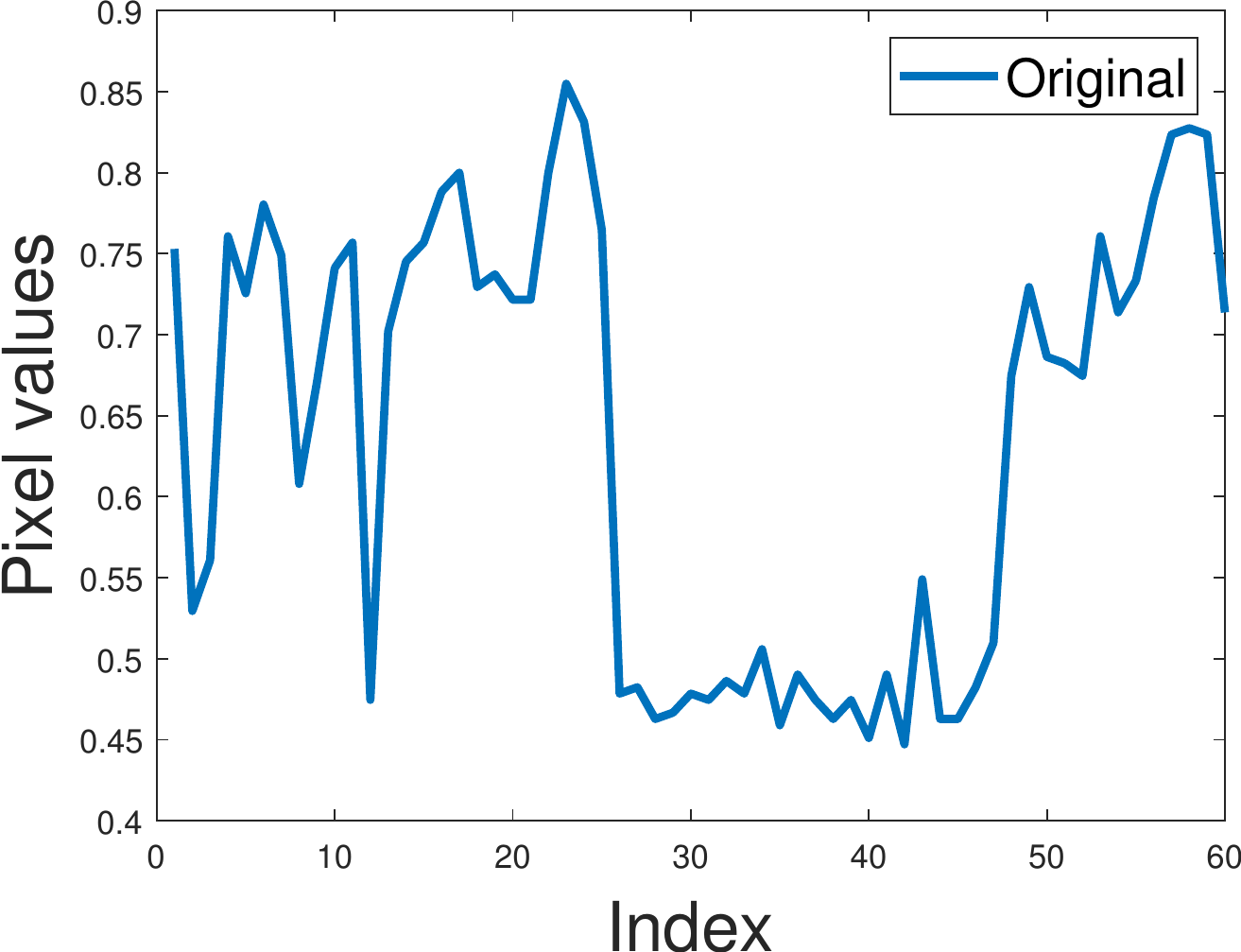}\vspace{0pt}
		\includegraphics[width=\linewidth]{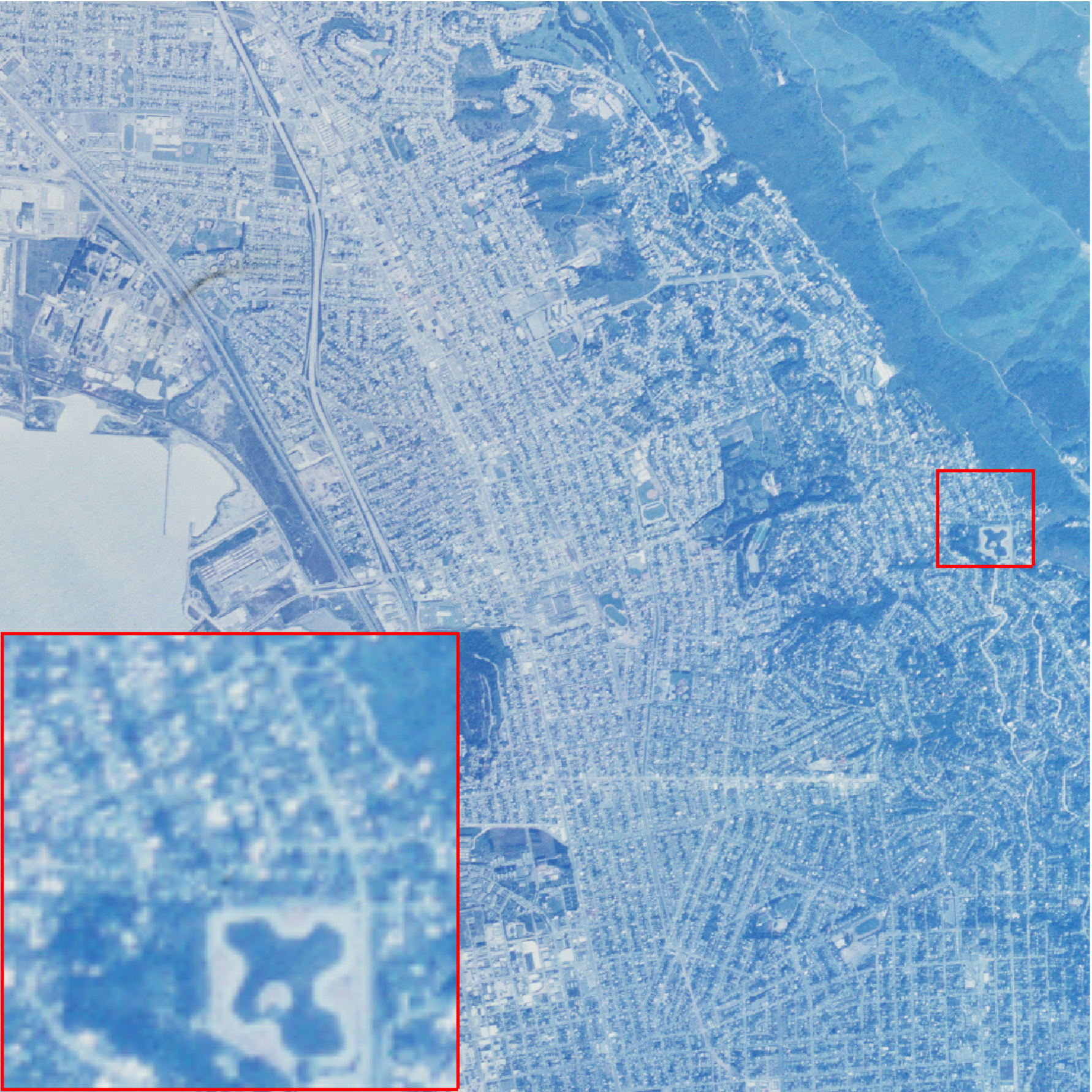}\vspace{0pt}
		\includegraphics[width=\linewidth]{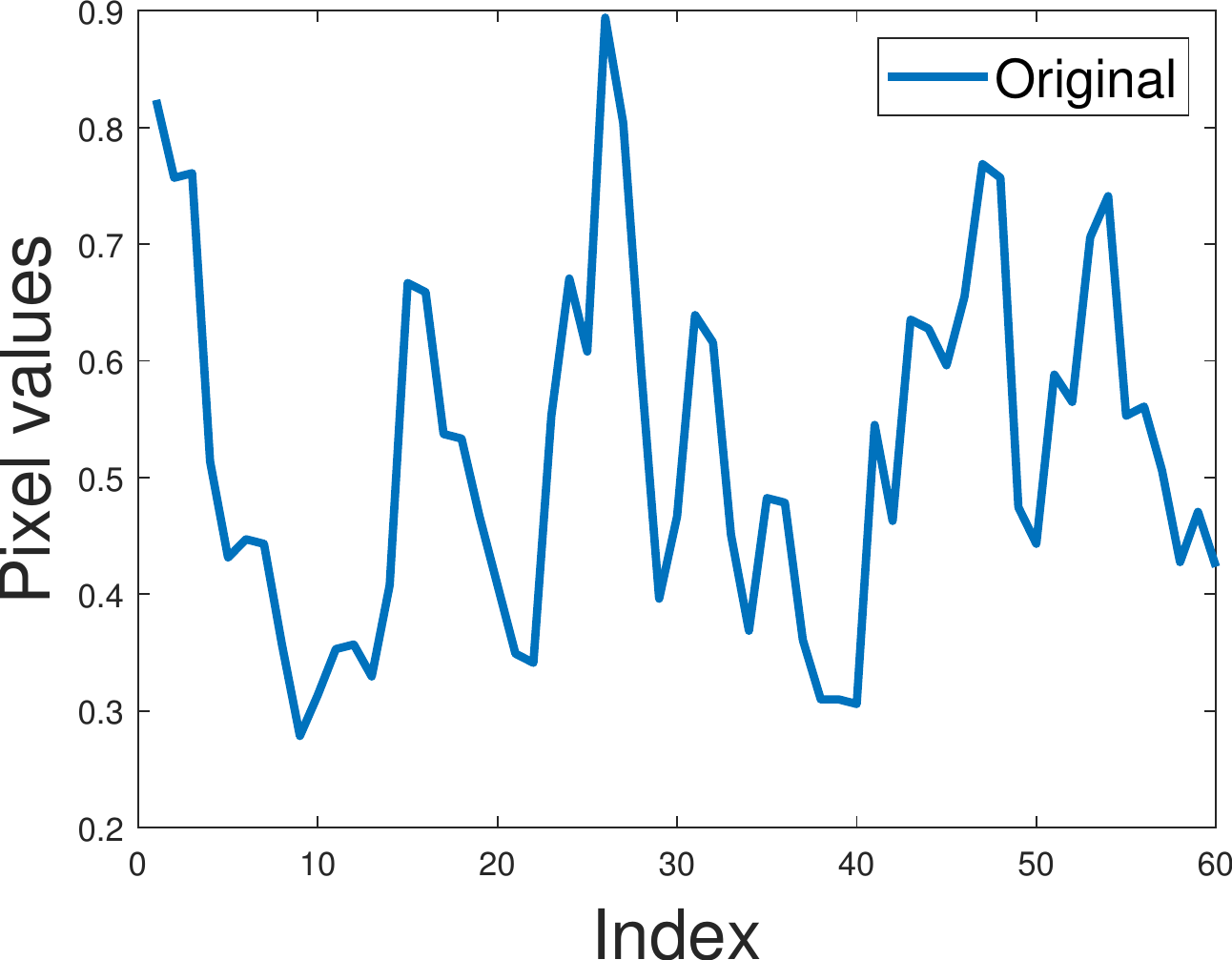}\vspace{0pt}
		\includegraphics[width=\linewidth]{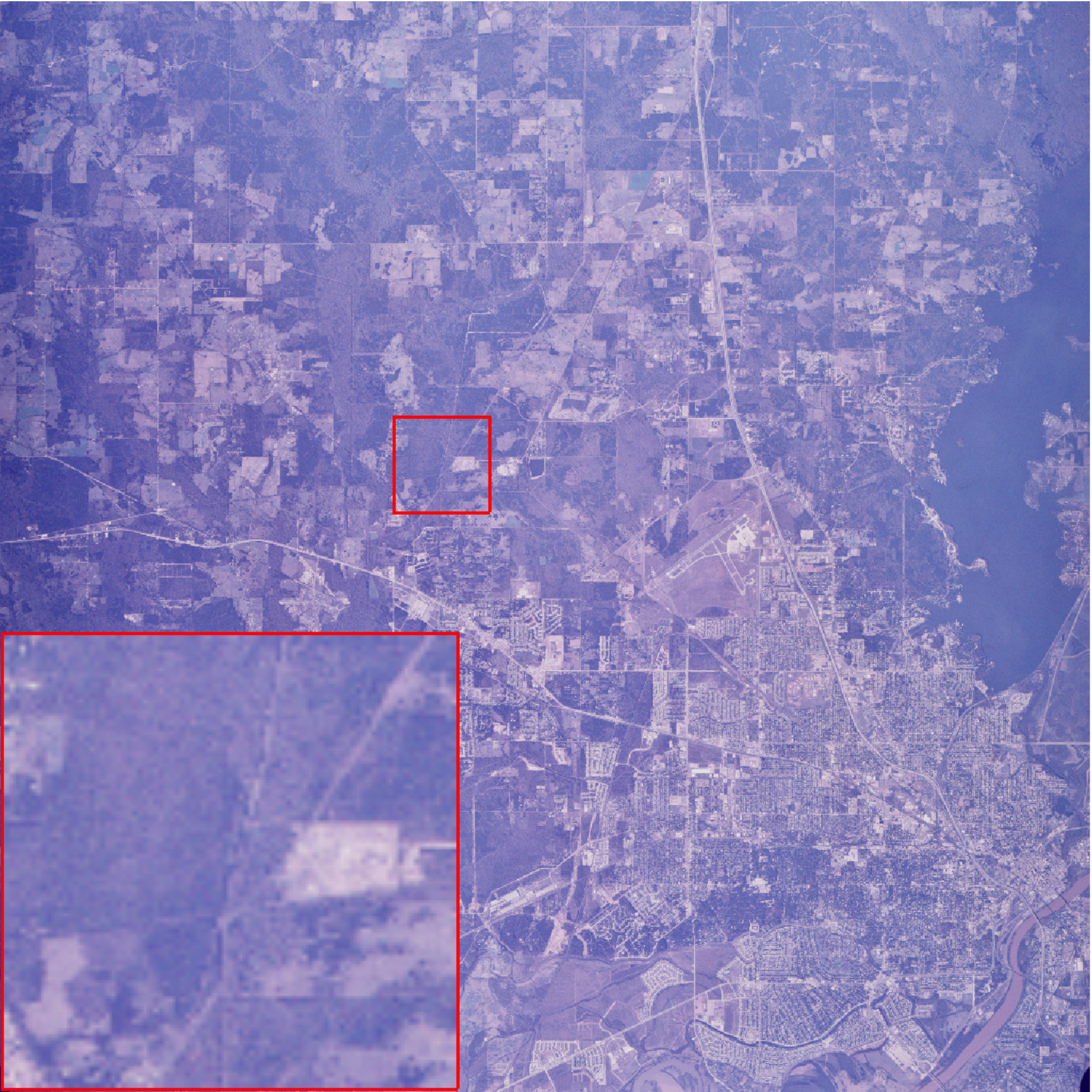}\vspace{0pt}
		\includegraphics[width=\linewidth]{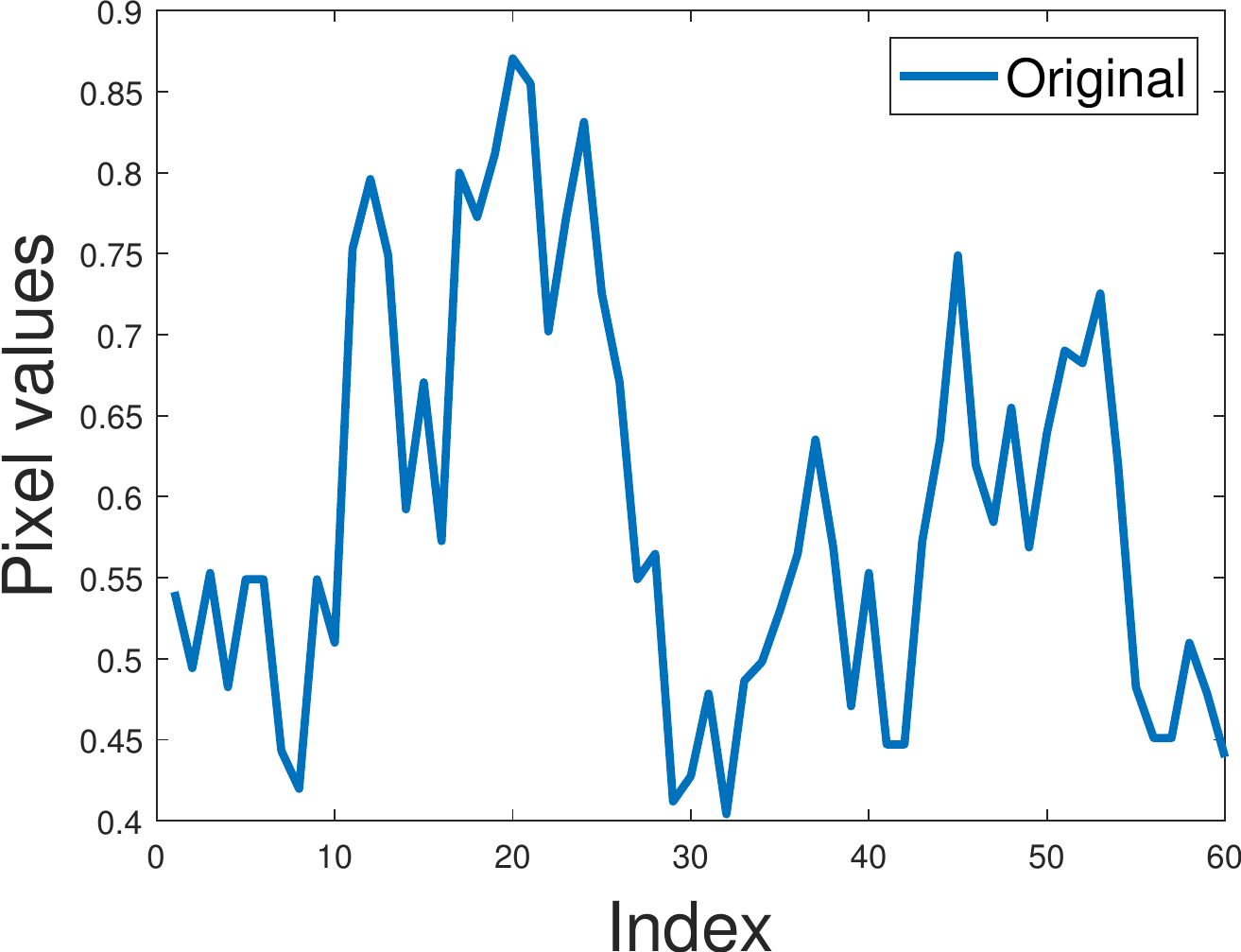}\vspace{0pt}
		\includegraphics[width=\linewidth]{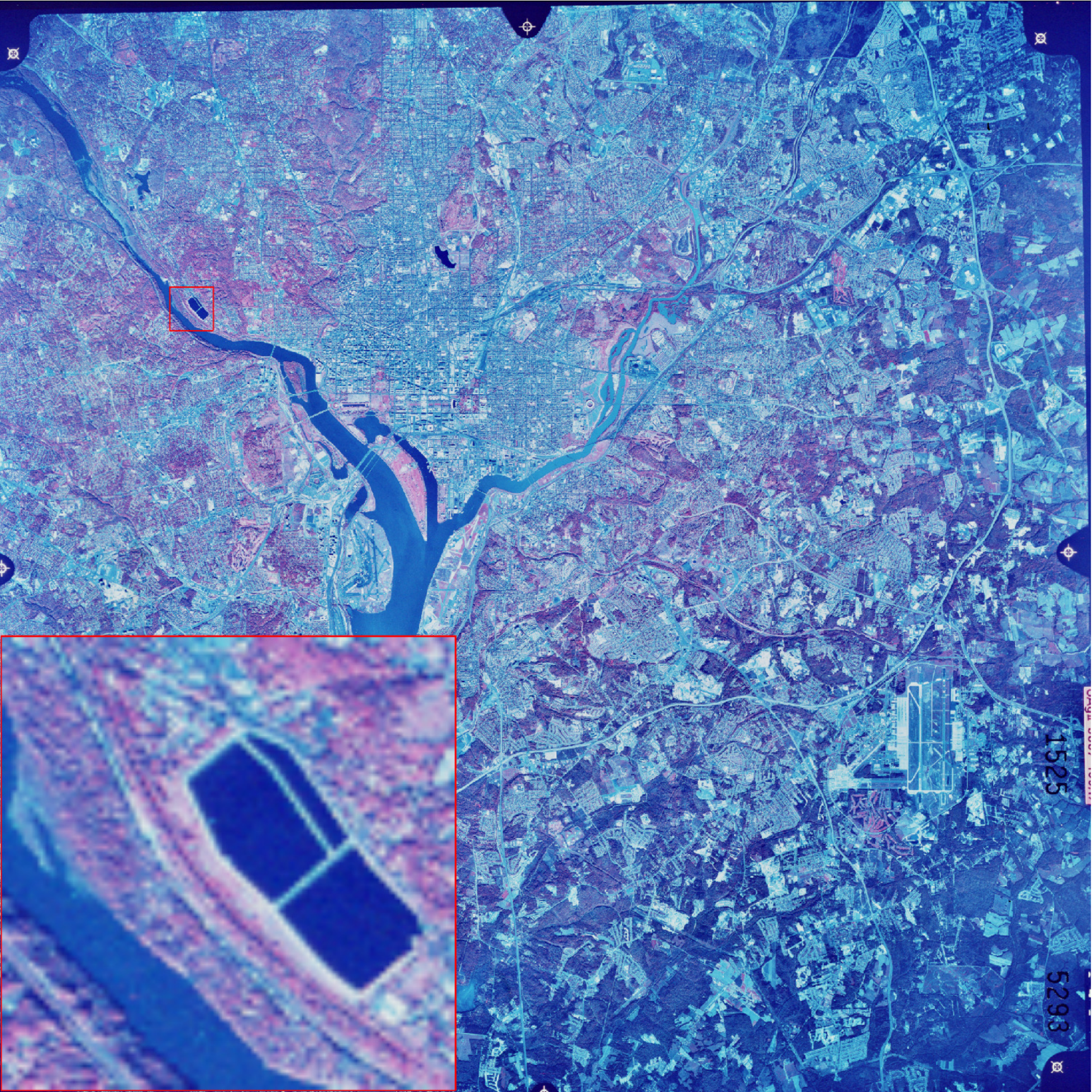}\vspace{0pt}
		\includegraphics[width=\linewidth]{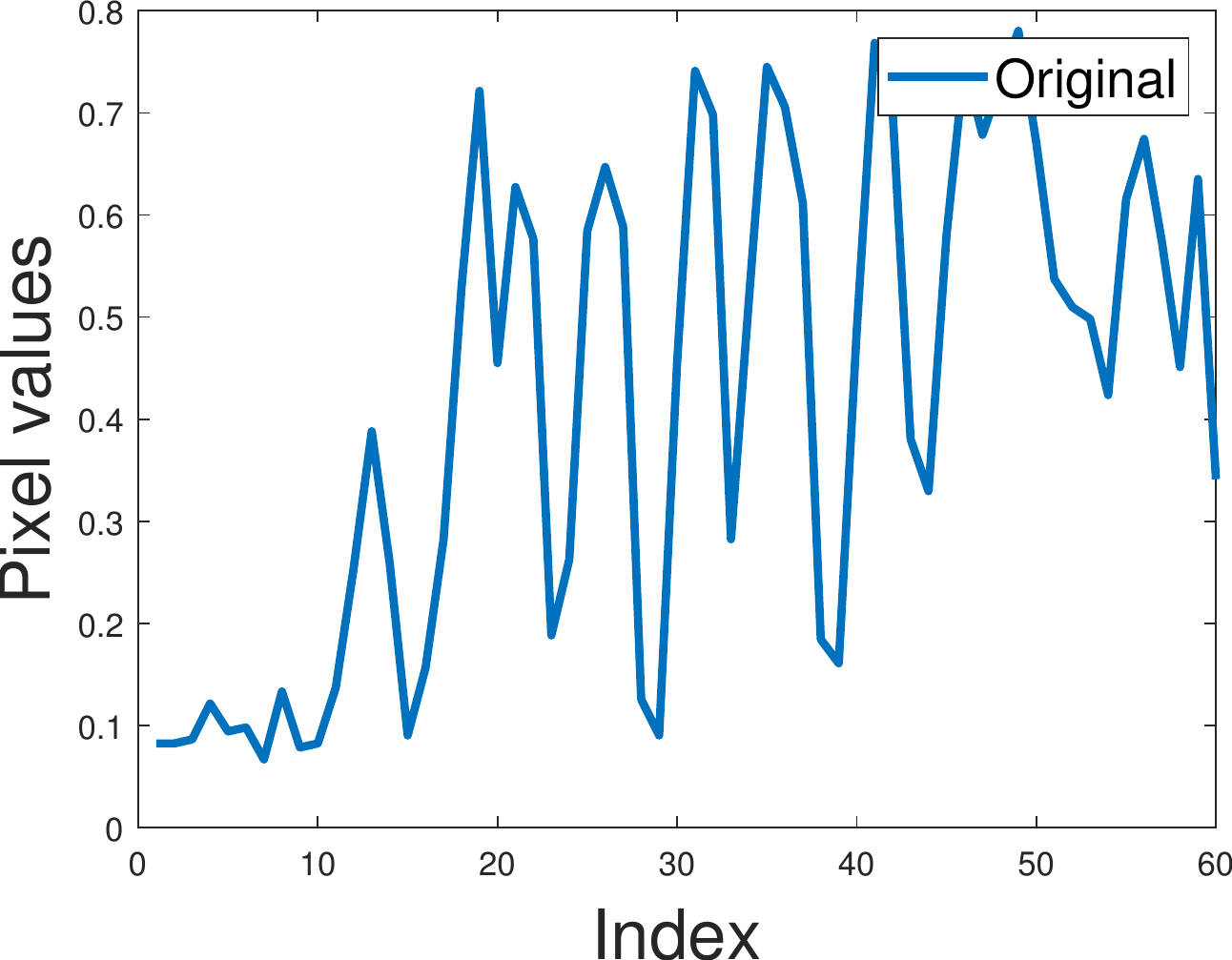}
		\caption{Original}
	\end{subfigure}   	
	\begin{subfigure}[b]{0.1175\linewidth}
		\centering
		\includegraphics[width=\linewidth]{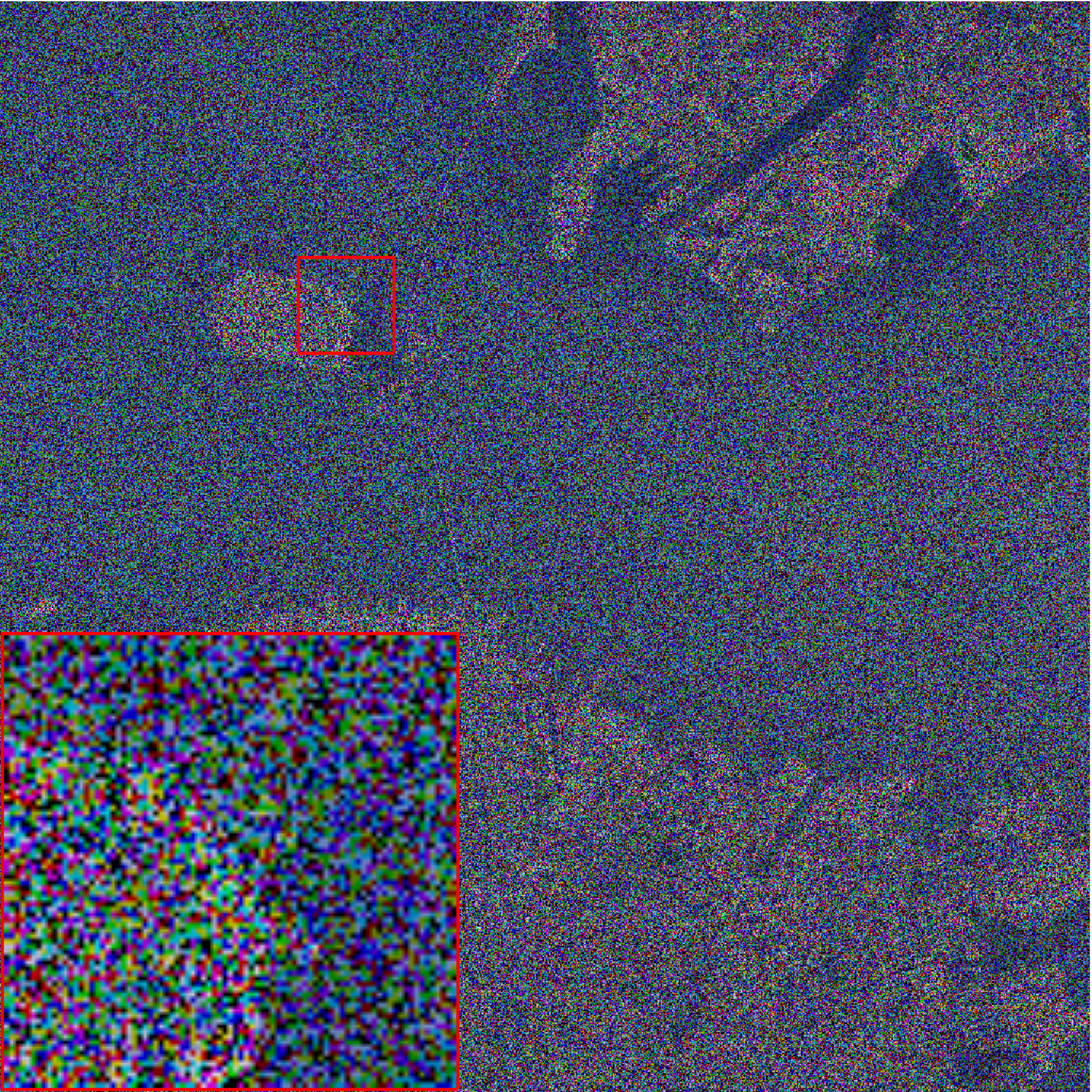}\vspace{0pt}
		\includegraphics[width=\linewidth]{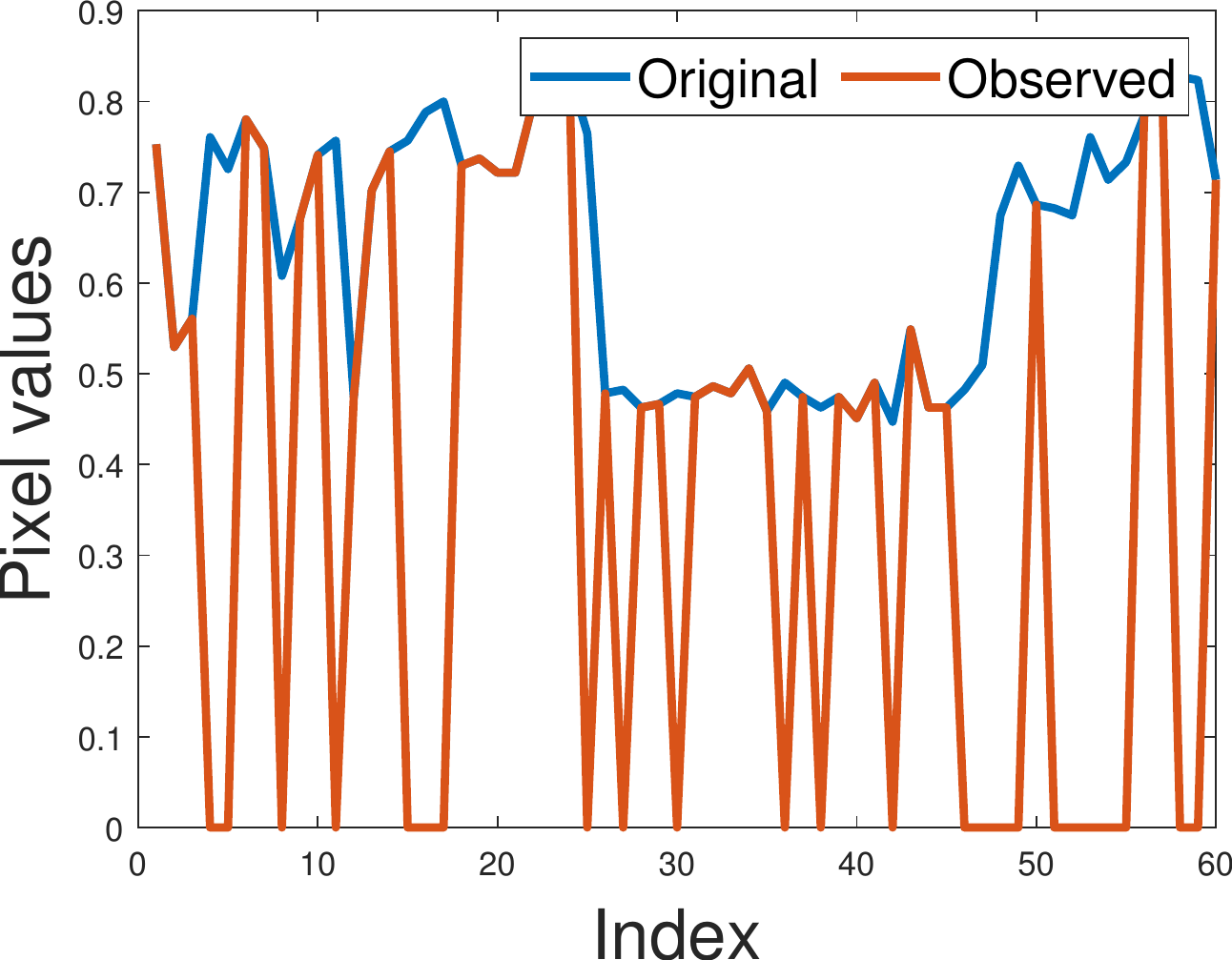}\vspace{0pt}
		\includegraphics[width=\linewidth]{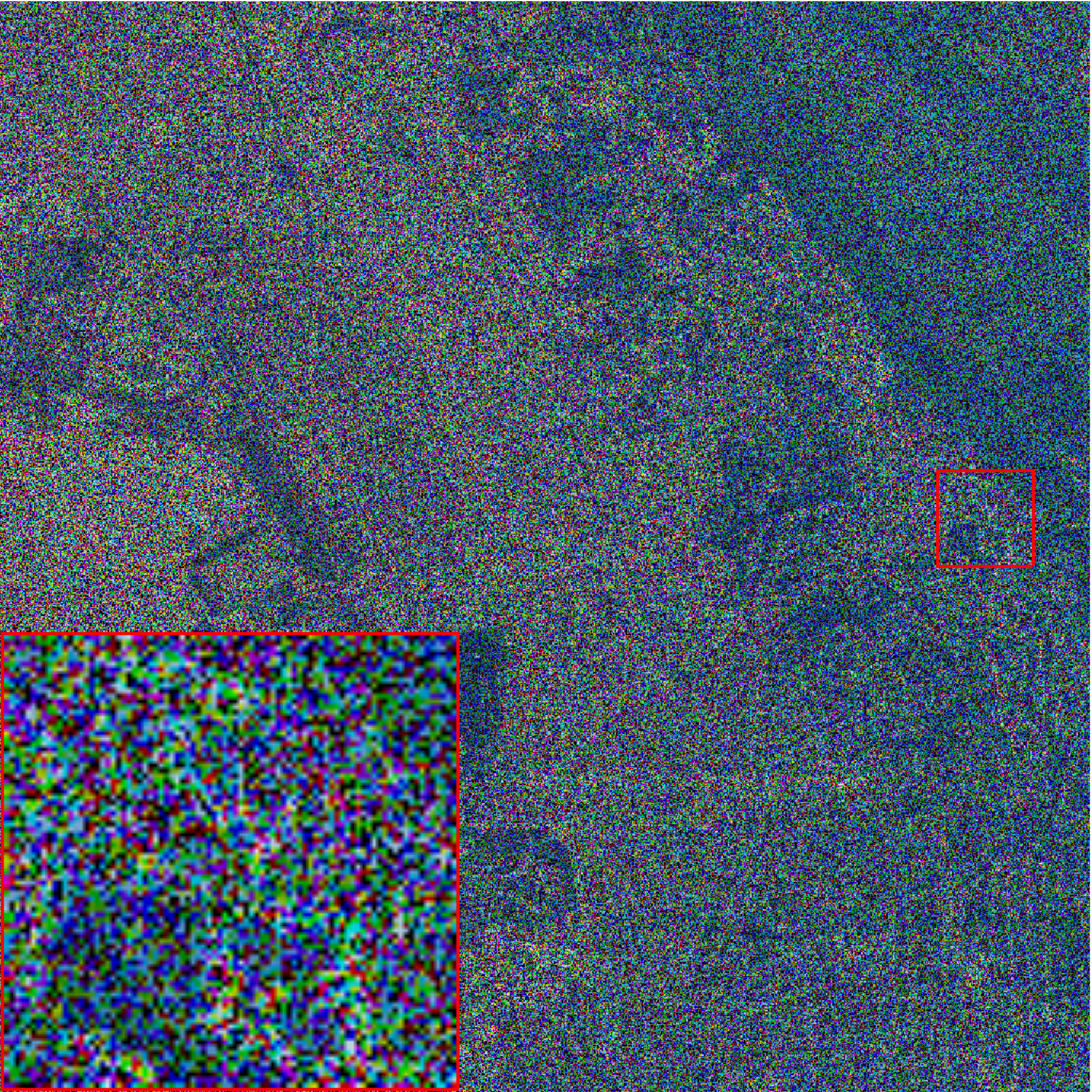}\vspace{0pt}
		\includegraphics[width=\linewidth]{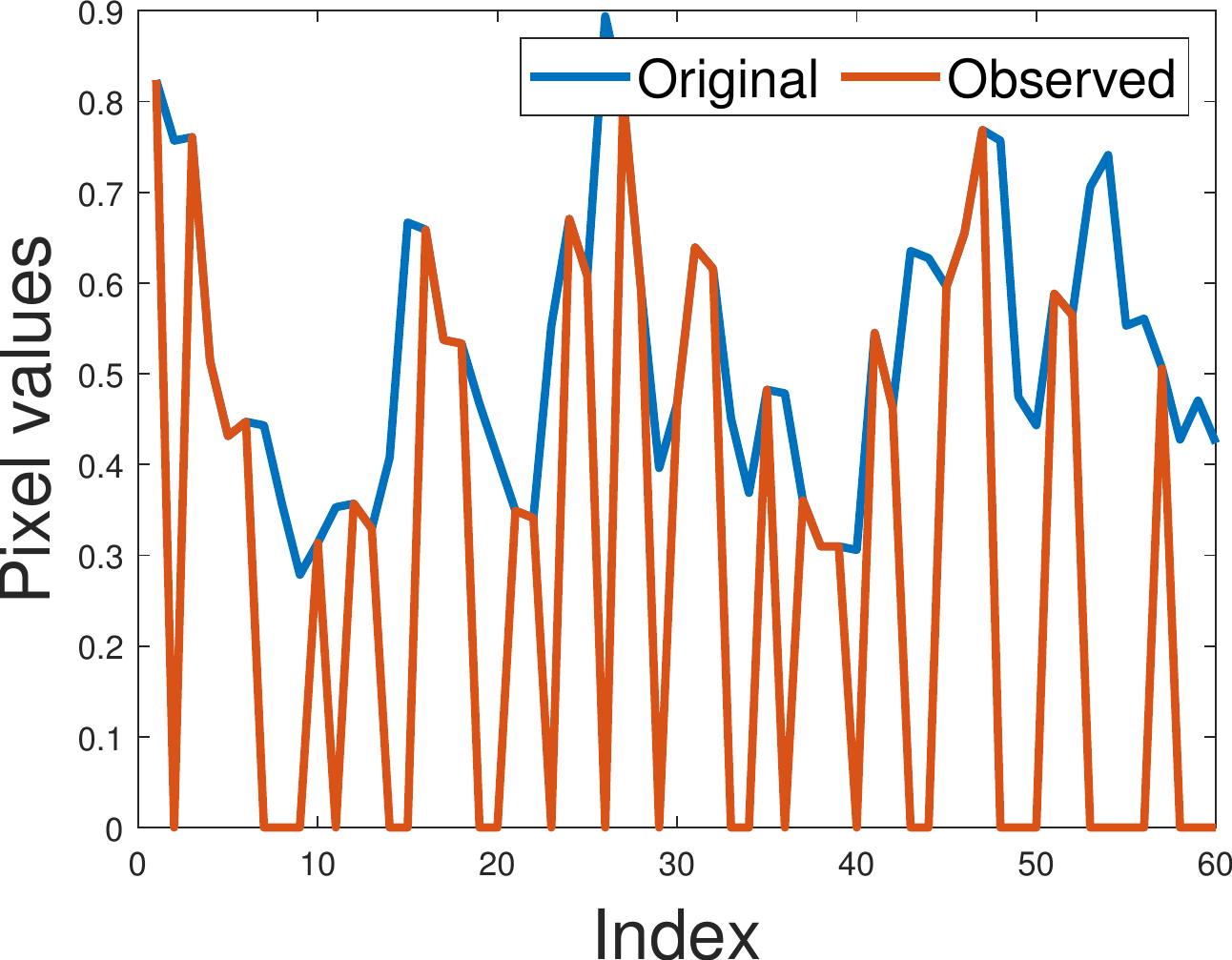}\vspace{0pt}
		\includegraphics[width=\linewidth]{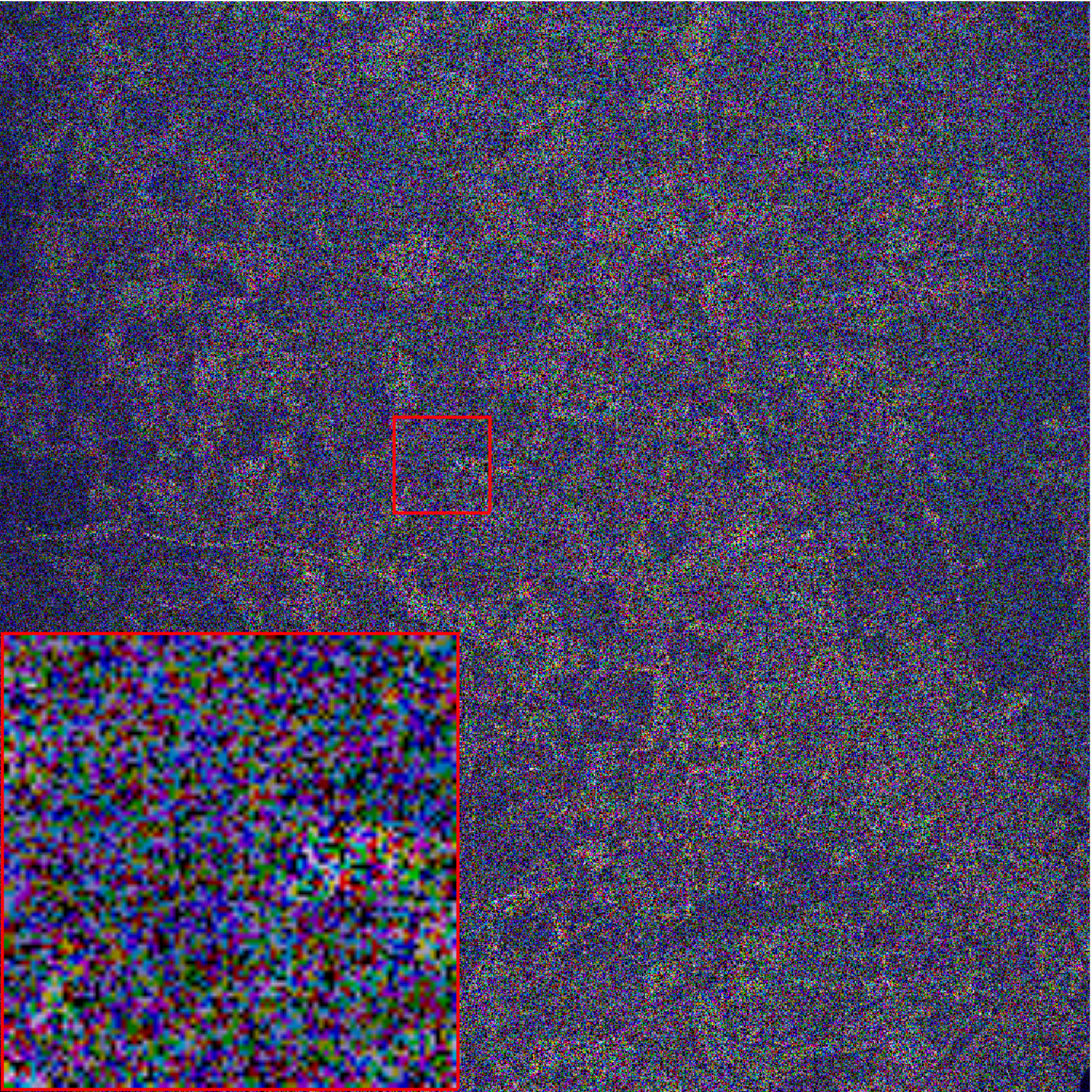}\vspace{0pt}
		\includegraphics[width=\linewidth]{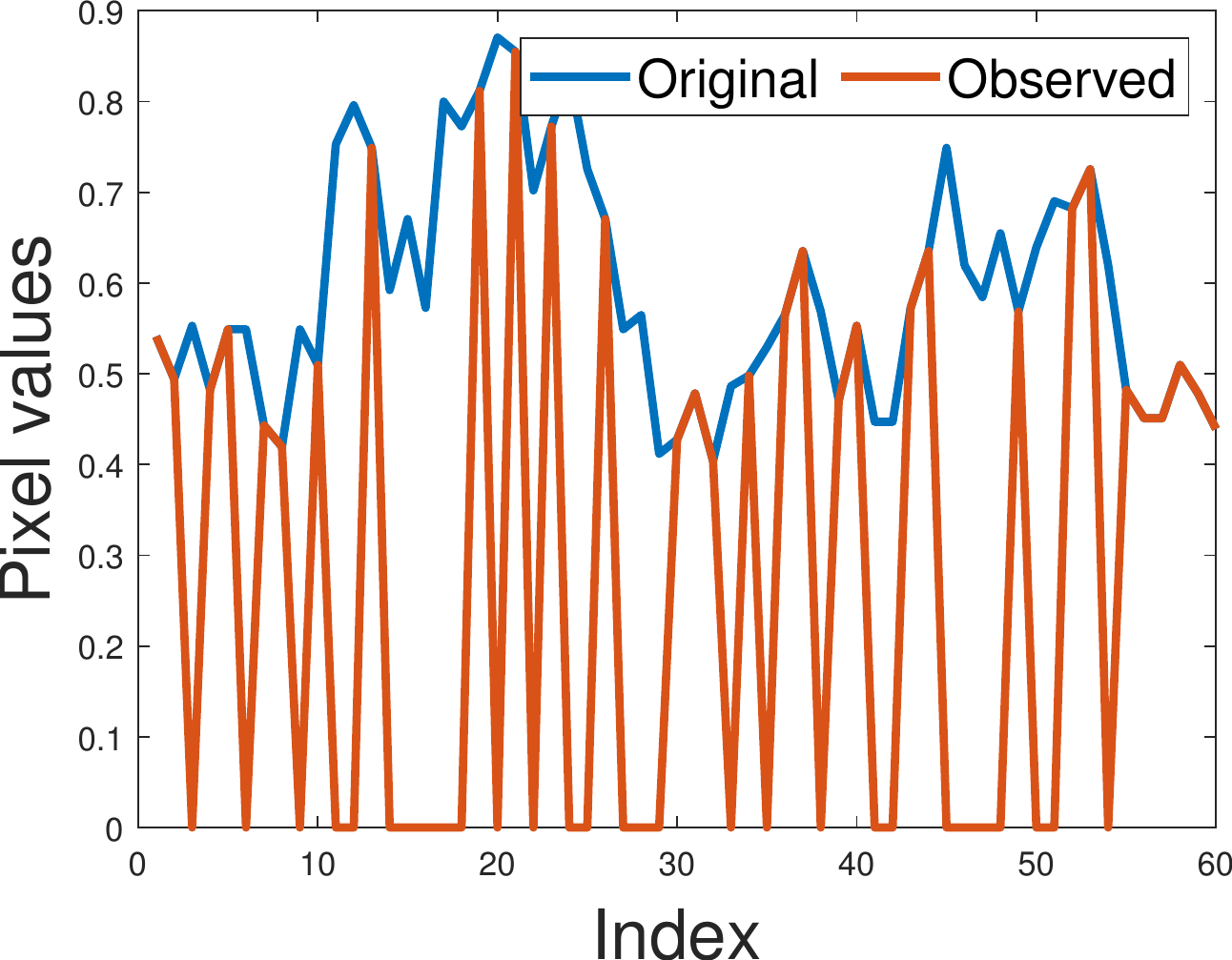}\vspace{0pt}
		\includegraphics[width=\linewidth]{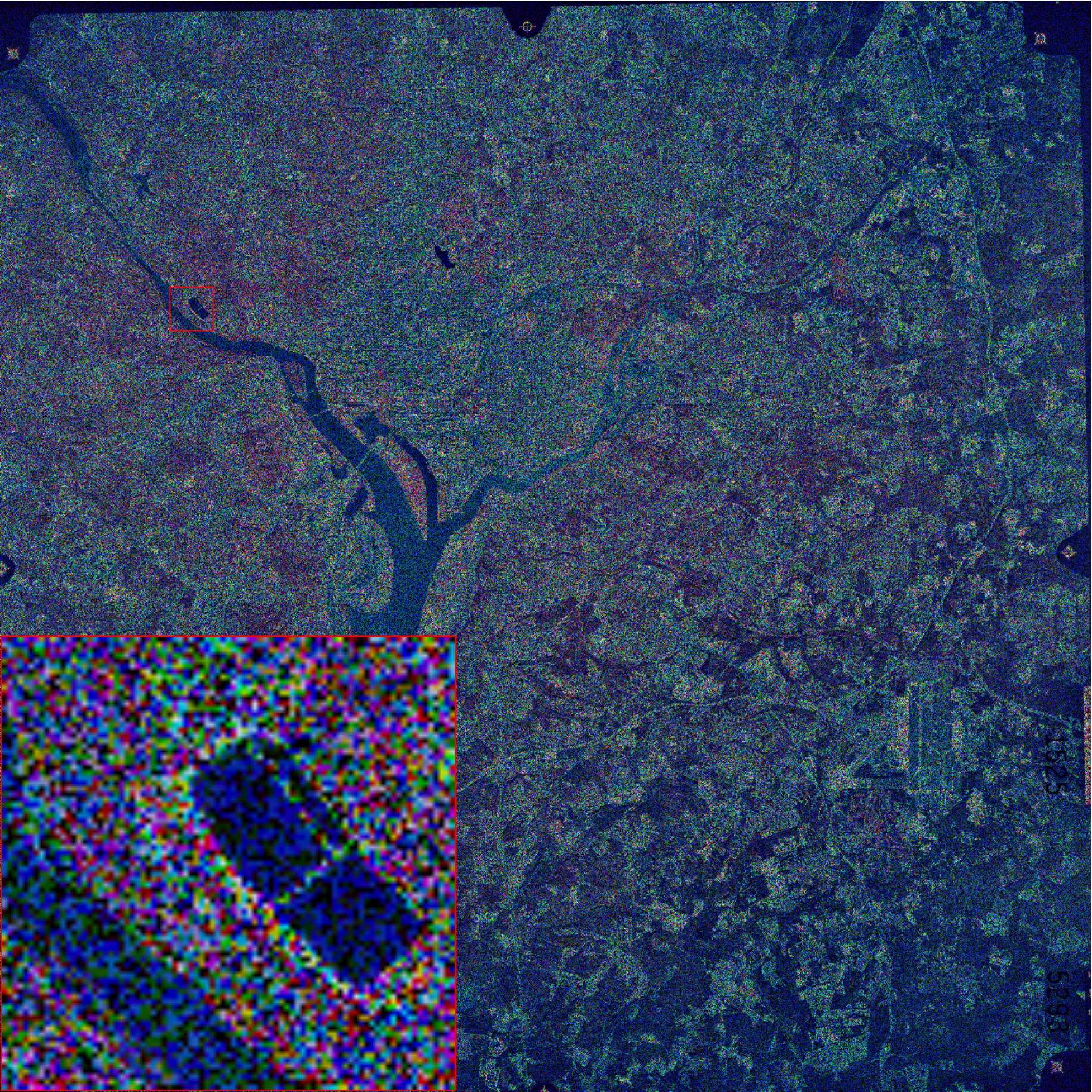}\vspace{0pt}
		\includegraphics[width=\linewidth]{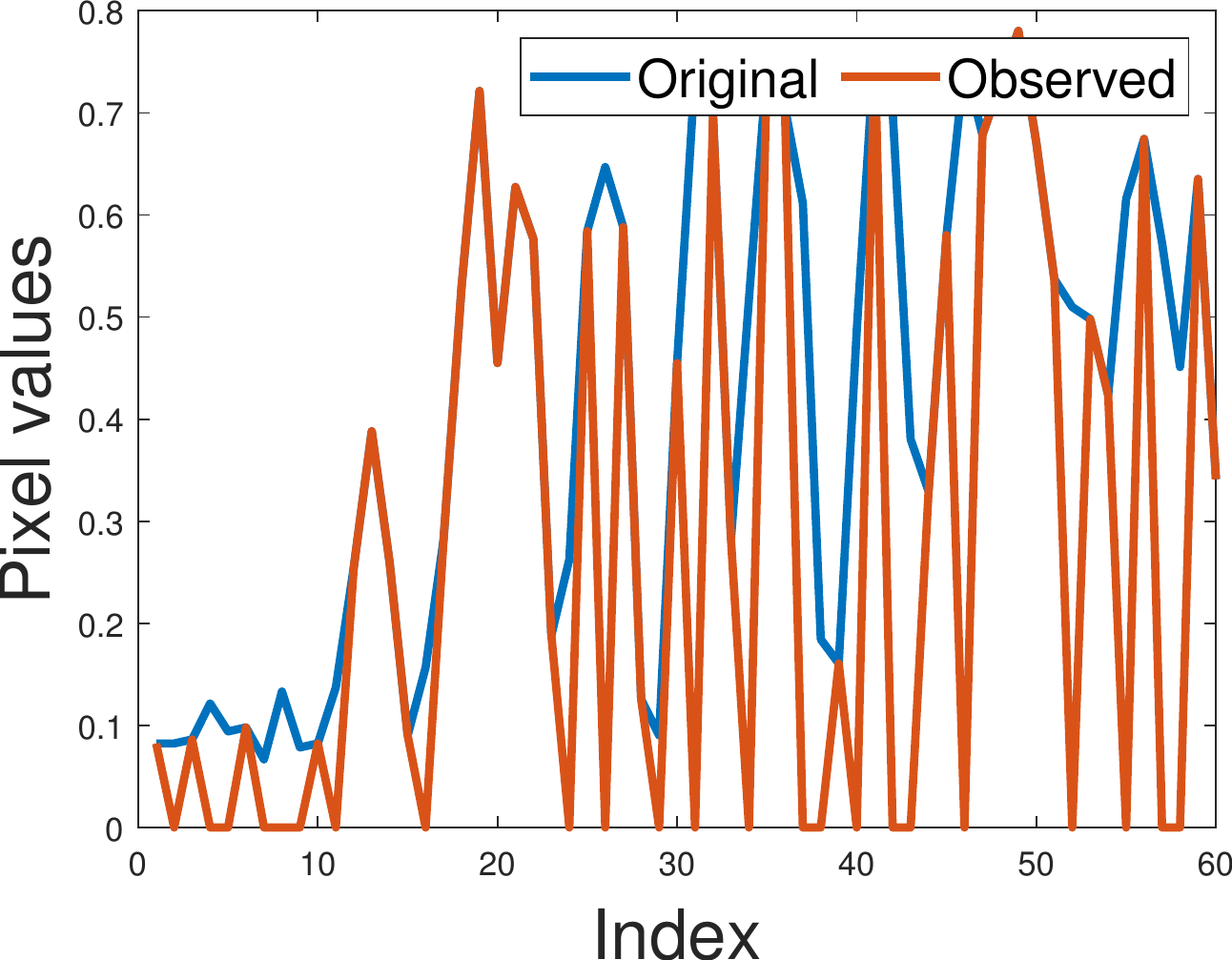}
		\caption{Observed}
	\end{subfigure}
	\begin{subfigure}[b]{0.118\linewidth}
		\centering
		\includegraphics[width=\linewidth]{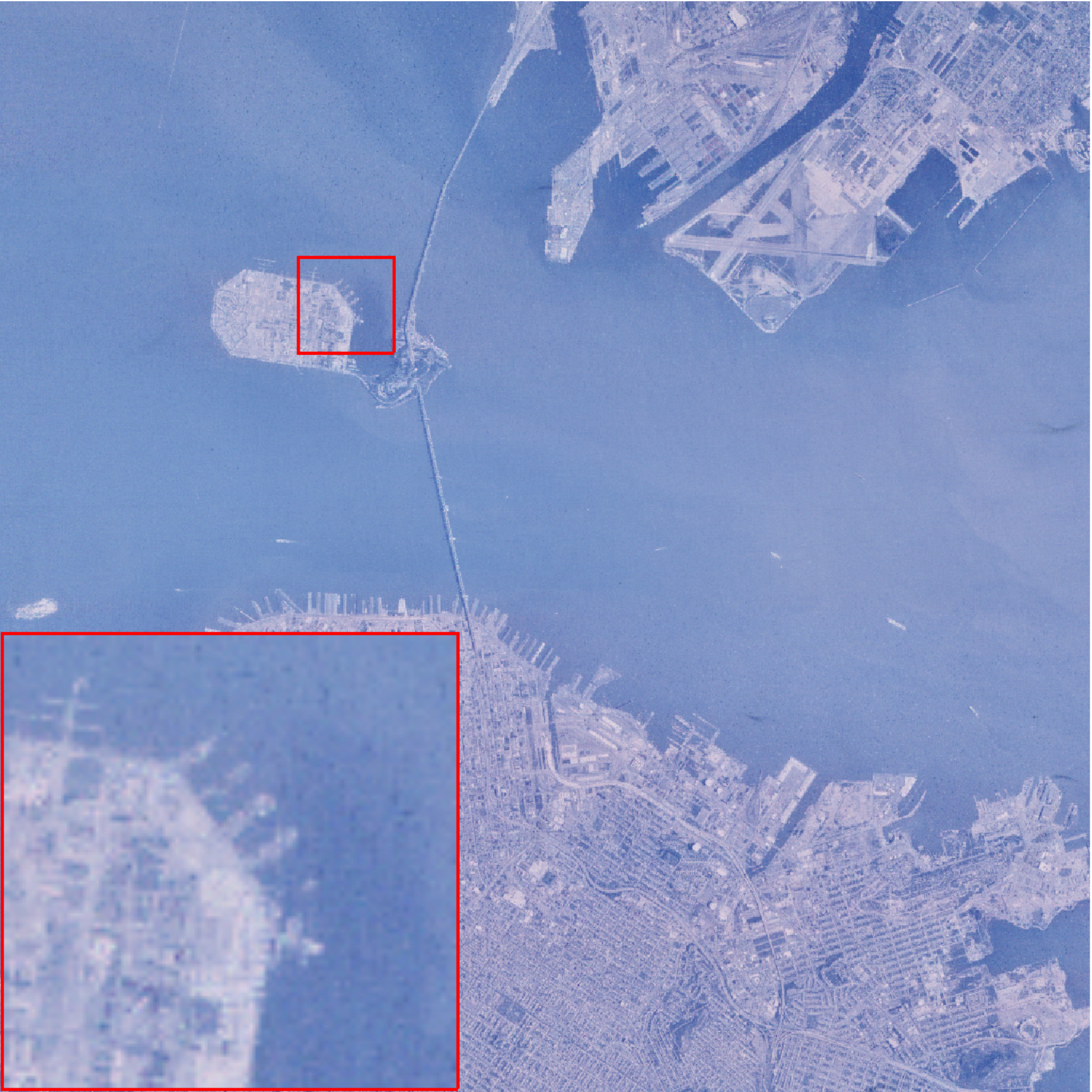}\vspace{0pt}
		\includegraphics[width=\linewidth]{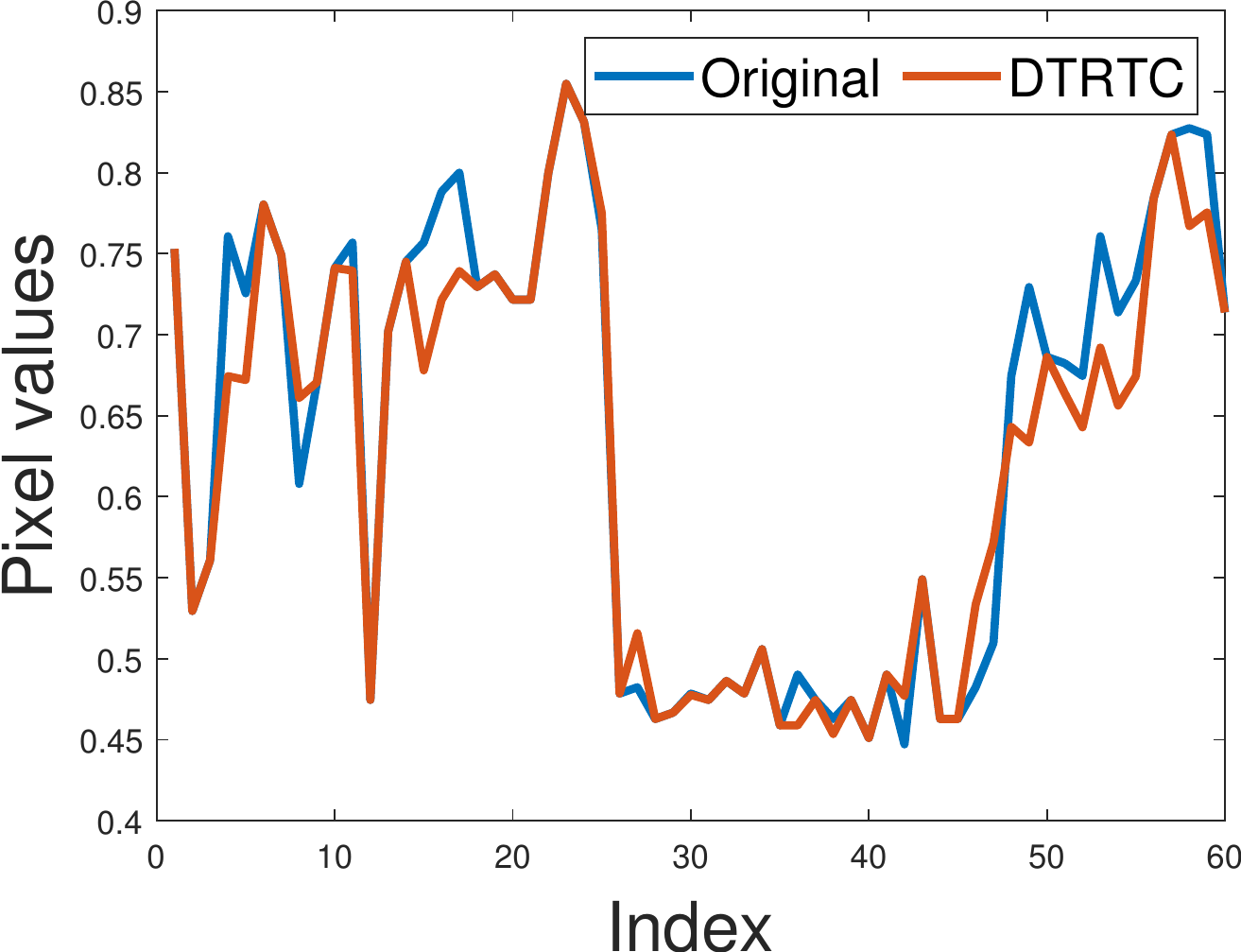}\vspace{0pt}
		\includegraphics[width=\linewidth]{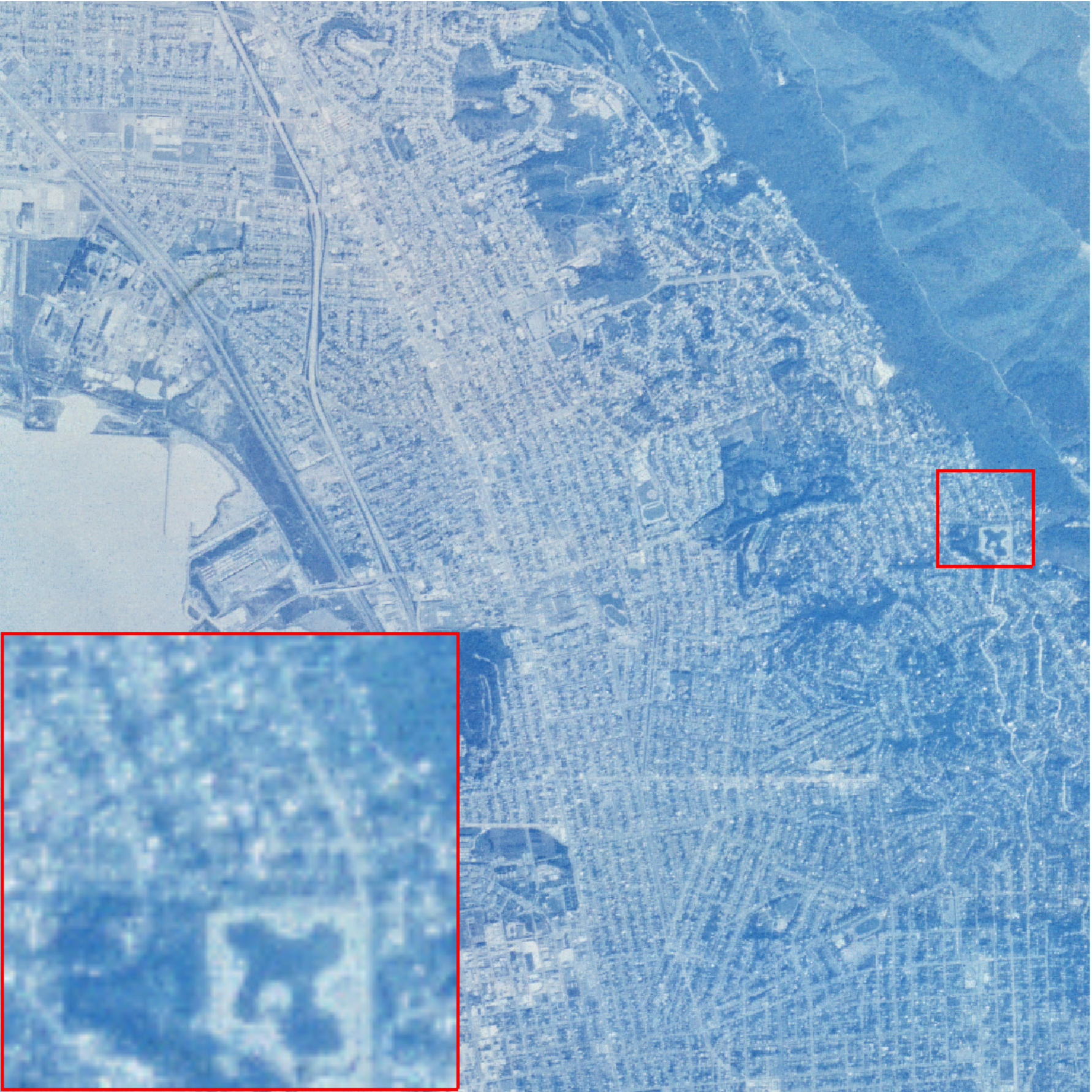}\vspace{0pt}
		\includegraphics[width=\linewidth]{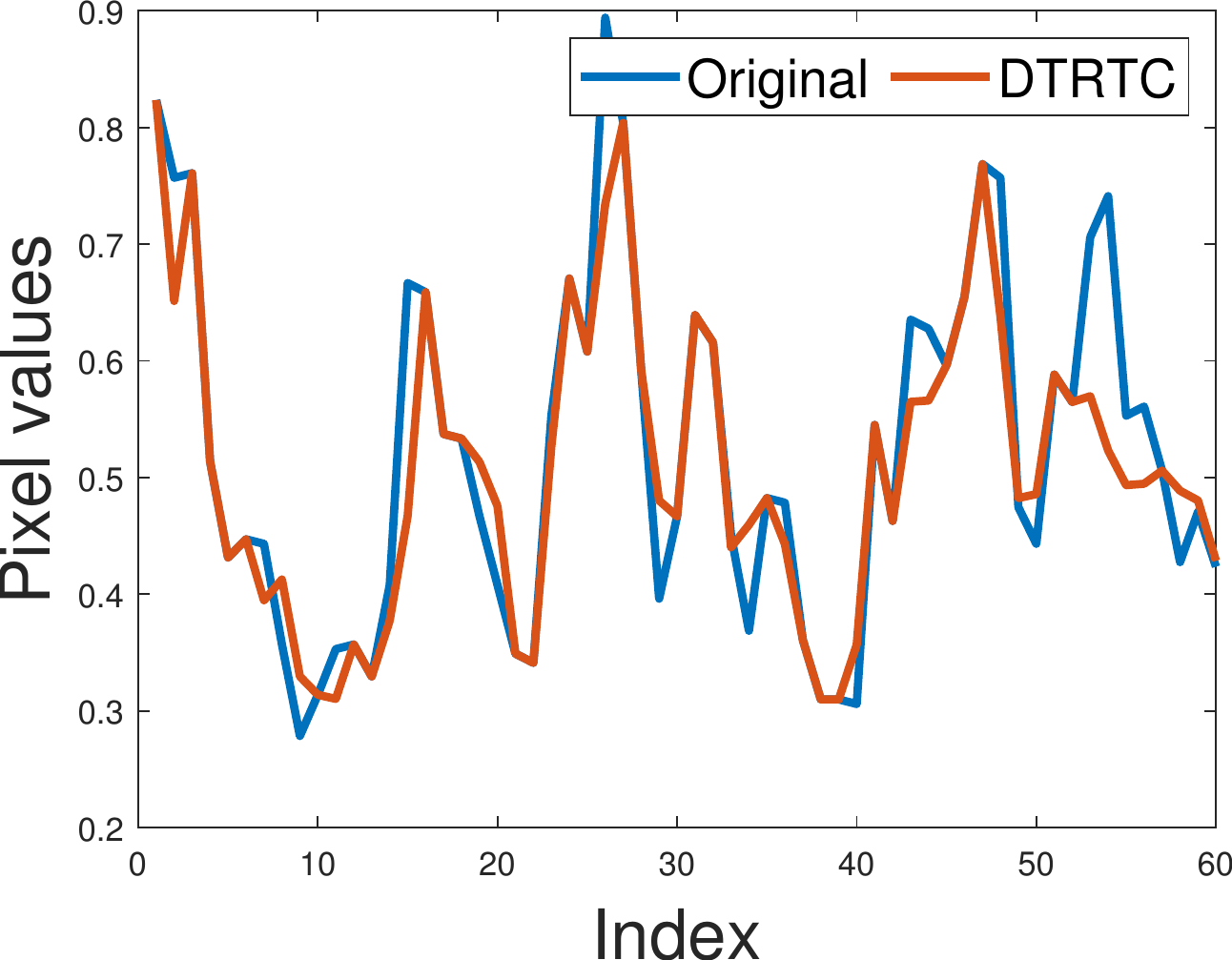}\vspace{0pt}
		\includegraphics[width=\linewidth]{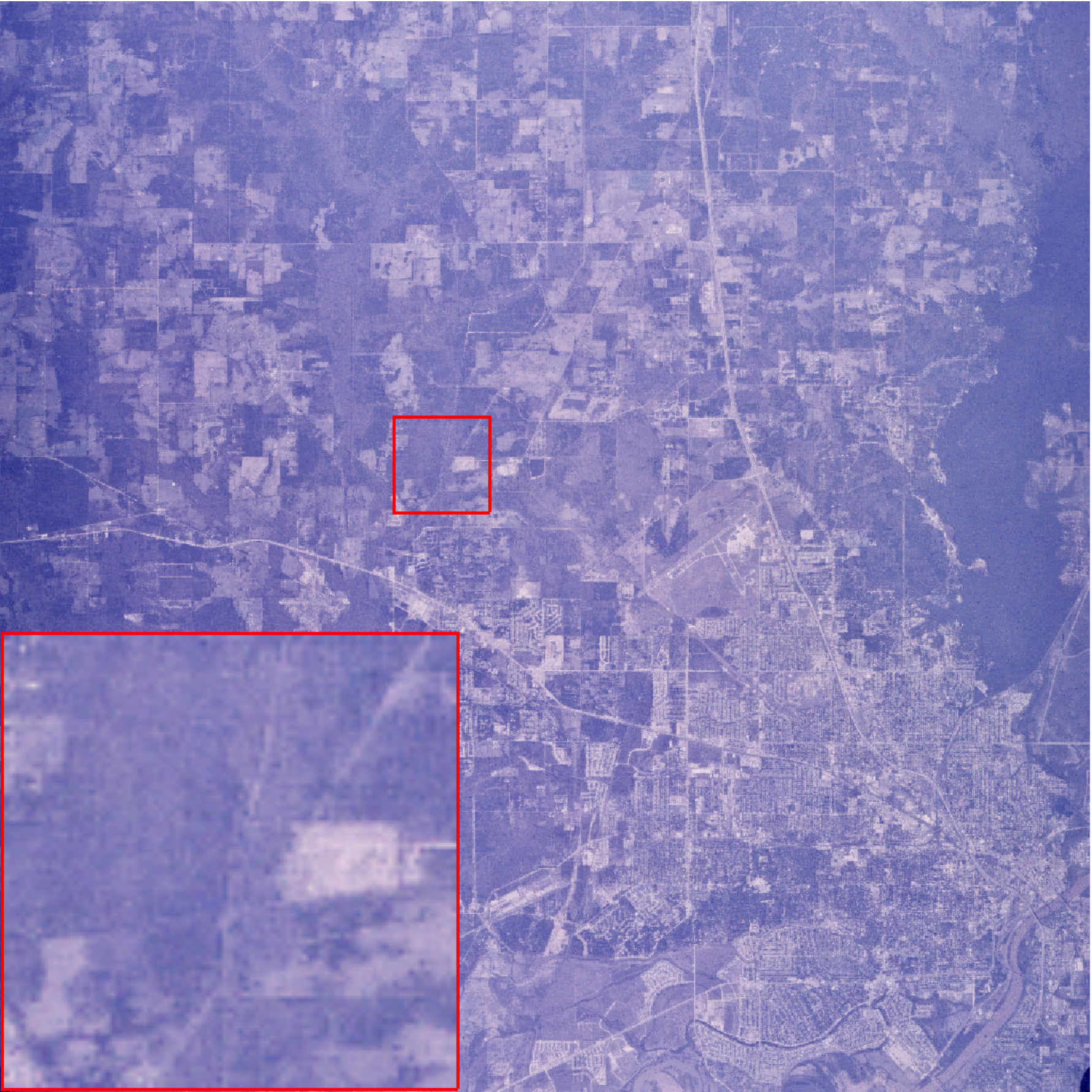}\vspace{0pt}
		\includegraphics[width=\linewidth]{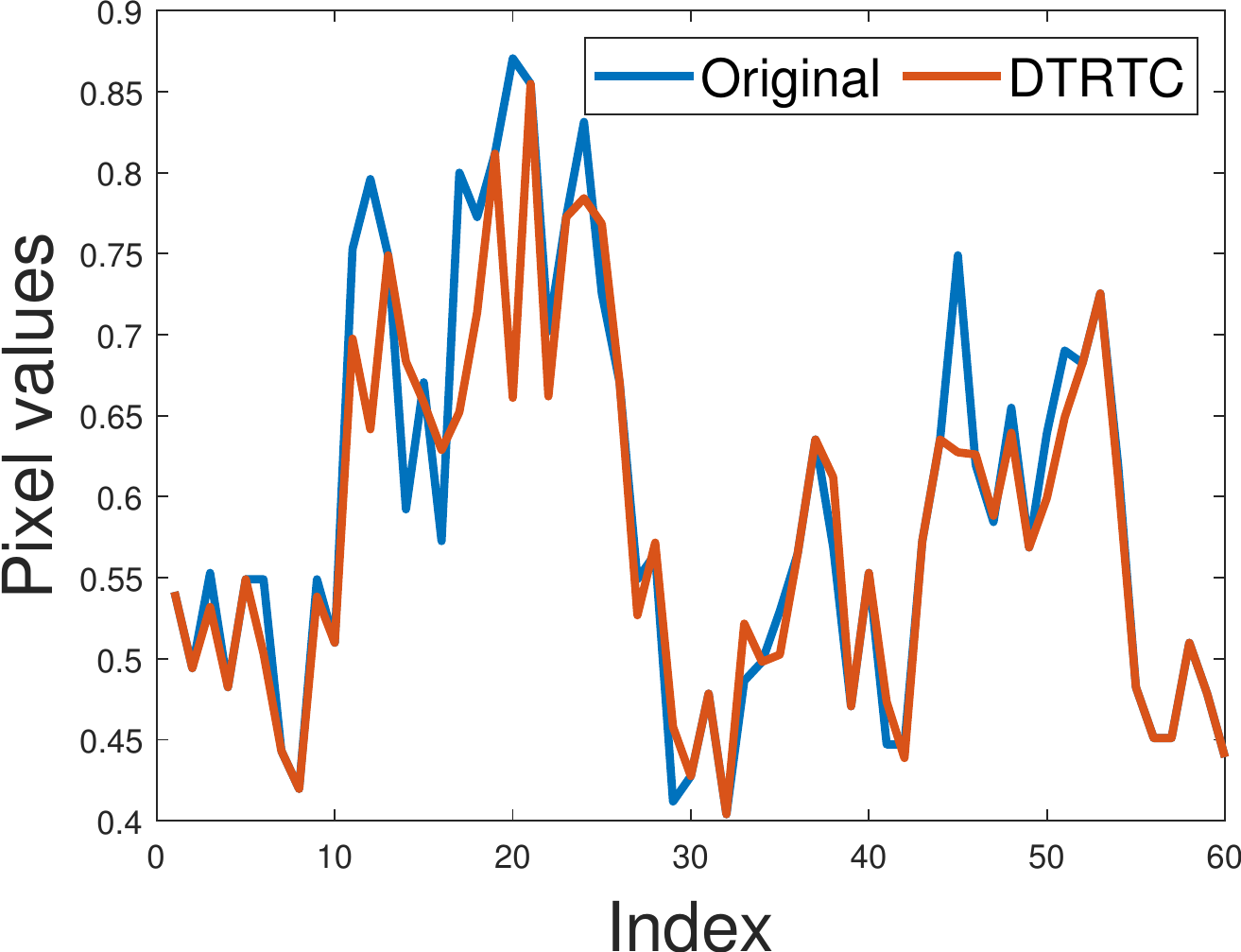}\vspace{0pt}
		\includegraphics[width=\linewidth]{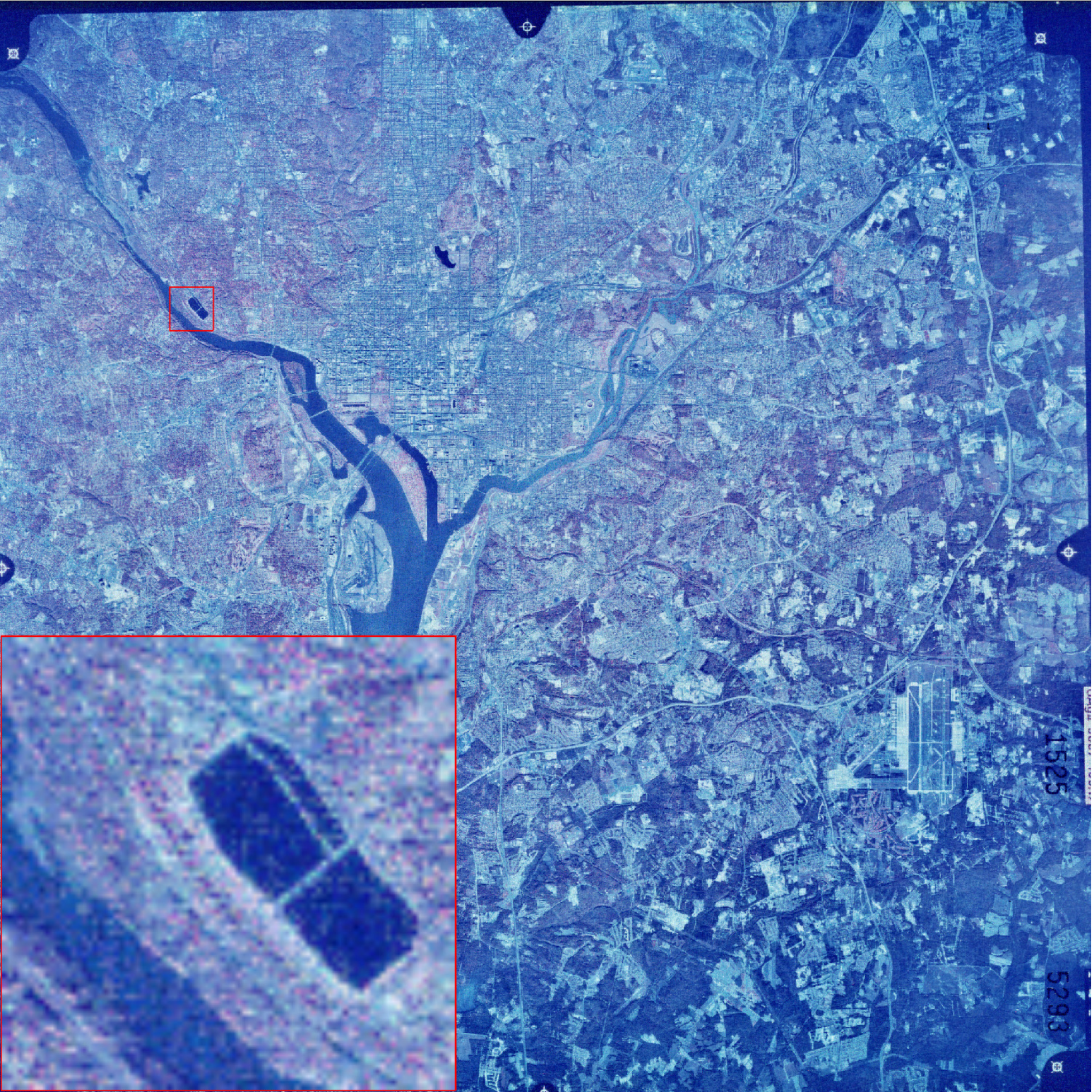}\vspace{0pt}
		\includegraphics[width=\linewidth]{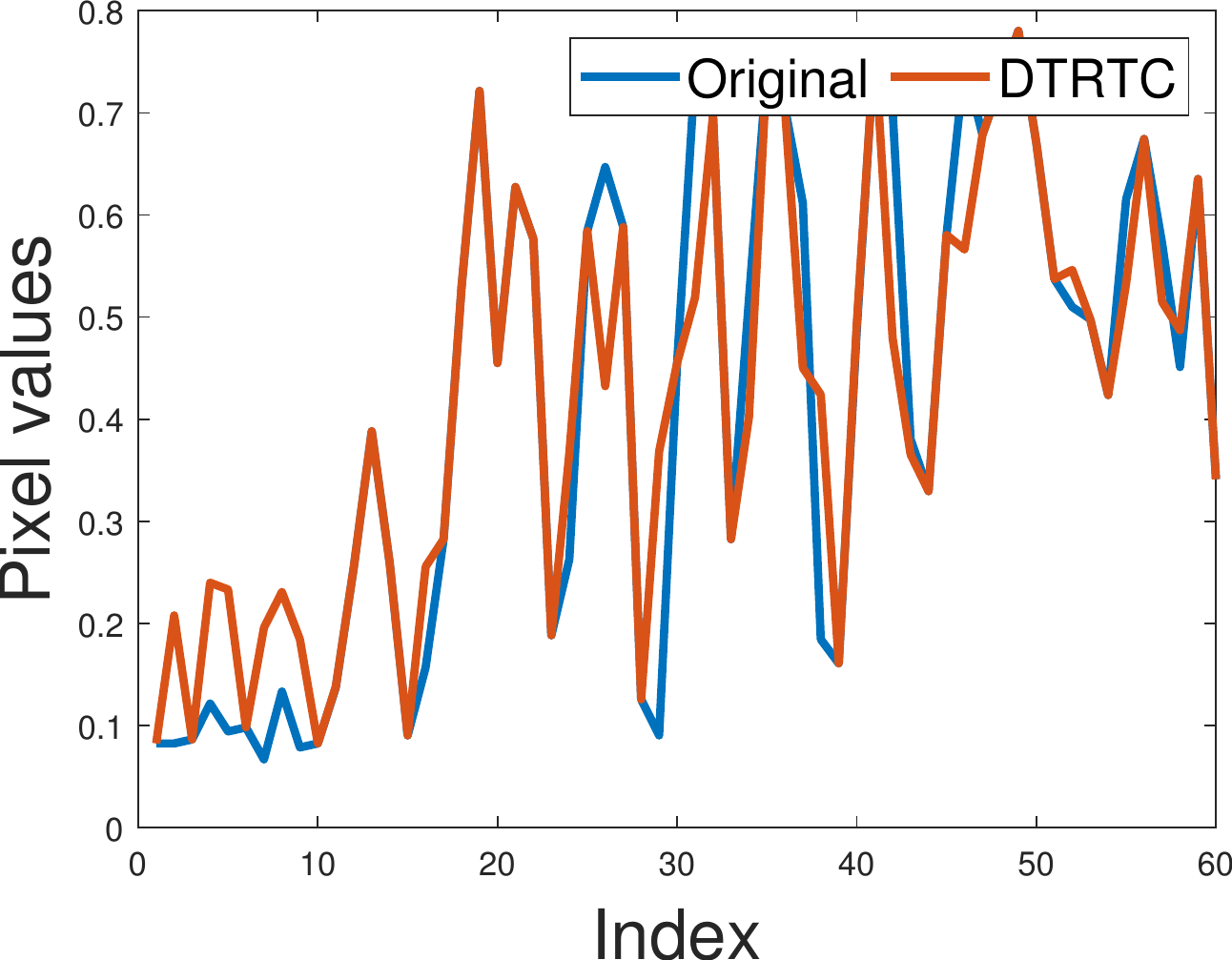}
		\caption{DTRTC}
	\end{subfigure}
	\begin{subfigure}[b]{0.118\linewidth}
		\centering
		\includegraphics[width=\linewidth]{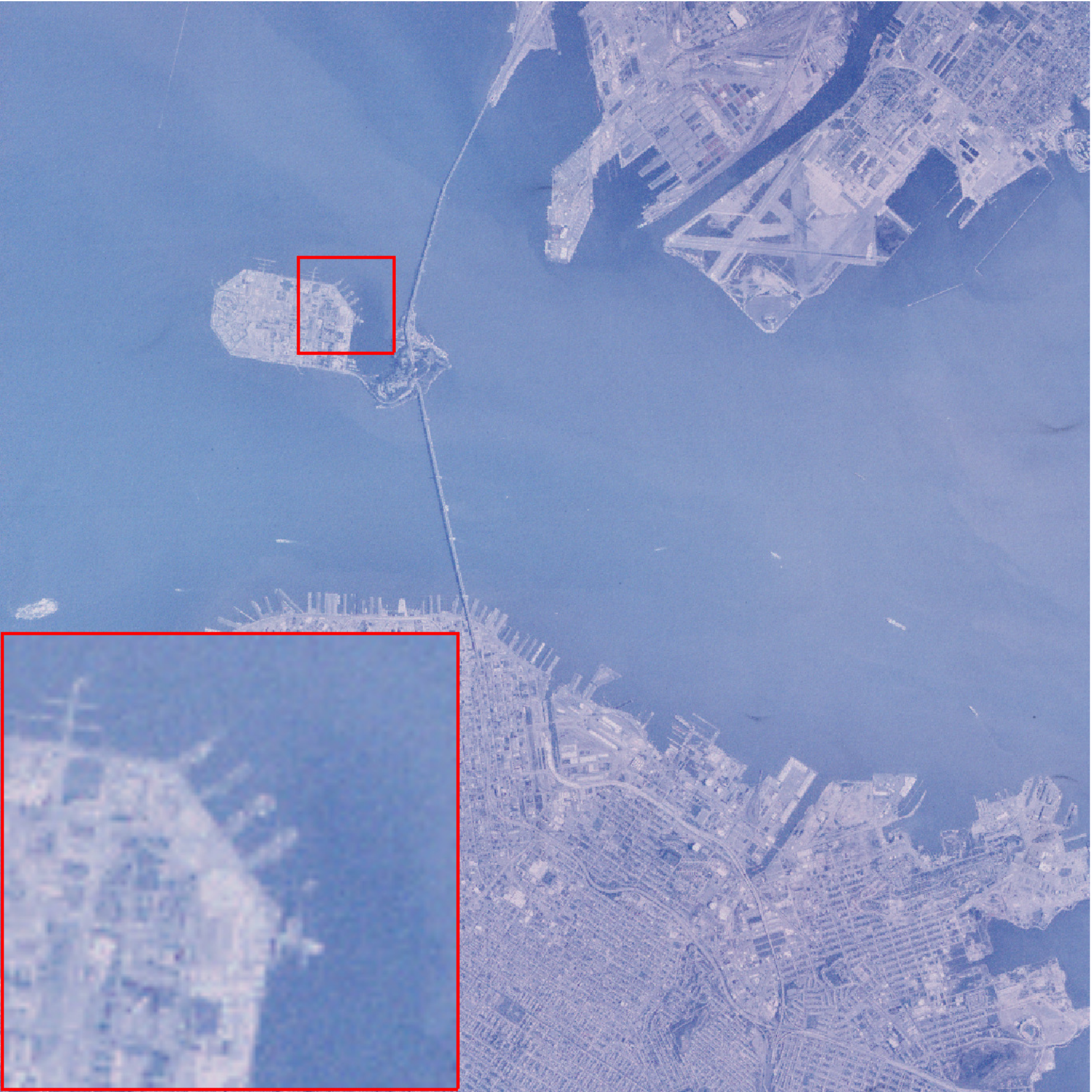}\vspace{0pt}
		\includegraphics[width=\linewidth]{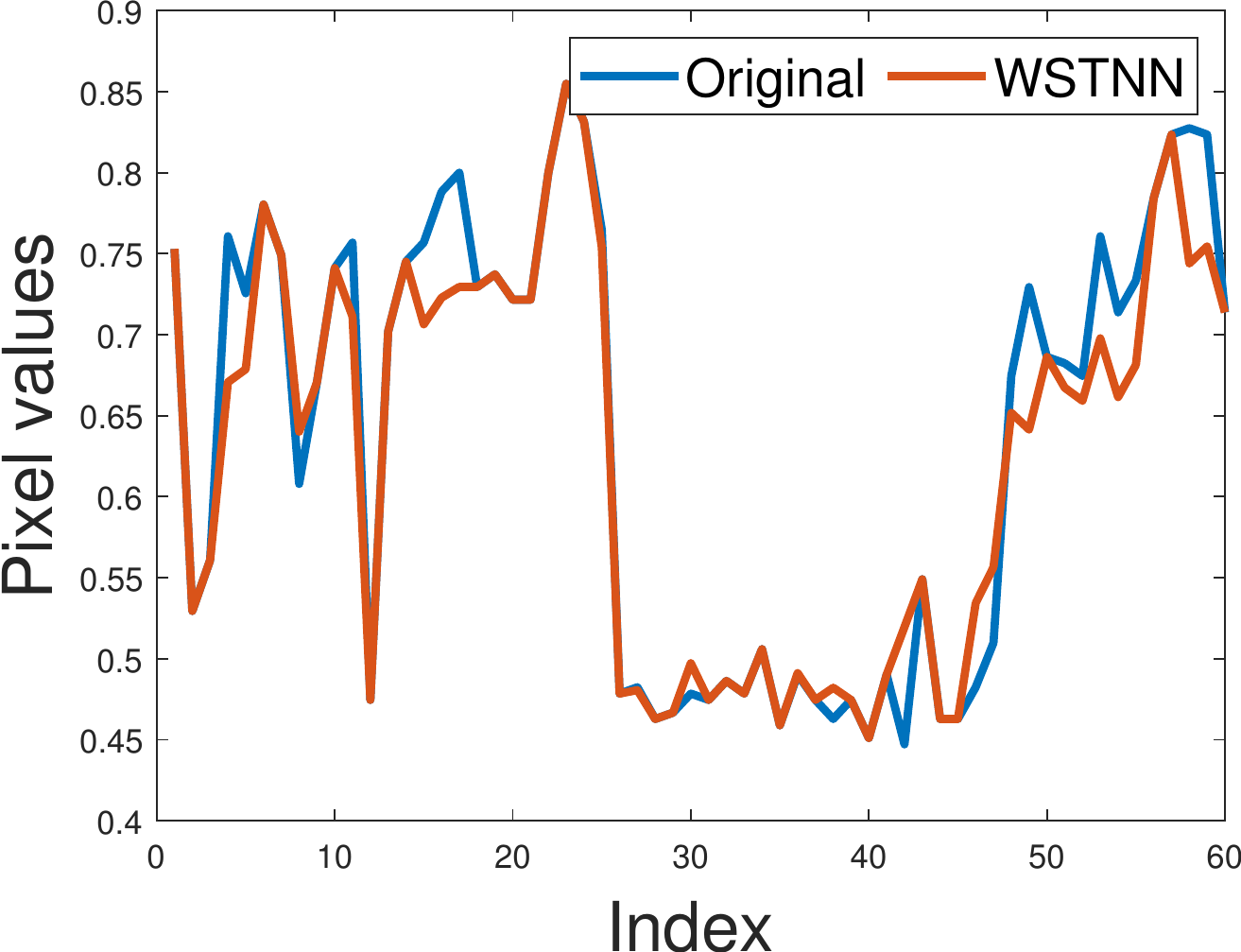}\vspace{0pt}
		\includegraphics[width=\linewidth]{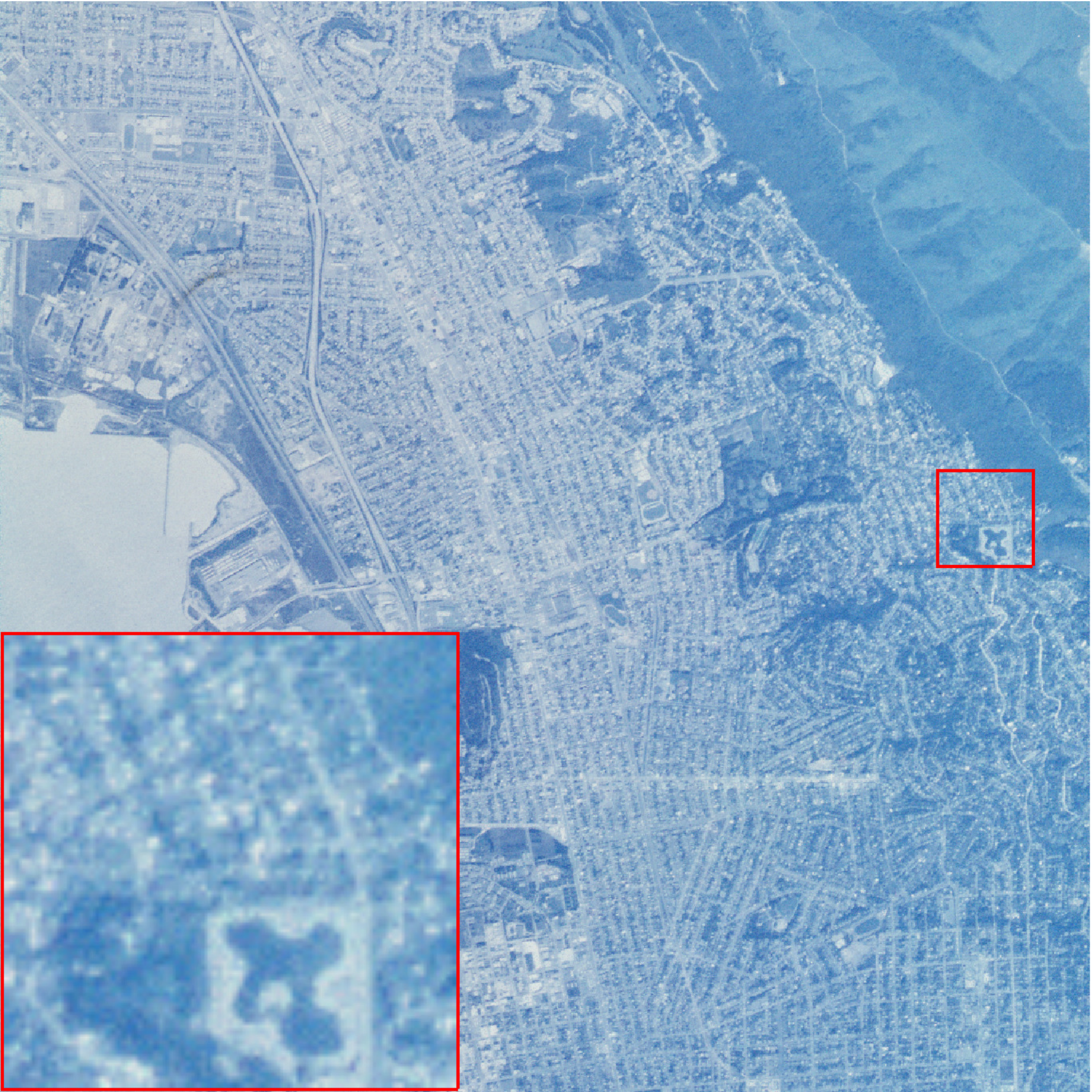}\vspace{0pt}
		\includegraphics[width=\linewidth]{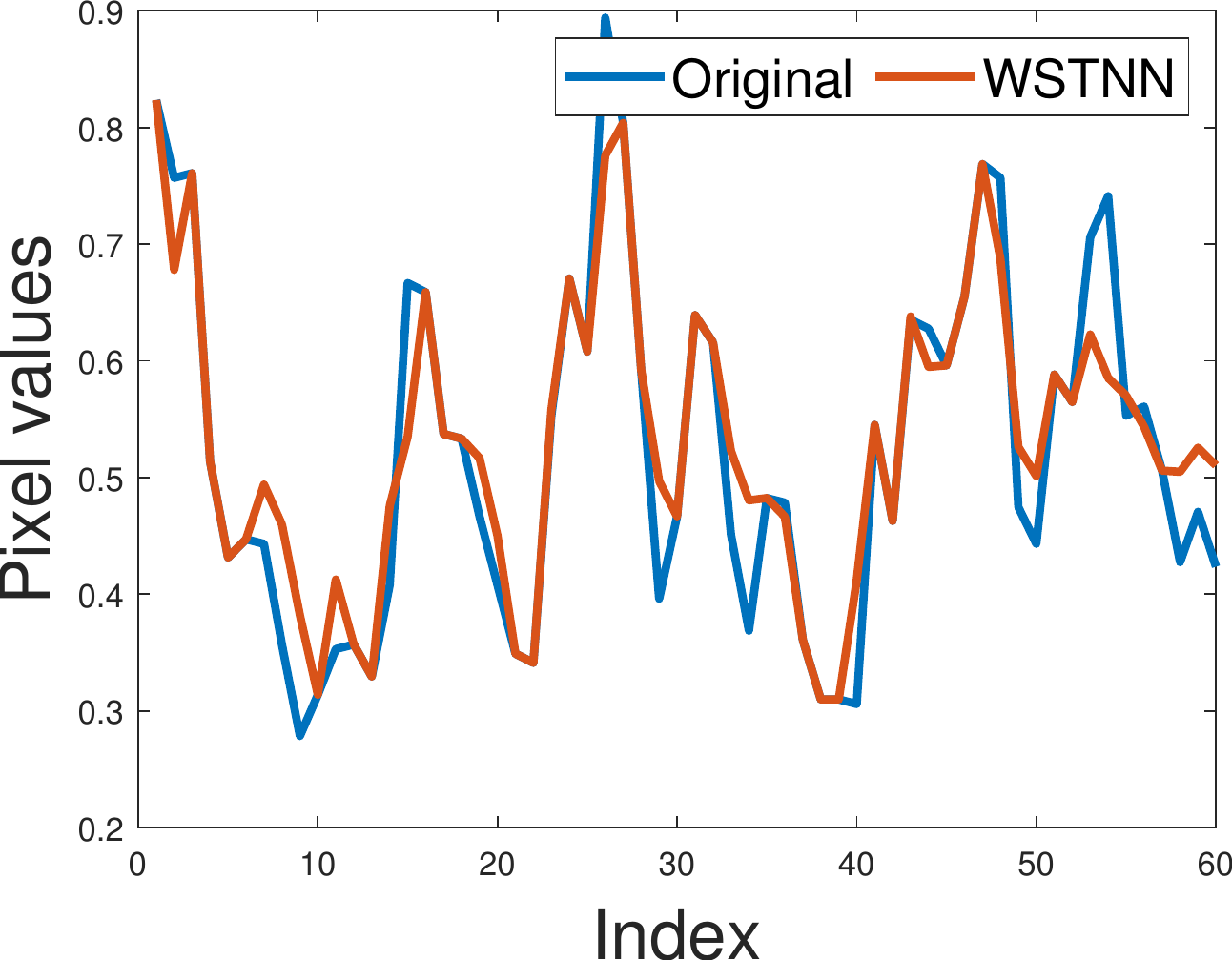}\vspace{0pt}
		\includegraphics[width=\linewidth]{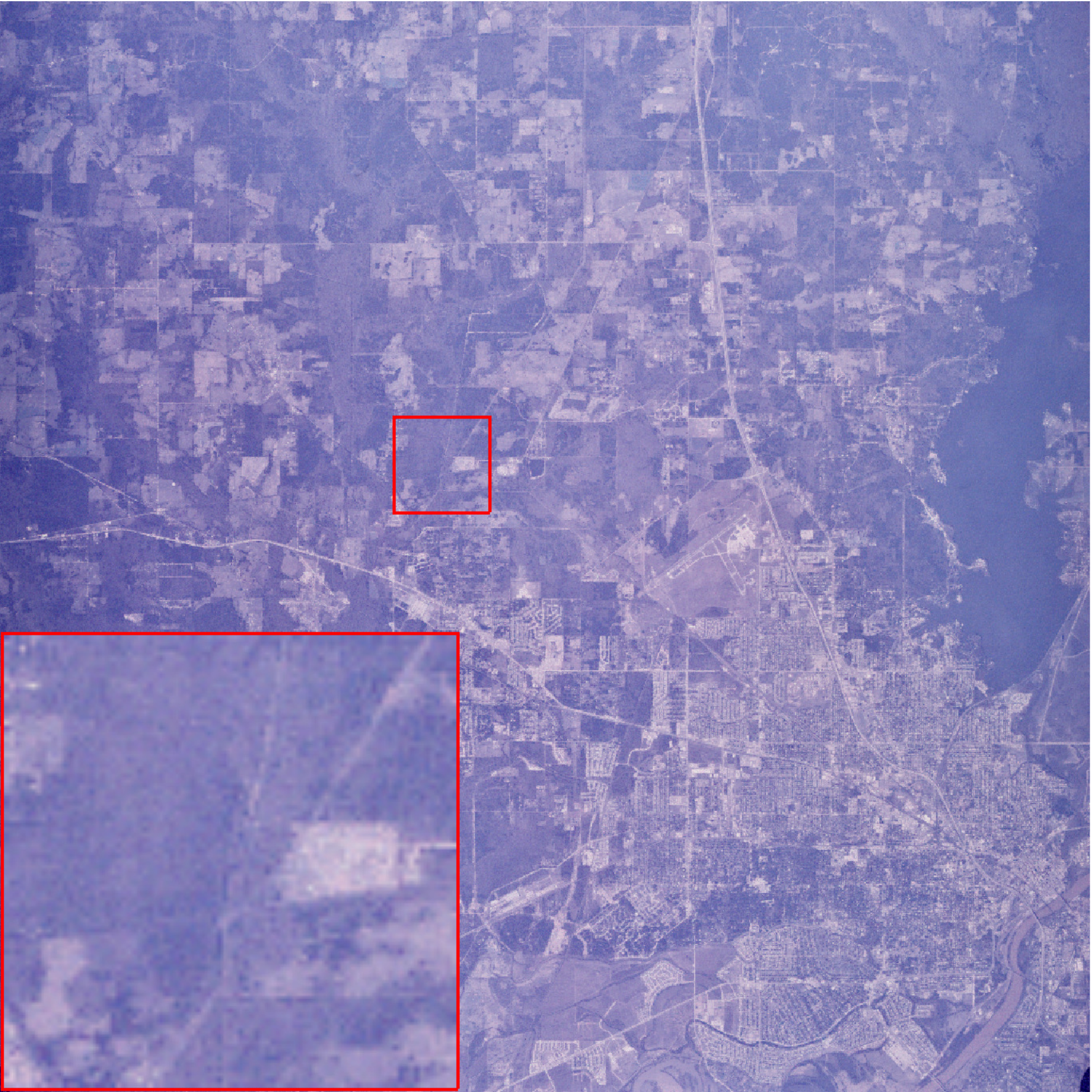}\vspace{0pt}
		\includegraphics[width=\linewidth]{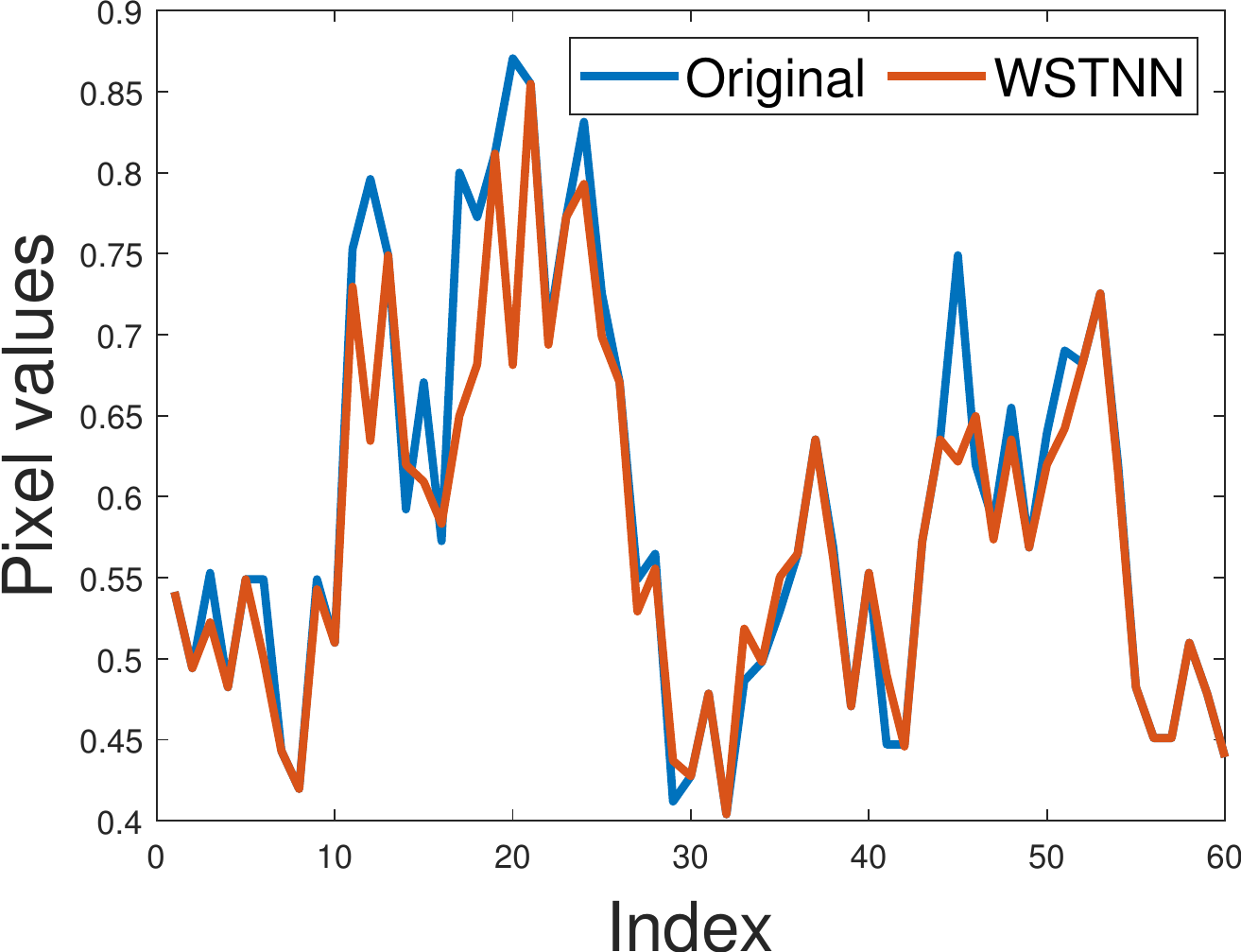}\vspace{0pt}
		\includegraphics[width=\linewidth]{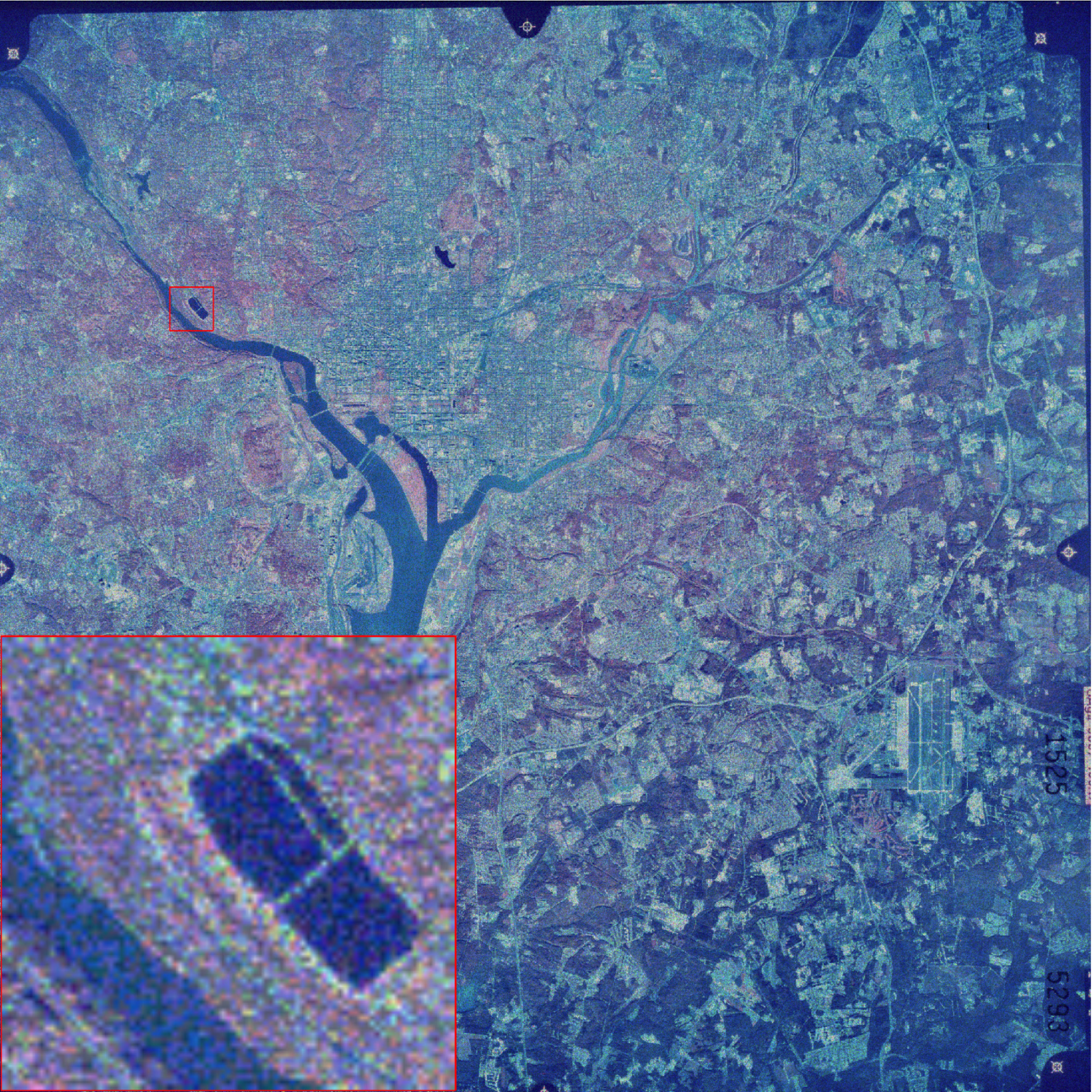}\vspace{0pt}
		\includegraphics[width=\linewidth]{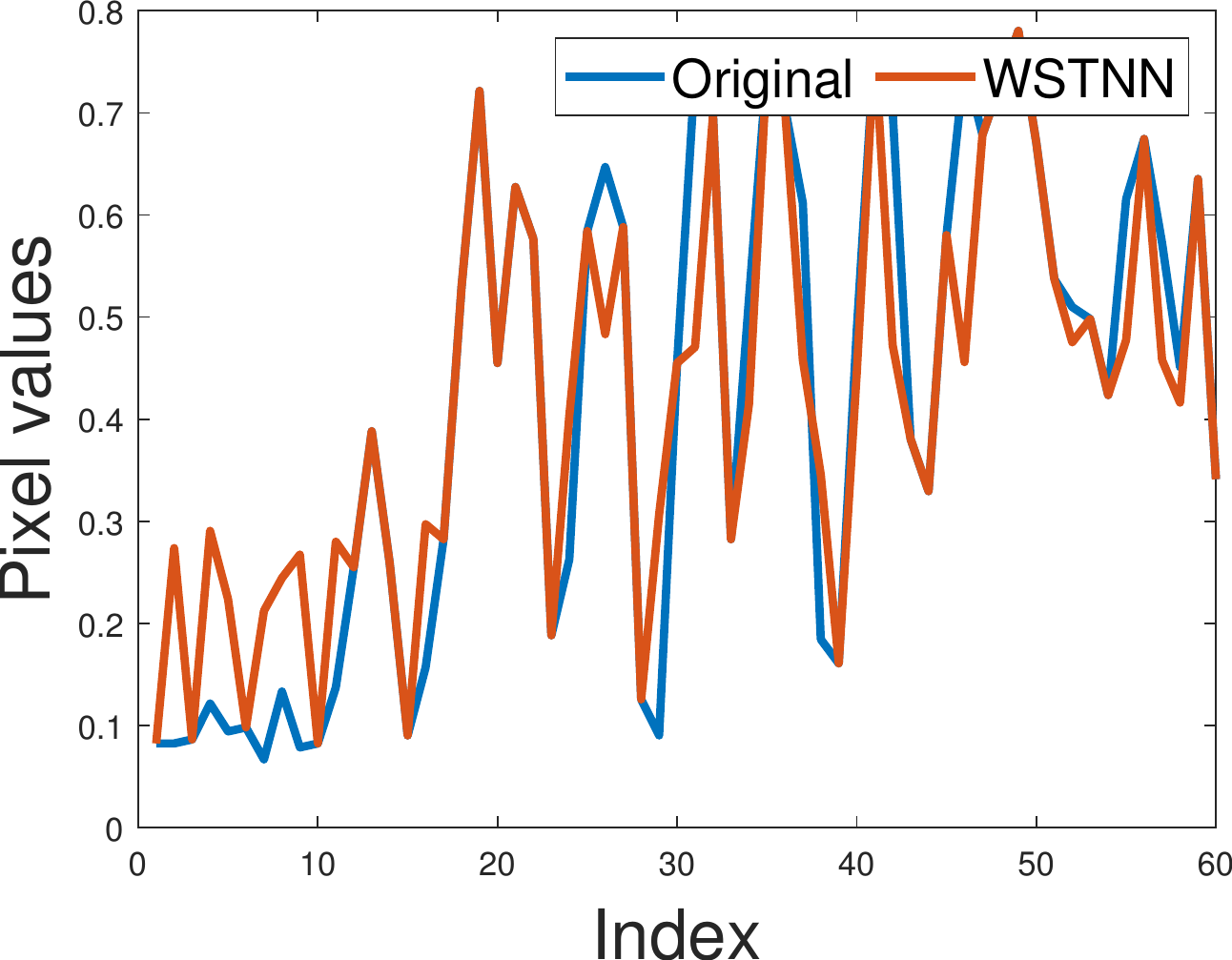}
		\caption{WSTNN}
	\end{subfigure}	
	\begin{subfigure}[b]{0.118\linewidth}
		\centering			
		\includegraphics[width=\linewidth]{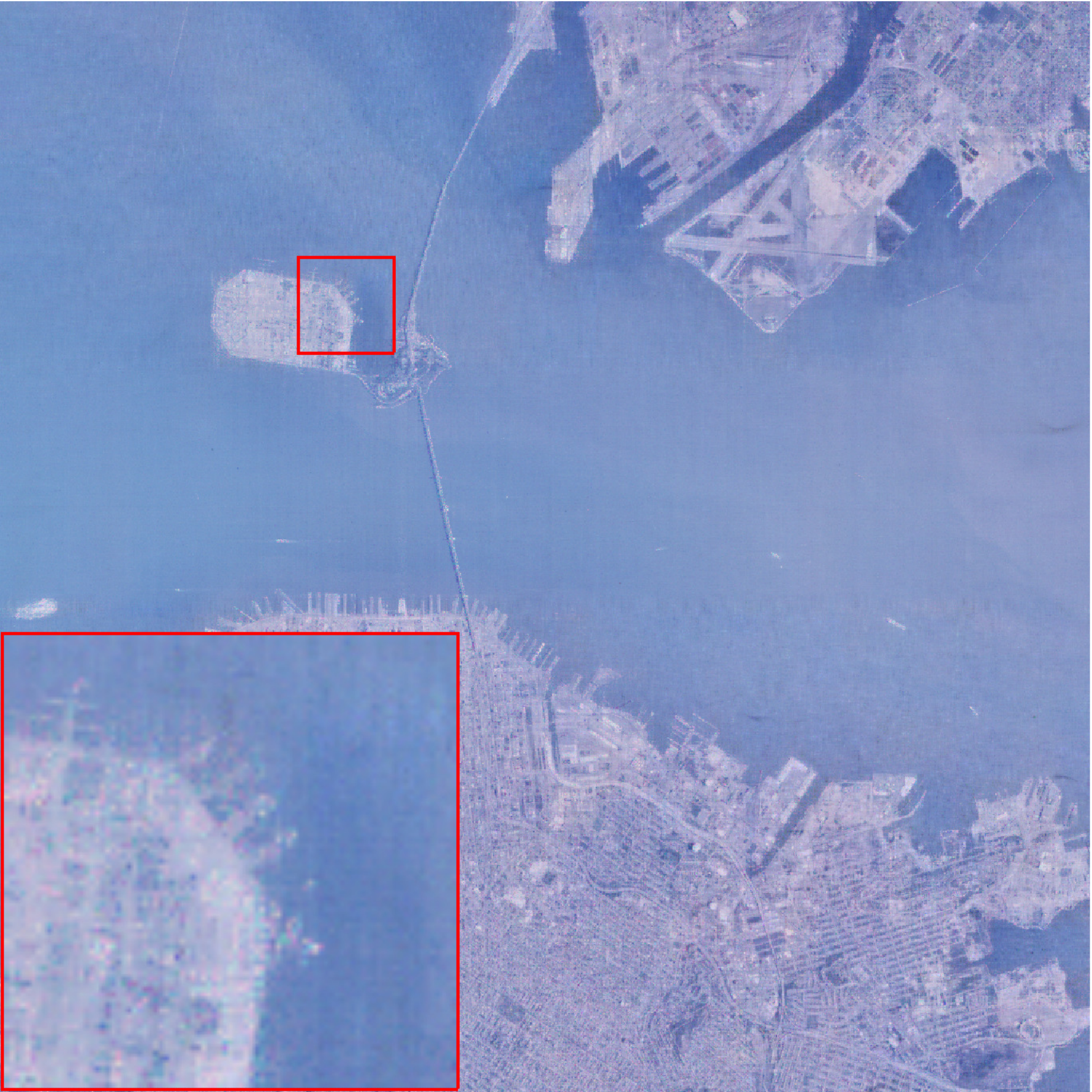}\vspace{0pt}
		\includegraphics[width=\linewidth]{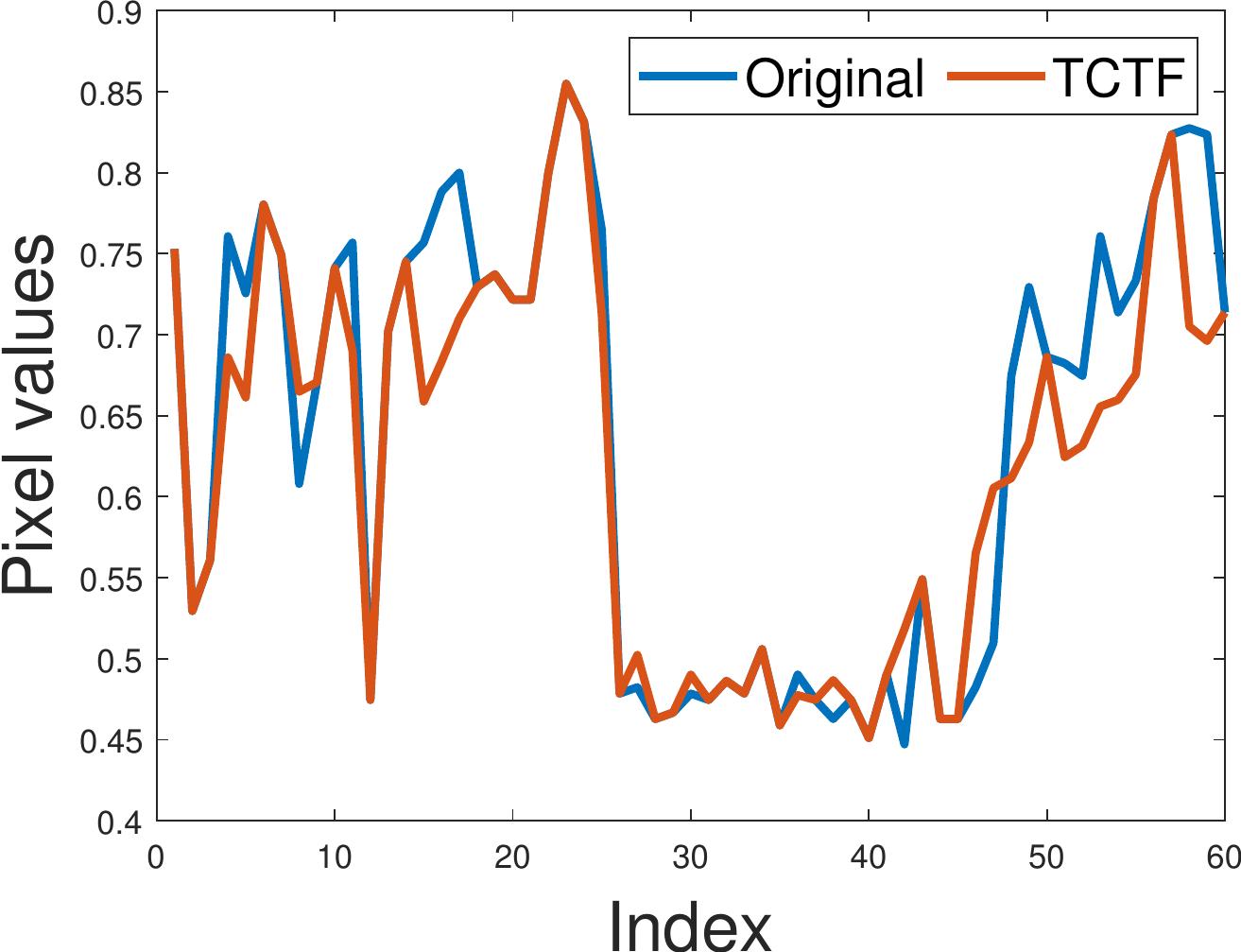}\vspace{0pt}
		\includegraphics[width=\linewidth]{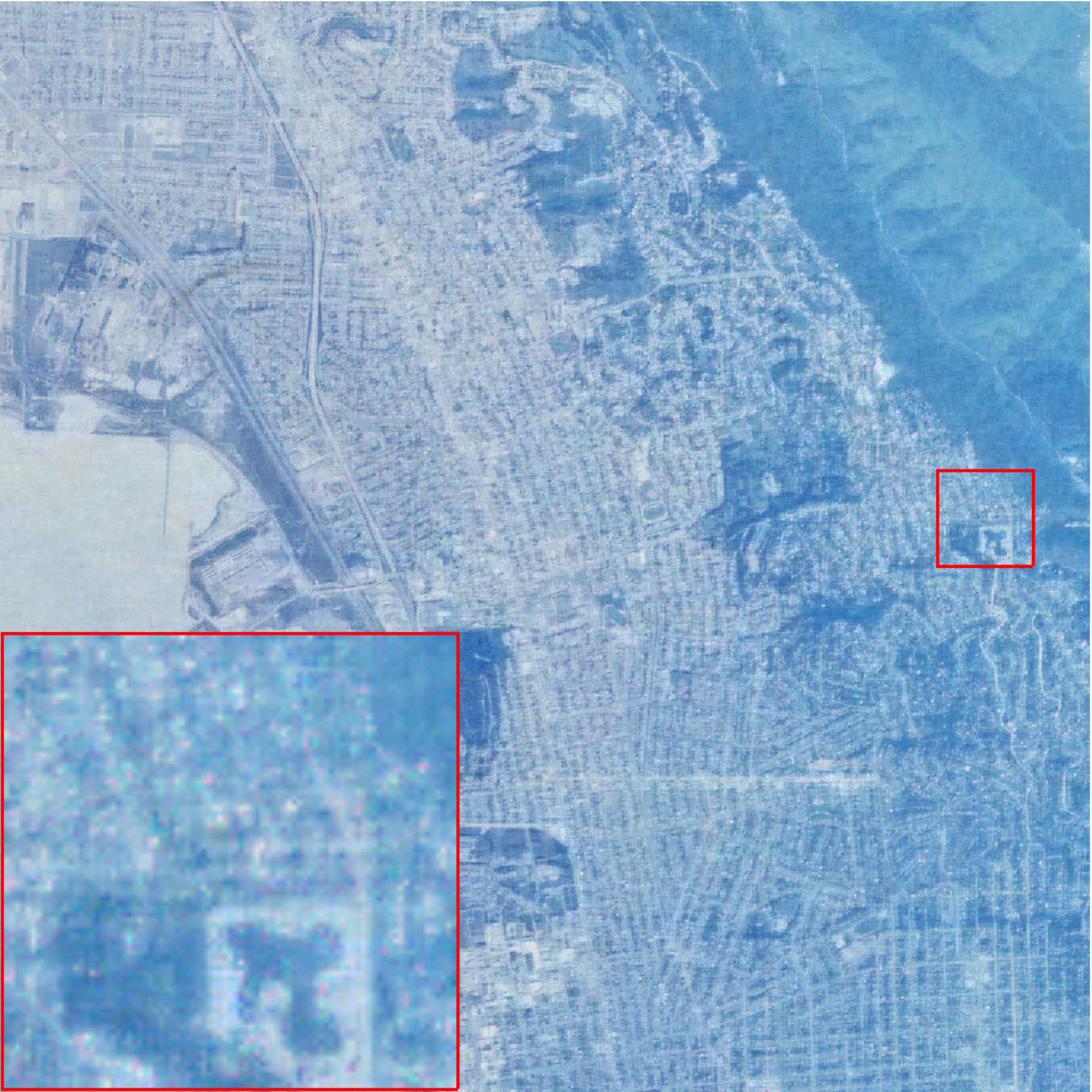}\vspace{0pt}
		\includegraphics[width=\linewidth]{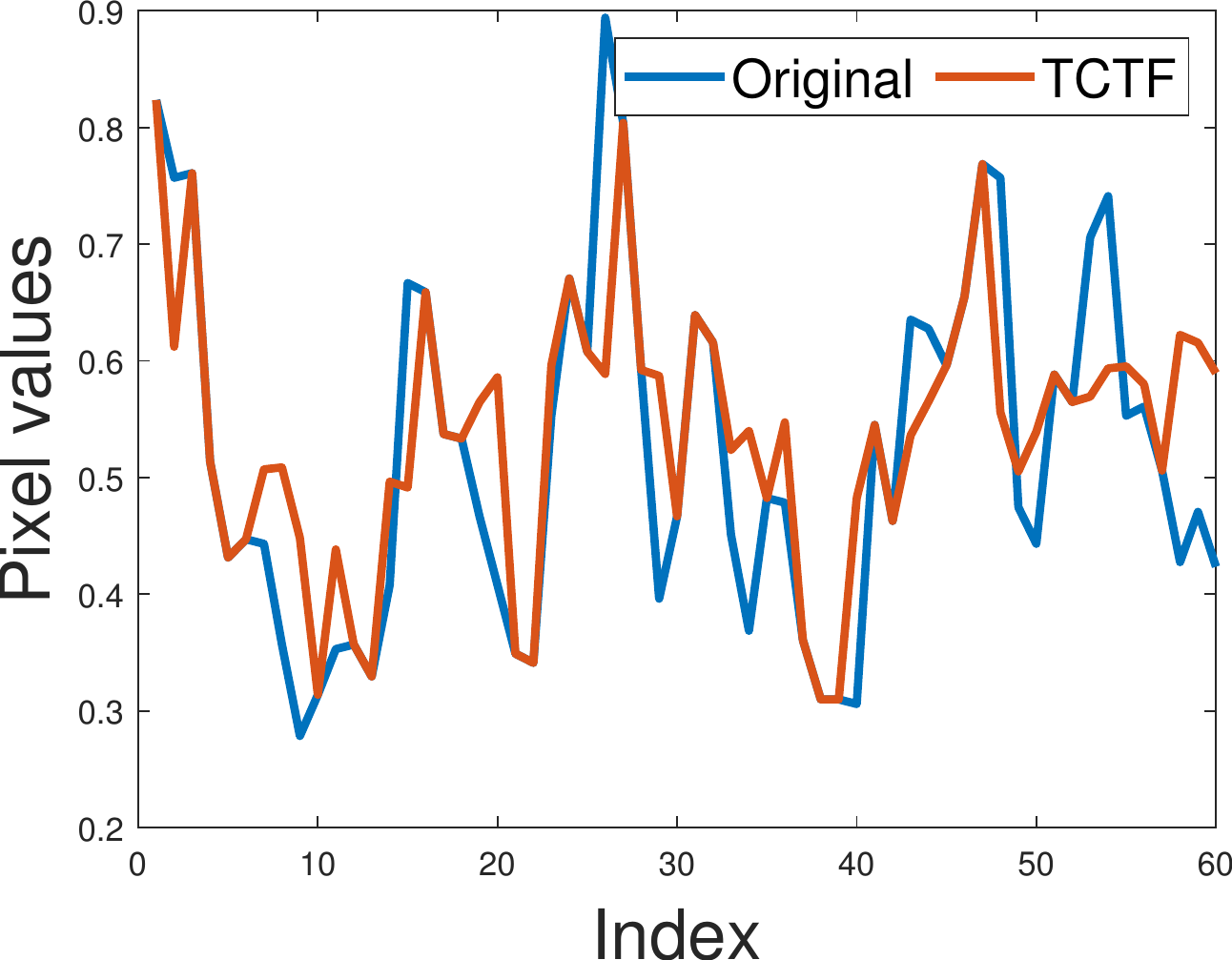}\vspace{0pt}
		\includegraphics[width=\linewidth]{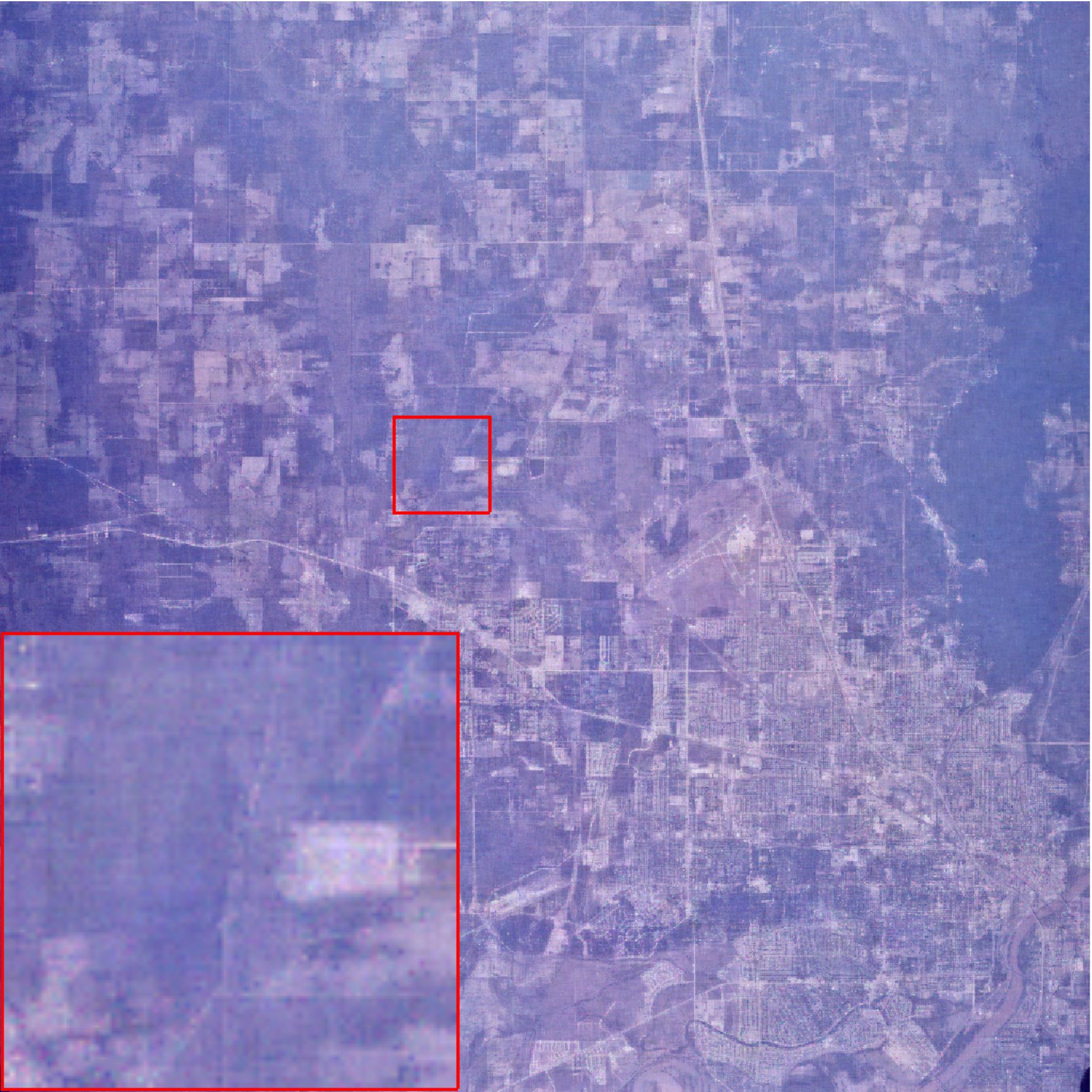}\vspace{0pt}
		\includegraphics[width=\linewidth]{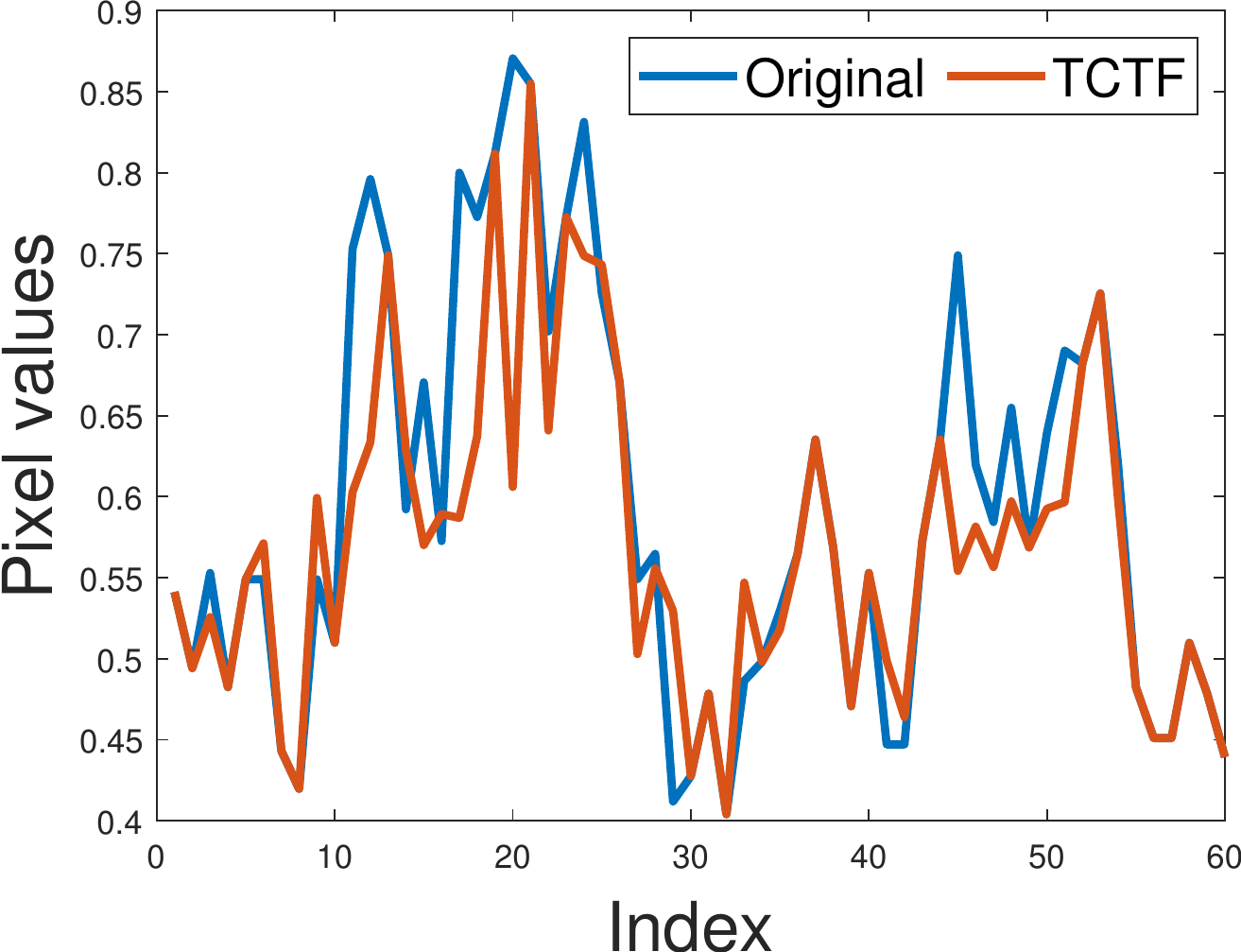}\vspace{0pt}
		\includegraphics[width=\linewidth]{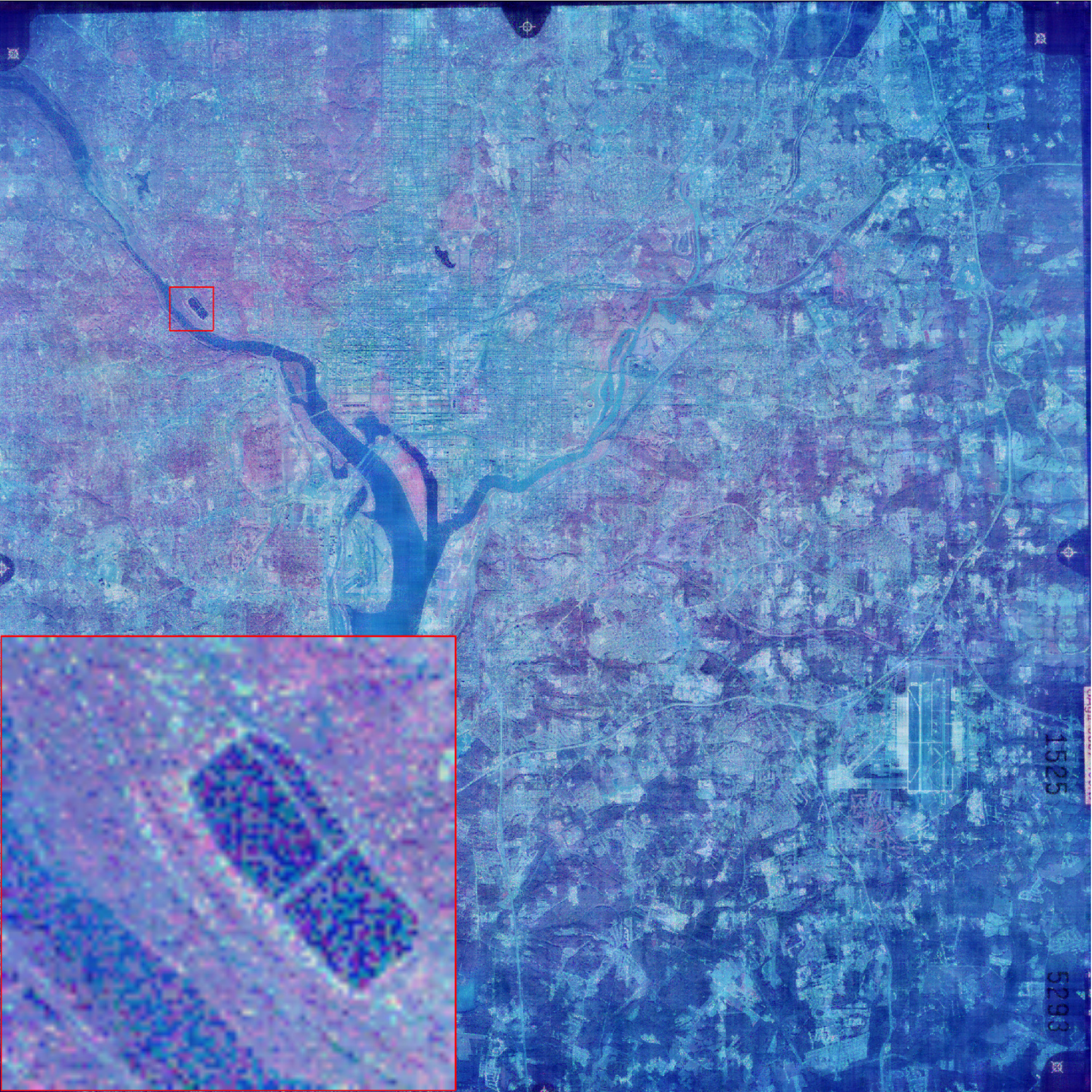}\vspace{0pt}
		\includegraphics[width=\linewidth]{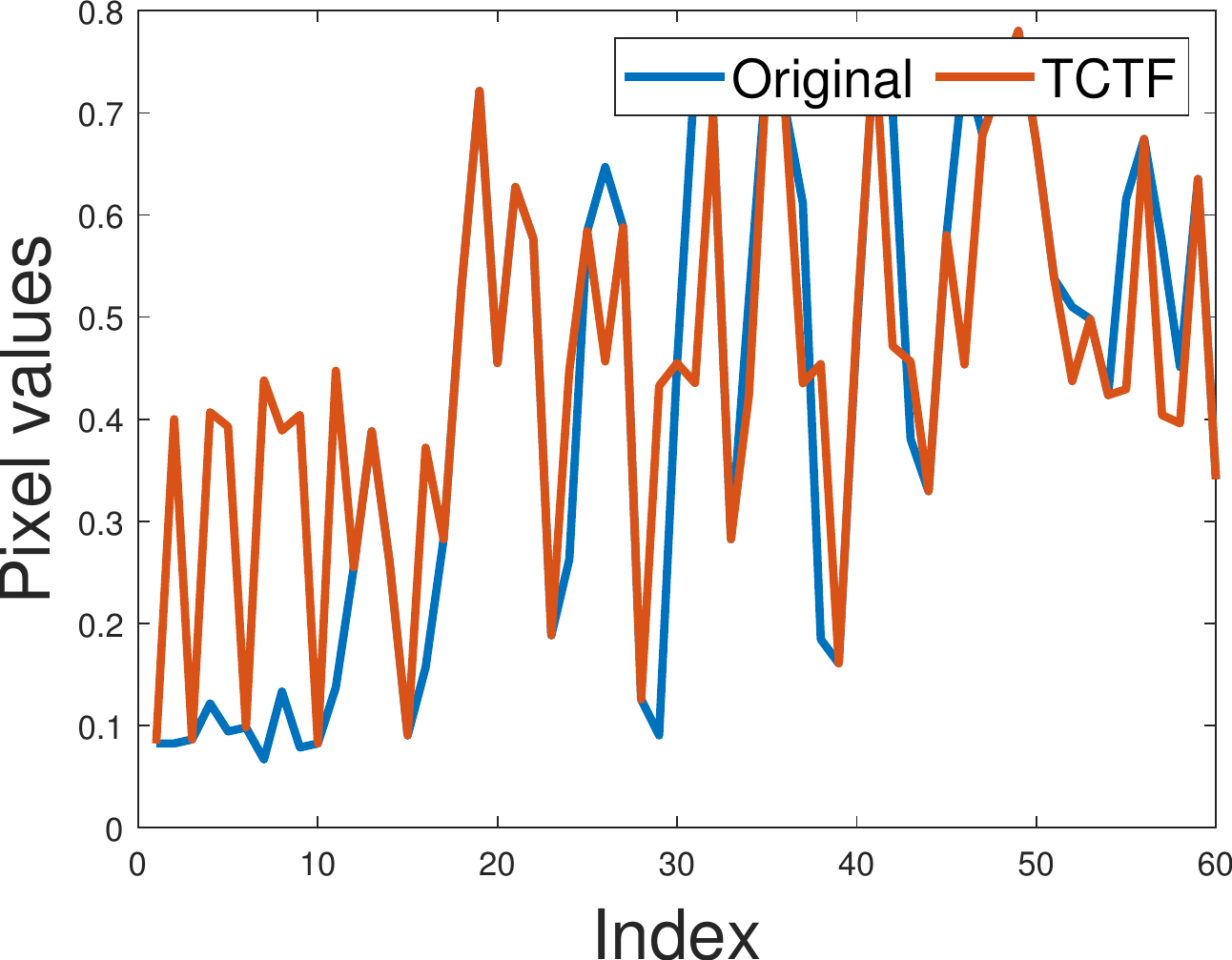}
		\caption{TCTF}
	\end{subfigure}
	\begin{subfigure}[b]{0.118\linewidth}
		\centering		
		\includegraphics[width=\linewidth]{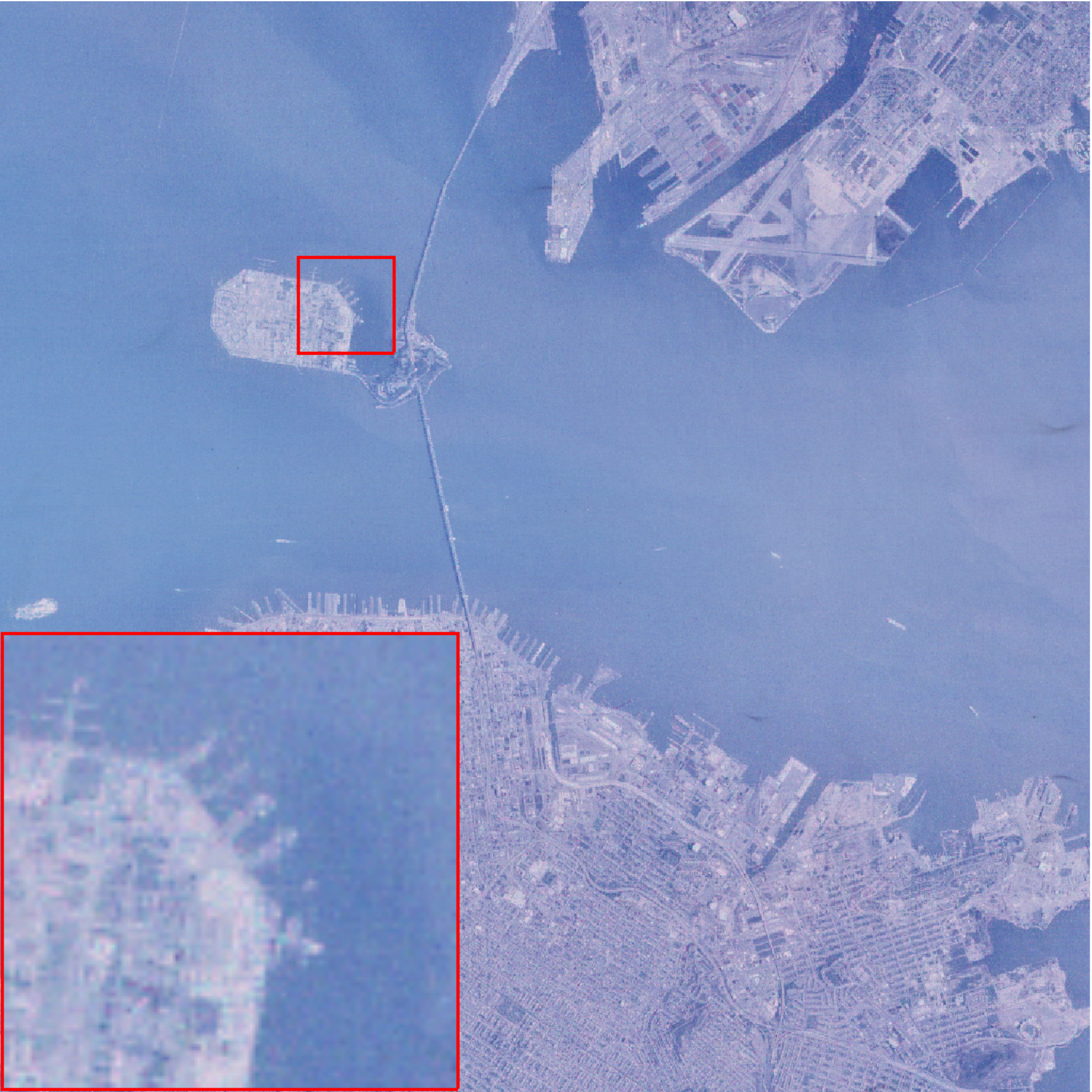}\vspace{0pt}
		\includegraphics[width=\linewidth]{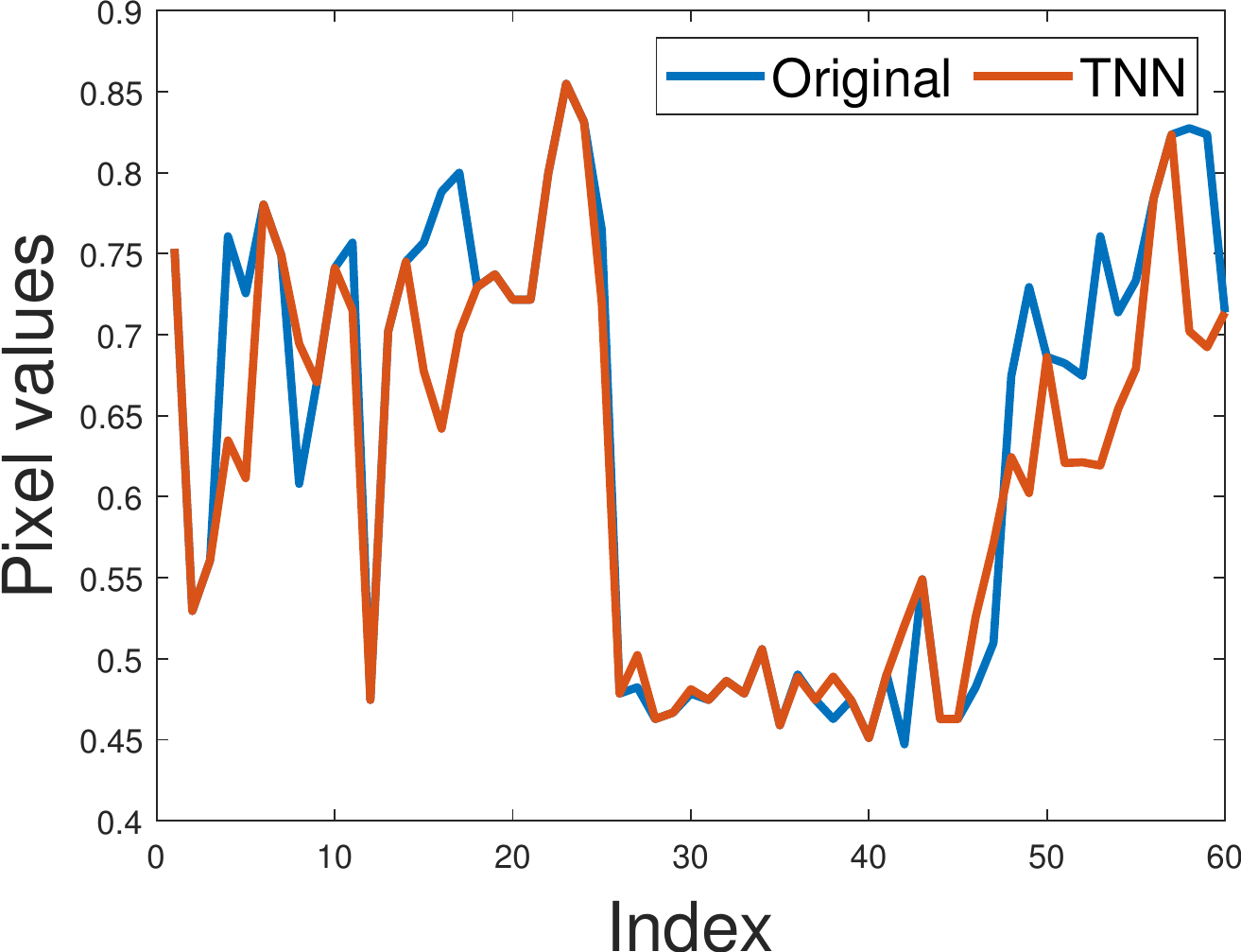}\vspace{0pt}
		\includegraphics[width=\linewidth]{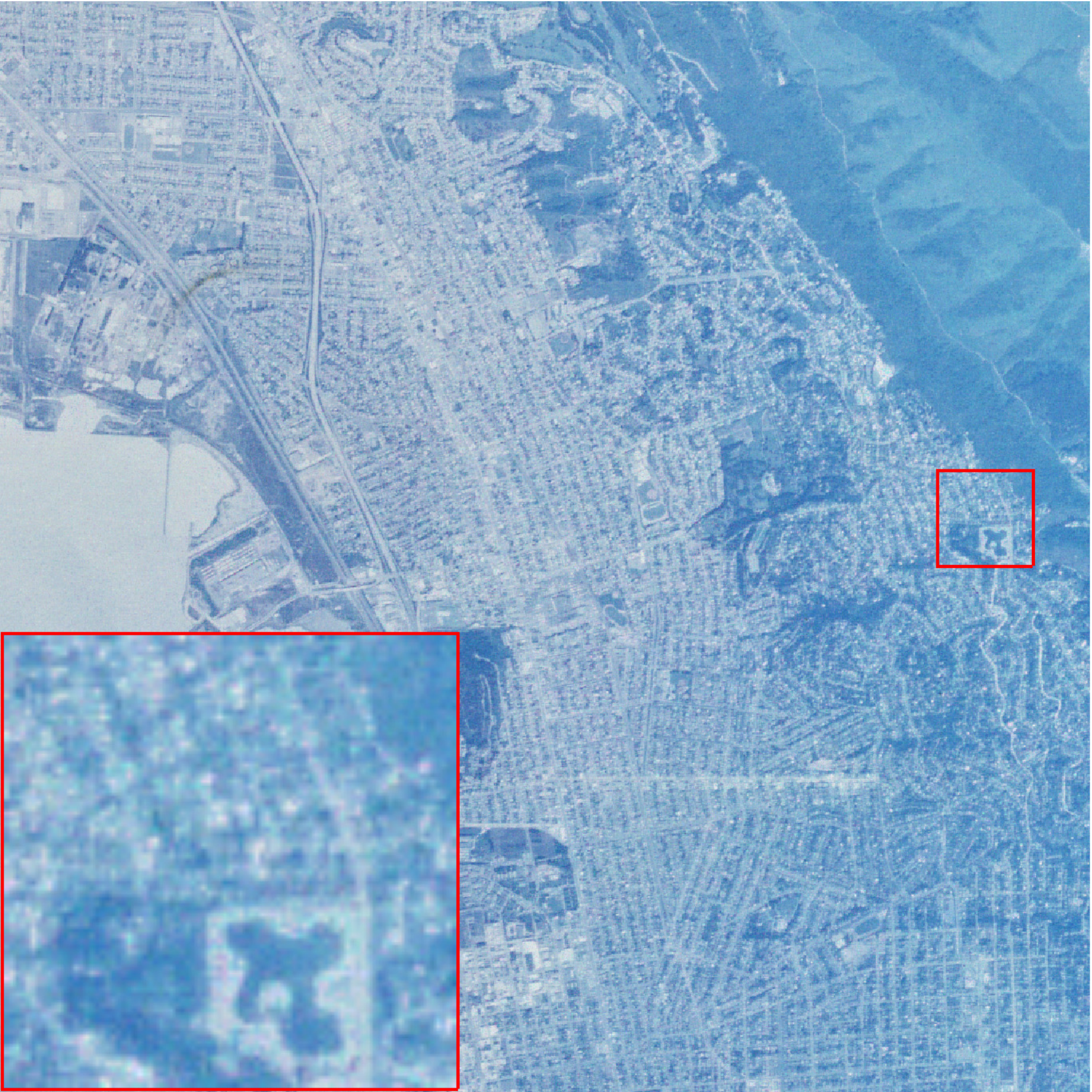}\vspace{0pt}
		\includegraphics[width=\linewidth]{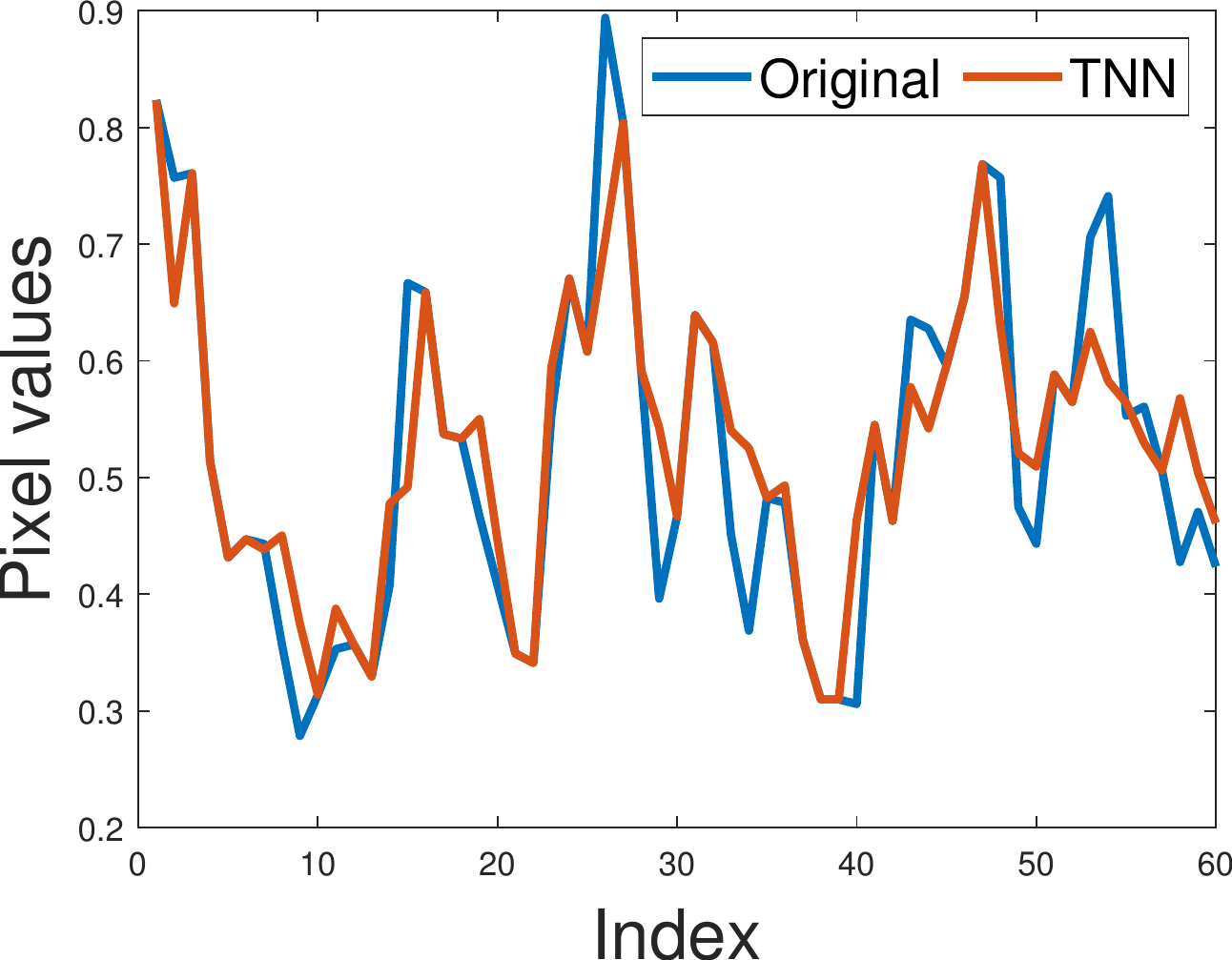}\vspace{0pt}
		\includegraphics[width=\linewidth]{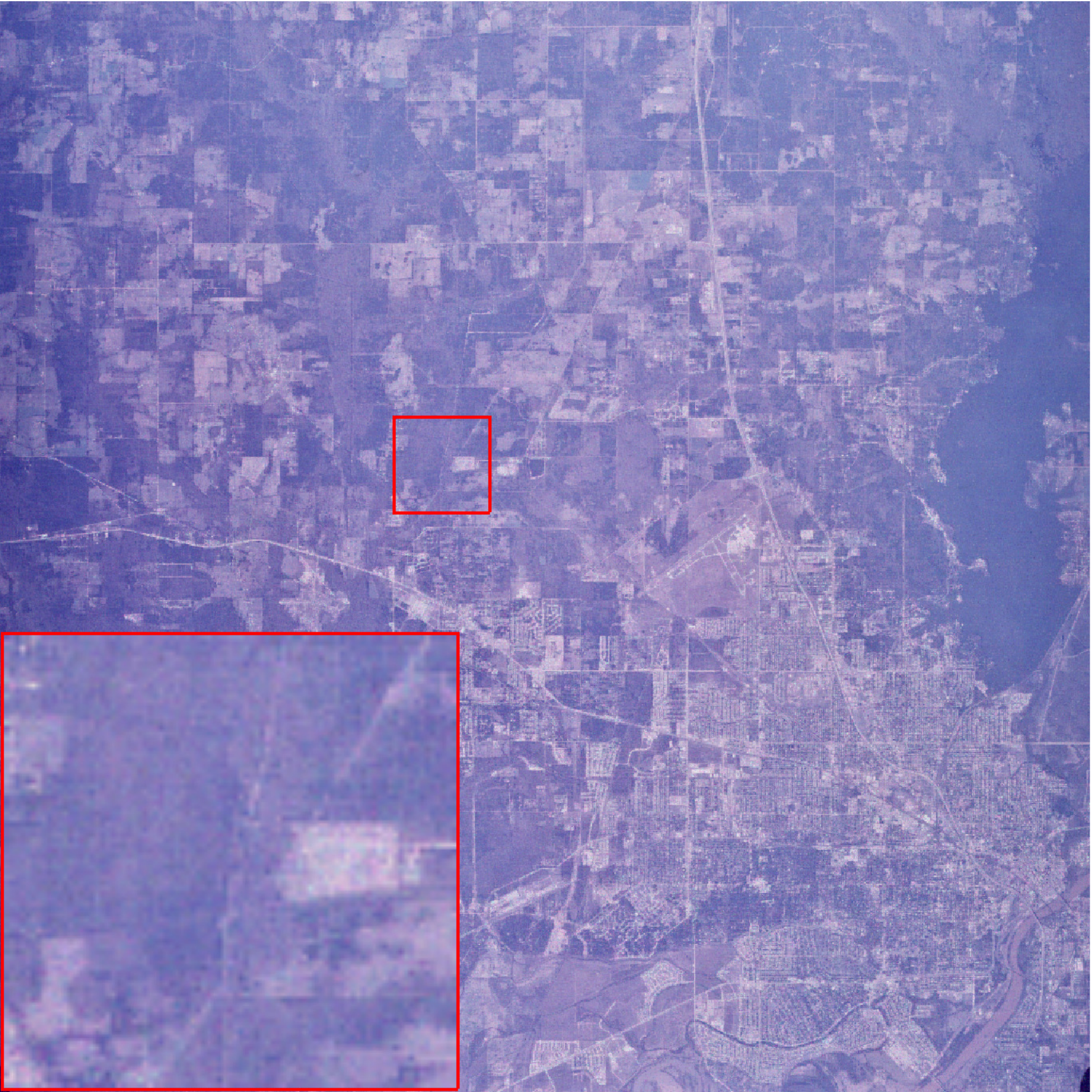}\vspace{0pt}
		\includegraphics[width=\linewidth]{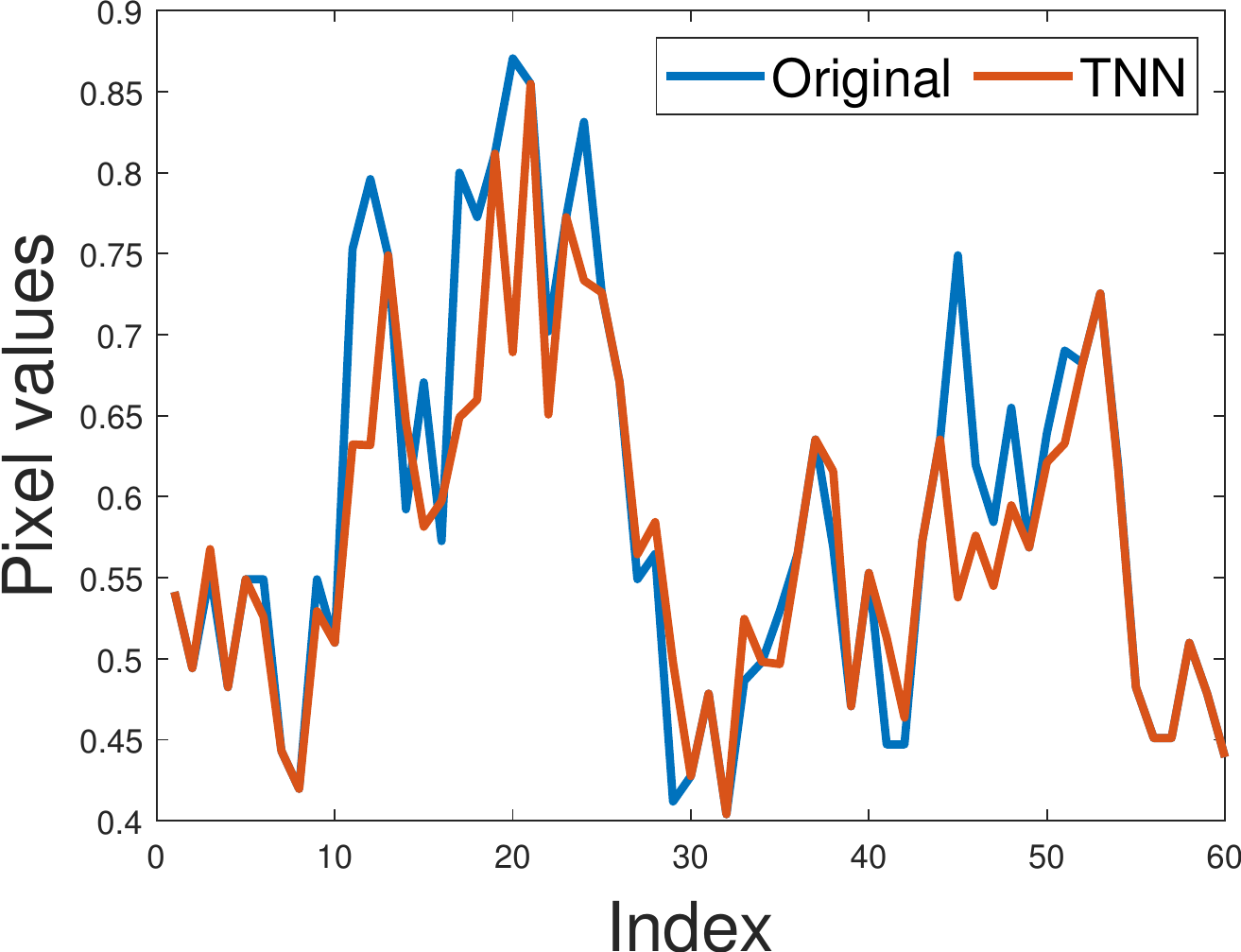}\vspace{0pt}
		\includegraphics[width=\linewidth]{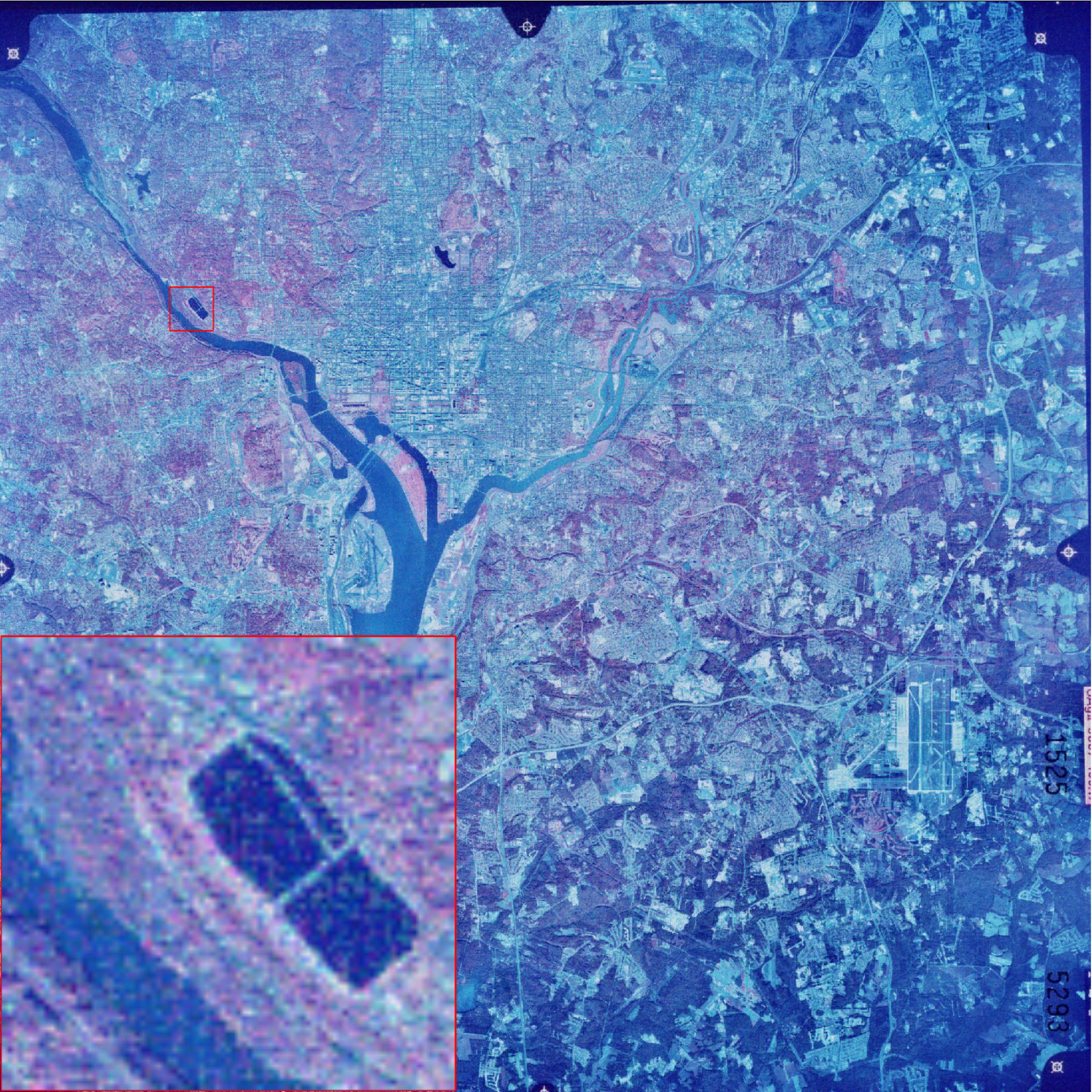}\vspace{0pt}
		\includegraphics[width=\linewidth]{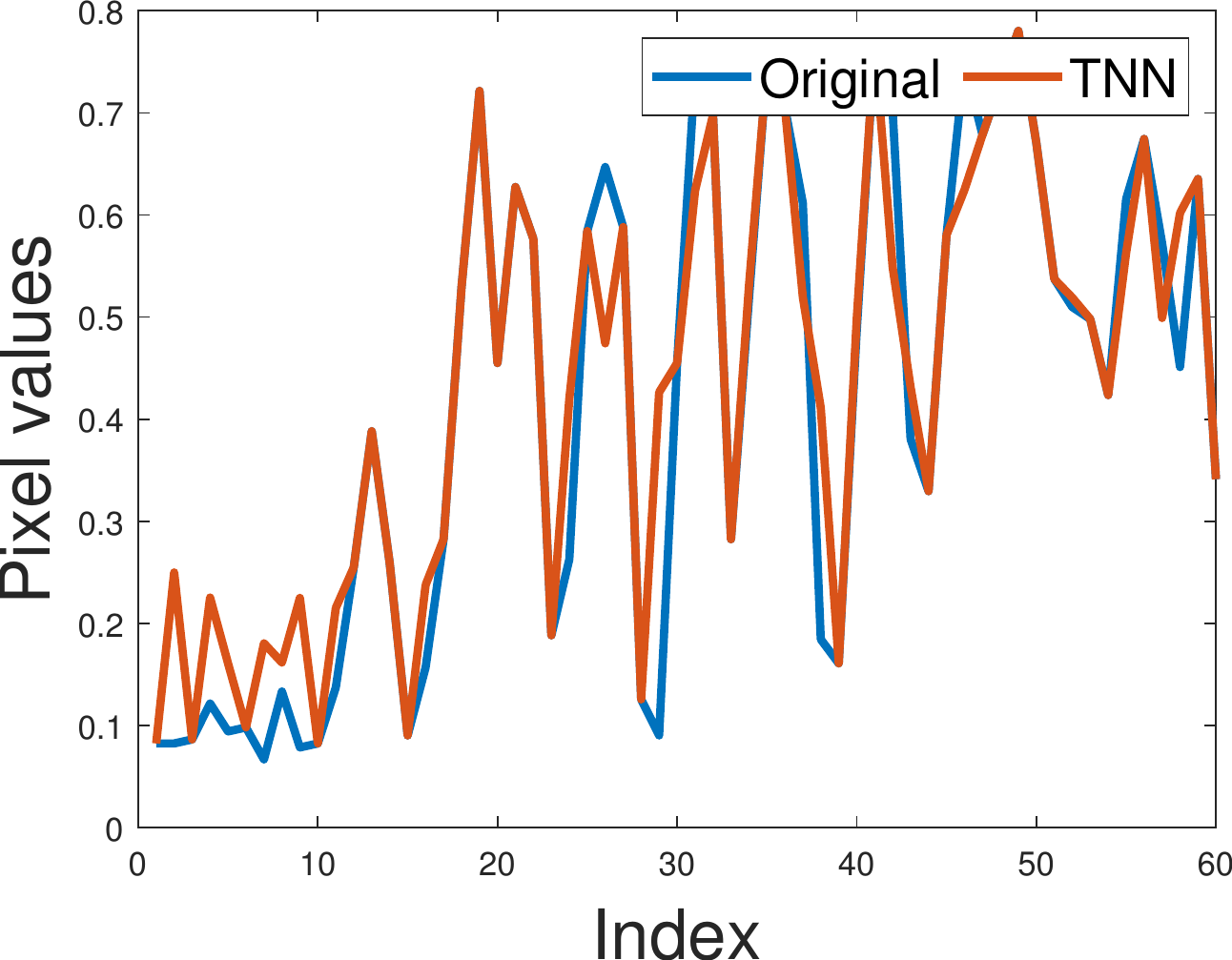}
		\caption{TNN}
	\end{subfigure}
	\begin{subfigure}[b]{0.118\linewidth}
		\centering		
		\includegraphics[width=\linewidth]{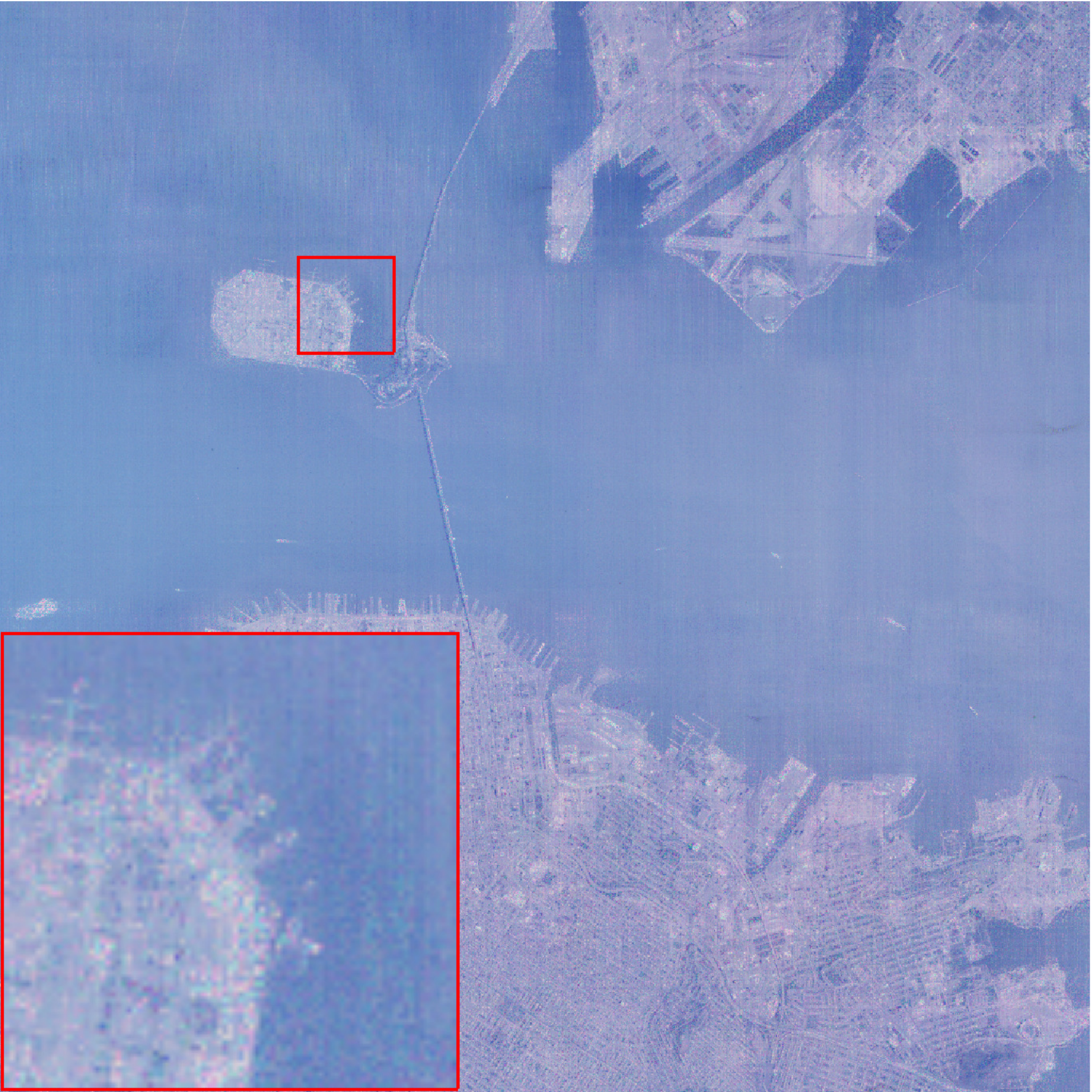}\vspace{0pt}
		\includegraphics[width=\linewidth]{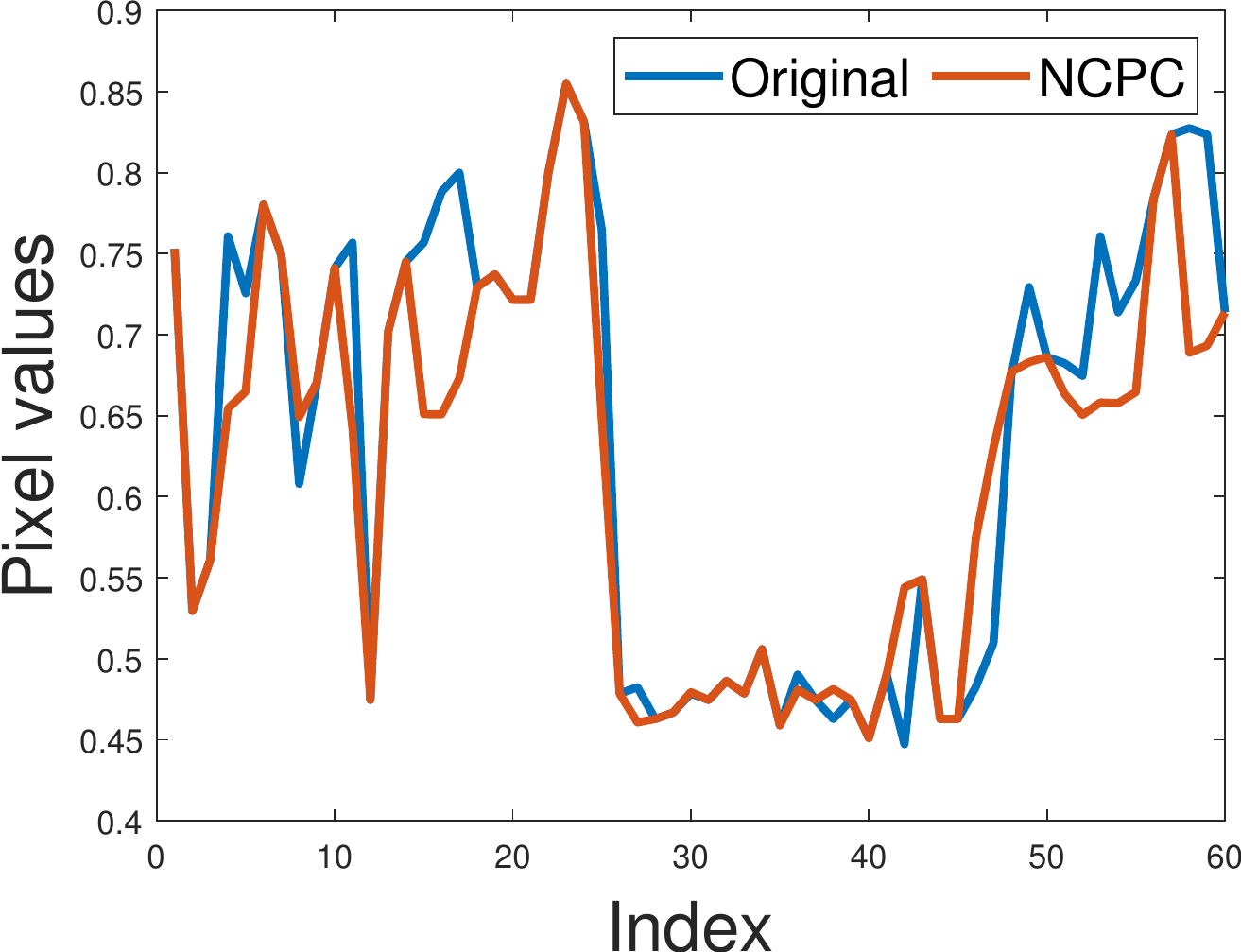}\vspace{0pt}
		\includegraphics[width=\linewidth]{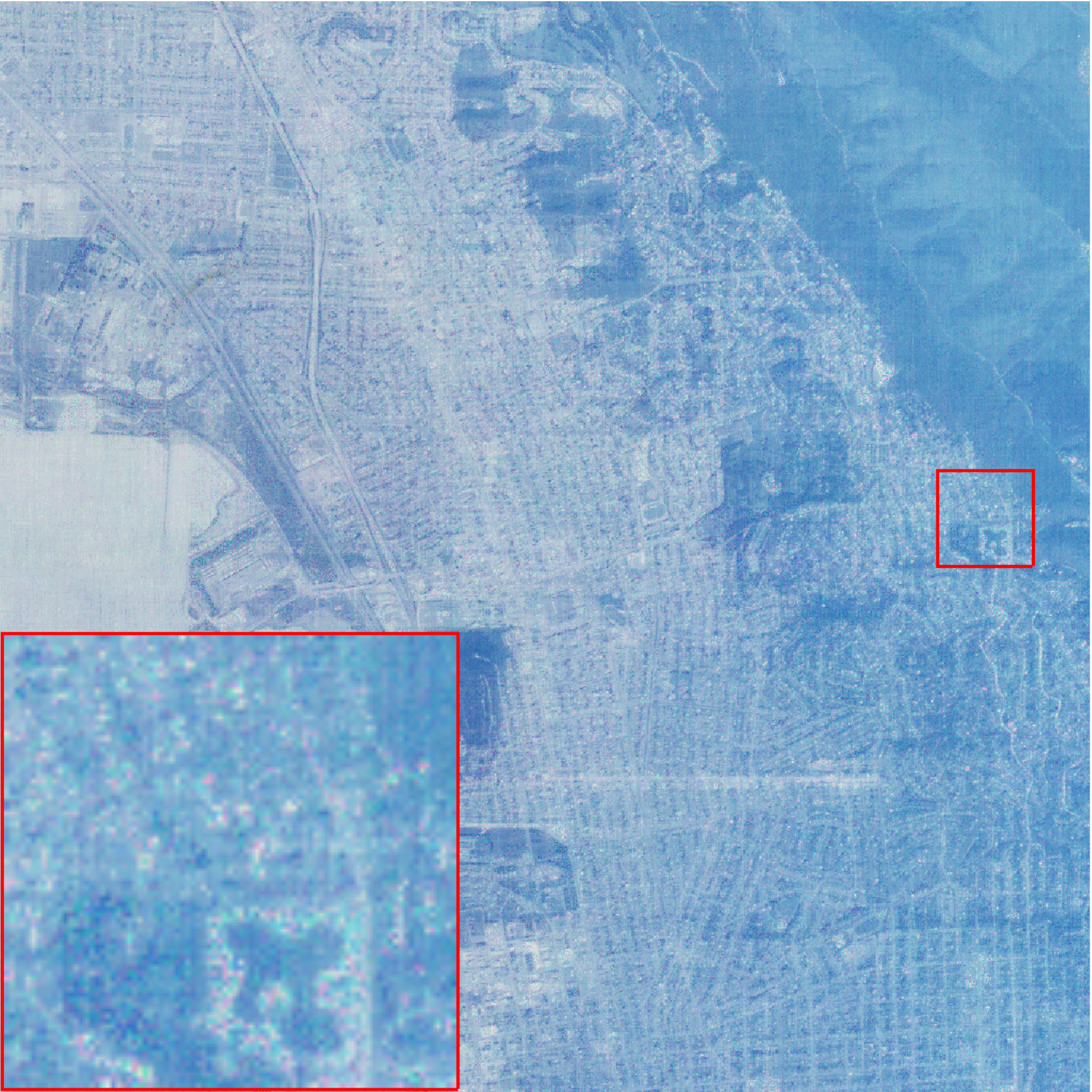}\vspace{0pt}
		\includegraphics[width=\linewidth]{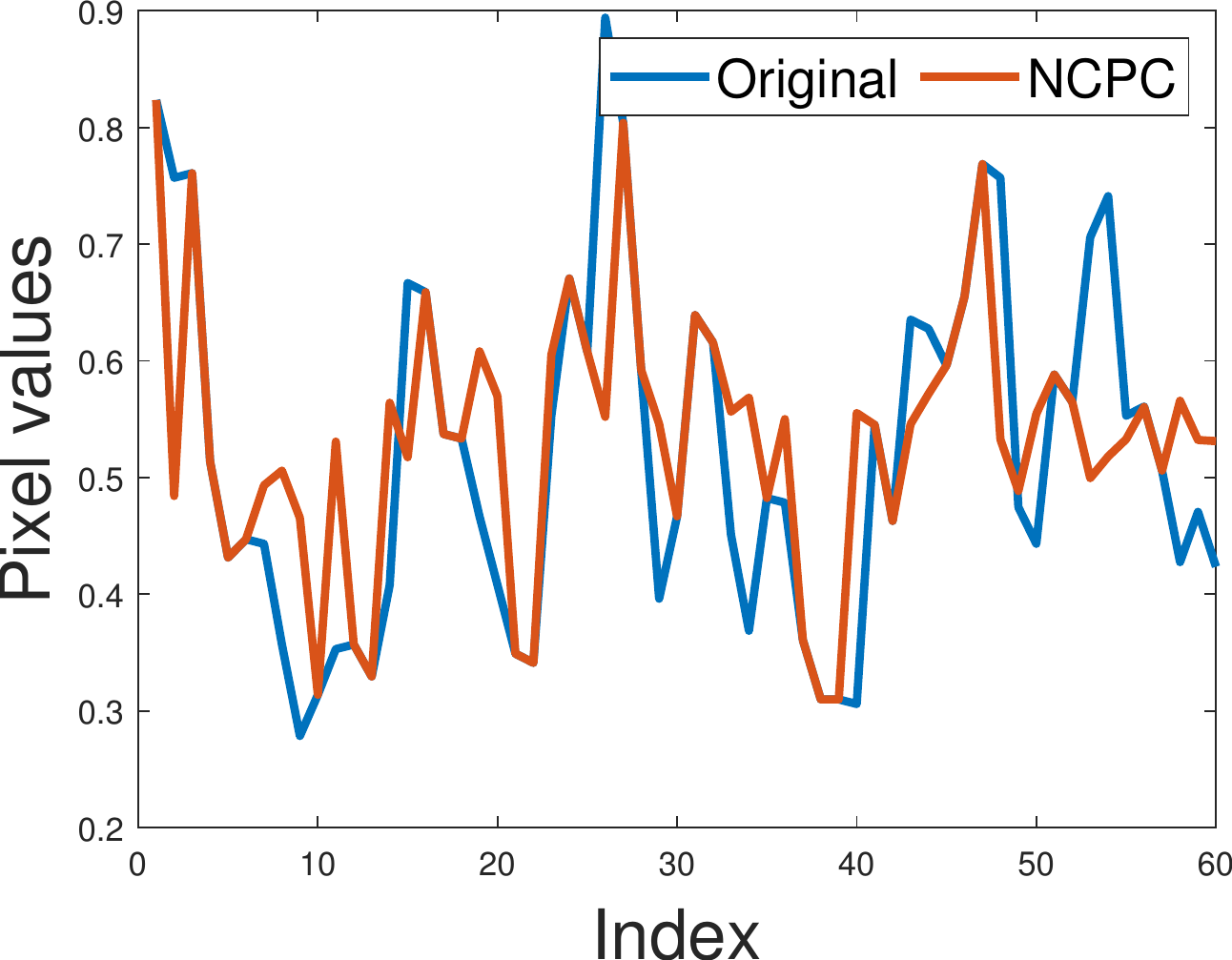}\vspace{0pt}
		\includegraphics[width=\linewidth]{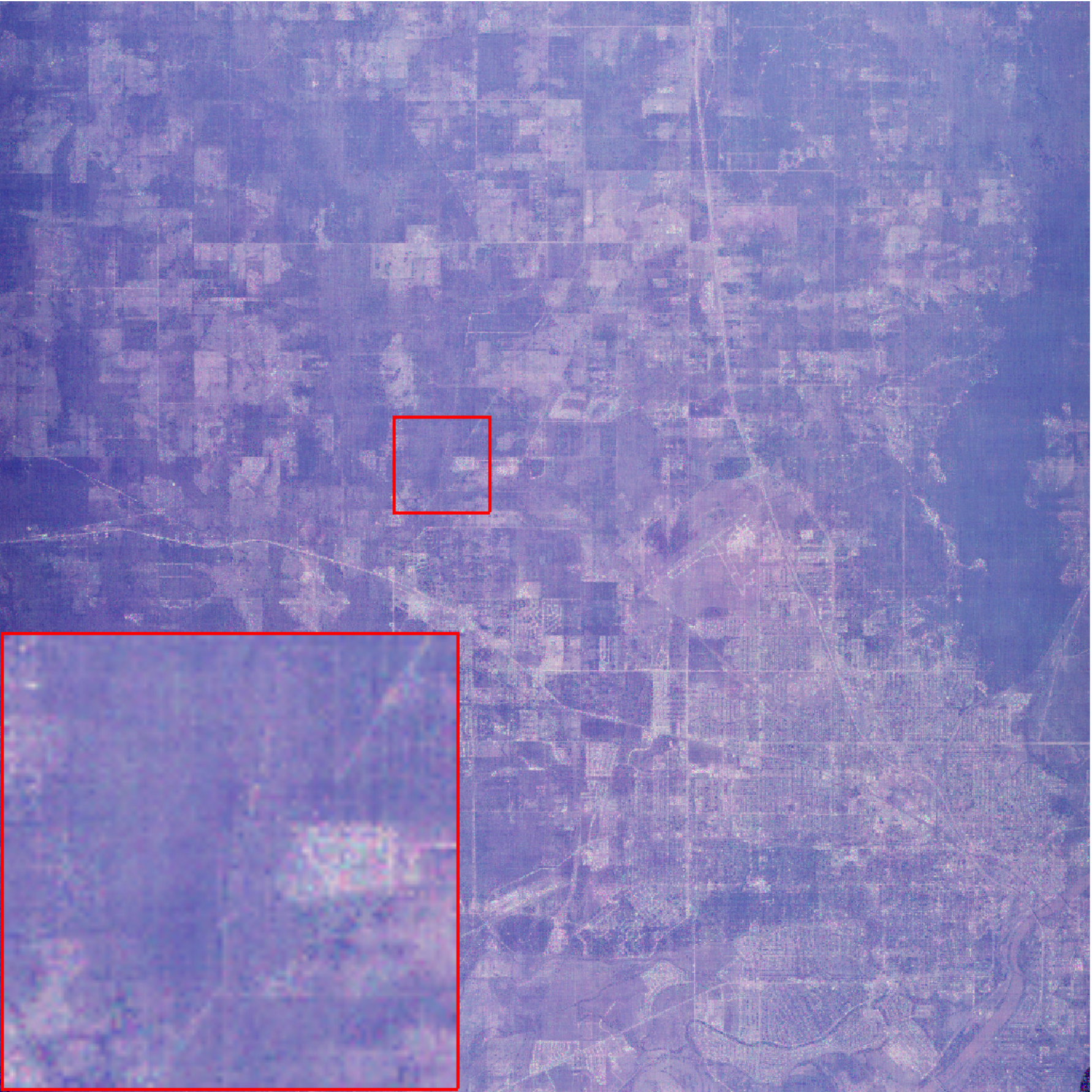}\vspace{0pt}
		\includegraphics[width=\linewidth]{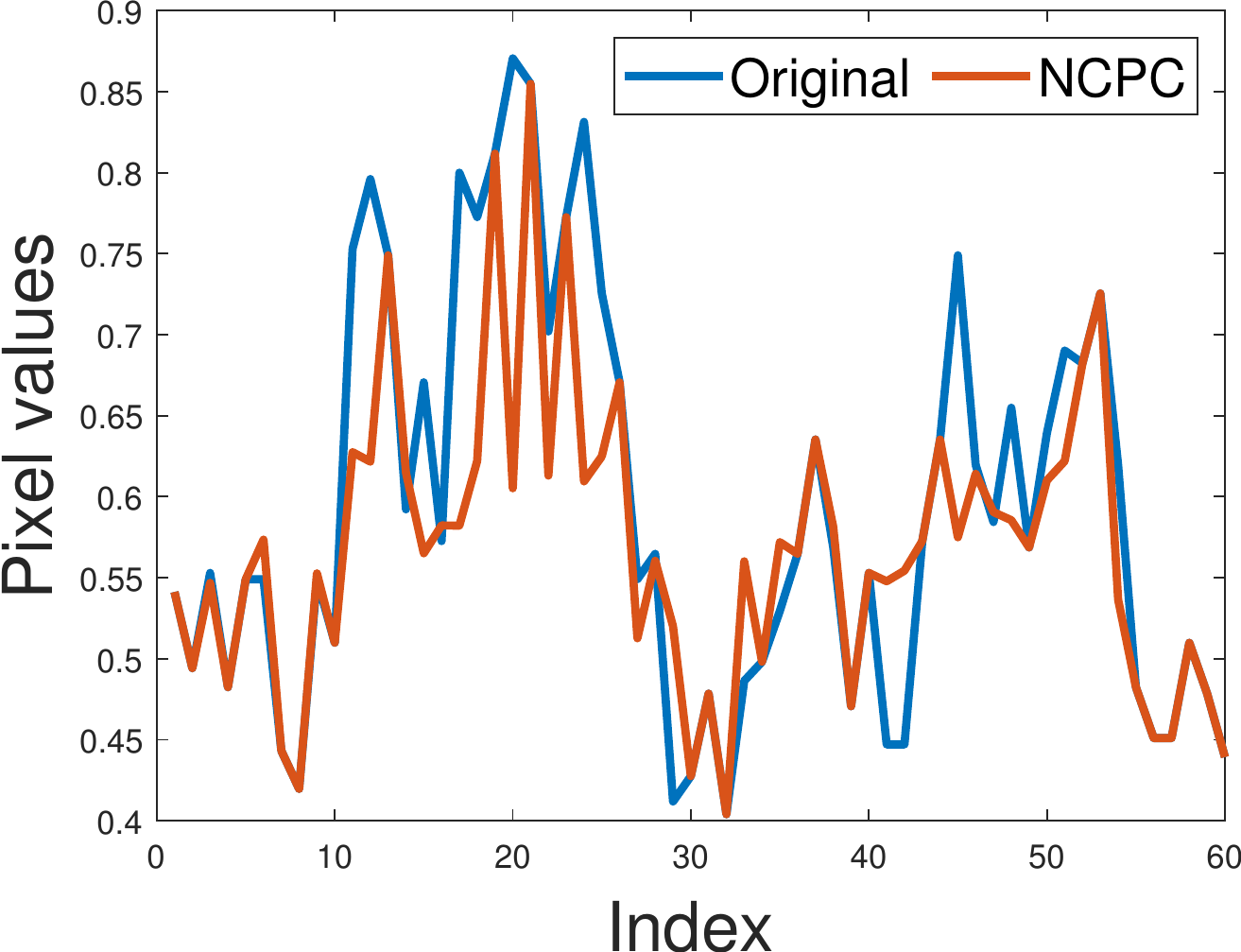}\vspace{0pt}
		\includegraphics[width=\linewidth]{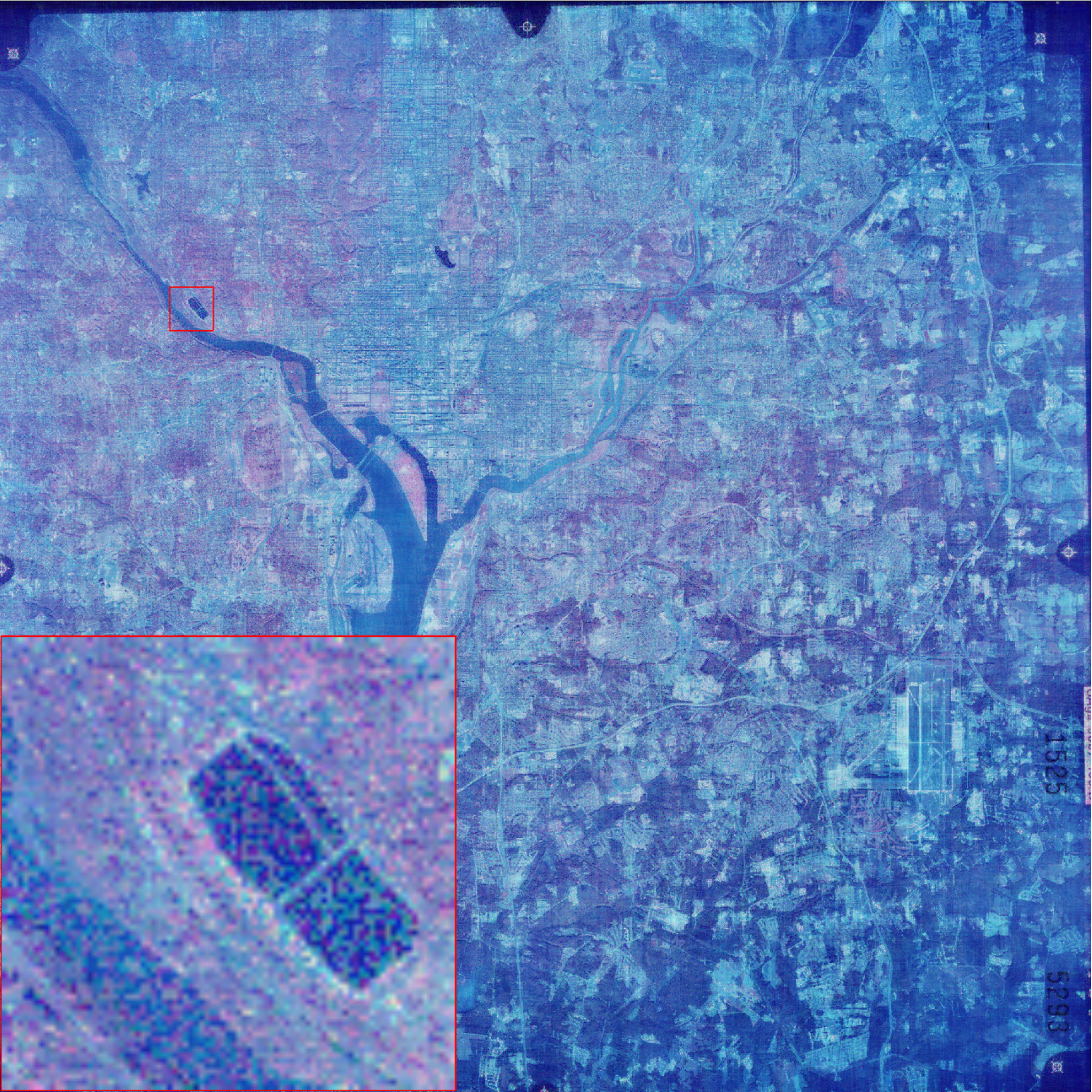}\vspace{0pt}
		\includegraphics[width=\linewidth]{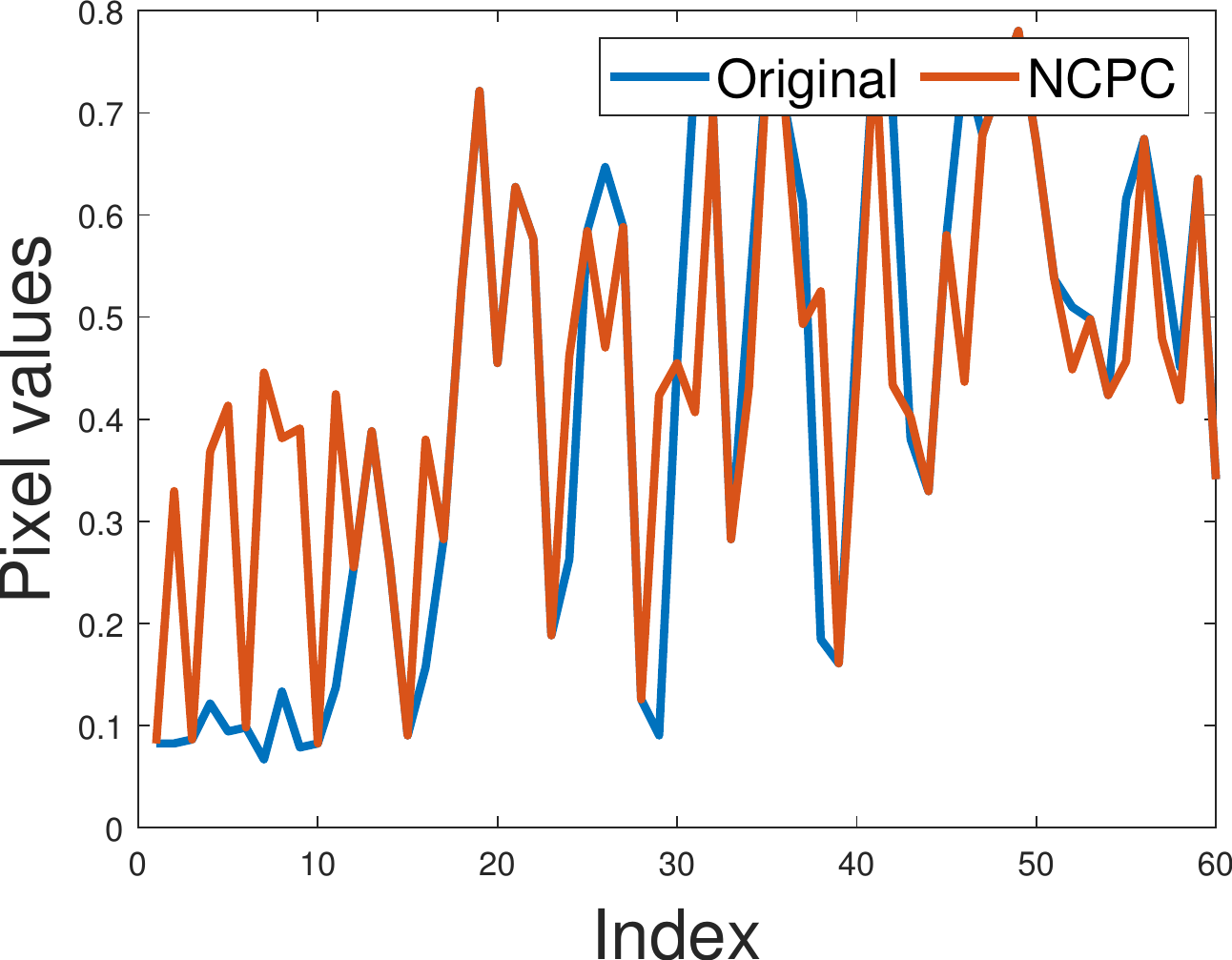}
		\caption{NCPC}
	\end{subfigure}
	\begin{subfigure}[b]{0.118\linewidth}
	\centering			
	\includegraphics[width=\linewidth]{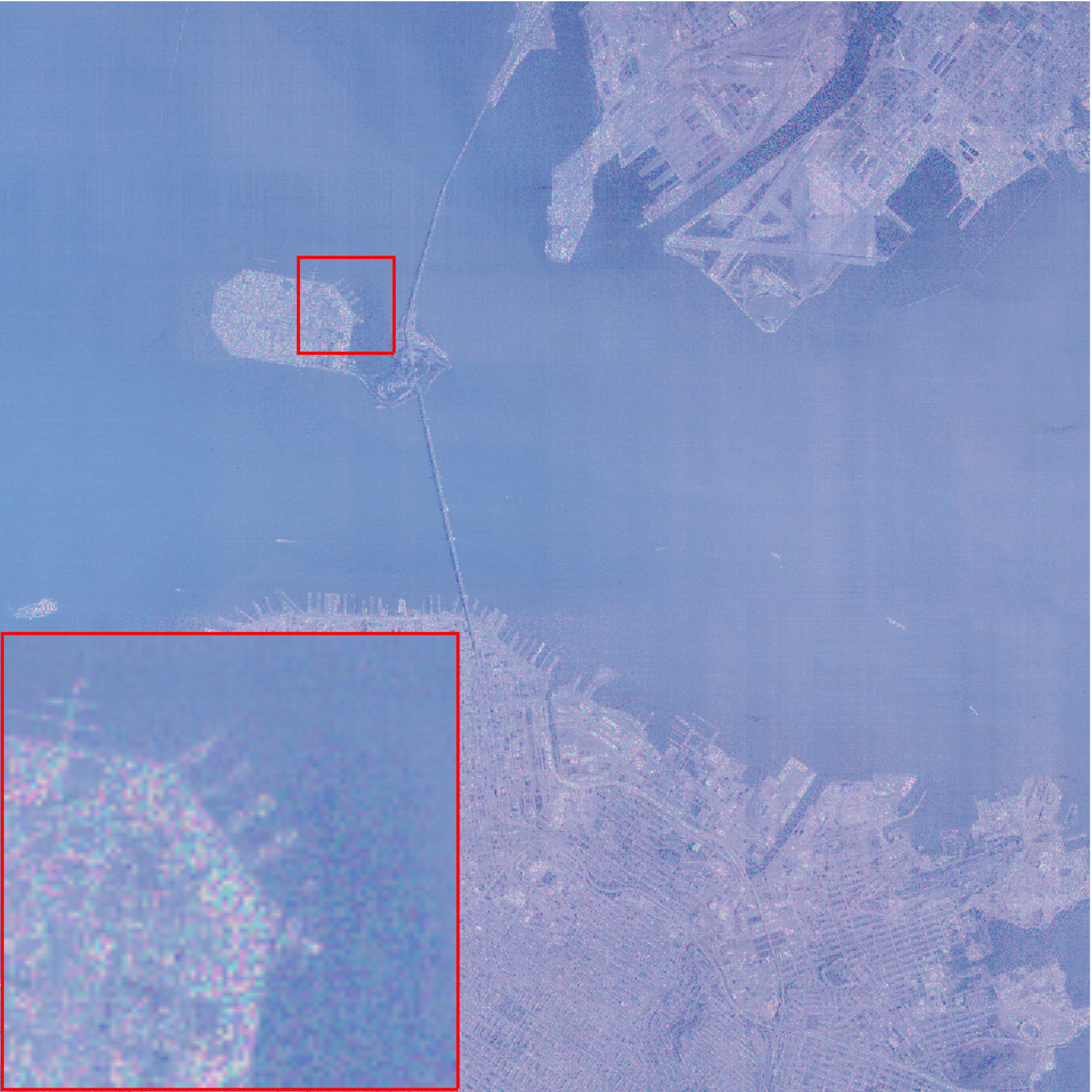}\vspace{0pt}
	\includegraphics[width=\linewidth]{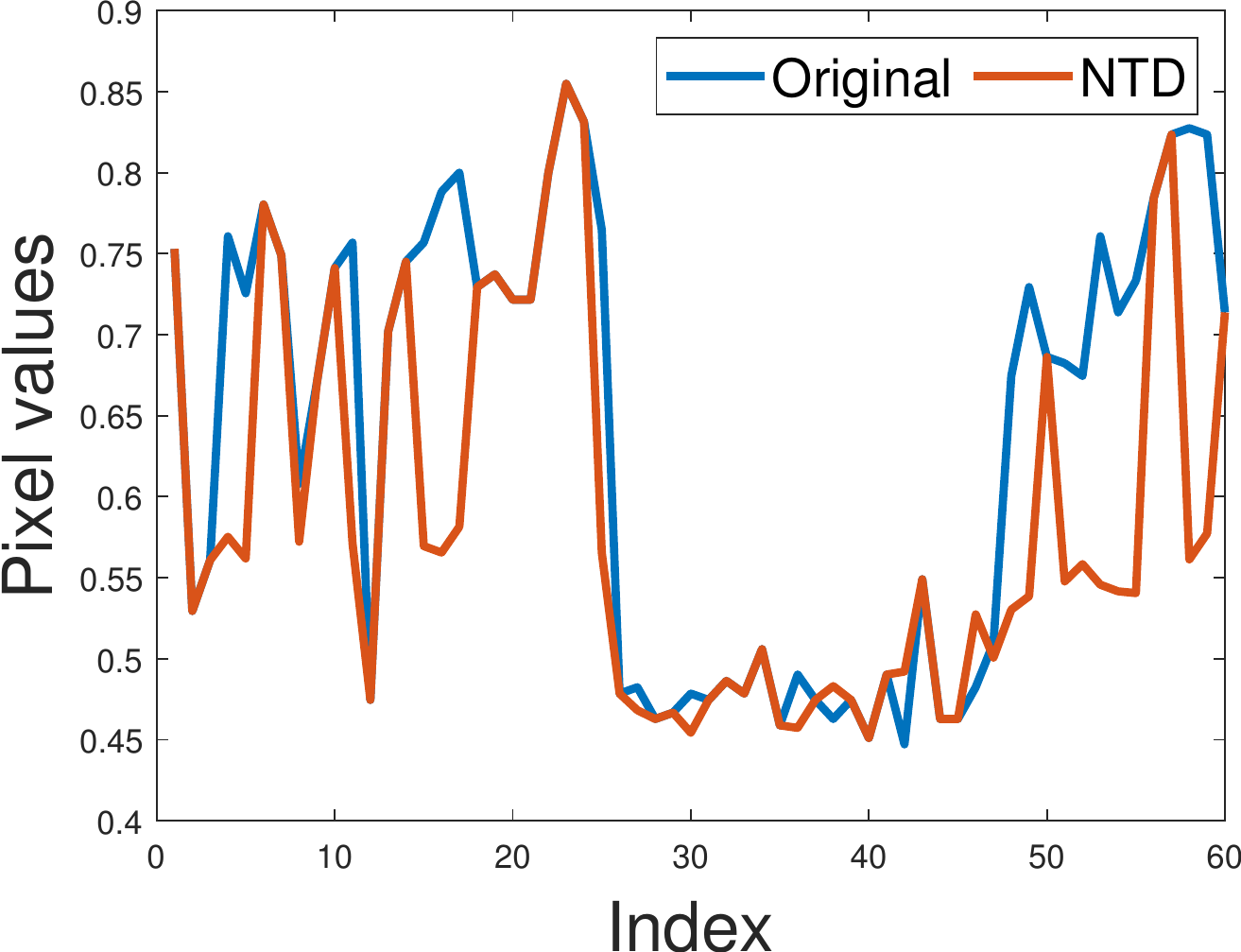}\vspace{0pt}
	\includegraphics[width=\linewidth]{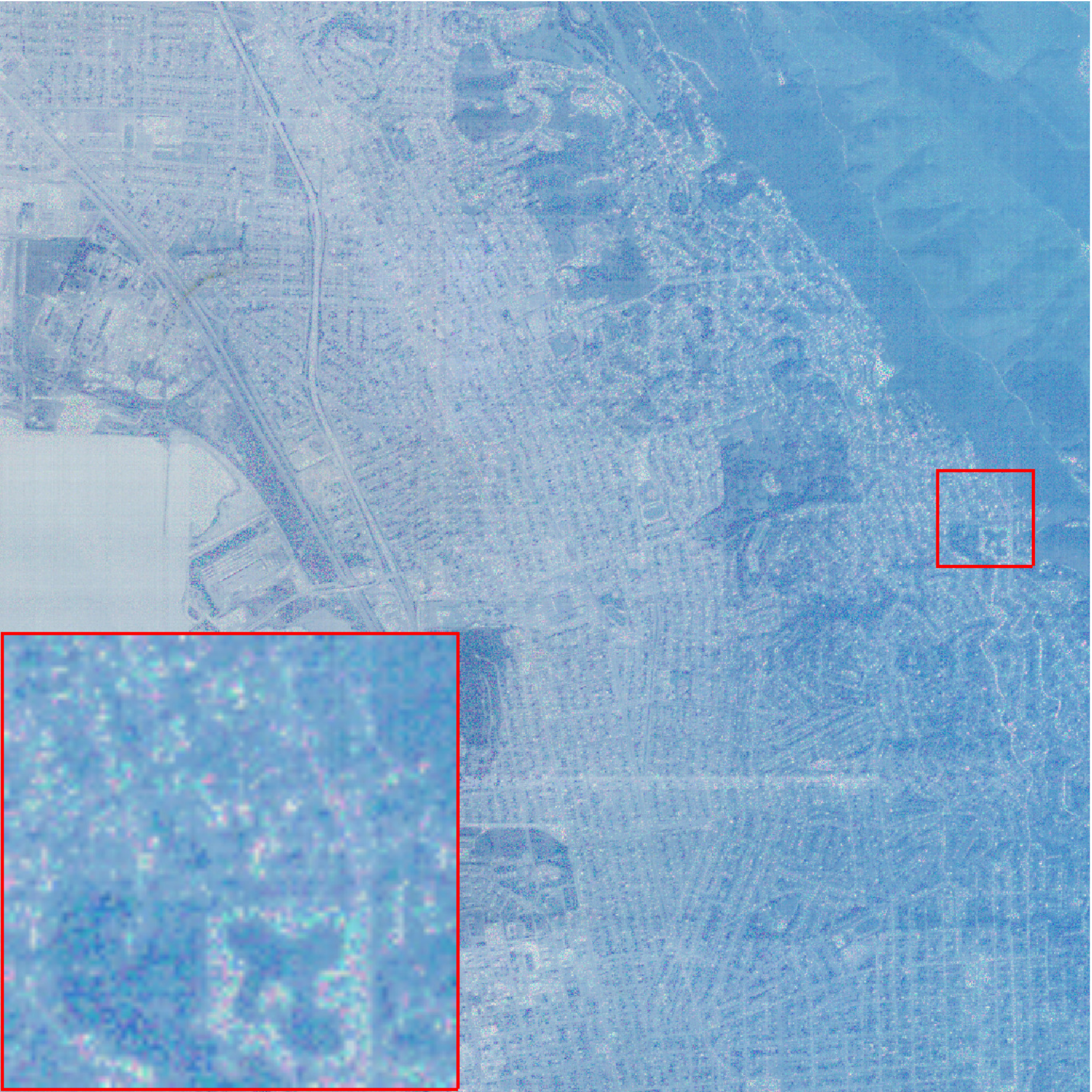}\vspace{0pt}
	\includegraphics[width=\linewidth]{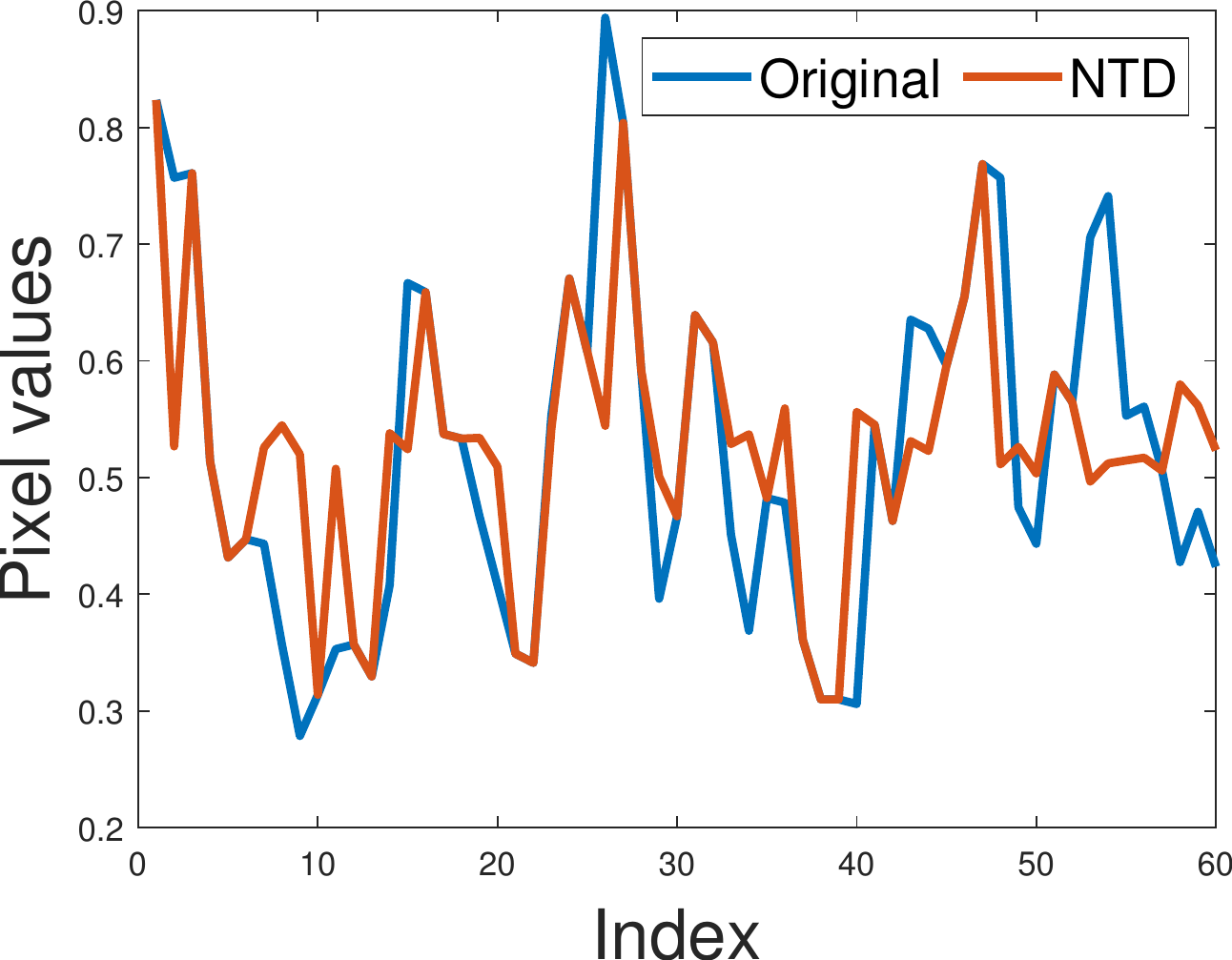}\vspace{0pt}
	\includegraphics[width=\linewidth]{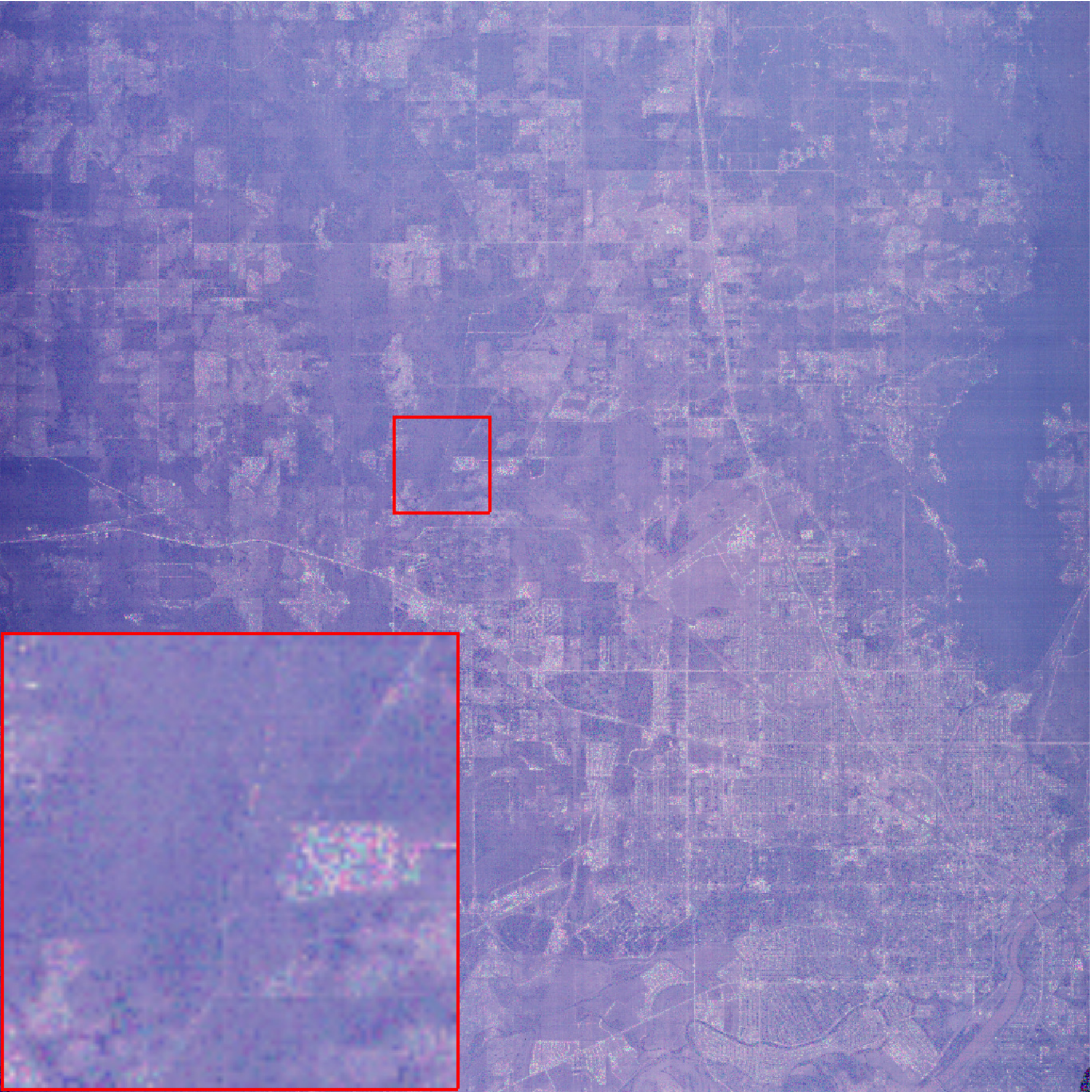}\vspace{0pt}
	\includegraphics[width=\linewidth]{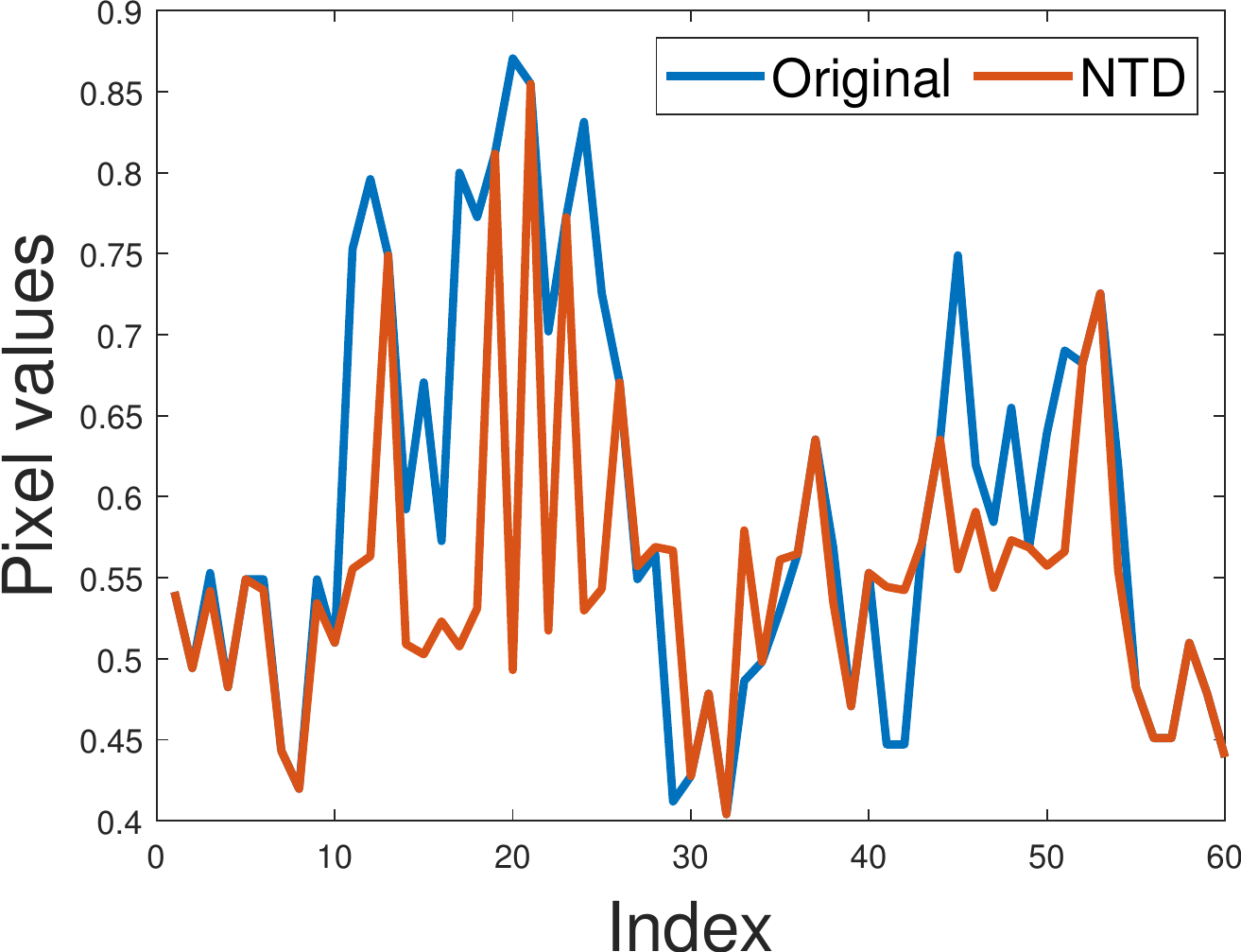}\vspace{0pt}
	\includegraphics[width=\linewidth]{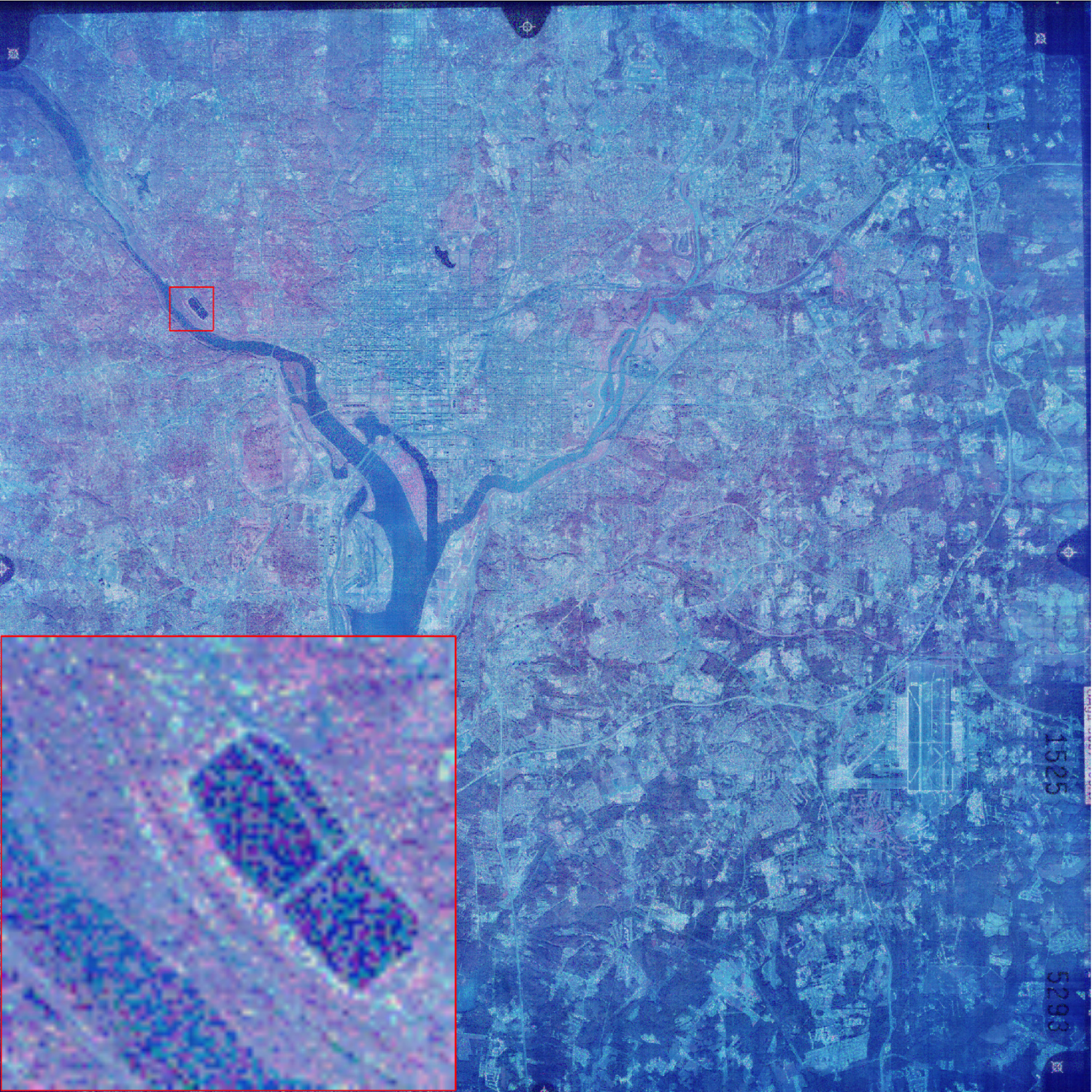}\vspace{0pt}
	\includegraphics[width=\linewidth]{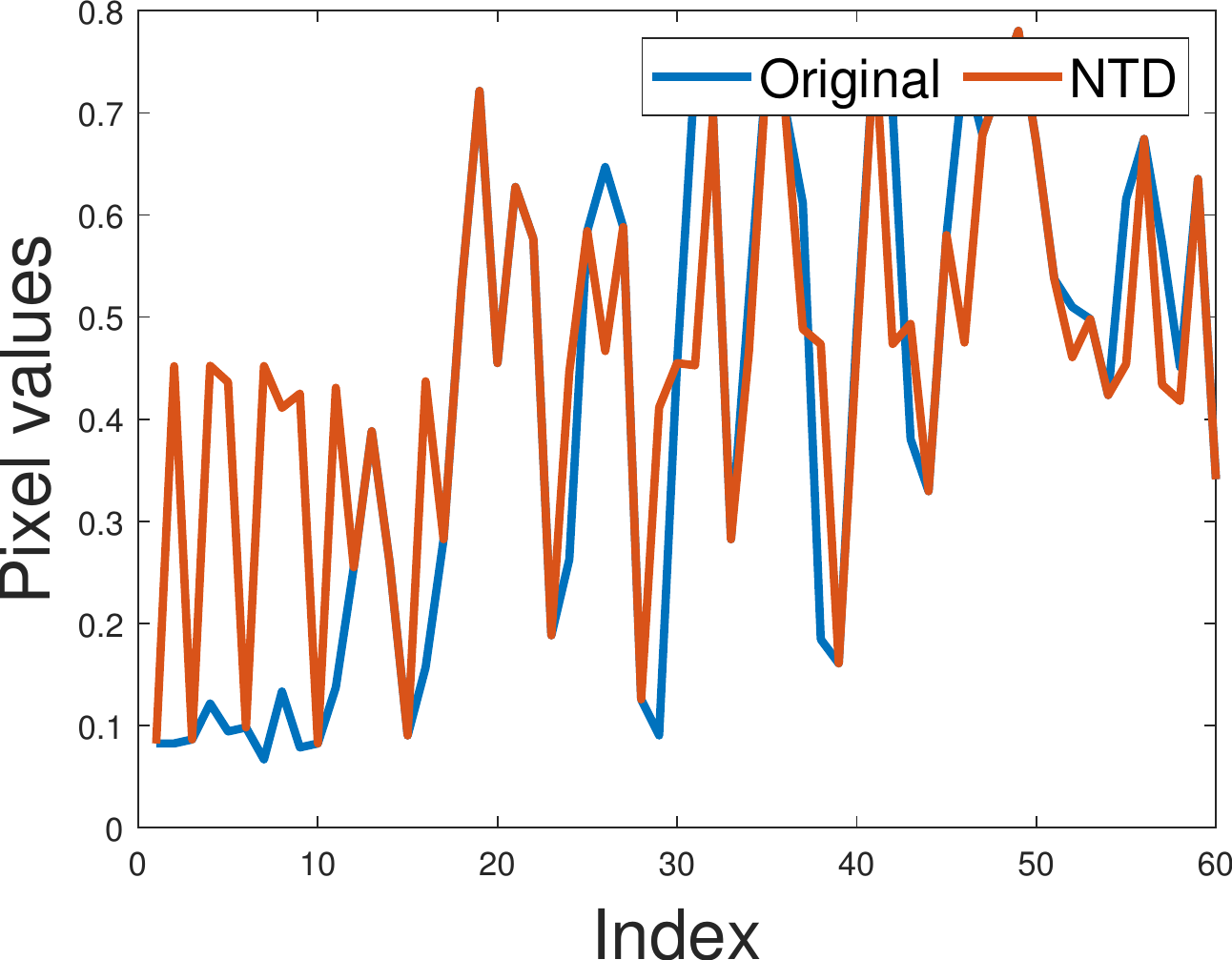}
	\caption{NTD}
\end{subfigure}					
	\end{subfigure}
	\vfill
	\caption{Examples of high altitude aerial image inpainting with $ p=50\% $. From top to bottom are respectively corresponding to ``San Francisco", ``Richmond", ``Shreveport" and ``Wash". For better visualization, we show the zoom-in region and the corresponding partial residuals of the region.}
	\label{fig:color_image_inpainting}
\end{figure*}

\begin{table*}[htbp]
	\centering
	\caption{HIGH ALTITUDE AERIAL IMAGE INPAINTING PERFORMANCE COMPARISON: PSNR, SSIM, FSIM AND RUNNING TIME}\setlength{\tabcolsep}{1.5mm}{
	\begin{tabular}{c|c|cccc|cccc|cccc}
		\hline
		\multirow{2}{*}{Picture} & \multirow{2}{*}{Methods} & \multicolumn{4}{c|}{$ p=40\% $}    & \multicolumn{4}{c|}{$ p=45\% $}   & \multicolumn{4}{c}{$ p=50\% $} \\
		\cline{3-14}          &       & PSNR  & SSIM  & FSIM  & Time  & PSNR  & SSIM  & FSIM  & Time  & PSNR  & SSIM  & FSIM  & Time \\
		\hline
		\multirow{6}{*}{San Francisco} & DTRTC & 29.637  & 0.804  & \textbf{0.982 } & \textbf{11.037 } & 30.666  & \textbf{0.838 } & 0.986  & \textbf{9.105 } & 31.637  & \textbf{0.866 } & 0.990  & \textbf{8.047 } \\
		& WSTNN & \textbf{29.917 } & \textbf{0.806 } & \textbf{0.982 } & 345.400  & \textbf{30.918 } & 0.836  & \textbf{0.988 } & 312.461  & \textbf{31.812 } & 0.858  & \textbf{0.991 } & 316.849  \\
		& TCTF  & 27.144  & 0.752  & 0.914  & 15.294  & 27.663  & 0.776  & 0.930  & 14.821  & 28.928  & 0.803  & 0.969  & 15.115  \\
		& TNN   & 28.838  & 0.775  & 0.972  & 261.645  & 29.560  & 0.804  & 0.979  & 242.107  & 30.309  & 0.831  & 0.984  & 249.143  \\
		& NCPC  & 26.165  & 0.698  & 0.895  & 39.202  & 26.779  & 0.728  & 0.915  & 36.672  & 27.384  & 0.754  & 0.933  & 38.689  \\
		& NTD   & 25.481  & 0.699  & 0.878  & 13.828  & 26.180  & 0.728  & 0.900  & 13.320  & 26.927  & 0.761  & 0.920  & 16.231  \\
		\hline
		\multirow{6}{*}{Richmond} & DTRTC & \textbf{28.671 } & 0.800  & \textbf{0.986 } & \textbf{10.324 } & 29.560  & 0.832  & \textbf{0.990 } & \textbf{8.871 } & 30.384  & 0.857  & \textbf{0.992 } & \textbf{7.756 } \\
		& WSTNN & 28.657  & \textbf{0.816 } & 0.980  & 325.396  & \textbf{29.974 } & \textbf{0.858 } & 0.989  & 314.337  & \textbf{30.985 } & \textbf{0.882 } & \textbf{0.992 } & 325.966  \\
		& TCTF  & 24.790  & 0.661  & 0.874  & 15.521  & 25.709  & 0.700  & 0.919  & 15.040  & 27.011  & 0.743  & 0.963  & 15.742  \\
		& TNN   & 27.596  & 0.750  & 0.974  & 253.573  & 28.395  & 0.786  & 0.981  & 244.574  & 29.232  & 0.818  & 0.987  & 257.293  \\
		& NCPC  & 24.298  & 0.619  & 0.870  & 44.169  & 24.908  & 0.657  & 0.894  & 41.027  & 25.430  & 0.693  & 0.915  & 39.716  \\
		& NTD   & 23.556  & 0.622  & 0.835  & 15.662  & 24.267  & 0.654  & 0.868  & 13.444  & 24.602  & 0.679  & 0.889  & 14.272  \\
		\hline
		\multirow{6}{*}{Shreveport} & DTRTC & 29.411  & 0.807  & \textbf{0.988 } & \textbf{10.938 } & 30.369  & 0.842  & \textbf{0.991 } & \textbf{9.316 } & 31.260  & 0.869  & \textbf{0.994 } & \textbf{8.024 } \\
		& WSTNN & \textbf{29.665 } & \textbf{0.828 } & 0.984  & 325.316  & \textbf{30.643 } & \textbf{0.857 } & \textbf{0.990 } & 312.816  & \textbf{31.505 } & \textbf{0.878 } & \textbf{0.993 } & 326.884  \\
		& TCTF  & 26.463  & 0.686  & 0.929  & 15.476  & 26.980  & 0.716  & 0.942  & 15.077  & 28.385  & 0.767  & 0.978  & 15.626  \\
		& TNN   & 28.245  & 0.752  & 0.976  & 251.147  & 28.989  & 0.786  & 0.983  & 243.248  & 29.730  & 0.817  & 0.987  & 263.028  \\
		& NCPC  & 25.194  & 0.628  & 0.883  & 42.986  & 25.826  & 0.669  & 0.907  & 40.977  & 26.435  & 0.704  & 0.929  & 38.202  \\
		& NTD   & 24.710  & 0.632  & 0.843  & 13.754  & 25.078  & 0.656  & 0.868  & 13.508  & 25.734  & 0.691  & 0.898  & 13.683  \\
		\hline
	\multirow{6}{*}{Wash} & DTRTC & \textbf{21.946 } & \textbf{0.692 } & \textbf{0.990 } & \textbf{54.578 } & \textbf{22.571 } & \textbf{0.729 } & \textbf{0.992 } & \textbf{46.910 } & \textbf{23.202 } & \textbf{0.762 } & \textbf{0.994 } & \textbf{31.456 } \\
& WSTNN & 13.700  & 0.372  & 0.910  & 2869.261  & 15.014  & 0.427  & 0.939  & 2685.948  & 16.492  & 0.485  & 0.961  & 2648.372  \\
& TCTF  & 19.596  & 0.542  & 0.888  & 74.882  & 19.881  & 0.575  & 0.894  & 73.590  & 20.584  & 0.623  & 0.929  & 75.920  \\
& TNN   & 21.729  & 0.644  & 0.980  & 2661.034  & 22.426  & 0.690  & 0.986  & 2542.311  & 23.150  & 0.732  & 0.990  & 2524.334  \\
& NCPC  & 19.346  & 0.527  & 0.877  & 241.108  & 19.841  & 0.573  & 0.901  & 226.601  & 20.398  & 0.615  & 0.925  & 224.523  \\
& NTD   & 18.958  & 0.521  & 0.819  & 55.018   & 19.194  & 0.551  & 0.834  & 50.740   & 19.959  & 0.600   & 0.891 &
46.979 \\
	\hline
	\end{tabular}}%
	\label{tab:image}
\end{table*}%

\par We present the image inpainting results of the four tested images in Table \ref{tab:image}, and the best results are highlighted in bold. For visual comparisons, we show the images of the recovered high altitude aerial images by different methods for $ p=50\% $ in Figure \ref{fig:color_image_inpainting}. The proposed DTRTC algorithm can be seen to achieve the best performance.  The four methods based on tubal rank DTRTC, WSTNN, TCTF, and TNN perform better on PSNR, SSIM, and FSIM values than the method based on CP rank, NCPC, and the method based on Tucker rank, NTD except for the ``Wash" image.
Furthermore, TCTF and TNN do not use all low rank structures of tensors \cite{ZHZ20}, DTRTC and WSTNN are more comprehensive to preserve all low rank structures of tensor data. However, it can be seen from Lemma \ref{lem:wstnn} that WSTNN over-utilizes the low rank information of the tensor, resulting in too long running time and little improvement in PSNR, SSIM and FSIM values.
For the large scale ``Wash" image, the recovery of WSTNN, TCTF, NCPC and NTD is unsatisfactory, but DTRTC and TNN can successfully recover the image. However, since TNN and WSTNN require T-SVD decomposition at each step, as the tensor size increases, its calculation time increases significantly. As a result, DTRTC both produces excellent inpainting results and runs extremely fast.

\subsection{Video Inpainting}
We evaluate our proposed method DTRTC on the widely used YUV Video Sequences\footnote{\url{http://trace.eas.asu.edu/yuv/.}}. There are at least
150 frames in each video sequences. We pick the first $ 120 $ frames from them.
In the experiments, we test our proposed method and other methods on ``Bridge" video with $ 288 \times 352 $ pixels. 
We test the video with random missing data of the sampling ratio $ p=20\%, 25\%, 30\% $.  
Set the initial double tubal rank $ \bm{r}_\X^0=\left(120,70,\ldots,70\right),\,\bm{r}_{\tilde\X}^0=\left(10,10,\ldots,10\right) \, (\Y\in\bR^{n_3\times (n_2n_1/3) \times 3})$ in DTRTC, the initial tubal rank $ \left(50,8,\ldots,8\right) $ in TCTF, the initial CP rank $ 50 $ in NCPC and the initial Tucker rank $ \left(30,30,5\right) $ in NTD. In experiments, the maximum iterative number is set to be $300$ and precision $ \varepsilon $ is set to be 1e-4.

\begin{figure*}[htbp]
	\centering
	\begin{subfigure}[b]{1\linewidth}
		\begin{subfigure}[b]{0.118\linewidth}
			\centering
			\includegraphics[width=\linewidth]{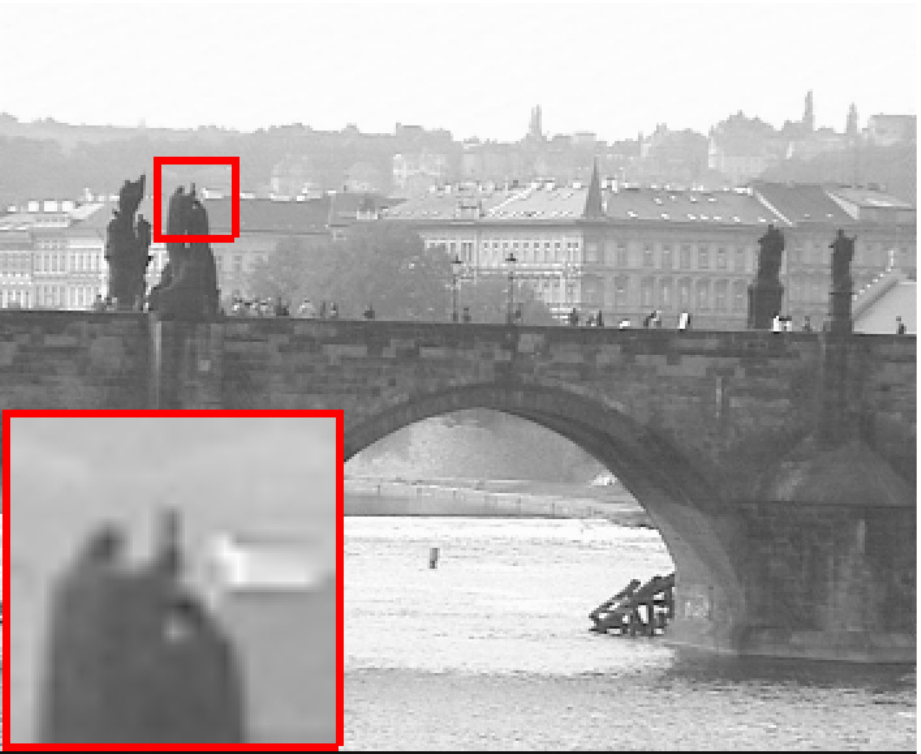}\vspace{0pt}
			\includegraphics[width=\linewidth]{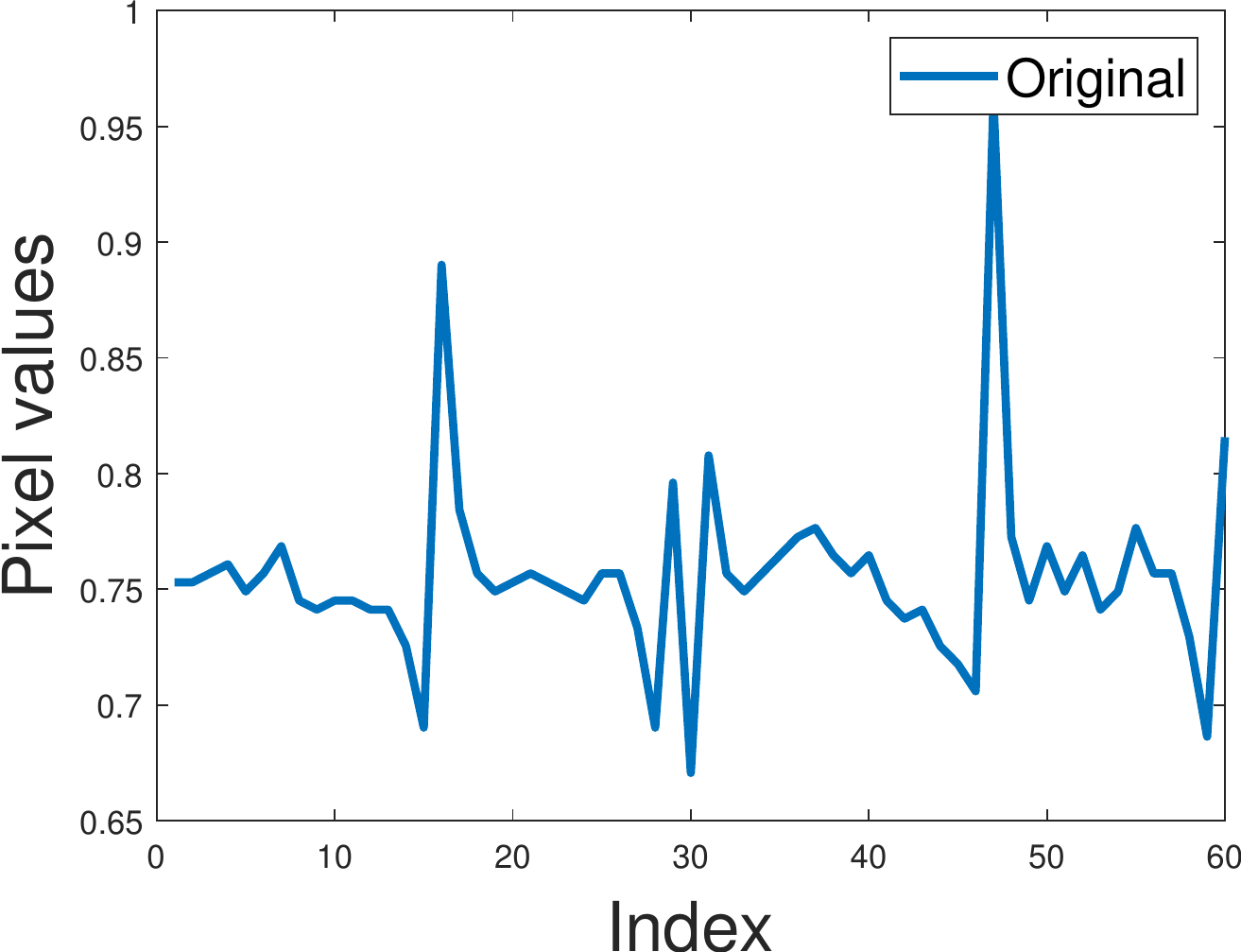}
			\caption{Original}
		\end{subfigure}   	
		\begin{subfigure}[b]{0.1175\linewidth}
			\centering
			\includegraphics[width=\linewidth]{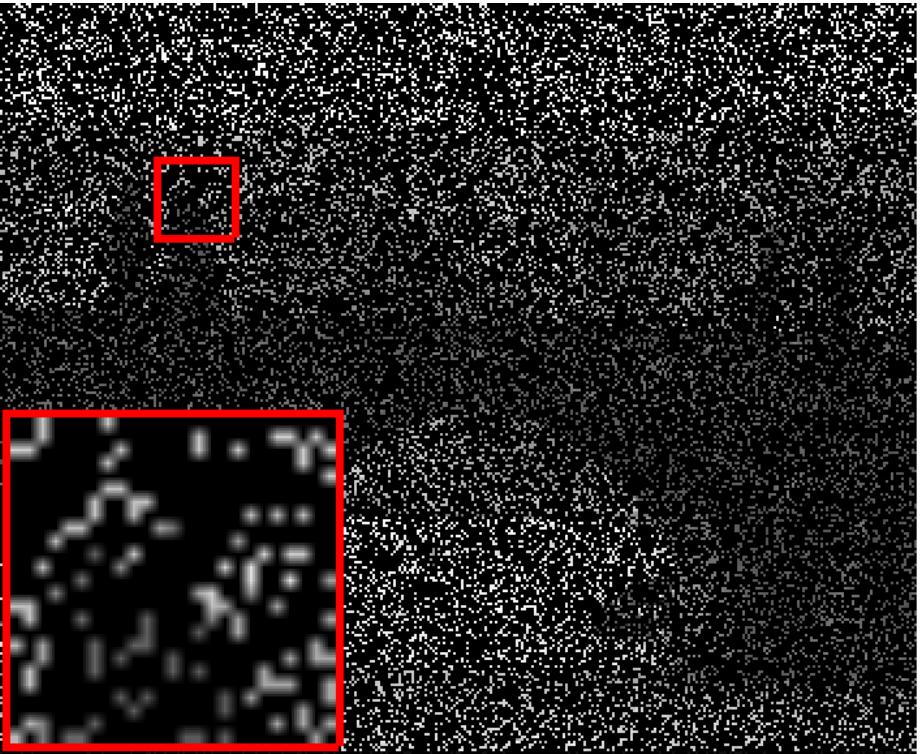}\vspace{0pt}
			\includegraphics[width=\linewidth]{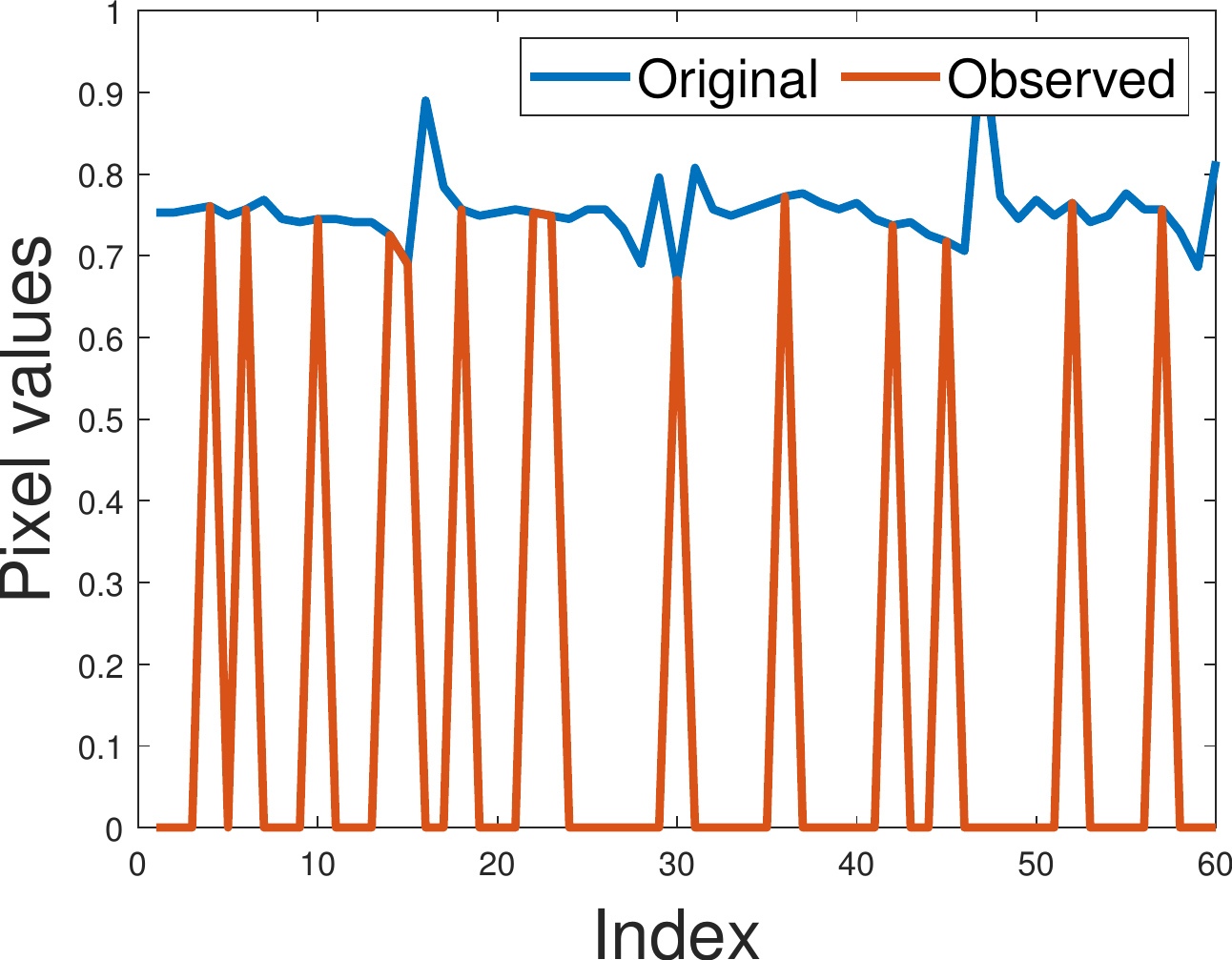}
			\caption{Observed}
		\end{subfigure}
		\begin{subfigure}[b]{0.118\linewidth}
			\centering
			\includegraphics[width=\linewidth]{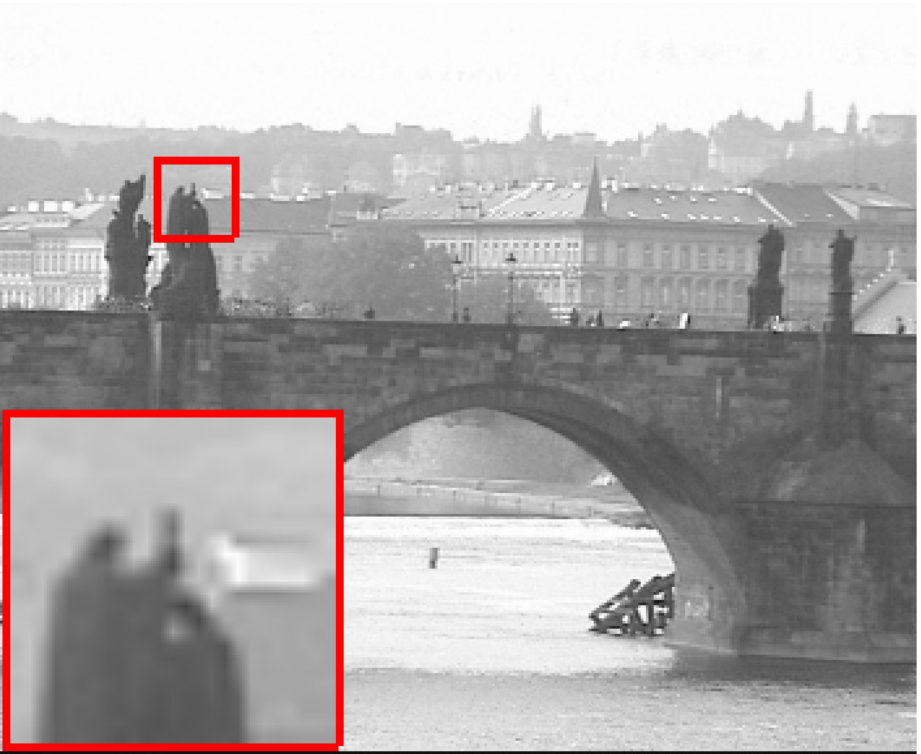}\vspace{0pt}
			\includegraphics[width=\linewidth]{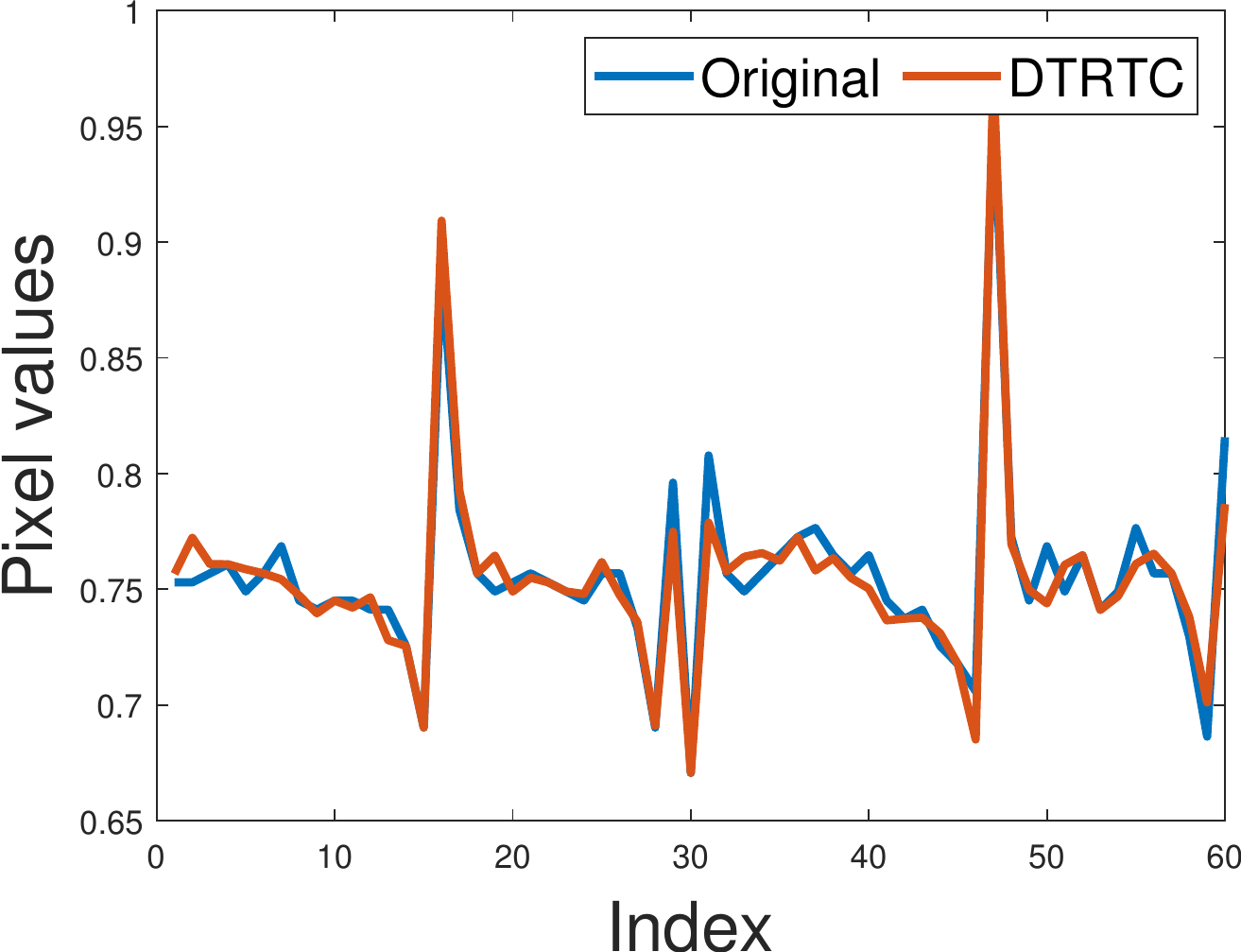}
			\caption{DTRTC}
		\end{subfigure}
		\begin{subfigure}[b]{0.118\linewidth}
			\centering
			\includegraphics[width=\linewidth]{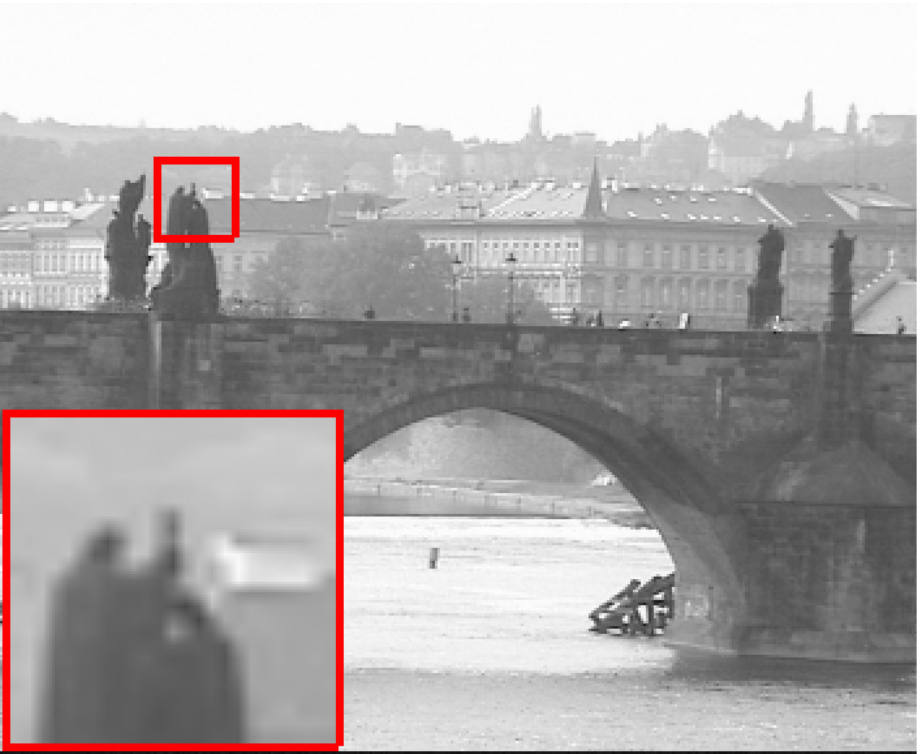}\vspace{0pt}
			\includegraphics[width=\linewidth]{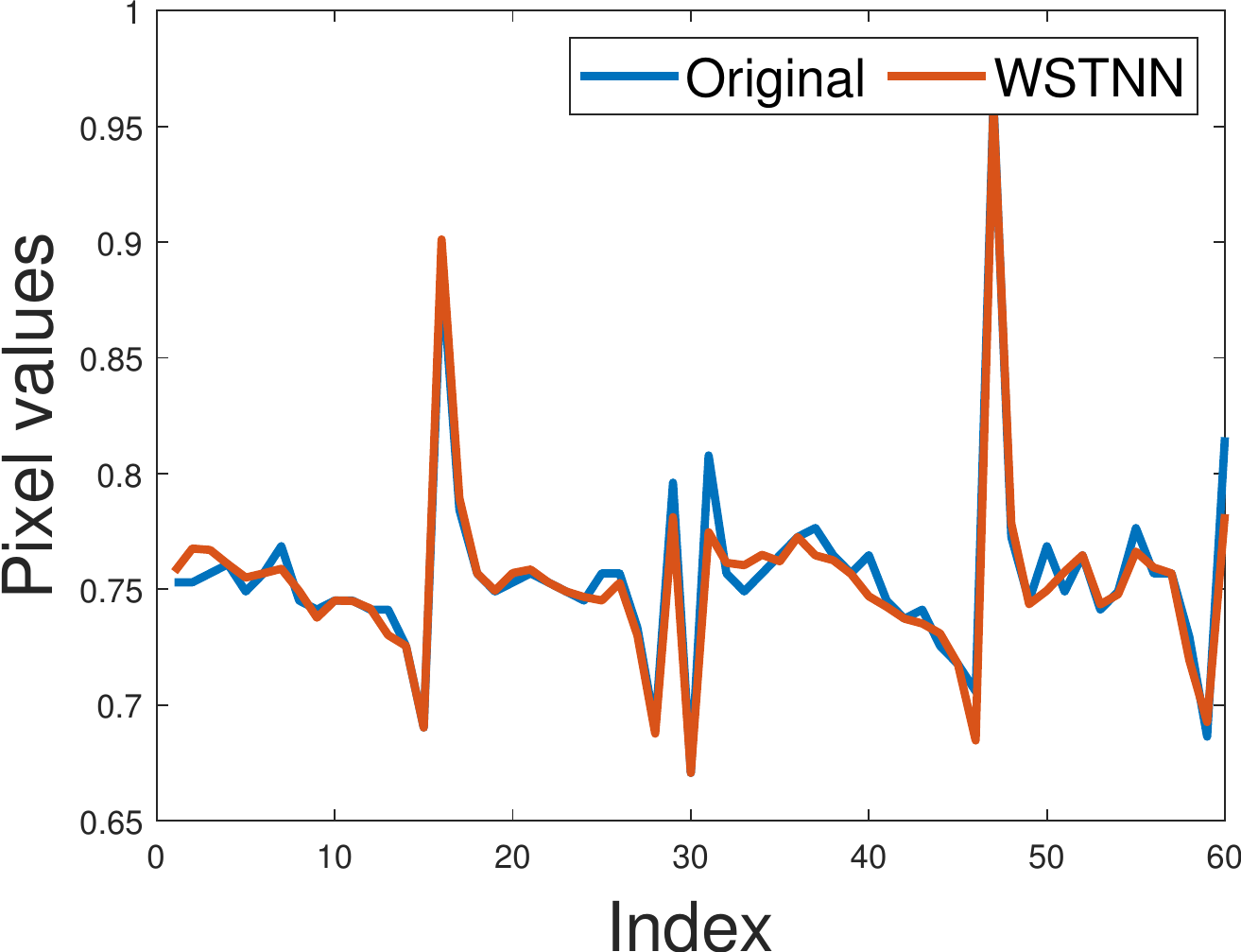}
			\caption{WSTNN}
		\end{subfigure}	
		\begin{subfigure}[b]{0.118\linewidth}
			\centering			
			\includegraphics[width=\linewidth]{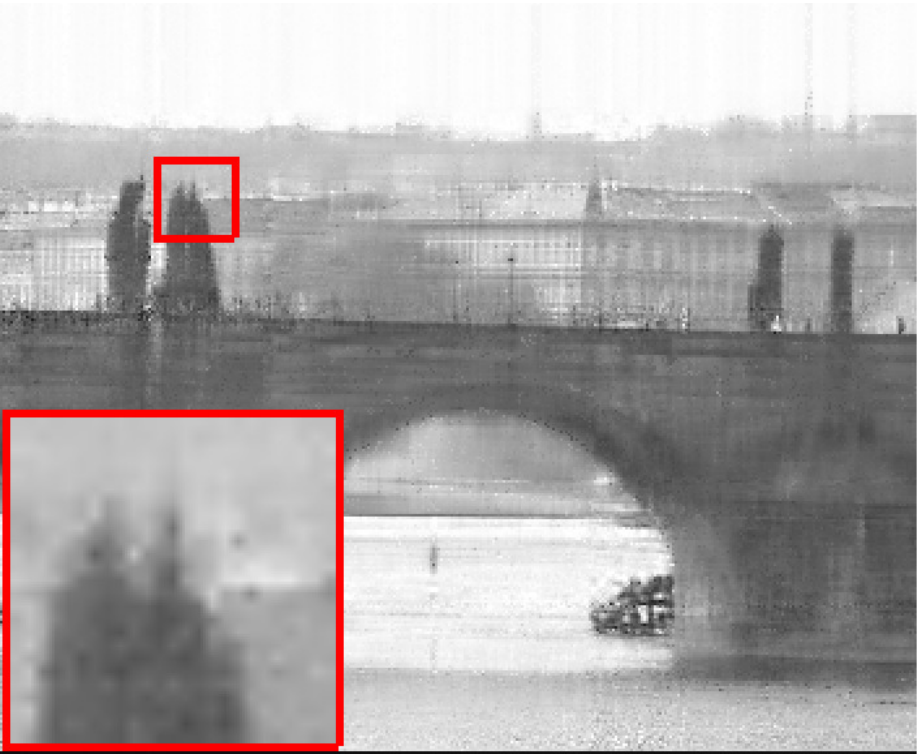}\vspace{0pt}
			\includegraphics[width=\linewidth]{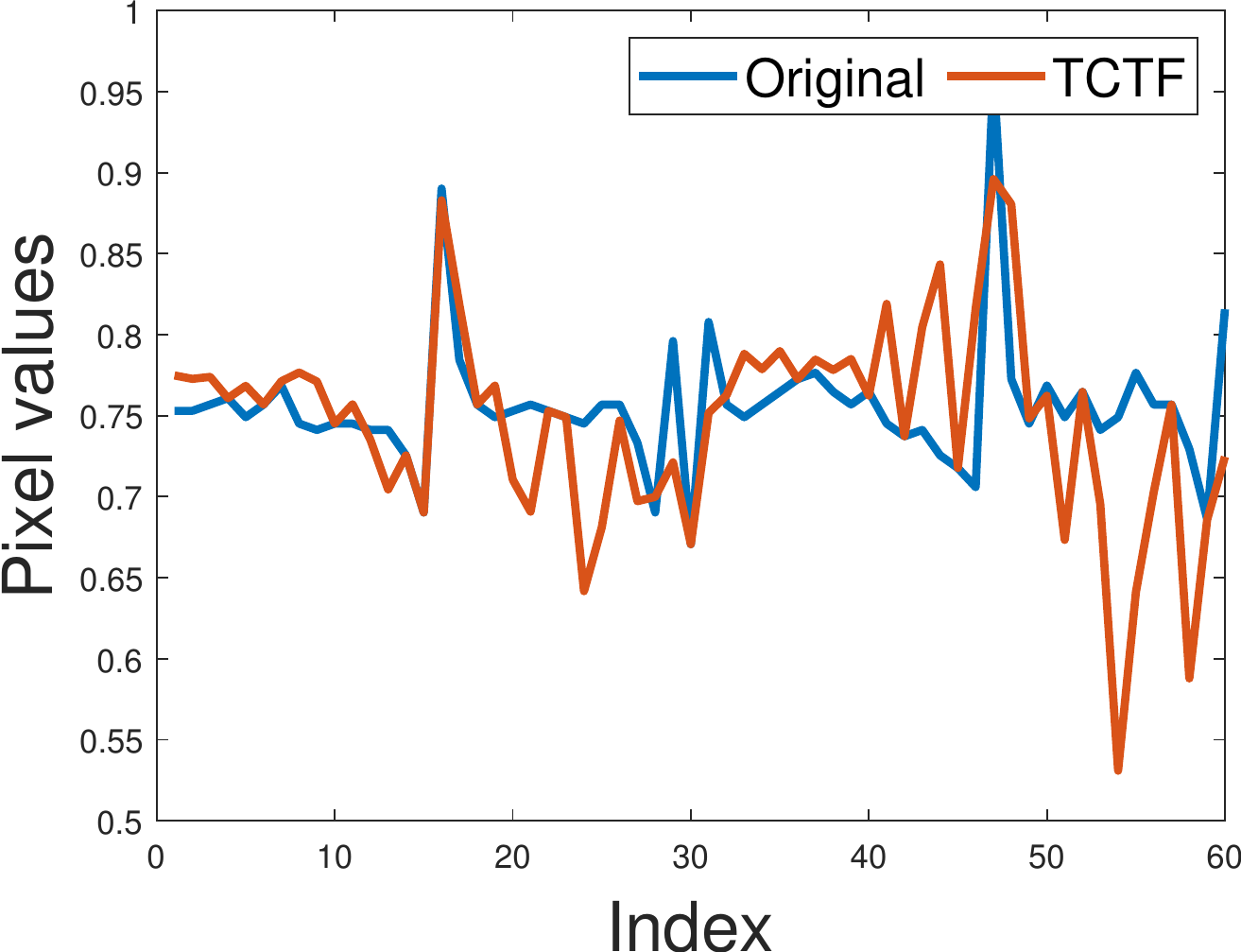}
			\caption{TCTF}
		\end{subfigure}
		\begin{subfigure}[b]{0.118\linewidth}
			\centering		
			\includegraphics[width=\linewidth]{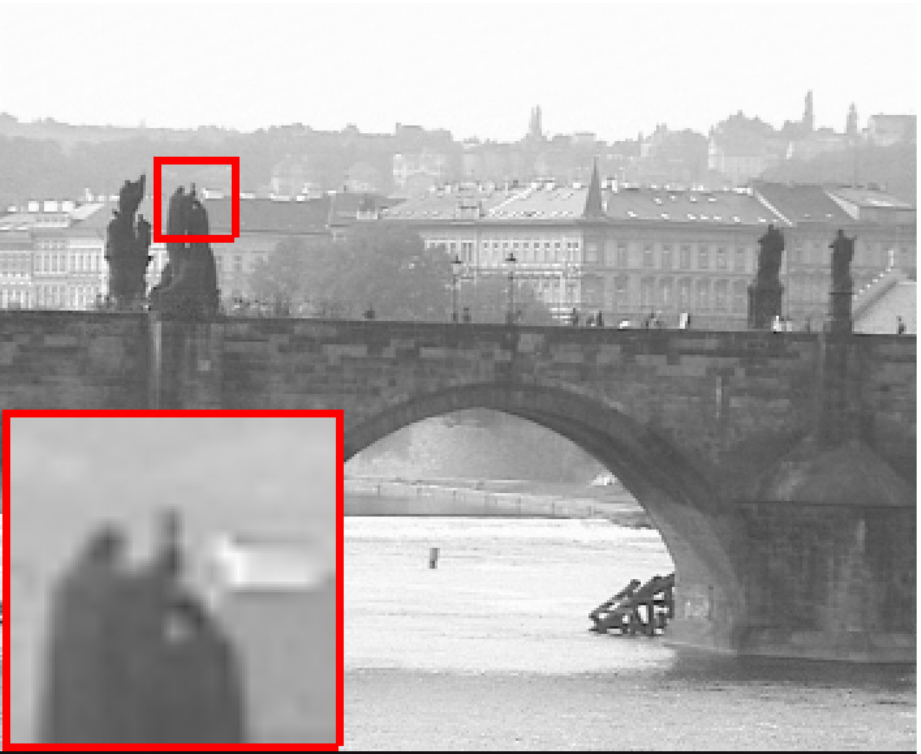}\vspace{0pt}
			\includegraphics[width=\linewidth]{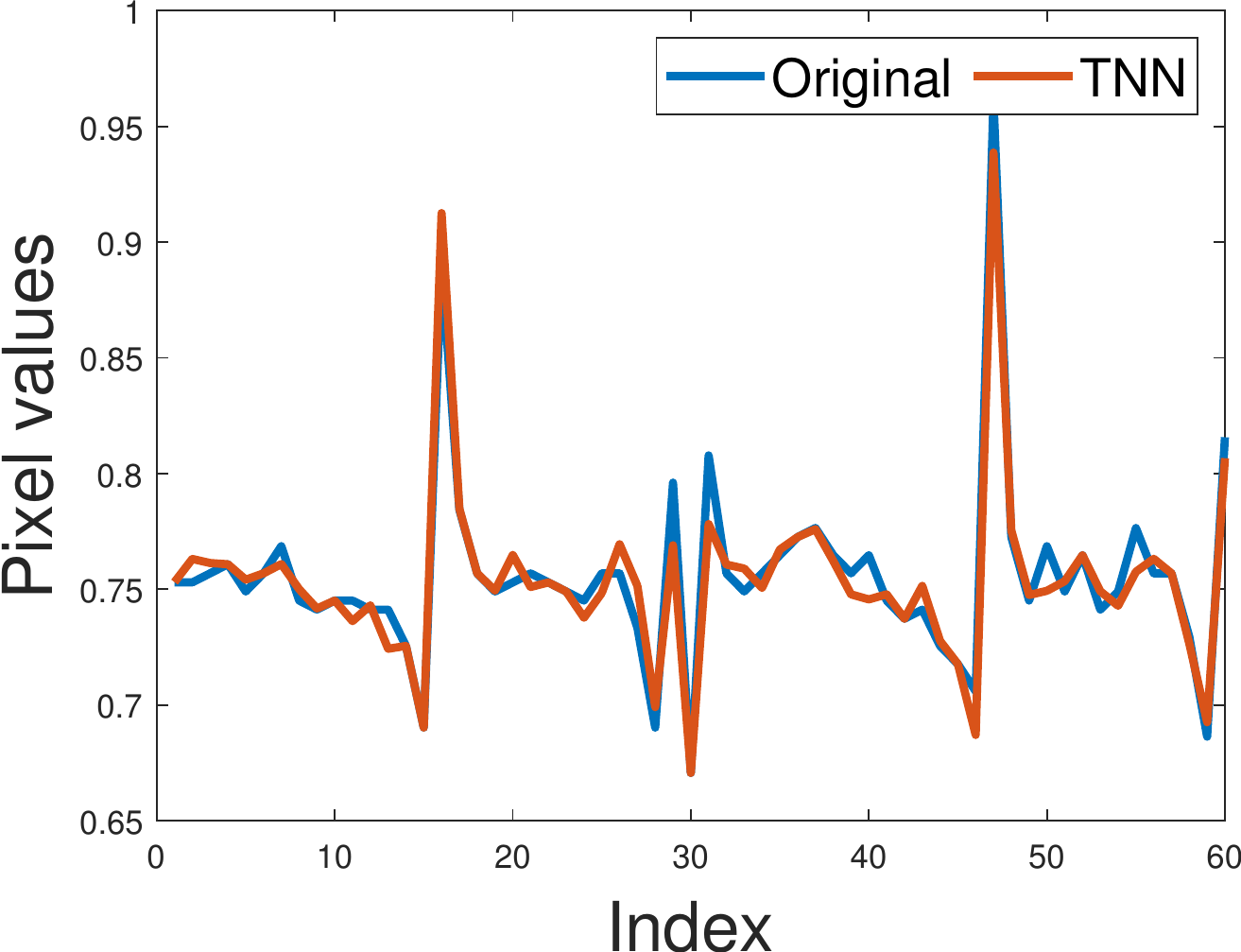}
			\caption{TNN}
		\end{subfigure}
		\begin{subfigure}[b]{0.118\linewidth}
			\centering		
			\includegraphics[width=\linewidth]{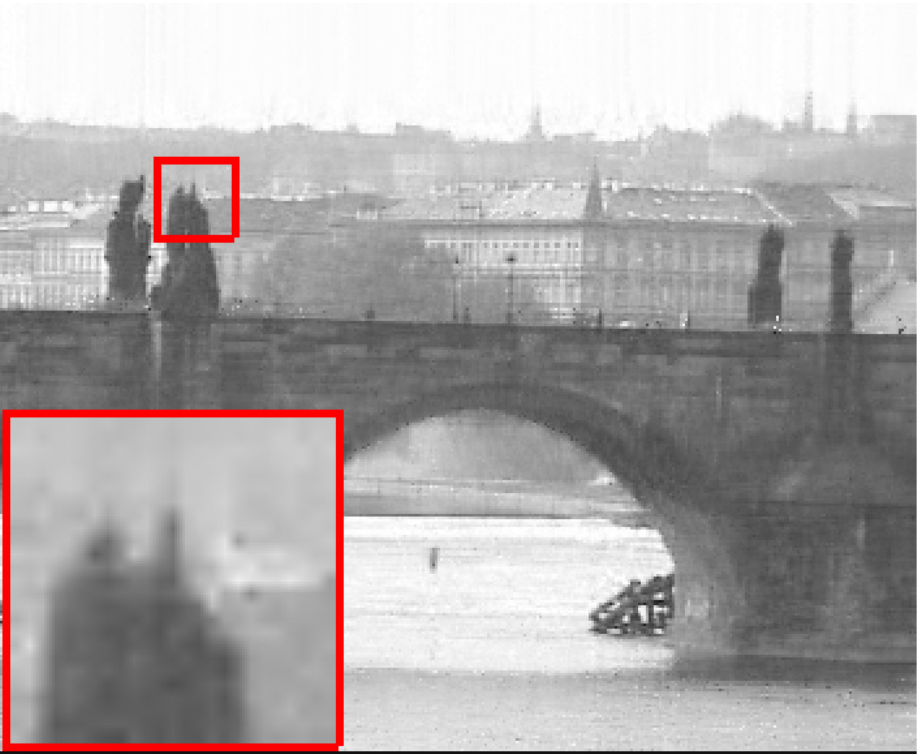}\vspace{0pt}
			\includegraphics[width=\linewidth]{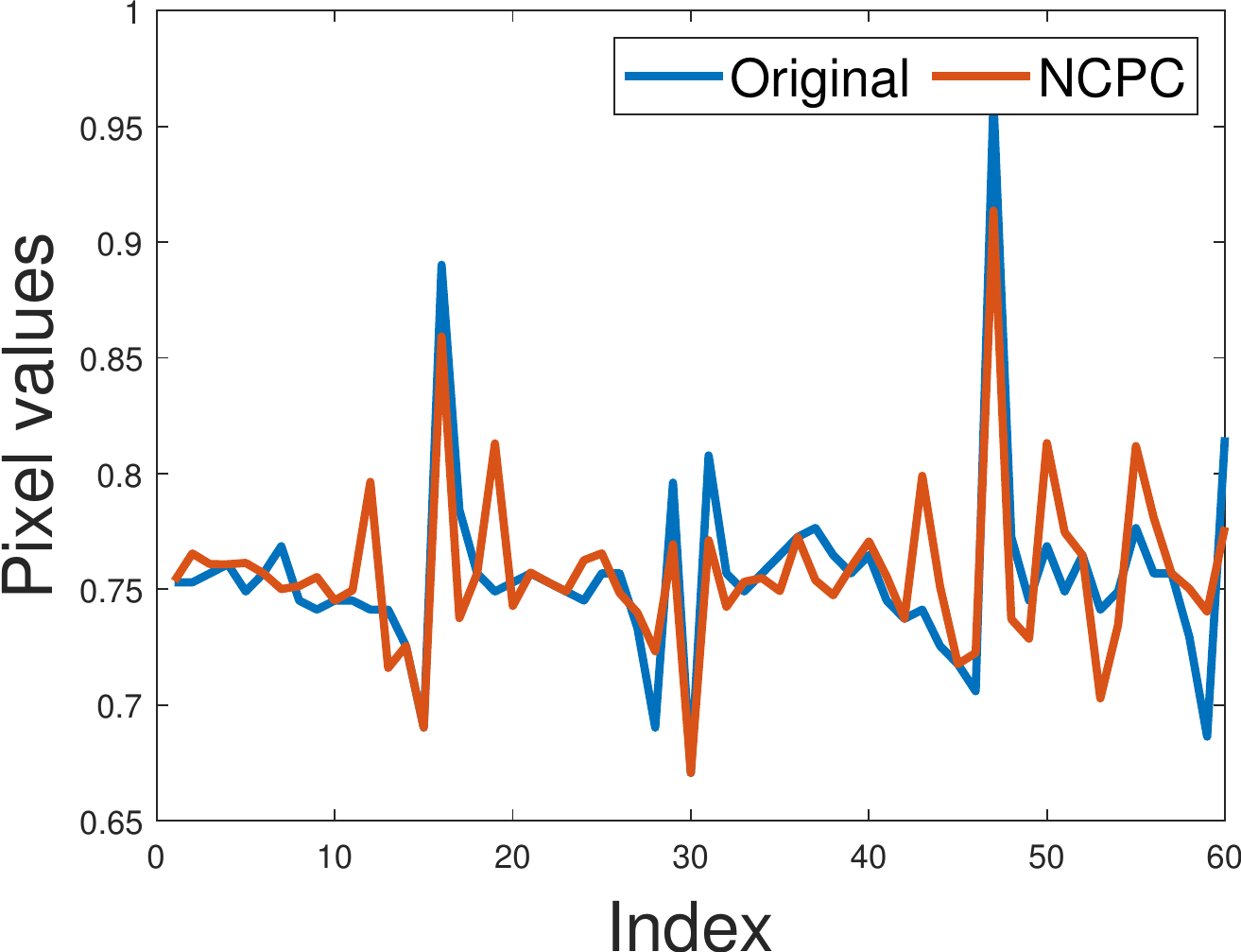}
			\caption{NCPC}
		\end{subfigure}
		\begin{subfigure}[b]{0.118\linewidth}
			\centering			
			\includegraphics[width=\linewidth]{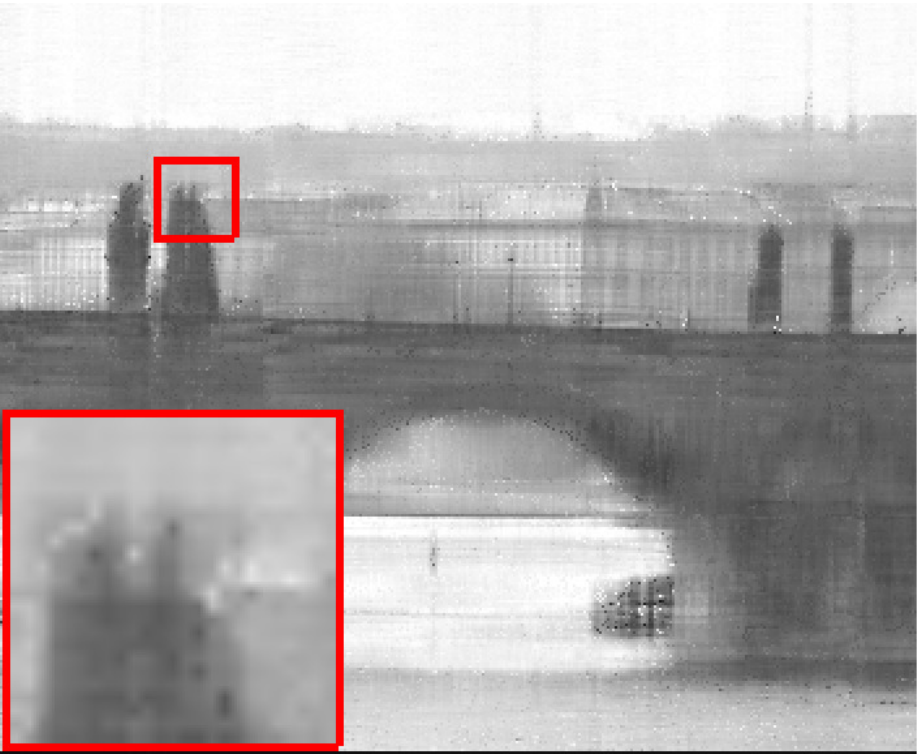}\vspace{0pt}
			\includegraphics[width=\linewidth]{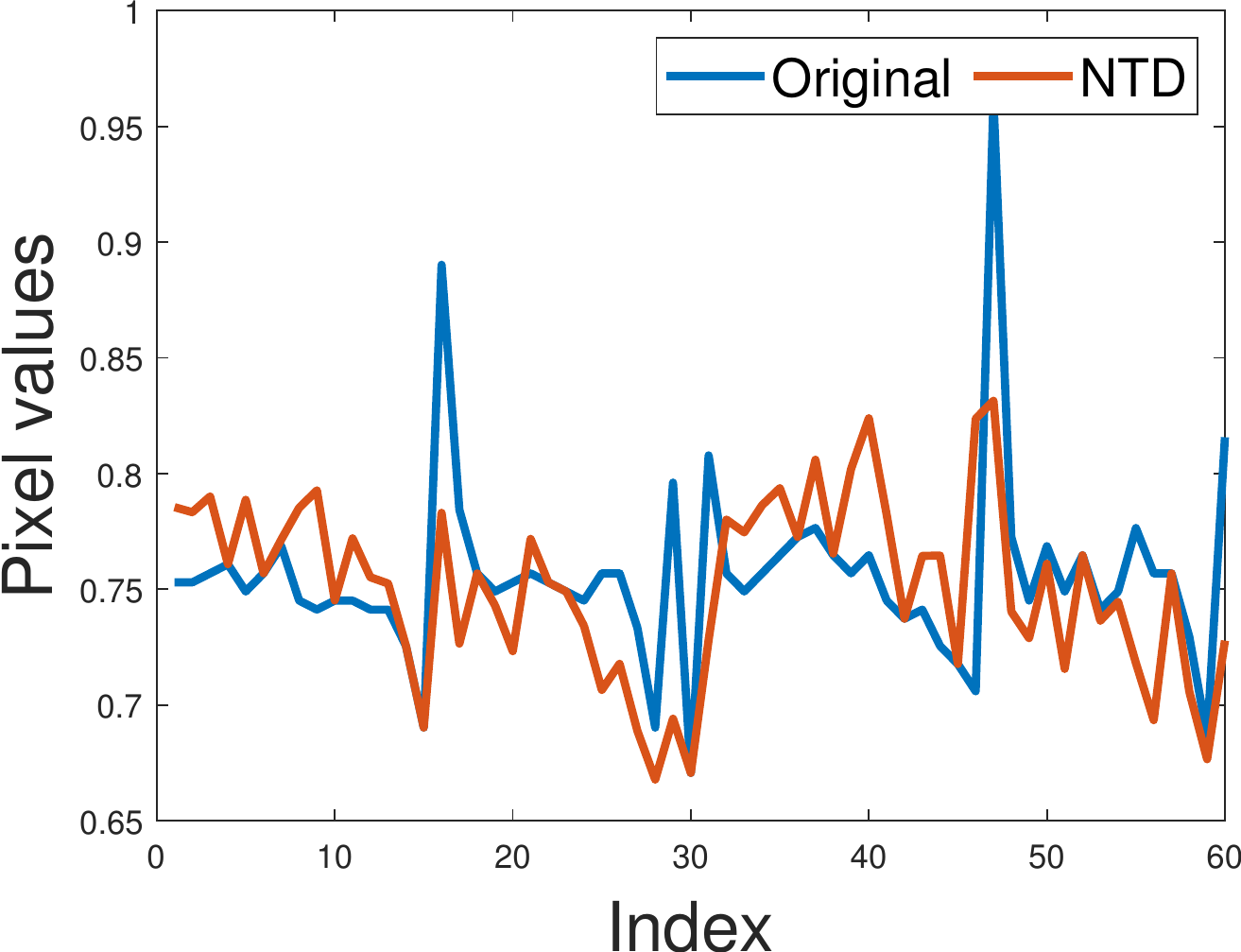}
			\caption{NTD}
		\end{subfigure}					
	\end{subfigure}
	\vfill
	\caption{Examples of video ``Bridge" inpainting with $ p=20\% $. For better visualization, we show the zoom-in region and the corresponding partial residuals of the region.}
    \label{fig:test-video}
\end{figure*}

\begin{table*}[htbp]
	\centering
	\caption{VIDEO INPAINTING PERFORMANCE COMPARISON: PSNR, SSIM, FSIM AND RUNNING TIME}
	\setlength{\tabcolsep}{1.5mm}{
	\begin{tabular}{c|c|cccc|cccc|cccc}
		\hline
		\multirow{2}{*}{Video} & \multirow{2}{*}{Methods} & \multicolumn{4}{c|}{$ p=20\% $}    & \multicolumn{4}{c|}{$ p=25\% $}   & \multicolumn{4}{c}{$ p=30\% $} \\
		\cline{3-14}          &       & PSNR  & SSIM  & FSIM  & Time  & PSNR  & SSIM  & FSIM  & Time  & PSNR  & SSIM  & FSIM  & Time \\
		\hline
		\multirow{6}{*}{Bridge} & DTRTC & 33.289  & 0.931  & 0.968  & \textbf{44.648 } & 33.752  & 0.937  & 0.971  & \textbf{30.767 } & 34.317  & 0.945  & 0.974  & \textbf{25.980 } \\
		& WSTNN & \textbf{33.840 } & \textbf{0.943 } & \textbf{0.972 } & 404.738  & \textbf{34.544 } & \textbf{0.951 } & \textbf{0.976 } & 309.719  & \textbf{35.228 } & \textbf{0.957 } & \textbf{0.980 } & 291.561  \\
		& TCTF  & 27.292  & 0.758  & 0.877  & 120.391  & 28.156  & 0.792  & 0.889  & 111.535  & 22.725  & 0.625  & 0.808  & 113.067  \\
		& TNN   & 33.696  & 0.932  & 0.967  & 1710.926  & 34.414  & 0.941  & 0.971  & 1485.431  & 35.110  & 0.949  & 0.976  & 1439.404  \\
		& NCPC  & 29.722  & 0.836  & 0.914  & 70.474  & 30.188  & 0.853  & 0.924  & 59.200  & 30.598  & 0.866  & 0.932  & 62.777  \\
		& NTD   & 26.814  & 0.736  & 0.858  & 70.616  & 27.478  & 0.766  & 0.876  & 64.893  & 27.963  & 0.786  & 0.889  & 69.222  \\
		\hline
	\end{tabular}}
	\label{tab:test-video}
\end{table*}

\par  Figure \ref{fig:test-video} shows the 18-th frame of ``Bridge", which shows that DTRTC performs better in filling the missing values of the tested sequence and recovers the details better. On the PSNR, SSIM and FSIM metric, DTRTC achieves similar effects to WSTNN and TNN, consistent with the observation in Table \ref{tab:test-video}. On time consumption, DTRTC is the fastest method, about $ 9 $ times faster than WSTNN and at least $ 38 $ times faster than TNN. Clearly, the video inpainting results are also consistent with the image inpainting results, and all these demonstrate that DTRTC can perform tensor completion better with less consumed time.

\section{Conclusion}
In this paper, we established a relationship between matrix rank and tensor tubal rank. After that, we modeled the matrix completion problem as a third order tensor completion problem and proposed a two-stage  tensor factorization based  algorithm, which made a drastic reduction on the dimension of data and hence cut down on the running time. For low rank tensor completion problem, we introduced double tubal rank. Compared to tubal rank, 3-tubal rank and tensor fibered rank, double tubal rank can not only fully exploit the low rank structures of the tensor but also avoid the low rank structures redundancy. Based on this rank, we modified the proposed  tensor factorization based algorithm for tensor completion problem.  The reported experiments demonstrated that our proposed algorithms were much more efficient than the most state-of-the-art matrix/tensor completion algorithms.

\appendices
\section{Proof of Theorem}
\begin{proof}
	According to $ f^t $, we have that
	\begin{equation}\label{fs}
		\begin{aligned}
			&f^t-f^{t+1}\\=&\frac{1}{2}\left(\left\|\P^{t}\ast \Q^{t}-\X^{t}\right\|_F^2-\left\|\P^{t+1}\ast \Q^{t+1}-\X^{t+1}\right\|_F^2\right) 	\\
			&+\frac{\gamma}{2}\left(\left\|\U^{t}\ast \V^{t}-\tilde\X^{t}\right\|_F^2-\left\|\U^{t+1}\ast \V^{t+1}-\tilde\X^{t+1}\right\|_F^2\right). 		
		\end{aligned}
	\end{equation}
	In step 9 of Algorithm 4.1, since $ \X^{t+1} $ is an optimal solution of $\X$-subproblem, we have
	\begin{equation*}
		\begin{aligned}
			&f\left(\P^{t+1},\Q^{t+1},\U^{t+1},\V^{t+1},\X^{t+1}\right)\\\le &f\left(\P^{t+1},\Q^{t+1},\U^{t+1},\V^{t+1},\X^{t}\right).
		\end{aligned}
	\end{equation*}
	Then,
	\begin{equation}\label{xt}
		\begin{aligned}
			&\left\|\P^{t+1}\ast\Q^{t+1}-\X^{t+1}\right\|_F^2+\gamma\left\|\U^{t+1}\ast\V^{t+1}-\tilde\X^{t+1}\right\|_F^2
			\\\le&\left\|\P^{t+1}\ast\Q^{t+1}-\X^{t}\right\|_F^2+\gamma\left\|\U^{t+1}\ast\V^{t+1}-\tilde\X^{t}\right\|_F^2.	
		\end{aligned}
	\end{equation}
	According to the computation of $\P^{t+1}, \Q^{t+1}$ and Lemma 3 in \cite{ZLLZ18}, we have
	\begin{equation}\label{xt1}
		\begin{aligned}
			&\left\|\P^{t}\ast \Q^{t}-\X^{t}\right\|_F^2-\left\|\P^{t+1}\ast \Q^{t+1}-\X^{t+1}\right\|_F^2	\\
			=&\left\|\P^{t+1}\ast\Q^{t+1}-\X^{t}\right\|_F^2-\left\|\P^{t+1}\ast\Q^{t+1}-\X^{t+1}\right\|_F^2\\&+\frac{1}{n_3}\left\|\hat{P}^{t+1} \hat{Q}^{t+1}-\hat{P}^{t} \hat{Q}^{t}\right\|_F^2.	
		\end{aligned}
	\end{equation}
	Similar result can be obtained that
	\begin{equation}\label{yt}
		\begin{aligned}
			&\left\|\U^{t}\ast \V^{t}-\tilde\X^{t}\right\|_F^2-\left\|\U^{t+1}\ast \V^{t+1}-\tilde\X^{t+1}\right\|_F^2	\\
			=&\left\|\U^{t+1}\ast \V^{t+1}-\tilde\X^{t}\right\|_F^2-\left\|\U^{t+1}\ast \V^{t+1}-\tilde\X^{t+1}\right\|_F^2\\&+\frac{1}{q}\left\|\hat{U}^{t+1} \hat{V}^{t+1}-\hat{U}^{t} \hat{V}^{t}\right\|_F^2.	
		\end{aligned}
	\end{equation}
	Combining \eqref{fs}-\eqref{yt}, it holds
	\begin{equation}\label{ft}
		\begin{aligned}
			f^t-f^{t+1} \ge& \frac{1}{2n_3}\left\|\hat{P}^{t+1} \hat{Q}^{t+1}-\hat{P}^{t} \hat{Q}^{t}\right\|_F^2\\&+\frac{\gamma}{2q}\left\|\hat{U}^{t+1} \hat{V}^{t+1}-\hat{U}^{t} \hat{V}^{t}\right\|_F^2\ge 0.	
		\end{aligned}
	\end{equation}
	Summing all the inequality \eqref{ft} for all $ t $, we obtain
	\begin{equation}
		\begin{aligned}
			f^1-f^{n+1} \ge&
			\frac{1}{2n_3}\sum_{t=1}^n\left\|\hat{P}^{t+1} \hat{Q}^{t+1}-\hat{P}^{t} \hat{Q}^{t}\right\|_F^2\\&+\frac{\gamma}{2q}\sum_{t=1}^n\left\|\hat{U}^{t+1} \hat{V}^{t+1}-\hat{U}^{t} \hat{V}^{t}\right\|_F^2.
		\end{aligned}
	\end{equation}
	Thus, we can obtain the following equation:
	\begin{equation}\label{infp}
		\begin{aligned}
			&\lim\limits_{t\to+\infty}\left\|\hat{P}^{t+1} \hat{Q}^{t+1}-\hat{P}^{t} \hat{Q}^{t}\right\|_F^2=0,\\ &\lim\limits_{t\to+\infty}\left\|\hat{U}^{t+1} \hat{V}^{t+1}-\hat{U}^{t} \hat{V}^{t}\right\|_F^2=0.
		\end{aligned}
	\end{equation}
	Similar to the analysis of Equation (38)-(46) in \cite{ZLLZ18}, ones have
	\begin{equation*}
		\begin{aligned}
			&\lim _{t \to+\infty}\left(\bar{X}^{t}-\hat{P}^{t} \hat{Q}^{t}\right)\left(\hat{Q}^{t}\right)^{*}=0,\\
			&\lim _{t \to+\infty}\left(\hat{P}^{t+1}\right)^{*}\left(\bar{X}^{t}-\hat{P}^{t} \hat{Q}^{t}\right)=0.		
		\end{aligned}
	\end{equation*}
	Since the sequence $\left\{\P^t,\Q^t,\U^t,\V^t,\X^t\right\}$ generated by Algorithm 4.1 is bounded, there is a subsequence $\left\{\P^{t_j},\Q^{t_j},\U^{t_j},\V^{t_j},\X^{t_j}\right\}$ that
	converges to a point $\left(\P_{\star},\Q_{\star},\U_{\star},\V_{\star},\X_{\star}\right)$. Therefore, the following two equations hold:
	\begin{equation}\label{pq}
		\begin{aligned}
			&\left(\bar{X}_{\star}-\hat{P}_{\star} \hat{Q}_{\star}\right)\left(\hat{Q}_{\star}\right)^{*}=0,\quad
			\left(\hat{P}_{\star}\right)^{*}\left(\bar{X}_{\star}-\hat{P}_{\star} \hat{Q}_{\star}\right)=0.		
		\end{aligned}
	\end{equation}
	Similarly, we have
	\begin{equation}\label{uv}
		\left(\bar{\tilde X}_{\star}-\hat{U}_{\star} \hat{V}_{\star}\right)\left(\hat{V}_{\star}\right)^{*}=0,\quad
		\left(\hat{U}_{\star}\right)^{*}\left(\bar{\tilde X}_{\star}-\hat{U}_{\star} \hat{V}_{\star}\right)=0.		
	\end{equation}
	On the other hand, we update $ \X^{t+1}=\frac{1}{1+\gamma}P_{\Omega^c}\left(\P^{t+1} \ast\Q^{t+1}+\gamma fold_3\left[\left(\U^{t+1} \ast\V^{t+1}\right)_{(1)}\right]\right)+{P_\Omega }(\mathcal M) $ at each iteration. Thus, $ \X_\star $ always satisfies the following two equations
	\begin{equation}\label{xy}
		\begin{aligned}
			&P_{\Omega^{c}}\left(\mathcal{X}_{\star}-\frac{1}{1+\gamma}\left(\P_{\star} \ast\Q_{\star}+\gamma fold_3\left[\left(\U_{\star} \ast\V_{\star}\right)_{(1)}\right]\right)\right)=0, \\
			&P_{\Omega}\left(\mathcal{X}_{\star}-\mathcal{M}\right)=0.
		\end{aligned}
	\end{equation}
	Furthermore, there exists $\Lambda_{\star}$ such that
	\begin{equation}\label{xys}
		\begin{aligned}
		P_{\Omega}\left(\mathcal{X}_{\star}-\frac{1}{1+\gamma}\left(\P_{\star} \ast\Q_{\star}+\gamma fold_3\left[\left(\U_{\star} \ast\V_{\star}\right)_{(1)}\right]\right)\right)\\+\Lambda_{\star}=0.	
		\end{aligned}
	\end{equation}
	By \eqref{pq}-\eqref{xys},  $ \left(\P_\star,\Q_\star,\U_\star,\V_\star,\X_\star\right) $ is a KKT point of problem \eqref{lm}.
\end{proof}


\ifCLASSOPTIONcaptionsoff
  \newpage
\fi



%
\bibliographystyle{IEEEtran}
\bibliography{my}

%




\end{document}